\documentclass[12pt]{article}

\usepackage{settings}

\title{The Schwartz space for the $ (k, a) $-generalized Fourier transform and the minimal representation of the conformal group}
\author{%
  Tatsuro Hikawa%
  \thanks{Graduate School of Mathematical Sciences, the University of Tokyo, 3-8-1 Komaba, Meguro-ku, Tokyo 153-8914, Japan}%
}
\date{}

\begin{document}

\maketitle
\begin{abstract}
  This paper studies an analog of the classical Schwartz space $ \mscrS(\setR^N) $
  in the framework of $ (k, a) $-deformed harmonic analysis
  associated with the $ (k, a) $-generalized Fourier transform $ \mscrF_{k, a} $.
  Motivated by the observation that $ \mscrS(\setR^N) $ coincides with
  the space of smooth vectors for the Segal--Shale--Weil representation,
  we define the $ (k, a) $-generalized Schwartz space
  $ \mscrS_{k, a}(\setR^N) $ as the space of smooth vectors
  for the unitary representation associated with $ \mscrF_{k, a} $.
  Since this definition is intrinsic to the representation,
  it follows immediately that $ \mscrS_{k, a}(\setR^N) $ is preserved by $ \mscrF_{k, a} $.
  As main results, we explicitly determine $ \mscrS_{k, a}(\setR^N) $ for $ N = 1 $,
  as well as for general $ N $ when $ k = 0 $ and $ a $ is rational.
  We also explicitly determine the space of smooth vectors
  for the $ L^2 $-model of the minimal representation of the conformal group
  $ \widetilde{\mathit{SO}}_0(N + 1, 2) $ studied by Kobayashi--Mano.
\end{abstract}

\tableofcontents

\section{Introduction}

The Schwartz space $ \mscrS(\setR^N) $ plays a fundamental role in harmonic analysis on Euclidean space.
A key property is that the Fourier transform maps $ \mscrS(\setR^N) $ onto itself as a topological linear isomorphism.
This paper studies an analog of the Schwartz space
in the framework of $ (k, a) $-deformed harmonic analysis
associated with the \emph{$ (k, a) $-generalized Fourier transform} $ \mscrF_{k, a} $
introduced by Ben~Saïd--Kobayashi--Ørsted~\cite{MR2566988,MR2956043}.
Here, $ k $ is the multiplicity parameter arising in Dunkl theory,
and $ a > 0 $ is a parameter that provides a continuous interpolation between
the minimal representations of the two simple Lie groups
$ \mathit{Mp}(N, \mathbb{R}) $ and $ \widetilde{\mathit{SO}}_0(N + 1, 2) $.

The transform $ \mscrF_{k, a} $ is a unitary operator
that intertwines the multiplication operator $ \enorm{x}^a $ with
the differential-difference operator $ -\enorm{x}^{2 - a} \Laplacian_k $.
Note that the $ (0, 2) $-generalized Fourier transform $ \mscrF_{0, 2} $ coincides with
the classical Fourier transform, which intertwines $ \enorm{x}^2 $ with $ -\Laplacian $.

\subsection{Background from representation theory}
\label{ssec:background}

Our approach is motivated by the observation
that the Schwartz space $\mscrS(\setR^N)$ coincides with the space of smooth vectors
for the Segal--Shale--Weil representation (also known as the oscillator representation).
We adopt this as a guiding principle and focus on spaces of smooth vectors (see \cref{def:smooth-vector}) throughout this paper.

Ben~Saïd--Kobayashi--Ørsted introduced
a family of $ \mathfrak{sl}_2 $-triples of differential-difference operators 
on $ \setR^N \setminus \setenum{0} $:
\[
  \DiffH{k, a}  = \frac{2}{a} E_x + \frac{2 \dindex{k} + a + N - 2}{a}, \qquad
  \DiffEp{k, a} = \frac{i}{a} \enorm{x}^a, \qquad
  \DiffEm{k, a} = \frac{i}{a} \enorm{x}^{2 - a} \Laplacian_k,
\]
where $ E_x = \sum_{j = 1}^{N} x_j \Pdif{x_j} $ denotes the Euler operator,
and $ \Laplacian_k $ denotes the Dunkl Laplacian, which reduces to
the classical Laplacian $ \Laplacian $ when $ k = 0 $.

This $ \mathfrak{sl}_2 $-triple lifts to a unitary representation
\[
  \map{\Omega_{k, a}}{\widetilde{\mathit{SL}}(2, \setR)}{\mscrU(L^2(\setR^N, w_{k, a}(x) \,dx))}
\]
of the universal covering group $ \widetilde{\mathit{SL}}(2, \setR) $ of $ \mathit{SL}(2, \setR) $.
The $ (k, a) $-generalized Fourier transform is defined
as the unitary inversion operator arising in $ \Omega_{k, a} $:
\[
  \mscrF_{k, a}
  = c_{k, a}
    \Omega_{k, a}\Paren*{
      \exp_{\widetilde{\mathit{SL}}(2, \setR)}\Paren*{
        \frac{\pi}{2} \begin{pmatrix} 0 & -1 \\ 1 & 0 \end{pmatrix}
      }
    },
\]
where the phase factor is given by $ c_{k, a} = e^{\frac{i\pi}{2} \frac{2 \dindex{k} + a + N - 2}{a}} $.
It includes several classical transforms as special cases:
\begin{itemize}
  \item The $ (0, 2) $-generalized Fourier transform $ \mscrF_{0, 2} $ is
        the classical Fourier transform.
  \item The $ (0, 1) $-generalized Fourier transform $ \mscrF_{0, 1} $ is
        a Hankel-type transform on the light cone~\cite{MR2134314} (see the next paragraph).
  \item The $ (k, 2) $-generalized Fourier transform $ \mscrF_{k, 2} $ is
        the Dunkl transform~\cite{MR1199124}.
\end{itemize}

Let us explain the meaning of the parameter $ a $ from the perspective of minimal representations.
The classical Fourier transform on the Euclidean space $ \setR^N $ can be
interpreted as the unitary inversion operator arising in the Segal--Shale--Weil representation,
which is a unitary representation of the metaplectic group $ \mathit{Mp}(N, \setR) $
on the Hilbert space $ L^2(\setR^N) $ (see \cite[Chapter~4]{MR983366} for details) and which
decomposes into two irreducible components,
each of which is a minimal representation.
This interpretation naturally leads to analogs to the Fourier transform
in harmonic analysis on manifolds beyond the Euclidean setting.
Motivated by this idea, the \emph{Fourier transform on the light cone} was introduced
by Kobayashi--Mano~\cite{MR2134314,MR2317306,MR2401813,MR2858535}
as the unitary inversion operator arising in the $ L^2 $-model of the minimal representation of the conformal group.

When $ k = 0 $, the representation $ \Omega_{0, a} $ commutes with the natural action of $ O(N) $,
and hence it defines a unitary representation of
$ O(N) \times \widetilde{\mathit{SL}}(2, \setR) $.
This coincides with the restriction of the Segal--Shale--Weil representation to the subgroup when $ (k, a) = (0, 2) $,
and with the restriction of the minimal representation of
$ \widetilde{\mathit{SO}}_0(N + 1, 2) $ to the subgroup when $ (k, a) = (0, 1) $.
This explains why $ \mscrF_{0, 2} $ is the classical Fourier transform and
$ \mscrF_{0, 1} $ is the Fourier transform on the light cone.
The parameter $ a $ therefore provides a continuous interpolation between
the minimal representations of the two simple Lie groups
$ \mathit{Mp}(N, \mathbb{R}) $ and $ \widetilde{\mathit{SO}}_0(N + 1, 2) $
at the level of the subgroup $ O(N) \times \widetilde{\mathit{SL}}(2, \setR) $.

\subsection{Results of the paper}

From the perspective of representation theory,
the Schwartz space $ \mscrS(\setR^N) $ coincides with the space of smooth vectors
for the Segal--Shale--Weil representation of $ \mathit{Mp}(N, \setR) $
(or equivalently, for its restriction $ \Omega_{0, 2} $ to $ \widetilde{\mathit{SL}}(2, \setR) $).
Since the Fourier transform arises as the unitary inversion operator
in the Segal--Shale--Weil representation,
this viewpoint gives a representation-theoretic explanation
why the Fourier transform preserves $ \mscrS(\setR^N) $
(see \cref{thm:properties-of-smooth-vectors}~(2)).

Motivated by this observation, we define the \emph{$ (k, a) $-generalized Schwartz space} as
\[
  \mscrS_{k, a}(\setR^N)
  = L^2(\setR^N, w_{k, a}(x) \,dx)_{\Omega_{k, a}}^\infty,
\]
that is, the space of smooth vectors for $ \Omega_{k, a} $ (\cref{def:schwartz-space}).
Since the $ (k, a) $-generalized Fourier transform $ \mscrF_{k, a} $ is defined as
the unitary inversion operator arising in $ \Omega_{k, a} $,
it preserves $ \mscrS_{k, a}(\setR^N) $ (\cref{thm:schwartz-space-is-ft-stable}).

In this way, we obtain a natural $ \mscrF_{k, a} $-stable subspace $ \mscrS_{k, a}(\setR^N) $,
intrinsic to the unitary representation $ \Omega_{k, a} $,
which generalizes the classical Schwartz space $ \mscrS(\setR^N) = \mscrS_{0, 2}(\setR^N) $.

In this paper, we address the following problem.

\begin{problem*}
  Determine the $ (k, a) $-generalized Schwartz space $ \mscrS_{k, a}(\setR^N) $
  explicitly.
\end{problem*}

In the case $ N = 1 $, we determine $ \mscrS_{k, a}(\setR) $
for general parameters $ k $ and $ a $.

\begin{theoremAlph}[\cref{thm:schwartz-space-of-rank-one}]\label{thmalph:schwartz-space-of-rank-one}
  Let $ k $ be a non-negative multiplicity function and $ a > 0 $
  such that $ \lambda_{k, a, 0} > -1 $, that is, $ \frac{2\dindex{k} - 1}{a} > -1 $.
  Then, we have
  \[
    \mscrS_{k, a}(\setR)
    = \set*{x \mapsto u(\abs{x}^a) + x v(\abs{x}^a)}{\text{%
      $ u $, $ v \in \mscrS(\setRzp) $%
    }}.
  \]
\end{theoremAlph}

For general $ N $, we determine $ \mscrS_{0, a}(\setR^N) $ when $ a $ is rational.
We note that the rationality of $ a $ is equivalent to
the absence of accumulation points in the $ \mathfrak{so}(2) $-spectrum of $ \Omega_{k, a} $~\cite[Corollary~3.22~3)]{MR2956043},
as well as to $ \mscrF_{k, a} $ having finite order~\cite[Corollary~5.2]{MR2956043},
making this case particularly interesting
from both representation-theoretic and harmonic-analytic perspectives.
On the other hand, when $ a $ is irrational,
it was pointed out in Gorbachev--Ivanov--Tikhonov~\cite[Proposition~5.8]{MR4629458} that
$ \mscrF_{k, a} $ ``drastically deforms even very smooth functions''.

\begin{theoremAlph}[\cref{thm:schwartz-space}]
  Let $ a = 2p/q \in \setQp $ with $ p $, $ q \in \setNp $,
  and further assume that $ a > 1 $ when $ N = 1 $.
  Then, we have
  \[
    \mscrS_{0, a}(\setR^N)
    = \sum_{j = 0}^{q - 1} \enorm{x}^{ja} S_{2p}(\setR^N),
  \]
  where
  \[
    S_{2p}(\setR^N)
    = \set*{F \in \mscrS(\setR^N)}{
        \begin{aligned}
          & \text{The formal Taylor series of $ F $ at the origin belongs to} \\
          & \textstyle\prod_{m, n = 0}^{\infty} \enorm{x}^{2np} \mcalH^m(\setR^N)
        \end{aligned}
    }.
  \]
\end{theoremAlph}

So far, we have focused on the space $ \mscrS_{k, a}(\setR^N) $ of smooth vectors
for the representation $ \Omega_{k, a} $ of $ \widetilde{\mathit{SL}}(2, \setR) $.
As explained in \cref{ssec:background}, for the special parameter $ (k, a) = (0, 1) $,
the representation $ \Omega_{0, 1} $ arises as the restriction of
the minimal representation of the larger group $ \widetilde{\mathit{SO}}_0(N + 1, 2) $.
We also determine the space of smooth vectors for this larger symmetry,
and show that it coincides with $ \mscrS_{0, 1}(\setR^N) $.

In what follows, we assume that $ N \geq 2 $.
Let $ \Pi_1 $ denote the $ L^2 $-model for the minimal representation of
the conformal group $ \widetilde{\mathit{SO}}_0(N + 1, 2) $.
We define the \emph{Schwartz space for this minimal representation} as
\[
  \mscrS_{\mathrm{conf}}(\setR^N)
  = L^2(\setR^N, \enorm{x}^{-1} \,dx)_{\Pi_1}^\infty,
\]
that is, the space of smooth vectors for the minimal representation $ \Pi_1 $ (\cref{def:schwartz-space-conformal}).
By the same reasoning as for the $ (k, a) $-generalized Schwartz space,
$ \mscrS_{\mathrm{conf}}(\setR^N) $ is preserved by $ \mscrF_{0, 1} $.

Since $ \Omega_{0, 1} $ coincides with the restriction of $ \Pi_1 $ to a subgroup,
it is clear that $ \mscrS_{\mathrm{conf}}(\setR^N) $ is contained in $ \mscrS_{0, 1}(\setR^N) $.
We prove that these $ \mscrF_{0, 1} $-stable subspaces coincide.

\begin{theoremAlph}[\cref{thm:schwartz-space-conformal}]
  Let $ N \geq 2 $.
  The following spaces coincide.
  \begin{enumalphp}
    \item $ \mscrS_{\mathrm{conf}}(\setR^N) $,
          or the space of smooth vectors for the minimal representation $ \Pi_1 $
    \item $ \mscrS_{0, 1}(\setR^N) $,
          or the space of smooth vectors for $ \Omega_{0, 1} $
    \item the space of smooth vectors for $ \restr{\Omega_{0, 1}}{\widetilde{\mathit{SO}}(2)} $
    \item $ \mscrS(\setR^N) + \enorm{x} \mscrS(\setR^N) $
  \end{enumalphp}
\end{theoremAlph}

Recently, several works related to Schwartz-type spaces
for the $ (k, a) $-generalized Fourier transform have appeared.
Gorbachev--Ivanov--Tikhonov~\cite{MR4629458} investigated
the image $ \mscrF_{k, a}(\mscrS(\setR^N)) $ of the classical Schwartz space under $ \mscrF_{k, a} $.
Ivanov~\cite{MR4684153} defined an $ \mscrF_{k, a} $-stable Schwartz-type space in the case $ N = 1 $,
which, by \cref{thmalph:schwartz-space-of-rank-one}, coincides with $ \mscrS_{k, a}(\setR) $.
Note that this space is defined by an explicit description of the function space,
without reference to the theory of smooth vectors.
There is also a related preprint by Faustino--Negzaoui~\cite{arXiv2507-04064}.
See \cref{ssec:previous-work} for more details.

\subsection{Organization of the paper}

In \cref{sec:preliminaries}, we briefly review basic properties of
the differential-difference operators $ \DiffH{k, a} $, $ \DiffEp{k, a} $ and $ \DiffEm{k, a} $
introduced by Ben~Saïd--Kobayashi--Ørsted.
In \cref{sec:infinitesimal}, after recalling some general definitions and facts,
we consider the infinitesimal representation $ d\Omega_{k, a} $
of the unitary representation $ \Omega_{k, a} $,
and investigate certain related self-adjoint and skew-adjoint operators.
\cref{sec:schwartz-space,sec:conformal-group} constitute the main part of this paper.
In these sections, we determine the $ (k, a) $-generalized Schwartz space and
the Schwartz space for the minimal representation of the conformal group, respectively.
The proofs rely on the results obtained in \cref{sec:infinitesimal}
as well as technical results collected
in \cref{sec:laguerre-polynomial,sec:pg,sec:harmonic-polynomial}.

\subsection{Notation and terminology}
 
\begin{itemize}
  \item $ \setN = \setenum{0, 1, 2, \dots} $.
  \item We write $ \innprod{\blank}{\blank} $ for the Euclidean inner product,
        and $ \enorm{\blank} $ for the Euclidean norm.
  \item $ \Sphere{N - 1} = \set{x \in \setR^N}{\enorm{x} = 1} $.
  \item Function spaces, such as $ C^\infty $ spaces and $ L^2 $ spaces,
        are understood to consist of complex-valued functions.
  \item We write $ \mcalP(\setR^N) $ for the space of polynomials on $ \setR^N $ and
        $ \mcalP^m(\setR^N) $ for its subspace of homogeneous polynomials of degree $ m $.
        We write $ \mcalH^m(\setR^N) $
        for the space of harmonic polynomials of degree $ m $ on $ \setR^N $ and
        $ \mcalH^m(\Sphere{N - 1}) $
        for the space of spherical harmonics of degree $ m $ on $ \Sphere{N - 1} $.
  \item We write $ E_x = \sum_{j = 1}^{N} x_j \Pdif{x_j} $ for the Euler operator
        on $ \setR^N $, and $ E_r = r \Odif{r} $ for the Euler operator on $ \setRp $.
  \item We write the rising and falling factorials as
        \begin{align*}
          x^{\overline{n}} &= x (x + 1) \dotsm (x + n - 1), \\
          x^{\underline{n}} &= x (x - 1) \dotsm (x - n + 1).
        \end{align*}
  \item Let $ \faml{c_\alpha}{\alpha \in \setN^n} $ be a family of complex numbers.
        We say that $ \faml{c_\alpha}{\alpha \in \setN^n} $ is \emph{rapidly decreasing}
        if $ \sup_{\alpha \in \setN^n} \abs{P(\alpha) c_\alpha} < \infty $ for any polynomial $ P $.
        We say that $ \faml{c_\alpha}{\alpha \in \setN^n} $ is of \emph{polynomial growth}
        if $ \abs{c_\alpha} \leq P(\alpha) $ for some polynomial $ P $.
\end{itemize}

\section{Preliminaries}
\label{sec:preliminaries}

In this section, we briefly review basic properties of
the differential-difference operators $ \DiffH{k, a} $, $ \DiffEp{k, a} $ and $ \DiffEm{k, a} $
introduced by Ben~Saïd--Kobayashi--Ørsted
to the extent necessary for later use.
This section contains no new results.

\subsection{Dunkl theory}

Throughout this section, we fix a reduced root system $ \mscrR $ on $ \setR^N $.
That is, we assume that $ \mscrR $ satisfies the following conditions:
\begin{itemize}
  \item $ \mscrR $ is a finite subset of $ \setR^N \setminus \setenum{0} $,
  \item $ \mscrR $ is stable under the orthogonal reflection $ r_\alpha $
        with respect to the hyperplane $ (\setR \alpha)^\perp $ for all
        $ \alpha \in \mscrR $, and
  \item $ \mscrR \cap \setR \alpha = \setenum{\alpha, -\alpha} $ for all
        $ \alpha \in \mscrR $.
\end{itemize}
Note that we do not impose crystallographic conditions on roots and do not
require that $ \mscrR $ spans $ \setR^N $.

The subgroup of $ O(N) $ generated by all the reflections $ r_\alpha $
is called the \emph{reflection group associated with $ \mscrR $}.
We say that a function $ \map{k}{\mscrR}{\setC} $ is a \emph{multiplicity
function} if it is invariant under the natural action of the reflection group.
We usually write $ k_\alpha $ instead of $ k(\alpha) $. We say that
a multiplicity function $ k $ is \emph{non-negative} if $ k_\alpha \geq 0 $
for all $ \alpha \in \mscrR $. The \emph{index} of a multiplicity function
$ k $ is defined as
\[
  \dindex{k}
  = \frac{1}{2} \sum_{\alpha \in \mscrR} k_\alpha
  = \sum_{\alpha \in \mscrR^+} k_\alpha,
\]
where $ \mscrR^+ $ is any positive system of $ \mscrR $.

For details on Dunkl theory, we refer the reader to the original papers
\cite{MR917849,MR951883,MR1145585,MR1199124} and the lecture notes \cite{MR2022853}.

\subsection{The differential-difference operators \texorpdfstring{$ \DiffH{k, a} $}{Hk,a}, \texorpdfstring{$ \DiffEp{k, a} $}{E+k,a} and \texorpdfstring{$ \DiffEm{k, a} $}{E-k,a}}

Let $ k $ be a multiplicity function and $ a \in \setC \setminus \setenum{0} $.
We recall the definition of the differential-difference operators
$ \DiffH{k, a} $, $ \DiffEp{k, a} $ and $ \DiffEm{k, a} $
on $ \setR^N \setminus \setenum{0} $ from \cite[(3.3)]{MR2956043}:
\[
  \DiffH{k, a}
  = \frac{2}{a} E_x + \frac{2 \dindex{k} + a + N - 2}{a}, \qquad
  \DiffEp{k, a}
  = \frac{i}{a} \enorm{x}^a, \qquad
  \DiffEm{k, a}
  = \frac{i}{a} \enorm{x}^{2 - a} \Laplacian_k.
\]
Here, $ \Laplacian_k $ denotes the Dunkl Laplacian
(see \cite{MR917849}, \cite[Definition~1.1]{MR951883} and \cite[(2.2)]{MR2022853}),
which reduces to the classical Laplacian $ \Laplacian $ when $ k = 0 $.

Additionally, for $ m \in \setN $, we consider the following differential
operators on $ \setRp $:
\begin{align*}
  \DiffH{k, a}[m]
  &= \frac{2}{a} E_r + \frac{2 \dindex{k} + a + N - 2}{a}, \\
  \DiffEp{k, a}[m]
  &= \frac{i}{a} r^a, \\
  \DiffEm{k, a}[m]
  &= \frac{i}{a} r^{-a} (E_r - m) (E_r + m + 2\dindex{k} + N - 2).
\end{align*}
These are the radial parts of $ \DiffH{k, a} $, $ \DiffEp{k, a} $ and
$ \DiffEm{k, a} $ respectively in the following sense.

\begin{proposition}\label{thm:radial-parts}
  Let $ k $ be a multiplicity function, $ a \in \setC \setminus \setenum{0} $,
  and $ m \in \setN $.
  For $ p \in \mcalH_k^m(\Sphere{N - 1}) $ and $ f \in C^\infty(\setRp) $, we have
  \begin{align*}
    \DiffH{k, a} (p \otimes f) &= p \otimes \DiffH{k, a}[m] f, \\
    \DiffEp{k, a} (p \otimes f) &= p \otimes \DiffEp{k, a}[m] f, \\
    \DiffEm{k, a} (p \otimes f) &= p \otimes \DiffEm{k, a}[m] f,
  \end{align*}
  where $ p \otimes f $ denotes the function $ r\omega \mapsto p(\omega) f(r) $
  on $ \setR^N \setminus \setenum{0} $.
\end{proposition}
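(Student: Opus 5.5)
The plan is to verify the three identities one at a time; only the last carries real content. Write $ F = p \otimes f $, so that $ F(x) = \enorm{x}^{m} p(x/\enorm{x}) \, \enorm{x}^{-m} f(\enorm{x}) $. Let $ P $ be the $ k $-harmonic homogeneous polynomial of degree $ m $ whose restriction to $ \Sphere{N - 1} $ is $ p $. Then
\[
  F(x) = P(x) g(\enorm{x}), \qquad g(r) = r^{-m} f(r),
\]
on $ \setR^N \setminus \setenum{0} $.

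The first two identities follow at once. Multiplication by $ \enorm{x}^a = r^a $ acts only on the radial variable, which gives the identity for $ \DiffEp{k, a} $. For $ \DiffH{k, a} $, the Euler operator in polar coordinates $ x = r\omega $ is $ E_x = r \partial_r = E_r $, acting on the radial factor. Hence $ E_x(p \otimes f) = p \otimes E_r f $, and the constant term is untouched.

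For $ \DiffEm{k, a} $ it suffices to show
\[
  \Laplacian_k (p \otimes f) = p \otimes r^{-2}(E_r - m)(E_r + m + 2\dindex{k} + N - 2) f,
\]
after which we multiply by $ \frac{i}{a} r^{2 - a} $. I would use the product rule for the Dunkl Laplacian with a radial factor. For $ G $ smooth on $ \setR^N \setminus \setenum{0} $ and $ \psi $ radial and $ W $-invariant,
\[
  \Laplacian_k (\psi G) = (\Laplacian_k \psi) G + 2 \innprod{\nabla \psi}{\nabla G} + \psi \Laplacian_k G.
\]
This holds because the difference parts of the Dunkl operators act trivially on the invariant factor $ \psi $: we have $ T_j(\psi G) = (\partial_j \psi) G + \psi T_j G $. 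Iterating this, and using that $ \partial_j \psi = x_j \psi'(r)/r $, gives the cross term $ 2 r^{-1}\psi'(r) \, E_x G $. This is the step needing the most care, because the reflection terms must be checked to cancel. If needed, I would instead write $ \Laplacian_k = \Laplacian + \sum_{\alpha > 0} k_\alpha (\dots) $ explicitly and use the $ W $-invariance of $ \psi $ directly.

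Now apply this with $ G = P $, which satisfies $ \Laplacian_k P = 0 $ and $ E_x P = mP $, and with $ \psi = g(r) $. We also need the radial Dunkl Laplacian,
\[
  \Laplacian_k g(r) = g'' + \frac{2\dindex{k} + N - 1}{r} g',
\]
which follows from the same product rule applied to $ \Laplacian_k \enorm{x}^2 = 2(N + 2\dindex{k}) $, or from the standard formula. This yields
\[
  \Laplacian_k F = P \Paren*{ g'' + \frac{2m + 2\dindex{k} + N - 1}{r} g' }.
\]
Finally, substitute $ g = r^{-m} f $ and rewrite in terms of $ E_r $:
\[
  r^2 g'' + (2m + 2\dindex{k} + N - 1) r g' = E_r(E_r - 1) g + (2m + 2\dindex{k} + N - 1) E_r g = E_r(E_r + 2m + 2\dindex{k} + N - 2) g.
\]
Conjugating by $ r^{-m} $ shifts $ E_r $ to $ E_r - m $, so this becomes $ r^{-m}(E_r - m)(E_r + m + 2\dindex{k} + N - 2) f $. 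Multiplying back by $ P = r^m p $ gives the claimed identity.
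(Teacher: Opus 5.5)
Your proof is correct. With $P$ the solid $k$-harmonic extension of $p$ and $g = r^{-m}f$, the Leibniz rule gives $\Laplacian_k(p\otimes f) = P\,\bigl(g'' + \frac{2m+2\dindex{k}+N-1}{r}g'\bigr)$. Conjugating by $r^{-m}$ then produces exactly $(E_r-m)(E_r+m+2\dindex{k}+N-2)$.

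The paper itself only says ``direct computation'' and points to \cite[Lemma~3.6]{MR2956043}. The usual way to carry this out uses the polar form $\Laplacian_k = \partial_r^2 + \frac{N-1+2\dindex{k}}{r}\partial_r + r^{-2}\Laplacian_{k,S}$. It combines this with the known fact that $\mcalH_k^m(\Sphere{N-1})$ is an eigenspace of the spherical part $\Laplacian_{k,S}$, with eigenvalue $-m(m+N-2+2\dindex{k})$. Your route never introduces $\Laplacian_{k,S}$. In effect it rederives that eigenvalue from $\Laplacian_k P = 0$ and $E_x P = mP$. This makes it more self-contained, at the price of having to prove a Leibniz rule for $\Laplacian_k$ against a radial factor.

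On the step you flagged: literally iterating $T_j(\psi G) = (\partial_j\psi)G + \psi T_jG$ is not allowed, because $\partial_j\psi = x_j\psi'(r)/r$ is not invariant under the reflection group. It still works out. Put $h = \psi'/r$. Then $\sum_j T_j(x_jhG) = (E_xh)G + h\bigl((N+\dindex{k}+E_x)G + \sum_{\alpha\in\mscrR^+}k_\alpha\,G\circ r_\alpha\bigr)$. Also $\sum_j x_jh\,T_jG = h\bigl(E_xG + \sum_{\alpha\in\mscrR^+}k_\alpha(G - G\circ r_\alpha)\bigr)$. So the reflection terms cancel, and what remains is $\bigl(E_xh + (N+2\dindex{k})h\bigr)G + 2hE_xG + \psi\Laplacian_kG$.

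Taking $G=1$ in this computation gives $\Laplacian_k\psi = E_xh + (N+2\dindex{k})h = \psi'' + \frac{2\dindex{k}+N-1}{r}\psi'$. That is a sounder justification of the radial formula than the single identity $\Laplacian_k\enorm{x}^2 = 2(N+2\dindex{k})$, which on its own does not determine the radial operator. Alternatively, the explicit formula for $\Laplacian_k$ that you mention as a fallback makes the whole Leibniz rule a one-line check.
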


\begin{proof}
  It follows from a direct computation.
  See also \cite[Lemma~3.6]{MR2956043}.
\end{proof}

\begin{proposition}\label{thm:sl2-triple}
  Let $ k $ be a multiplicity function and $ a \in \setC \setminus \setenum{0} $.
  \begin{enumarabicp}
    \item The differential-difference operators
          $ \DiffH{k, a} $, $ \DiffEp{k, a} $ and $ \DiffEm{k, a} $
          form an $ \mathfrak{sl}_2 $-triple. That is,
          \[
            [\DiffH{k, a}, \DiffEp{k, a}] = 2 \DiffEp{k, a}, \qquad
            [\DiffH{k, a}, \DiffEm{k, a}] = -2 \DiffEm{k, a}, \qquad
            [\DiffEp{k, a}, \DiffEm{k, a}] = \DiffH{k, a}.
          \]
    \item For any $ m \in \setN $, the differential operators
          $ \DiffH{k, a}[m] $, $ \DiffEp{k, a}[m] $ and $ \DiffEm{k, a}[m] $
          form an $ \mathfrak{sl}_2 $-triple. That is,
          \[
            [\DiffH{k, a}[m], \DiffEp{k, a}[m]] = 2 \DiffEp{k, a}, \qquad
            [\DiffH{k, a}[m], \DiffEm{k, a}[m]] = -2 \DiffEm{k, a}, \qquad
            [\DiffEp{k, a}[m], \DiffEm{k, a}[m]] = \DiffH{k, a}.
          \]
  \end{enumarabicp}
\end{proposition}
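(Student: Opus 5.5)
Part (2) is a direct computation with one-variable operators on $\setRp$, and part (1) reduces to it through \cref{thm:radial-parts} together with the spherical harmonic decomposition. Throughout, I use that $[E_r, r^s] = s r^s$, which is the identity $E_r r^s = r^s (E_r + s)$.

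For (2), the constant term of $\DiffH{k, a}[m]$ commutes with everything, so only $\frac{2}{a} E_r$ matters in the first two brackets. First, $[\frac{2}{a} E_r, \frac{i}{a} r^a] = \frac{2}{a}\cdot a \cdot \frac{i}{a} r^a = 2\DiffEp{k, a}[m]$. Second, $E_r$ commutes with the polynomial $(E_r - m)(E_r + m + 2\dindex{k} + N - 2)$, and $[E_r, r^{-a}] = -a r^{-a}$; this gives $[\DiffH{k, a}[m], \DiffEm{k, a}[m]] = -2\DiffEm{k, a}[m]$.

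For the third bracket, I write $Q(E_r) = (E_r - m)(E_r + m + 2\dindex{k} + N - 2)$. Then
\[
  [\DiffEp{k, a}[m], \DiffEm{k, a}[m]] = -\frac{1}{a^2}\bigl(r^a r^{-a} Q(E_r) - r^{-a} Q(E_r) r^a\bigr) = -\frac{1}{a^2}\bigl(Q(E_r) - Q(E_r + a)\bigr),
\]
using $Q(E_r) r^a = r^a Q(E_r + a)$. Setting $c = 2\dindex{k} + N - 2$, the difference is
\[
  Q(E_r + a) - Q(E_r) = a(2E_r + c) + a^2 .
\]
Dividing by $a^2$ gives $\frac{2}{a} E_r + \frac{c + a}{a} = \DiffH{k, a}[m]$. (The displayed statement writes $\DiffEp{k, a}$ and $\DiffEm{k, a}$ without $[m]$ on the right-hand sides; I read these as the radial versions.) Notably, $m$ drops out of this computation.

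For (1), the first two relations are direct as well. $E_x$ commutes with $\enorm{x}^a$ up to the term $a\enorm{x}^a$, which gives the first relation. The Euler operator satisfies $[E_x, \Laplacian_k] = -2\Laplacian_k$, since $\Laplacian_k$ is homogeneous of degree $-2$; combined with $[E_x, \enorm{x}^{2 - a}] = (2 - a)\enorm{x}^{2 - a}$, this gives $[\DiffH{k, a}, \DiffEm{k, a}] = \frac{2}{a}(2 - a - 2)\DiffEm{k, a} = -2\DiffEm{k, a}$.

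The third relation, involving the Dunkl Laplacian applied to $\enorm{x}^a$, is the only real obstacle. I would avoid computing Dunkl operators on powers of the norm. Instead I would use that finite sums of functions $p \otimes f$, with $p \in \mcalH_k^m(\Sphere{N - 1})$ and $f \in C^\infty(\setRp)$, suffice to detect an identity between operators of this form. On each such function, \cref{thm:radial-parts} turns both sides into $p \otimes (\cdot)$ of the corresponding radial expressions, and part (2) finishes the argument. Density of such sums in $C^\infty(\setR^N \setminus \setenum{0})$ (via the $k$-spherical harmonic expansion in $L^2(\Sphere{N - 1}, w_k)$ and continuity of the operators) is the point to check. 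Alternatively, one can cite the direct verification in \cite[Theorem~3.2]{MR2956043}.
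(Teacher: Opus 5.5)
Your proposal is correct, but it runs in the opposite direction from the paper. The paper cites \cite[Theorem~3.2]{MR2956043} for (1). It then obtains (2) by applying (1) to functions $p \otimes f$ and reading off radial parts via \cref{thm:radial-parts}.

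You prove (2) by a self-contained one-variable computation. The identity $Q(E_r + a) - Q(E_r) = a(2E_r + c) + a^2$ is right, and so is your reading of the missing $[m]$'s in the statement. You then lift (2) to (1). Your route for (2) is independent of Dunkl theory and uniform in $N$ and $m$. In particular it covers $N = 1$, $m \geq 2$: there $\mcalH_k^m(\Sphere{0}) = 0$ for $k \geq 0$, so the paper's deduction from (1) gives nothing. For (1), your fallback citation is exactly the paper's argument.

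If you want the lift from (2) to (1) to stand on its own, three points need attention.

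\begin{itemize}
\item The density must hold in the Fréchet topology of $C^\infty(\setR^N \setminus \setenum{0})$, in which $\Laplacian_k$ is continuous. Convergence in $L^2(\Sphere{N - 1}, w_k(\omega) \,d\omega)$ does not let you pass limits through $\Laplacian_k$. Use instead two facts. Restrictions of polynomials are dense in $C^\infty(\Sphere{N - 1})$, and for $k \geq 0$ they coincide with $\bigoplus_{m} \mcalH_k^m(\Sphere{N - 1})$. Also, $C^\infty(\Sphere{N - 1}) \otimes C^\infty(\setRp)$ is dense in $C^\infty(\setR^N \setminus \setenum{0})$.
\item The statement allows arbitrary complex $k$. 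For such $k$, $w_k$ is not a weight and the $k$-harmonic decomposition is not available. Since both sides depend polynomially on $k$, the identity extends from $k \geq 0$.
\item A shorter path avoids density altogether. For $g$ invariant under the reflection group, each Dunkl operator satisfies $T_\xi(gf) = (\partial_\xi g) f + g\, T_\xi f$. Summing $T_{e_j}^2$ (the reflection terms cancel) gives
\[
  [\Laplacian_k, \enorm{x}^a] = a \enorm{x}^{a - 2} \bigl(2E_x + 2\dindex{k} + N + a - 2\bigr).
\]
The third relation then follows in one line for all $k$ and $a$.
\end{itemize}
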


\begin{proof}
  \begin{subproof}{(1)}
    It is \cite[Theorem~3.2]{MR2956043}.
  \end{subproof}

  \begin{subproof}{(2)}
    It follows from (1) and \cref{thm:radial-parts}.
  \end{subproof}
\end{proof}

Let
\[
  \mbfh   = \begin{pmatrix} 1 & 0 \\ 0 & -1 \end{pmatrix}, \qquad
  \mbfe^+ = \begin{pmatrix} 0 & 1 \\ 0 & 0 \end{pmatrix}, \qquad
  \mbfe^- = \begin{pmatrix} 0 & 0 \\ 1 & 0 \end{pmatrix}
\]
be the standard basis of $ \mathfrak{sl}(2, \setR) $.
By \cref{thm:sl2-triple}~(1), the differential-difference operators
$ \DiffH{k, a} $, $ \DiffEp{k, a} $ and $ \DiffEm{k, a} $ define
a Lie algebra representation
\begin{align*}
  & \map{\omega_{k, a}}{\mathfrak{sl}(2, \setR)}{\End_{\setC}(C^\infty(\setR^N \setminus \setenum{0}))}, \\
  & \omega_{k, a}(\mbfh) = \DiffH{k, a}, \qquad
    \omega_{k, a}(\mbfe^+) = \DiffEp{k, a}, \qquad
    \omega_{k, a}(\mbfe^-) = \DiffEm{k, a}.
\end{align*}
Similarly, by \cref{thm:sl2-triple}~(2), the differential operators
$ \DiffH{k, a}[m] $, $ \DiffEp{k, a}[m] $ and $ \DiffEm{k, a}[m] $ define
a Lie algebra representation
\begin{align*}
  & \map{\omega_{k, a}}{\mathfrak{sl}(2, \setR)}{\End_{\setC}(C^\infty(\setR^N \setminus \setenum{0}))}, \\
  & \omega_{k, a, m}(\mbfh) = \DiffH{k, a}[m], \qquad
    \omega_{k, a, m}(\mbfe^+) = \DiffEp{k, a}[m], \qquad
    \omega_{k, a, m}(\mbfe^-) = \DiffEm{k, a}[m].
\end{align*}
We again write $ \omega_{k ,a} $ and $ \omega_{k, a, m} $ for their complexification.

\subsection{The orthonormal basis \texorpdfstring{$ \faml{f_{k, a, m; l}}{l \in \setN} $}{(fk,a,m;l)l∈N}}
\label{ssec:onb}

Following Ben~Saïd--Kobayashi--Ørsted~\cite[(3.11), (3.32)]{MR2956043}, we set
\[
  \lambda_{k, a, m}
  = \frac{2m + 2\dindex{k} + N - 2}{a}
\]
and consider the function on $ \setRp $ defined by
\[
  f_{k, a, m; l}(r)
  = \Paren*{\frac{2^{\lambda_{k, a, m} + 1} \Gamma(l + 1)}{a^{\lambda_{k, a, m}} \Gamma(\lambda_{k, a, m} + l + 1)}}^{1/2}
    r^m
    L_l^{\lambda_{k, a, m}} \Paren*{\frac{2}{a} r^a}
    \exp \Paren*{-\frac{1}{a} r^a}
\]
for $ l \in \setN $.
Here, $ L_l^\lambda $ denotes the Laguerre polynomial
(see \cref{sec:laguerre-polynomial} for the definition).

\begin{proposition}[{\cite[Proposition~3.15]{MR2956043}}]\label{thm:onb}
  Let $ k $ be a non-negative multiplicity function, $ a > 0 $, and $ m \in \setN $
  such that $ \lambda_{k, a, m} = \frac{2m + 2\dindex{k} + N - 2}{a} > -1 $.
  Then, $ \faml{f_{k, a, m; l}}{l \in \setN} $ is
  an orthonormal basis for $ L^2(\setRp, r^{2 \dindex{k} + a + N - 3} \,dr) $.
  \qed
\end{proposition}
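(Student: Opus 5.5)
The proof reduces, by a change of variables, to the classical orthonormality and completeness of the Laguerre functions in $L^2(\setRp, t^{\lambda} e^{-t}\,dt)$ for $\lambda > -1$. Write $\lambda = \lambda_{k, a, m}$. We substitute
\[
  t = \frac{2}{a} r^a, \qquad dt = 2 r^{a-1}\,dr, \qquad r = \Paren*{\frac{a t}{2}}^{1/a}.
\]

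First we compute the inner product. We have
\[
  f_{k, a, m; l}(r)\, \overline{f_{k, a, m; l'}(r)}
  = C_l C_{l'}\, r^{2m} L_l^\lambda(t) L_{l'}^\lambda(t) e^{-t},
\]
where $C_l$ denotes the normalizing constant. Hence the weighted integrand is $r^{2m + 2\dindex{k} + a + N - 3}\,dr$ times this product. Since $2m + 2\dindex{k} + N - 2 = a\lambda$, this equals
\[
  r^{a\lambda} \cdot r^{a-1}\,dr = \Paren*{\frac{a t}{2}}^{\lambda} \frac{dt}{2}.
\]
So the inner product becomes
\[
  C_l C_{l'}\, \frac{a^\lambda}{2^{\lambda+1}} \int_0^\infty L_l^\lambda(t) L_{l'}^\lambda(t)\, t^\lambda e^{-t}\,dt .
\]
By the classical orthogonality relation, the integral is $\delta_{l l'}\, \Gamma(l + \lambda + 1)/\Gamma(l + 1)$. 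This is where $\lambda > -1$ is used: it makes $t^\lambda e^{-t}$ integrable near $0$. With
\[
  C_l^2 = \frac{2^{\lambda+1}\Gamma(l+1)}{a^\lambda\Gamma(\lambda+l+1)},
\]
we get exactly $\delta_{l l'}$, which proves orthonormality. I would quote the orthogonality relation from \cref{sec:laguerre-polynomial} or a standard reference.

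Next comes completeness. The map
\[
  U \colon g \mapsto \Bigl(t \mapsto (\text{const})\, r^{-m} g(r)\, e^{t/2}\Bigr), \qquad r = (at/2)^{1/a},
\]
is, by the same computation, a unitary isomorphism
\[
  L^2(\setRp, r^{2\dindex{k}+a+N-3}\,dr) \to L^2(\setRp, t^\lambda e^{-t}\,dt).
\]
It sends $f_{k, a, m; l}$ to a constant multiple of $L_l^\lambda$. So it suffices to know that the polynomials are dense in $L^2(\setRp, t^\lambda e^{-t}\,dt)$.

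That density is the only nontrivial analytic step. I would either cite it or prove it by the standard argument. Suppose $h \in L^2(t^\lambda e^{-t}\,dt)$ is orthogonal to all polynomials. Consider
\[
  F(z) = \int_0^\infty h(t)\, e^{-zt}\, t^\lambda e^{-t}\,dt .
\]
By Cauchy--Schwarz it is holomorphic on $\operatorname{Re} z > -1/2$, and all its derivatives at $z = 0$ vanish, so $F \equiv 0$. Then the Laplace transform of $h\, t^\lambda e^{-t}$ vanishes on a half-plane, hence $h = 0$ almost everywhere.

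Combining the two steps, the $f_{k, a, m; l}$ form an orthonormal basis. The main obstacle is this completeness step; everything else is the bookkeeping of exponents and constants above. It is worth noting that $k$ enters only through $\dindex{k}$ in $\lambda$, so the non-negativity of $k$ is not really needed beyond guaranteeing $\lambda > -1$.
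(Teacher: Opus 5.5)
Your proof is correct. The substitution $t = \frac{2}{a}r^a$ identifies $L^2(\setRp, r^{2\dindex{k}+a+N-3}\,dr)$ unitarily with $L^2(\setRp, t^{\lambda}e^{-t}\,dt)$ and carries $f_{k,a,m;l}$ to normalized Laguerre polynomials, so classical orthogonality together with density of polynomials (your Laplace-transform argument is fine) gives the claim. The paper gives no proof of its own and simply quotes \cite[Proposition~3.15]{MR2956043}; your reduction to the classical Laguerre basis is the standard argument behind that result. One small correction: the Laguerre orthogonality relation is not among the facts collected in \cref{sec:laguerre-polynomial}, so it has to be cited from a standard reference such as \cite{MR3307944}.
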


Let $ \mbfk $, $ \mbfn^+ $ and $ \mbfn^- $ be the Cayley transforms of
the standard basis $ \mbfh $, $ \mbfe^+ $ and $ \mbfe^- $. That is,
\begin{align*}
  \mbfk
  &= \Ad\Paren*{\frac{1}{\sqrt{2}} \begin{pmatrix} -i & -1 \\ 1 & i \end{pmatrix}} \mbfh
  = \begin{pmatrix} 0 & -i \\ i & 0 \end{pmatrix}, \\
  \mbfn^+
  &= \Ad\Paren*{\frac{1}{\sqrt{2}} \begin{pmatrix} -i & -1 \\ 1 & i \end{pmatrix}} \mbfe^+
  = \frac{1}{2} \begin{pmatrix} i & -1 \\ -1 & -i \end{pmatrix}, \\
  \mbfn^-
  &= \Ad\Paren*{\frac{1}{\sqrt{2}} \begin{pmatrix} -i & -1 \\ 1 & i \end{pmatrix}} \mbfe^-
  = \frac{1}{2} \begin{pmatrix} -i & -1 \\ -1 & i \end{pmatrix}.
\end{align*}

\begin{proposition}\label{thm:action}
  Let $ k $ be a non-negative multiplicity function, $ a > 0 $, and $ m \in \setN $
  such that $ \lambda_{k, a, m} = \frac{2m + 2\dindex{k} + N - 2}{a} > -1 $.
  For $ l \in \setN $, we have
  \begin{align*}
    \omega_{k, a, m}(\mbfk)   f_{k, a, m; l} &= (\lambda_{k, a, m} + 2l + 1) f_{k, a, m; l}, \\
    \omega_{k, a, m}(\mbfn^+) f_{k, a, m; l} &= i \sqrt{(l + 1)(\lambda_{k, a, m} + l + 1)} \, f_{k, a, m; l + 1}, \\
    \omega_{k, a, m}(\mbfn^-) f_{k, a, m; l} &= i \sqrt{l (\lambda_{k, a, m} + l)} \, f_{k, a, m; l - 1},
  \end{align*}
  where we regard $ f_{k, a, m; -1} = 0 $.
\end{proposition}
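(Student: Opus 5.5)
The plan is to reduce everything to classical identities for Laguerre polynomials. First I express $\mbfk$, $\mbfn^+$, $\mbfn^-$ in terms of the standard basis. From the explicit matrices,
\[
  \mbfk = i\mbfe^- - i\mbfe^+, \qquad
  \mbfn^{\pm} = \tfrac{1}{2}\bigl(\pm i\mbfh - \mbfe^+ - \mbfe^-\bigr).
\]
By linearity of the complexified $\omega_{k, a, m}$, this gives $\omega_{k,a,m}(\mbfk) = i(\DiffEm{k,a}[m] - \DiffEp{k,a}[m])$ and $\omega_{k,a,m}(\mbfn^\pm) = \frac12(\pm i\DiffH{k,a}[m] - \DiffEp{k,a}[m] - \DiffEm{k,a}[m])$.

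Next I rewrite the radial operators in adapted coordinates. Using $m + 2\dindex{k} + N - 2 = a\lambda - m$ with $\lambda = \lambda_{k,a,m}$, we have
\[
  \DiffEm{k,a}[m] = \tfrac{i}{a} r^{-a}(E_r - m)(E_r - m + a\lambda).
\]
I then write $f(r) = r^m g(t)$ with $t = \frac{2}{a} r^a$. Under this substitution $E_r - m$ acts on $g$ as $a\, t\,\frac{d}{dt}$, and $r^{\pm a}$ becomes $(\frac{a t}{2})^{\pm 1}$. Hence
\begin{align*}
  \omega_{k,a,m}(\mbfk)(r^m g) &= r^m\Bigl(-2\bigl(t g'' + (\lambda+1) g'\bigr) + \tfrac{t}{2} g\Bigr), \\
  \DiffH{k,a}[m](r^m g) &= r^m\bigl(2 t g' + (\lambda+1) g\bigr).
\end{align*}
The second line uses $\frac{2}{a}m + \frac{2\dindex{k} + a + N - 2}{a} = \lambda + 1$. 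The normalizing constants play no role until the last step, so I take $g = L_l^\lambda(t)e^{-t/2}$.

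For the $\mbfk$ formula, I expand $t g'' + (\lambda+1)g'$ with $g = L e^{-t/2}$. I then use the Laguerre equation $tL'' + (\lambda+1-t)L' + lL = 0$. This collapses the expression to $(\lambda+2l+1)\,L e^{-t/2}$.

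For $\mbfn^\pm$, I combine the three operators in the same coordinates. The point is that the $g''$ terms cancel or simplify, leaving only first-order expressions in $t$. I then apply the standard recurrences
\begin{align*}
  tL_l^{\lambda\prime} &= lL_l^\lambda - (l+\lambda)L_{l-1}^\lambda, \\
  tL_l^{\lambda\prime} &= (l+1)L_{l+1}^\lambda - (l+\lambda+1-t)L_l^\lambda.
\end{align*}
The $\mbfn^+$ combination should become a multiple of $L_{l+1}^\lambda$ and the $\mbfn^-$ combination a multiple of $L_{l-1}^\lambda$. The raw coefficients should be $i(l+1)$ and $i(l+\lambda)$ respectively.

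The final step is to insert the normalizations. The ratio of the constants in front of $f_{k,a,m;l\pm1}$ and $f_{k,a,m;l}$ is $\sqrt{(l+1)/(\lambda+l+1)}$ or its analogue, and this converts the coefficients into $i\sqrt{(l+1)(\lambda+l+1)}$ and $i\sqrt{l(\lambda+l)}$. For $l = 0$ the $L_{-1}$ term vanishes, consistent with the convention $f_{k,a,m;-1} = 0$. The only real obstacle is the bookkeeping of signs and factors of $i$ and $2$ in the $\mbfn^\pm$ computation. As a check, I will verify that the results respect the commutation relation $[\mbfn^+, \mbfn^-] = -\mbfk$ (or the appropriate sign) from \cref{thm:sl2-triple}, using $(l+1)(\lambda+l+1) - l(\lambda+l) = \lambda + 2l + 1$.
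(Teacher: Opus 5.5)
Your proposal is correct, but it argues differently from the paper. The paper's proof is a one-line reduction: it uses \cref{thm:radial-parts} to transfer \cite[Theorem~3.21]{MR2956043}, the corresponding formulas for the differential-difference operators on $\setR^N \setminus \{0\}$, to the radial operators. You instead verify the formulas from scratch on $\setRp$.

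Your computation checks out. With $t = \frac{2}{a} r^a$ and $f = r^m g(t)$, your expressions for $\omega_{k,a,m}(\mbfk)$ and $\omega_{k,a,m}(\mbfh)$ are right. After eliminating $g''$ with the Laguerre equation, one finds
\[
  \omega_{k,a,m}(\mbfn^+)\bigl(r^m L_l^\lambda e^{-t/2}\bigr) = i\, r^m \bigl(t L_l^{\lambda\prime} + (\lambda + l + 1 - t) L_l^\lambda\bigr) e^{-t/2}
\]
and
\[
  \omega_{k,a,m}(\mbfn^-)\bigl(r^m L_l^\lambda e^{-t/2}\bigr) = i\, r^m \bigl(l L_l^\lambda - t L_l^{\lambda\prime}\bigr) e^{-t/2}.
\]
Your two recurrences then give exactly the raw coefficients $i(l+1)$ and $i(\lambda+l)$. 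Writing $C_l$ for the normalizing constant of $f_{k,a,m;l}$, the ratios $C_l/C_{l+1} = \sqrt{(\lambda+l+1)/(l+1)}$ and $C_l/C_{l-1} = \sqrt{l/(\lambda+l)}$ convert these into the stated coefficients. For $l = 0$ the $\mbfn^-$ expression vanishes directly, since $L_0^\lambda = 1$.

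Your route buys self-containedness and an explicit check of every convention: the Cayley transform, the factors of $i$, and the normalization of $f_{k,a,m;l}$. The paper's phrase ``essentially the same as'' leaves that matching to the reader. Your argument also needs neither \cref{thm:radial-parts} nor any Dunkl theory. The paper's route buys brevity.

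One small correction to your sanity check. The Cayley transform is a Lie algebra automorphism and $[\mbfe^+, \mbfe^-] = \mbfh$, so $[\mbfn^+, \mbfn^-] = \mbfk$ with a plus sign. Your coefficients satisfy exactly this: the two factors of $i$ give $-l(\lambda+l) + (l+1)(\lambda+l+1) = \lambda + 2l + 1$.
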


\begin{proof}
  By \cref{thm:radial-parts}, the assertion is essentially the same as
  \cite[Theorem~3.21]{MR2956043}.
\end{proof}

\begin{remark}
  Since $ k $ is non-negative and $ a > 0 $, the condition
  $ \lambda_{k, a, m} = \frac{2m + 2\dindex{k} + N - 2}{a} > -1 $
  is always satisfied when $ N \geq 2 $.
  In the case $ N = 1 $, the condition $ \lambda_{k, a, m} > -1 $
  is equivalent to $ 2m + 2\dindex{k} + a > 1 $.
\end{remark}

\subsection{The unitary representation \texorpdfstring{$ \Omega_{k, a} $}{Ωk,a}}

We write $ \widetilde{\mathit{SL}}(2, \setR) $
for the universal covering group of $ \mathit{SL}(2, \setR) $.

\begin{theorem}\label{thm:unitary-representation-rad}
  Let $ k $ be a non-negative multiplicity function, $ a > 0 $, and $ m \in \setN $
  such that $ \lambda_{k, a, m} = \frac{2m + 2\dindex{k} + N - 2}{a} > -1 $.
  We set
  \[
    W_{k, a, m}
    = \lspan_{\setC} \set{f_{k, a, m; l}}{l \in \setN}.
  \]
  Then, $ (\omega_{k, a, m}, W_{k, a, m}) $ is
  an $ (\mathfrak{sl}(2, \setR), \mathfrak{so}(2)) $-module.
  Moreover, it lifts to a unique unitary representation $ \Omega_{k, a, m} $
  of $ \widetilde{\mathit{SL}}(2, \setR) $ on the Hilbert space
  $ L^2(\setRp, r^{2\dindex{k} + a + N - 3} \,dr) $,
  which is an infinite-dimensional irreducible unitary representation
  with a lowest weight vector of weight $ \lambda_{k, a, m} + 1 $.
\end{theorem}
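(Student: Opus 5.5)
The plan is to reduce everything to the explicit action of the Cayley-transformed basis recorded in \cref{thm:action}, and then appeal to the standard integration theory for lowest weight $(\mathfrak{g}, K)$-modules.

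First, we check that $(\omega_{k, a, m}, W_{k, a, m})$ is an $(\mathfrak{sl}(2, \setR), \mathfrak{so}(2))$-module. The complexified $\mathfrak{sl}(2, \setC)$ is spanned by $\mbfk$, $\mbfn^+$ and $\mbfn^-$, and by \cref{thm:action} each of these maps $W_{k, a, m}$ into itself. Since $\mbfk$ acts diagonally on the basis $f_{k, a, m; l}$ with eigenvalue $\lambda_{k, a, m} + 2l + 1$, the $\mathfrak{so}(2) = \setR i\mbfk$-action is locally finite and semisimple. Here $i\mbfk$ is the generator of $\mathfrak{so}(2)$ in $\mathfrak{sl}(2, \setR)$. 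Thus $W_{k, a, m}$ is a weight module with one-dimensional weight spaces of weights $\lambda_{k, a, m} + 1 + 2l$, $l \in \setN$. Since $\mbfn^-$ kills $f_{k, a, m; 0}$, this vector is a lowest weight vector of weight $\lambda_{k, a, m} + 1$.

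Next come irreducibility and unitarity. The coefficients $\sqrt{(l+1)(\lambda_{k, a, m} + l + 1)}$ are nonzero because $\lambda_{k, a, m} > -1$. Hence $\mbfn^{\pm}$ connect all weight spaces, so every nonzero submodule contains some $f_{k, a, m; l}$ and therefore everything; this gives irreducibility. For unitarity, we verify infinitesimal unitarity with respect to the inner product of $L^2(\setRp, r^{2\dindex{k} + a + N - 3} \,dr)$, namely that $\omega_{k, a, m}(X)$ is skew-symmetric on $W_{k, a, m}$ for real $X$. In the orthonormal basis of \cref{thm:onb}, this amounts to three checks:
\begin{itemize}
  \item $\omega_{k, a, m}(\mbfk)$ is symmetric with real eigenvalues;
  \item $\omega_{k, a, m}(\mbfn^+)^* = -\omega_{k, a, m}(\mbfn^-)$, read off from the matrix entries $i\sqrt{\cdots}$ in \cref{thm:action};
  \item these relations are compatible with the real form, since complex conjugation of $\mathfrak{sl}(2,\setC)$ with respect to $\mathfrak{sl}(2,\setR)$ sends $\mbfk \mapsto -\mbfk$ and $\mbfn^+ \mapsto \mbfn^-$ (checked from the matrices).
\end{itemize}

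Existence and uniqueness of the lift then follow from the classical theory: an irreducible infinitesimally unitary lowest weight $(\mathfrak{sl}(2,\setR),\mathfrak{so}(2))$-module with positive lowest weight integrates to a unique irreducible unitary representation of $\widetilde{\mathit{SL}}(2,\setR)$ on its Hilbert completion. This is the holomorphic discrete series or its analytic continuation, realized for example by a weighted Bergman-space model or via Nelson's theorem, using that the Casimir/Laplacian $\mbfk^2 - \ldots$ is essentially self-adjoint on $W$, which has an orthonormal eigenbasis. The Hilbert completion of $W_{k, a, m}$ is all of $L^2(\setRp, r^{2\dindex{k} + a + N - 3} \,dr)$ by \cref{thm:onb}, which identifies the representation space.

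The main obstacle is the integration step. Nelson's theorem requires an analytic vector argument: we must show that each $f_{k, a, m; l}$ is an analytic vector for $\Delta = -\omega(i\mbfk)^2 - \ldots$. This follows from the polynomial growth in $l$ of the matrix coefficients together with the diagonal action of $\mbfk$. Alternatively, we cite the known classification, such as Bargmann's or Pukánszky's, or \cite[Theorem~3.30]{MR2956043}, to which this statement is essentially equivalent.
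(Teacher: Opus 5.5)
Your proposal is correct and follows essentially the same route as the paper, which simply invokes \cite[Theorems~3.28, 3.30, 3.31 and Fact~3.27, 7)]{MR2956043}. In both, the explicit action of $\mbfk$, $\mbfn^{\pm}$ from \cref{thm:action} identifies $(\omega_{k, a, m}, W_{k, a, m})$ with the standard irreducible, infinitesimally unitary lowest weight module of weight $\lambda_{k, a, m} + 1 > 0$, and this module integrates uniquely to $\widetilde{\mathit{SL}}(2, \setR)$ on $L^2(\setRp, r^{2\dindex{k} + a + N - 3} \,dr)$ because $\faml{f_{k, a, m; l}}{l \in \setN}$ is an orthonormal basis; your irreducibility and unitarity checks and the Nelson argument (where the Laplacian is diagonal in this basis, so it is trivially essentially self-adjoint on $W_{k, a, m}$) just spell out that cited standard fact.
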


Let us consider the weight functions
\[
  w_k(\omega)
  = \prod_{\alpha \in \mscrR} \abs{\innprod{\alpha}{\omega}}^{k_\alpha}
  = \prod_{\alpha \in \mscrR^+} \abs{\innprod{\alpha}{\omega}}^{2k_\alpha}
  \quad (\omega \in \Sphere{N - 1})
\]
and
\[
  w_{k, a}(x)
  = \enorm{x}^{a - 2} \prod_{\alpha \in \mscrR} \abs{\innprod{\alpha}{x}}^{k_\alpha}
  = \enorm{x}^{a - 2} \prod_{\alpha \in \mscrR^+} \abs{\innprod{\alpha}{x}}^{2k_\alpha}
  \quad (x \in \setR^N)
\]
(see \cite[(1.2)]{MR2956043}; $ \vartheta_{k, a} $ in their notation).
By the polar decomposition
$ w_{k, a}(x) \,dx = w_k(\omega) \,d\omega \otimes r^{2 \dindex{k} + a + N - 3} \,dr $ and
the fact that $ L^2(\Sphere{N - 1}, w_k(\omega) \,d\omega) $ decomposes into
the spaces $ \mcalH_k^m(\Sphere{N - 1}) $ of $ k $-spherical harmonics
(see \cite[pp.\,37--39]{MR917849}), we have the orthogonal decomposition
\begin{align*}
  L^2(\setR^N, w_{k, a}(x) \,dx)
  &= L^2(\Sphere{N - 1}, w_k(\omega) \,d\omega) \otimeshat L^2(\setRp, r^{2\dindex{k} + a + N - 3} \,dr) \\
  &= \sumoplus_{m \in \setN} \mcalH_k^m(\Sphere{N - 1}) \otimes L^2(\setRp, r^{2\dindex{k} + a + N - 3} \,dr).
\end{align*}
Note that, when $ k = 0 $, $ w_{k, a}(x) $ reduces to $ w_{0, a}(x) = \enorm{x}^{a - 2} $
and $ \mcalH_k^m(\Sphere{N - 1}) $ reduces to the space $ \mcalH^m(\Sphere{N - 1}) $
of classical spherical harmonics.

\begin{theorem}\label{thm:unitary-representation}
  Let $ k $ be a non-negative multiplicity function and $ a > 0 $
  such that $ \lambda_{k, a, 0} = \frac{2\dindex{k} + N - 2}{a} > -1 $.
  We set
  \[
    W_{k, a}(\setR^N)
    = \bigoplus_{m \in \setN} \mcalH_k^m(\Sphere{N - 1}) \otimes W_{k, a, m}.
  \]
  Then, $ (\omega_{k, a}, W_{k, a}(\setR^N)) $ is
  an $ (\mathfrak{sl}(2, \setR), \mathfrak{so}(2)) $-module.
  Moreover, it lifts to a unique unitary representation
  $ \Omega_{k, a} $ of $ \widetilde{\mathit{SL}}(2, \setR) $
  on the Hilbert space $ L^2(\setR, w_{k, a}(x) \,dx) $, which decomposes as
  \[
    L^2(\setR^N, w_{k, a}(x) \,dx)
    = \sumoplus_{m \in \setN} \mcalH_k^m(\Sphere{N - 1})
      \otimes L^2(\setRp, r^{2\dindex{k} + a + N - 3} \,dr).
  \]
  Here, $ \widetilde{\mathit{SL}}(2, \setR) $ acts trivially on $ \mcalH_k^m(\Sphere{N - 1}) $,
  and acts $ L^2(\setRp, r^{2\dindex{k} + a + N - 3} \,dr) $
  via the representation $ \Omega_{k, a, m} $.
\end{theorem}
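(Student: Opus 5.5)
The plan is to assemble $\Omega_{k, a}$ as a Hilbert direct sum of copies of the irreducible representations $\Omega_{k, a, m}$ from \cref{thm:unitary-representation-rad}, tensored with the finite-dimensional multiplicity spaces $\mcalH_k^m(\Sphere{N - 1})$.

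First, the hypothesis $\lambda_{k, a, 0} > -1$ gives $\lambda_{k, a, m} \geq \lambda_{k, a, 0} > -1$ for every $m \in \setN$, since $\lambda_{k, a, m}$ increases with $m$ and $a > 0$. Hence \cref{thm:onb,thm:action,thm:unitary-representation-rad} apply for every $m$. Next, \cref{thm:radial-parts} shows that on each $\mcalH_k^m(\Sphere{N - 1}) \otimes W_{k, a, m}$ the operators $\DiffH{k, a}$, $\DiffEp{k, a}$, $\DiffEm{k, a}$ act as $\mathrm{id} \otimes \omega_{k, a, m}$. Therefore $(\omega_{k, a}, W_{k, a}(\setR^N))$ is the algebraic direct sum of the $(\mathfrak{sl}(2, \setR), \mathfrak{so}(2))$-modules $\mcalH_k^m(\Sphere{N - 1}) \otimes W_{k, a, m}$. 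Each summand is a finite direct sum of copies of $W_{k, a, m}$, so the whole space is an $(\mathfrak{sl}(2, \setR), \mathfrak{so}(2))$-module. By \cref{thm:action}, it is $\omega_{k, a}(\mbfk)$-semisimple with finite-dimensional weight spaces in each summand.

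For existence of the lift, define
\[
  \Omega_{k, a} = \sumoplus_{m \in \setN} \mathrm{id}_{\mcalH_k^m(\Sphere{N - 1})} \otimes \Omega_{k, a, m}.
\]
A Hilbert direct sum of unitary representations is a strongly continuous unitary representation. Its Hilbert space is $L^2(\setR^N, w_{k, a}(x) \,dx)$, by the orthogonal decomposition obtained from polar coordinates and the $k$-spherical harmonic decomposition recalled before the statement. Its differential restricted to $W_{k, a}(\setR^N)$ is $\omega_{k, a}$, because this holds summandwise. The space $W_{k, a}(\setR^N)$ is dense, since each $W_{k, a, m}$ is dense in $L^2(\setRp, r^{2\dindex{k} + a + N - 3} \,dr)$ by \cref{thm:onb}, and it consists of $\widetilde{\mathit{SO}}(2)$-finite vectors.

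The step needing the most care is uniqueness. Suppose $\Omega'$ is another unitary representation on the same Hilbert space whose space of $\widetilde{\mathit{SO}}(2)$-finite smooth vectors contains $W_{k, a}(\setR^N)$ with differential $\omega_{k, a}$. The vectors $p \otimes f_{k, a, m; l}$ are analytic for $\Omega'$: by \cref{thm:action}, the Casimir and $d\Omega'(\mbfk)$ act by explicit scalars with controlled growth, so the Nelson/Harish-Chandra analytic vector argument applies. Thus $\Omega'(g)v$ is determined on a neighbourhood of the identity by the Taylor series $\sum d\Omega'(X)^n v / n!$, which agrees with the corresponding series for $\Omega_{k, a}$. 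Since $\widetilde{\mathit{SL}}(2, \setR)$ is connected, $\Omega'$ and $\Omega_{k, a}$ agree on the dense subspace $W_{k, a}(\setR^N)$, hence everywhere.

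Alternatively, one can apply the uniqueness part of \cref{thm:unitary-representation-rad} to each invariant closed subspace $p \otimes L^2(\setRp, r^{2\dindex{k} + a + N - 3} \,dr)$. This requires first showing that such a subspace is $\Omega'$-invariant. That follows because $\Omega'$ preserves the closure of any $\mathfrak{g}$-stable subspace of analytic vectors, which brings us back to the same analyticity input. The rest of the argument is routine bookkeeping.
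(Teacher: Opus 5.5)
Your proposal is correct, and it follows a different route from the paper's. The paper gives no argument of its own here. It identifies the theorem with \cite[Theorems~3.28, 3.30, 3.31]{MR2956043} and remarks that the radial \cref{thm:unitary-representation-rad} is essentially proved there via \cite[Fact~3.27, 7)]{MR2956043}. You instead take \cref{thm:unitary-representation-rad} as input and define $\Omega_{k, a}$ as the Hilbert sum $\sumoplus_{m} \mathrm{id} \otimes \Omega_{k, a, m}$, using \cref{thm:radial-parts} to match the infinitesimal actions. You then prove uniqueness yourself by an analytic-vector argument. The paper's version is shorter but rests entirely on the cited source. Yours is self-contained and shows that the global statement is a formal consequence of the radial one.

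Three points can be stated more cleanly.

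First, analyticity of $p \otimes f_{k, a, m; l}$ needs no growth input. Because $W_{k, a}(\setR^N)$ is $\omega_{k, a}$-stable, this vector is an eigenvector of the Nelson Laplacian, which on $\mathfrak{sl}(2, \setR)$ is a linear combination of the Casimir element and $\mbfk^2$. It is therefore analytic for any unitary lift.

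Second, the neighbourhood on which the two Taylor series agree depends on $v$. Passing to the whole group is exactly the identity theorem for the real-analytic orbit maps $g \mapsto \Omega'(g)v$ and $g \mapsto \Omega_{k, a}(g)v$ on the connected group. You should say this explicitly. Alternatively, the linear growth in $l$ of the coefficients in \cref{thm:action} makes $W_{k, a}(\setR^N)$ a dense invariant set of analytic vectors for each $\omega_{k, a}(X)$. Nelson's theorem together with the maximality argument of \cref{thm:criterion} then gives $d\Omega'(X) = \closure{\restr{\omega_{k, a}(X)}{W_{k, a}(\setR^N)}} = d\Omega_{k, a}(X)$, and Stone's theorem finishes.

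Third, the paper later uses that $W_{k, a}(\setR^N)$ is \emph{exactly} the space of $\widetilde{\mathit{SO}}(2)$-finite vectors, not merely contained in it. To get this, note that a given weight $\lambda_{k, a, m} + 2l + 1$ occurs for only finitely many pairs $(m, l)$. Hence every $\widetilde{\mathit{SO}}(2)$-isotypic component is finite-dimensional and spanned by the vectors $p \otimes f_{k, a, m; l}$.
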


\begin{proof}[Proof of \cref{thm:unitary-representation-rad,thm:unitary-representation}]
  \cref{thm:unitary-representation} is nothing but \cite[Theorems~3.28, 3.30, 3.31]{MR2956043},
  in the proof of which \cref{thm:unitary-representation-rad} is essentially proven by using
  \cite[Fact~3.27, 7)]{MR2956043}.
\end{proof}

\begin{remark}\label{rem:unitary-equivalence}
  Let us consider the case $ N = 1 $,
  $ k \neq 0 $ is a non-negative multiplicity function, $ a = 1 $ and $ m = 0 $.
  We set $ \lambda = \lambda_{k, 1, 0} = 2\dindex{k} - 1 $,
  which ranges over $ (-1, \infty) $.
  In this case, the $ \mathfrak{sl}_2 $-triple is
  \[
    \DiffH{k, 1}[0]  = 2E_r + \lambda + 1, \qquad
    \DiffEp{k, 1}[0] = ir, \qquad
    \DiffEm{k, 1}[0] = i r^{-1} E_r (E_r + \lambda)
  \]
  and the function $ f_{k, 1, 0; l} $ defined in \cref{ssec:onb} is
  \[
    f_{k, 1, 0; l}(r)
    = \Paren*{\frac{2^{\lambda + 1} \Gamma(l + 1)}{\Gamma(\lambda + l + 1)}}^{1/2}
      L_l^\lambda(2r) e^{-r}.
  \]
  Moreover, \cref{thm:unitary-representation-rad} states that
  $ \omega_{k, 1, 0} $ defines an $ (\mathfrak{sl}(2, \setR), \mathfrak{so}(2)) $-module and
  lifts to a unique unitary representation $ \Omega_{k, 1, 0} $
  on the Hilbert space $ L^2(\setRp, r^\lambda \,dr) $.
  In this remark, for simplicity,
  let us write $ \DiffH{\lambda} $, $ \DiffEp{\lambda} $, $ \DiffEm{\lambda} $,
  $ f_{\lambda; l} $, $ \omega_\lambda $ and $ \Omega_\lambda $
  instead of $ \DiffH{k, 1}[0] $, $ \DiffEp{k, 1}[0] $, $ \DiffEm{k, 1}[0] $,
  $ f_{k, 1, 0; l} $, $ \omega_{k, 1, 0} $ and $ \Omega_{k, 1, 0} $, respectively.

  Now let $ N \in \setNp $, $ k $ be a non-negative multiplicity function,
  $ a > 0 $ and $ m \in \setN $
  such that $ \lambda_{k, a, m} = \frac{2m + 2\dindex{k} + N - 2}{a} > -1 $.
  Then, the unitary operator
  \begin{align*}
    & \map{\Phi}{L^2(\setRp, r^{2\dindex{k} + a + N - 3} \,dr)}{L^2(\setRp, r^{\lambda_{k, a, m}} \,dr)}, \\
    & \Phi f(r) = a^{\lambda_{k, a, m}/2} r^{-m/a} f(a^{1/a} r^{1/a})
  \end{align*}
  transfers $ \DiffH{k, a}[m] $, $ \DiffEp{k, a}[m] $ and $ \DiffEm{k, a}[m] $
  to $ \DiffH{\lambda} $, $ \DiffEp{\lambda} $ and $ \DiffEm{\lambda} $, respectively,
  and maps $ f_{k, a, m; l} $ to $ f_{\lambda_{k, a, m}; l} $.
  Therefore, by \cref{thm:action},
  $ \Phi $ gives an $ (\mathfrak{sl}(2, \setR), \mathfrak{so}(2)) $-isomorphism
  from $ \omega_{k, a, m} $ onto $ \omega_{\lambda_{k, a, m}} $
  and a unitary equivalence
  from $ \Omega_{k, a, m} $ onto $ \Omega_{\lambda_{k, a, m}} $.
\end{remark}

\begin{remark}\label{rem:kostants-model}
  $ L^2 $-models of the (relative) discrete series representations of
  $ \mathit{SL}(2, \setR) $ or its universal covering group
  $ \widetilde{\mathit{SL}}(2, \setR) $ have appeared in various contexts
  in the works of Faraut--Korányi~\cite{MR1446489}, Kostant~\cite{MR1755901},
  Vershik--Graev~\cite{MR2328257}, Hilgert--Kobayashi--Möllers~\cite{MR3201818}, and others.
  Among these, Kostant considered an $ \mathfrak{sl}_2 $-triple
  of differential operators
  \[
    2r \Odif{r} + 1, \quad
    ir, \quad
    i \Paren*{r \Paren*{\Odif{r}}^2 + \Odif{r} - \frac{\lambda^2}{4r}}
  \]
  on $ \setRp $, and proved that, for $ \lambda > -1 $,
  it lifts to a unique unitary representation $ \pi_\lambda $
  of $ \widetilde{\mathit{SL}}(2, \setR) $ on the Hilbert space $ L^2(\setRp, dr) $,
  which is an infinite-dimensional irreducible unitary representation
  with a lowest weight vector of weight $ \lambda + 1 $.

  In the notation of \cref{rem:unitary-equivalence}, the unitary operator
  \begin{align*}
    & \map{\Psi}{L^2(\setRp, r^\lambda \,dr)}{L^2(\setRp, dr)}, \\
    & \Psi f(r) = r^{\lambda/2} f(r)
  \end{align*}
  gives a unitary equivalence from $ \Omega_\lambda $ onto $ \pi_\lambda $.
  See also \cite[Remark~3.32]{MR2956043} and \cref{rem:smooth-vectors-for-kostants-model}.
\end{remark}

\section{The infinitesimal representation \texorpdfstring{$ d\Omega_{k, a} $}{dΩk,a}}
\label{sec:infinitesimal}

\subsection{Infinitesimal representation of unitary representations}

In this subsection, we recall some general definitions and facts about
infinitesimal representation of unitary representations and
smooth vectors for unitary representations.

We fix some notation and terminology related to operators on a Hilbert space.
The domain of a linear operator $ T $ on a Hilbert space is denoted by $ \Dom(T) $.
For linear operators $ S $ and $ T $ on a Hilbert space,
their composition $ TS $ is understood to be defined on $ \Dom(S) \cap S^{-1}(\Dom(T)) $.
A densely defined operator $ T $ on a Hilbert space is called
\emph{symmetric} (resp.\ \emph{skew-symmetric}) if its adjoint $ T^* $
is an extension of $ T $ (resp.\ $ iT $).
A densely defined operator $ T $ on a Hilbert space is called
\emph{self-adjoint} (resp.\ \emph{skew-adjoint}) if its adjoint $ T^* $
is equal to $ T $ (resp.\ $ iT $);
that is, they have the same domain and coincide on it.

\begin{definition}\label{def:infinitesimal-representation}
  Let $ G $ be a Lie group with Lie algebra $ \mfrakg $
  and $ (\pi, \mcalH) $ be a unitary representation of $ G $.
  For $ X \in \mfrakg $, we define the operator $ d\pi(X) $ on $ \mcalH $ by
  \begin{align*}
    \Dom(d\pi(X)) &= \set*{v \in \mcalH}{\text{$ \lim_{t \to 0} \frac{\pi(e^{tX}) v - v}{t} $ exists in $ \mcalH $}}, \\
    d\pi(X) v &= \lim_{t \to 0} \frac{\pi(e^{tX}) v - v}{t},
  \end{align*}
  which is a skew-adjoint operator on $ \mcalH $ (see \cite[Proposition~6.1]{MR2953553}).
  For $ X \in \mfrakg $, we also write $ d\pi(iX) = i\, d\pi(X) $,
  which is a self-adjoint operator on $ \mcalH $.
  We call $ d\pi $ the \emph{infinitesimal representation} of $ \pi $.
\end{definition}

\begin{definition}\label{def:smooth-vector}
  Let $ G $ be a Lie group and $ (\pi, \mcalH) $ be a unitary representation of $ G $.
  A vector $ v \in \mcalH $ is said to be \emph{smooth for $ \pi $} if the map
  $ g \mapsto \pi(g) v $ from $ G $ into $ \mcalH $ is smooth.
  We write $ \mcalH^\infty $,
  or $ \mcalH_\pi^\infty $ when we want to make the representation $ \pi $ explicit,
  for the space of smooth vectors for $ \pi $.

  We equip $ \mcalH^\infty $ with the Fréchet topology (see \cite[1.6.3]{MR929683}).
\end{definition}

We collect some basic properties
about the infinitesimal representation and the space of smooth vectors.

\begin{proposition}\label{thm:properties-of-smooth-vectors}
  Let $ G $ be a Lie group with Lie algebra $ \mfrakg $
  and $ (\pi, \mcalH) $ be a unitary representation of $ G $.
  \begin{enumarabicp}
    \item $ \mcalH^\infty $ is a dense subspace of $ \mcalH $.
    \item $ \mcalH^\infty $ is $ \pi(G) $-stable and
          $ \pi $ defines a smooth representation of $ G $ on $ \mcalH^\infty $.
    \item For any $ X \in \mfrakg $,
          $ \mcalH^\infty $ is contained in $ \Dom(d\pi(X)) $ and $ d\pi(X) $-stable.
    \item For any $ X \in \mfrakg $,
          $ \mcalH^\infty $ is a core of the skew-adjoint operator $ d\pi(X) $;
          that is, $ d\pi(X) = \closure{\restr{d\pi(X)}{\mcalH^\infty}} $.
  \end{enumarabicp}
\end{proposition}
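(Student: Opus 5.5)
These are the standard Gårding-type facts, and I would establish the four items in the order (1), (2), (3), (4), citing Warner's book \cite[1.6.3]{MR929683} for whatever routine details are needed.

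For (1), I would use Gårding vectors. For $\varphi \in C_c^\infty(G)$ and $v \in \mcalH$, set
\[
  \pi(\varphi) v = \int_G \varphi(g)\, \pi(g) v \,dg,
\]
with $dg$ a left Haar measure. Then
\[
  \pi(h)\pi(\varphi) v = \pi(\varphi(h^{-1}\,\cdot\,)) v .
\]
Since left translation $h \mapsto \varphi(h^{-1}\,\cdot\,)$ is a smooth map into $C_c^\infty(G)$, and since $\|\pi(\psi)\| \leq \|\psi\|_{L^1}$, the orbit map $h \mapsto \pi(h)\pi(\varphi)v$ is smooth. Hence $\pi(\varphi)v \in \mcalH^\infty$. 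Density follows by taking an approximate identity $\varphi_n \geq 0$ with $\int \varphi_n = 1$ and shrinking support. Strong continuity of $\pi$ then gives $\pi(\varphi_n)v \to v$.

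For (2), let $v \in \mcalH^\infty$ and $h \in G$. The orbit map of $\pi(h)v$ is $g \mapsto \pi(gh)v$, which is the orbit map of $v$ composed with right translation by $h$, so it is smooth. Thus $\mcalH^\infty$ is $\pi(G)$-stable. Smoothness of the action on $\mcalH^\infty$ with its Fréchet topology means that $G \times \mcalH^\infty \to \mcalH^\infty$ is continuous and the orbit maps are smooth into $\mcalH^\infty$. Recall that this topology is defined by the seminorms $\|d\pi(X_1)\dotsm d\pi(X_n)v\|$. So it suffices to check the identity
\[
  d\pi(X)\pi(g) = \pi(g)\, d\pi(\Ad(g^{-1})X) \quad\text{on } \mcalH^\infty ,
\]
and then use local boundedness of $\Ad$ together with unitarity.

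For (3), let $v \in \mcalH^\infty$ and $X \in \mfrakg$. Differentiating $t \mapsto \pi(e^{tX})v$ at $t=0$ shows $v \in \Dom(d\pi(X))$, and
\[
  d\pi(X)v = \left.\frac{d}{dt}\right|_{t=0} \pi(g e^{tX}) v \Big|_{g=e},
\]
which is the derivative of the orbit map along the left-invariant vector field $\tilde X$. The orbit map of $d\pi(X)v$ is
\[
  g \mapsto \pi(g)\, d\pi(X)v = (\tilde X F)(g), \qquad F(g) = \pi(g)v .
\]
This is smooth because $F$ is smooth, so $d\pi(X)v \in \mcalH^\infty$.

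Part (4) is the main obstacle. Let $U_t = \pi(e^{tX})$ be the one-parameter unitary group; by Stone's theorem its generator is $d\pi(X)$, and it is skew-adjoint. By (2), $\mcalH^\infty$ is a dense subspace invariant under every $U_t$, and by (3) it lies in $\Dom(d\pi(X))$. The standard core theorem (Nelson's invariant-domain lemma, e.g.\ Reed--Simon Thm.~VIII.11) then says that a dense subspace of the generator's domain which is invariant under the group is a core. To keep the argument self-contained, I would prove this directly.

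1. Let $A = \closure{\restr{d\pi(X)}{\mcalH^\infty}}$. Then $A \subseteq d\pi(X)$, and $A$ is skew-symmetric.
2. It suffices to show $\ker(A^* \mp 1) = 0$, since then $A$ is skew-adjoint and hence equal to $d\pi(X)$.
3. Suppose $A^* w = w$. For $u \in \mcalH^\infty$, put $\phi(t) = \langle U_t u, w\rangle$. Invariance of $\mcalH^\infty$ gives
\[
  \phi'(t) = \langle A U_t u, w\rangle = -\langle U_t u, w\rangle ,
\]
up to sign conventions. Hence $\phi(t) = e^{\mp t}\phi(0)$.
4. But $\phi$ is bounded, since $|\phi(t)| \leq \|u\|\|w\|$. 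This forces $\phi(0) = 0$, so $w \perp \mcalH^\infty$, and density of $\mcalH^\infty$ gives $w = 0$.
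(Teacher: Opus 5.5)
Your proposal is correct and follows essentially the same route as the paper, which cites \cite[1.6.2--1.6.4]{MR929683} for (1)--(3) and deduces (4) from (2) via the invariant-domain core theorem \cite[Proposition~6.3]{MR2953553}; your argument for (4) is just the standard proof of that cited result written out, namely that a bounded $\phi(t) = \langle U_t u, w\rangle$ satisfying $\phi' = \pm\phi$ must vanish, which forces $\ker(A^* \mp 1) = 0$, and the sign ambiguity you flag is harmless because both exponentials are unbounded on $\setR$. One small correction: \cite{MR929683} is Wallach's \emph{Real reductive groups~I}, not Warner's book.
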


\begin{proof}
  \begin{subproof}{(1), (2), (3)}
    See \cite[1.6.2--1.6.4]{MR929683}.
  \end{subproof}
  
  \begin{subproof}{(4)}
    It follows from (2) and \cite[Proposition~6.3]{MR2953553}.
  \end{subproof}
\end{proof}

\begin{proposition}\label{thm:infinitesimal-lie-algebra-representation}
  Let $ G $ be a Lie group with Lie algebra $ \mfrakg $
  and $ (\pi, \mcalH) $ be a unitary representation of $ G $.
  Then, for $ X $, $ Y \in \mfrakg $, we have
  \[
    d\pi([X, Y])
    = \closure{d\pi(X) d\pi(Y) - d\pi(Y) d\pi(X)}
  \]
  where the operator $ d\pi(X) d\pi(Y) - d\pi(Y) d\pi(X) $ is defined
  on $ \Dom(d\pi(X) d\pi(Y)) \cap \Dom(d\pi(Y) d\pi(X)) $.
\end{proposition}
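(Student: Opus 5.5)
The natural plan is to compare $d\pi([X,Y])$ with the commutator on a core and then pass to closures. Write $C = d\pi(X) d\pi(Y) - d\pi(Y) d\pi(X)$ on $D := \Dom(d\pi(X) d\pi(Y)) \cap \Dom(d\pi(Y) d\pi(X))$. We show two inclusions, $\restr{d\pi([X,Y])}{\mcalH^\infty} \subset C$ and $C \subset d\pi([X,Y])$. Since $d\pi([X,Y])$ is closed (it is skew-adjoint), the second gives $\closure{C} \subset d\pi([X,Y])$. By \cref{thm:properties-of-smooth-vectors}~(4), $\mcalH^\infty$ is a core for $d\pi([X,Y])$, so the first gives $d\pi([X,Y]) \subset \closure{C}$. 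Together these yield the claimed equality.

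\textbf{First inclusion.} By \cref{thm:properties-of-smooth-vectors}~(3), $\mcalH^\infty$ is contained in every $\Dom(d\pi(Z))$ and is stable under each $d\pi(Z)$. Hence $\mcalH^\infty \subset D$. It remains to recall that on $\mcalH^\infty$ the map $Z \mapsto \restr{d\pi(Z)}{\mcalH^\infty}$ is a Lie algebra homomorphism. This is standard: it follows from the smoothness of $g \mapsto \pi(g)v$ by differentiating $\pi(e^{sX} e^{tY} e^{-sX}) v$ at $s = t = 0$, and it is contained in the references cited for \cref{thm:properties-of-smooth-vectors}. Thus $C v = d\pi([X,Y]) v$ for $v \in \mcalH^\infty$.

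\textbf{Second inclusion.} This is the main obstacle, since a vector in $D$ need not be smooth. We argue by duality. Let $v \in D$ and $w \in \mcalH^\infty$. Skew-adjointness of $d\pi(X)$ and $d\pi(Y)$ lets us move both operators across the inner product, with $w$ staying in the relevant domains by stability of $\mcalH^\infty$:
\[
  \innprod{C v}{w}
  = \innprod{v}{(d\pi(Y) d\pi(X) - d\pi(X) d\pi(Y)) w}.
\]
By the first inclusion, the right-hand operator applied to $w$ equals $-d\pi([X,Y]) w$. Hence
\[
  \innprod{C v}{w} = \innprod{v}{-d\pi([X,Y]) w}
\]
for all $w$ in the core $\mcalH^\infty$ of $d\pi([X,Y])$.

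Consequently $w \mapsto \innprod{d\pi([X,Y]) w}{v}$ is bounded on the core, hence on $\Dom(d\pi([X,Y]))$ by taking closure. So $v \in \Dom(d\pi([X,Y])^*) = \Dom(d\pi([X,Y]))$, and $d\pi([X,Y])^* v = -d\pi([X,Y]) v$. Comparing with the display then gives $d\pi([X,Y]) v = C v$. The one point needing care is the bookkeeping of complex conjugation and signs in the skew-adjoint identity $T^* = -T$; with that checked, both inclusions and hence the proposition follow.
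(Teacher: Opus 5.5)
Your proof is correct and takes essentially the same route as the paper. The paper also observes that $\mathcal{H}^\infty \subseteq D$ is a core of $d\pi([X,Y])$ on which $C$ agrees with $d\pi([X,Y])$, so $\overline{C}$ extends $d\pi([X,Y])$. It then concludes from the skew-symmetry of $C$ and the maximal skew-symmetry of skew-adjoint operators. Your duality argument for $C \subseteq d\pi([X,Y])$ is that maximality step written out explicitly: it only uses the skew-symmetry pairing of $D$ against $\mathcal{H}^\infty$, which you verify.
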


\begin{proof}
  The operator $ d\pi(X) d\pi(Y) - d\pi(Y) d\pi(X) $ is skew-symmetric,
  and its domain contains $ \mcalH^\infty $ (\cref{thm:properties-of-smooth-vectors}~(3)),
  which is a core of $ d\pi([X, Y]) $ (\cref{thm:properties-of-smooth-vectors}~(4)).
  Hence, $ \closure{d\pi(X) d\pi(Y) - d\pi(Y) d\pi(X)} $ is
  a skew-symmetric extension of the skew-adjoint operator $ d\pi([X, Y]) $.
  Now the assertion follows because a skew-adjoint operator is maximally skew-symmetric.
\end{proof}

\begin{proposition}\label{thm:condition-to-be-a-smooth-vector}
  Let $ G $ be a Lie group with Lie algebra $ \mfrakg $
  and $ (\pi, \mcalH) $ be a unitary representation of $ G $.
  Then, for $ v \in \mcalH $, the following are equivalent.
  \begin{enumalphp}
    \item $ v $ is smooth for $ \pi $.
    \item For any finite sequence $ X_1 $, $ \dots $, $ X_n \in \mfrakg $,
          $ v \in \Dom(d\pi(X_1) \dotsm d\pi(X_n)) $.
    \item There exists a spanning subset $ S $ of a vector space $ \mfrakg $ such that,
          for any finite sequence $ X_1 $, $ \dots $, $ X_n \in S $,
          $ v \in \Dom(d\pi(X_1) \dotsm d\pi(X_n)) $.
    \item There exists a generating subset $ S $ of a Lie algebra $ \mfrakg $ such that,
          for any finite sequence $ X_1 $, $ \dots $, $ X_n \in S $,
          $ v \in \Dom(d\pi(X_1) \dotsm d\pi(X_n)) $.
  \end{enumalphp}
\end{proposition}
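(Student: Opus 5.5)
The chain (a) $\Rightarrow$ (b) $\Rightarrow$ (c) $\Rightarrow$ (d) is immediate, so the work is (d) $\Rightarrow$ (a). For (a) $\Rightarrow$ (b): a smooth map $g \mapsto \pi(g)v$ has all iterated derivatives along one-parameter subgroups. The cleanest route is to use \cref{thm:properties-of-smooth-vectors}~(3), which says $\mcalH^\infty$ is contained in every $\Dom(d\pi(X))$ and is $d\pi(X)$-stable. Inducting on $n$, we get $v \in \Dom(d\pi(X_1)\dotsm d\pi(X_n))$. Then (b) $\Rightarrow$ (c) holds with $S = \mfrakg$, and (c) $\Rightarrow$ (d) holds because a spanning set generates $\mfrakg$ as a Lie algebra.

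For (d) $\Rightarrow$ (a), I would pass through (c) and then invoke a standard characterization of smooth vectors.

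First, I would show (d) $\Rightarrow$ (c). Let $\mathcal{D}$ be the set of vectors lying in $\Dom(d\pi(X_1)\dotsm d\pi(X_n))$ for all finite sequences from $S$. This set is stable under $d\pi(X)$ for $X \in S$. Let $\mfrakh$ be the set of $X \in \mfrakg$ such that $\mathcal{D} \subset \Dom(d\pi(X))$, $d\pi(X)\mathcal{D} \subset \mathcal{D}$, and $d\pi(X)$ commutes appropriately with the others on $\mathcal{D}$. I would show that $\mfrakh$ is closed under brackets. By \cref{thm:infinitesimal-lie-algebra-representation}, $d\pi([X,Y])$ is the closure of the commutator, so on $\mathcal{D}$ (which lies in the domain of both products) $d\pi([X,Y])$ agrees with $d\pi(X)d\pi(Y)-d\pi(Y)d\pi(X)$. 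Since $\mathcal{D}$ is stable under $d\pi(X)$ and $d\pi(Y)$, it is stable under $d\pi([X,Y])$. Linear combinations are the delicate point: $d\pi(X+Y)$ is the closure of $d\pi(X)+d\pi(Y)$ on a common core. I would instead argue that $d\pi(X)+d\pi(Y)$ on $\mathcal{H}^\infty$ closes to $d\pi(X+Y)$, and that the sum is a skew-symmetric extension on $\Dom(d\pi(X))\cap\Dom(d\pi(Y))$. Maximality of skew-adjoint operators, as in the proof of \cref{thm:infinitesimal-lie-algebra-representation}, then shows that $d\pi(X+Y)$ extends $d\pi(X)+d\pi(Y)$. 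Hence the span of iterated brackets of $S$, which is all of $\mfrakg$, acts on $\mathcal{D}$ preserving it. Choosing a basis of $\mfrakg$ inside this span gives (c).

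Second, for (c) $\Rightarrow$ (a) I would take a basis $X_1,\dots,X_d$ of $\mfrakg$ contained in the span of $S$, where the previous step shows all words are defined. I would then apply the known theorem (Goodman, or the Nelson Laplacian argument, cf.\ \cite{MR2953553}) that $v$ is smooth if and only if $v \in \Dom(d\pi(X_{i_1})\dotsm d\pi(X_{i_n}))$ for all words in a basis. One version of this argument proceeds via the Laplacian $\Delta = \sum d\pi(X_i)^2$: one gets $v \in \Dom(\overline{\Delta}^k)$ for all $k$, and $\bigcap_k \Dom(\overline{\Delta}^k) = \mcalH^\infty$ by Nelson's theorem. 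Any linear combination of the $X_i$ is then handled by the linearity argument above.

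The main obstacle is the domain bookkeeping for sums: showing rigorously that the operator $d\pi(\sum c_i X_i)$ extends $\sum c_i d\pi(X_i)$ on the intersection of domains. I would handle this by maximality of skew-adjoint operators applied to the skew-symmetric operator $\sum c_i d\pi(X_i)$, whose closure contains $d\pi(\sum c_iX_i)$ restricted to the core $\mcalH^\infty$. Beyond that I would rely on the cited Goodman--Nelson characterization rather than reprove it.
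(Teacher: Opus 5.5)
Your proof is correct, and its skeleton is the paper's. The paper only says that (a)$\Leftrightarrow$(b)$\Leftrightarrow$(c) is clear from the definition and that (c)$\Leftrightarrow$(d) follows from \cref{thm:infinitesimal-lie-algebra-representation}. Your argument that $\mathfrak{h}=\{X : \mathcal{D}\subseteq\Dom(d\pi(X)),\ d\pi(X)\mathcal{D}\subseteq\mathcal{D}\}$ is closed under brackets is exactly what lies behind the second claim. You should drop the vague ``commutes appropriately'' clause: the two stability conditions suffice, and with them your step gives (d)$\Rightarrow$(b) directly.

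You go beyond the paper in two places. First, you prove $d\pi(X+Y)\supseteq d\pi(X)+d\pi(Y)$ on $\Dom(d\pi(X))\cap\Dom(d\pi(Y))$ by maximality of skew-adjoint operators. The paper leaves this inside ``clear'', although (c)$\Rightarrow$(b) genuinely needs it. Second, for (b)$\Rightarrow$(a) you cite Goodman's theorem or Nelson's $\mcalH^\infty=\bigcap_k\Dom(\overline{\Delta}^k)$. For the Nelson version you also need that $\overline{\Delta}$ extends $\sum_i d\pi(X_i)^2$ on $\bigcap_i\Dom(d\pi(X_i)^2)$; this follows by the same maximality argument, now for self-adjoint operators.

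That citation is valid but heavier than necessary. Use $\pi(h)^{-1}d\pi(X)\pi(h)=d\pi(\mathrm{Ad}(h^{-1})X)$ (domains included) together with your linearity. Then the partial derivatives of $t\mapsto\pi(g_0e^{t_1X_1}\dotsm e^{t_dX_d})v$ are $\sum_k c_{jk}(t)\,\pi(g_0e^{t_1X_1}\dotsm e^{t_dX_d})\,d\pi(X_k)v$ with $c_{jk}$ smooth. These are continuous and again of the same form, so induction gives smoothness directly from the definition. Your version is thus more careful about domains than the paper's, but it uses a black-box citation where a short self-contained argument is available.
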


\begin{proof}
  \begin{subproof}{$ \text{(a)} \Longleftrightarrow \text{(b)} \Longleftrightarrow \text{(c)} $}
    It is clear from the definition of smooth vectors.
  \end{subproof}

  \begin{subproof}{$ \text{(c)} \Longleftrightarrow \text{(d)} $}
    It follows from \cref{thm:infinitesimal-lie-algebra-representation}.
  \end{subproof}
\end{proof}

\begin{fact}[{\cite[Theorem~3.46~(2)]{MR2279709}}]\label{thm:density-of-space-of-k-finite-vectors}
  Let $ K $ be a compact Hausdorff group and
  $ (\pi, V) $ be a continuous representation of $ K $ on a complete Hausdorff locally convex.
  Then, the space $ V_K $ of $ K $-finite vectors is dense in $ V $.
\end{fact}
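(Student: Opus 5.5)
The plan is to approximate an arbitrary vector $ v \in V $ by vectors of the form $ \pi(f) v $ with $ f $ a $ K $-finite function on $ K $, using the Peter--Weyl theorem. Throughout, $ dk $ denotes the normalized Haar measure on $ K $, and $ p $ ranges over the continuous seminorms defining the topology of $ V $.

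First, I will make sense of the vector-valued integral
\[
  \pi(f) v = \int_K f(k) \, \pi(k) v \, dk
\]
for $ f \in C(K) $. The orbit map $ k \mapsto \pi(k) v $ is continuous, so $ \pi(K) v $ is compact. In a complete Hausdorff locally convex space, the closed convex hull of a compact set is compact. A standard Riemann-sum (or Pettis) argument then produces a unique vector $ \pi(f) v \in V $ with $ \lambda(\pi(f) v) = \int_K f(k) \lambda(\pi(k) v) \, dk $ for all $ \lambda \in V' $. I expect this step to be the main technical point: completeness is used exactly here, together with the Hahn--Banach theorem to conclude that the integral is well defined.

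From this construction I obtain the estimate
\[
  p(\pi(f) v) \le \|f\|_{L^1} \sup_{k \in K} p(\pi(k) v),
\]
whose right-hand side is finite by compactness of $ \pi(K) v $. I also obtain the equivariance $ \pi(k_0) \pi(f) v = \pi(L_{k_0} f) v $, where $ L_{k_0} f(k) = f(k_0^{-1} k) $, which follows by pairing with functionals and using invariance of Haar measure.

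Next, I will carry out the approximation. Fix $ p $ and $ \varepsilon > 0 $. By continuity of the orbit map at $ e $, there is a neighbourhood $ U $ of $ e $ with $ p(\pi(k) v - v) < \varepsilon $ for $ k \in U $. I choose $ f \ge 0 $ continuous, supported in $ U $, with $ \int_K f = 1 $; then $ p(\pi(f) v - v) \le \int_K f(k) \, p(\pi(k) v - v) \, dk \le \varepsilon $. By the Peter--Weyl theorem, matrix coefficients of finite-dimensional representations are uniformly dense in $ C(K) $, so I pick such a $ g $ with $ \|f - g\|_\infty < \varepsilon / (1 + \sup_{k} p(\pi(k) v)) $. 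Then $ p(\pi(g) v - v) < 2\varepsilon $. Since $ g $ is a matrix coefficient, its left translates span a finite-dimensional space, so by the equivariance above $ \pi(g) v $ is $ K $-finite. As $ p $ and $ \varepsilon $ were arbitrary, this gives $ v \in \closure{V_K} $.
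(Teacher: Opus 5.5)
Your proof is correct. It is the standard argument behind the result the paper cites from Sepanski: approximate $v$ by $\pi(f)v$ with $f$ an approximate identity supported near $e$, replace $f$ by a uniformly close combination of matrix coefficients via Peter--Weyl, and use that $\pi(g)v$ is $K$-finite when the left translates of $g$ span a finite-dimensional space. The paper itself quotes the Fact without proof, and your argument relies only on Peter--Weyl for general compact groups, so it covers the compact Hausdorff generality in which the Fact is stated.
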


\begin{proposition}\label{thm:smooth-and-k-finite-vectors}
  Let $ \widetilde{G} $ be a Lie group with Lie algebra $ \mfrakg $,
  $ \widetilde{K} $ be a Lie subgroup of $ G $,
  and $ (\pi, \mcalH) $ be a unitary representation of $ G $.
  Assume that there exists a character $ \map{\chi}{\widetilde{K}}{\setC} $
  such that the representation $ \restr{\pi}{\widetilde{K}} \otimes \chi $ descends to
  a compact quotient group $ K $ of $ \widetilde{K} $.
  \begin{enumarabicp}
    \item The space $ \mcalH_{\widetilde{K}} \cap \mcalH^\infty $ of
          $ \widetilde{K} $-finite smooth vectors is dense in $ \mcalH^\infty $
          with respect to its Fréchet topology.
    \item For $ X \in \mfrakg $,
          $ \mcalH_{\widetilde{K}} \cap \mcalH^\infty $ is
          a core of the skew-adjoint operator $ d\pi(X) $;
          that is, $ d\pi(X) = \closure{\restr{d\pi(X)}{\mcalH_{\widetilde{K}} \cap \mcalH^\infty}} $.
  \end{enumarabicp}
\end{proposition}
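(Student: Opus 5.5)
The plan is to twist by the character and then use the standard averaging operators of the compact group $ K $. Set $ \pi' = \restr{\pi}{\widetilde{K}} \otimes \chi $, which by assumption factors through the compact group $ K $. Since $ \chi $ is a character, multiplying by it does not change which vectors are $ \widetilde{K} $-finite. Hence $ \mcalH_{\widetilde{K}} $ coincides with the space of $ K $-finite vectors for $ \pi' $.

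For (1), I apply \cref{thm:density-of-space-of-k-finite-vectors} to the representation of $ K $ on $ \mcalH^\infty $ given by $ k \mapsto \pi'(k) = \chi(\tilde{k}) \pi(\tilde{k}) $.
\begin{itemize}
  \item This is well defined and preserves $ \mcalH^\infty $, because $ \mcalH^\infty $ is $ \pi(\widetilde{K}) $-stable by \cref{thm:properties-of-smooth-vectors}~(2).
  \item It acts continuously on the Fréchet space $ \mcalH^\infty $. The action of $ \widetilde{K} $ there is continuous by \cref{thm:properties-of-smooth-vectors}~(2), and multiplication by the continuous scalar $ \chi $ preserves continuity. The map $ \widetilde{K} \to K $ is a quotient map of topological groups, so continuity descends to $ K $.
  \item $ \mcalH^\infty $ is complete, Hausdorff and locally convex.
\end{itemize}
The fact then says that the $ K $-finite vectors of $ \mcalH^\infty $ are dense in $ \mcalH^\infty $. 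These are exactly the $ \widetilde{K} $-finite vectors lying in $ \mcalH^\infty $, namely $ \mcalH_{\widetilde{K}} \cap \mcalH^\infty $.

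For (2), fix $ X \in \mfrakg $. By \cref{thm:properties-of-smooth-vectors}~(4), $ \mcalH^\infty $ is a core for $ d\pi(X) $. So it suffices to show that $ \mcalH_{\widetilde{K}} \cap \mcalH^\infty $ is dense in $ \mcalH^\infty $ for the graph norm $ \norm{v} + \norm{d\pi(X) v} $. The Fréchet topology on $ \mcalH^\infty $ is finer than this graph-norm topology: the seminorms defining it include $ v \mapsto \norm{d\pi(X_1) \dotsm d\pi(X_n) v} $ with $ n \le 1 $, and $ d\pi(X) $ is continuous from $ \mcalH^\infty $ to $ \mcalH $. Hence the density from (1) gives density in the graph norm. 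Consequently the closure of $ \restr{d\pi(X)}{\mcalH_{\widetilde{K}} \cap \mcalH^\infty} $ contains $ \restr{d\pi(X)}{\mcalH^\infty} $. It is therefore at least $ d\pi(X) $, and it is at most $ d\pi(X) $ because $ d\pi(X) $ is closed.

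The only delicate point is verifying the hypotheses of the Fact in (1), namely continuity of the $ K $-action on the Fréchet space $ \mcalH^\infty $. This should follow directly from the standard statement in \cite[1.6.3]{MR929683} that $ G $ acts continuously on $ \mcalH^\infty $.
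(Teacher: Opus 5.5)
Your proposal is correct and follows the paper's proof. You twist by $\chi$ to get a representation of the compact group $K$, apply the density fact for $K$-finite vectors to the Fréchet space of smooth vectors, and then use that the smooth vectors already form a core for $d\pi(X)$. Your added checks fill in what the paper leaves implicit: that $K$ acts continuously on the Fréchet space, that $K$-finite vectors there coincide with the $\widetilde{K}$-finite smooth vectors, and that the Fréchet topology is finer than the graph norm.
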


\begin{proof}
  \begin{subproof}{(1)}
    Let $ \pi_1 $ denote the representation of $ K $ on $ \mcalH $ defined by
    $ \restr{\pi}{\widetilde{K}} \otimes \chi $.
    It is clear that the space $ \mcalH_K $ of $ K $-finite vectors for $ \pi_1 $
    coincides with the space $ \mcalH_{\widetilde{K}} $ of $ \widetilde{K} $-finite vectors for $ \pi $.
    The assertion follows since $ \mcalH_K \cap \mcalH^\infty $ is dense in $ \mcalH^\infty $
    by \cref{thm:density-of-space-of-k-finite-vectors}.
  \end{subproof}

  \begin{subproof}{(2)}
    The assertion holds
    since $ \mcalH_{\widetilde{K}} \cap \mcalH^\infty $ is dense in $ \mcalH^\infty $ by (1),
    which is in turn a core of $ d\pi(X) $ by \cref{thm:properties-of-smooth-vectors}~(4).
  \end{subproof}
\end{proof}

\subsection{The infinitesimal representation \texorpdfstring{$ d\Omega_{k, a} $}{dΩk,a}}

Recall from \cref{thm:unitary-representation} that
$ \Omega_{k, a} $ is the unique unitary representation
whose Harish-Chandra module is $ (\omega_{k, a}, W_{k, a}(\setR^N)) $.
In particular, $ W_{k, a}(\setR^N) $ is the space of
$ \widetilde{\mathit{SO}}(2) $-finite vectors for $ \Omega_{k, a} $
(here, $ \widetilde{\mathit{SO}}(2) $ denotes the universal covering group of $ \mathit{SO}(2) $)
and the infinitesimal representation $ d\Omega_{k, a} $ satisfies
\[
  \restr{d\Omega_{k, a}(X)}{W_{k, a}(\setR^N)}
  = \restr{\omega_{k, a}(X)}{W_{k, a}(\setR^N)}
\]
for all $ X \in \mathfrak{sl}(2, \setR) $.
Similarly (see \cref{thm:unitary-representation-rad}),
$ W_{k, a, m} $ is the space of
$ \widetilde{\mathit{SO}}(2) $-finite vectors for $ \Omega_{k, a, m} $
and the infinitesimal representation $ d\Omega_{k, a, m} $ satisfies
\[
  \restr{d\Omega_{k, a, m}(X)}{W_{k, a, m}}
  = \restr{\omega_{k, a, m}(X)}{W_{k, a, m}}
\]
for all $ X \in \mathfrak{sl}(2, \setR) $.

We emphasize the distinction between $ \omega_{k, a} $ and $ d\Omega_{k, a} $;
for $ X \in \mathfrak{sl}(2, \setR) $,
while $ \omega_{k, a}(X) $ is a differential-difference operator on $ \setR^N \setminus \setenum{0} $,
$ d\Omega_{k, a}(X) $ is a skew-adjoint operator on $ L^2(\setR^N, w_{k, a}(x) \,dx) $
defined by \cref{def:infinitesimal-representation}.
The same remark applies to $ \omega_{k, a, m} $ and $ d\Omega_{k, a, m} $.

\begin{proposition}\label{thm:infinitesimal-representation-rad}
  Let $ k $ be a non-negative multiplicity function, $ a > 0 $, and $ m \in \setN $
  such that $ \lambda_{k, a, m} = \frac{2m + 2\dindex{k} + N - 2}{a} > -1 $.
  Then, for $ X \in \mathfrak{sl}(2, \setR) $, we have
  \[
    d\Omega_{k, a, m}(X)
    = \closure{\restr{\omega_{k, a, m}(X)}{W_{k, a, m}}}.
  \]
\end{proposition}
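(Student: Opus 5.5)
We need to show that $ d\Omega_{k, a, m}(X) $ is the closure of $ \omega_{k, a, m}(X) $ restricted to $ W_{k, a, m} $. The natural route is to identify $ W_{k, a, m} $ as a core of $ d\Omega_{k, a, m}(X) $. We have $ W_{k, a, m} = \mcalH_{\widetilde{\mathit{SO}}(2)} $, the $ \widetilde{\mathit{SO}}(2) $-finite vectors for $ \Omega_{k, a, m} $, and $ d\Omega_{k, a, m}(X) $ agrees with $ \omega_{k, a, m}(X) $ there. So it suffices to show that $ W_{k, a, m} $ consists of smooth vectors, and then invoke \cref{thm:smooth-and-k-finite-vectors}~(2).

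The first step is to check the hypothesis of \cref{thm:smooth-and-k-finite-vectors}. Take $ \widetilde{K} = \widetilde{\mathit{SO}}(2) = \exp(\setR\, i\mbfk) $. Here $ i\mbfk = \begin{pmatrix} 0 & 1 \\ -1 & 0 \end{pmatrix} $ lies in $ \mathfrak{sl}(2, \setR) $, and we use the usual convention $ d\pi(iX) = i\,d\pi(X) $. By \cref{thm:action}, $ \mbfk $ acts on $ f_{k, a, m; l} $ by $ \lambda_{k, a, m} + 2l + 1 $. Since these vectors form an orthonormal basis (\cref{thm:onb}), $ \Omega_{k, a, m}(\exp(t\, i\mbfk)) $ acts diagonally by $ e^{\pm it(\lambda_{k, a, m} + 2l + 1)} $; the sign depends on conventions and is harmless.

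Now define the character $ \chi(\exp(t\, i\mbfk)) = e^{\mp it(\lambda_{k, a, m} + 1)} $. Then $ \restr{\Omega_{k, a, m}}{\widetilde{K}} \otimes \chi $ acts on $ f_{k, a, m; l} $ by $ e^{\pm 2ilt} $. This action is $ \pi $-periodic in $ t $, so it descends to the compact quotient $ \setR/\pi\setZ $.

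The second step is to show that each $ f_{k, a, m; l} $ is a smooth vector. The cleanest argument uses \cref{thm:condition-to-be-a-smooth-vector}~(c) with the spanning set $ \setenum{i\mbfk, \mbfn^+, \mbfn^-} $ of the complexification. This requires knowing that $ f_{k, a, m; l} $ lies in the domains of iterated products of $ d\Omega_{k, a, m} $ of these elements.

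For $ i\mbfk $ this is immediate from diagonal action on an orthonormal basis. For $ \mbfn^\pm $, which are complex combinations of real elements, the concern is that $ d\Omega $ on $ W_{k, a, m} $ equals $ \omega $ only because $ W_{k, a, m} $ is the Harish-Chandra module of $ \Omega_{k, a, m} $. The safer route is the standard fact that $ K $-finite vectors in the $ K $-isotypic components of a unitary representation are smooth, obtained by averaging. Each isotypic component is $ \setC f_{k, a, m; l} $, and one can find $ \phi \in C_c^\infty(\widetilde{K}) $, descending modulo the character, whose projection $ \pi(\phi) $ onto that line is nonzero. Then $ f_{k, a, m; l} $ is a multiple of $ \pi(\phi) v $ for suitable $ v $.

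But $ \pi(\phi) $ applied to an arbitrary vector is only $ \widetilde{K} $-smooth, not $ G $-smooth. To fix this, use the density in \cref{thm:smooth-and-k-finite-vectors}~(1). Smooth $ \widetilde{K} $-finite vectors are dense in $ \mcalH^\infty $, hence dense in $ \mcalH $. Since each isotypic component is one-dimensional, and the projection onto it maps $ \mcalH^\infty $ into itself (it is given by integration against a smooth function on $ \widetilde{K} $ modulo the character), the projection of $ \mcalH^\infty $ is nonzero. It therefore equals the whole line $ \setC f_{k, a, m; l} $. Hence $ W_{k, a, m} \subset \mcalH^\infty $, and in fact $ W_{k, a, m} = \mcalH_{\widetilde{K}} \cap \mcalH^\infty $.

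The final step is to apply \cref{thm:smooth-and-k-finite-vectors}~(2). This gives $ d\Omega_{k, a, m}(X) = \closure{\restr{d\Omega_{k, a, m}(X)}{W_{k, a, m}}} = \closure{\restr{\omega_{k, a, m}(X)}{W_{k, a, m}}} $, using the stated agreement of $ d\Omega_{k, a, m} $ and $ \omega_{k, a, m} $ on $ W_{k, a, m} $.

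The main obstacle is the second step, namely the inclusion $ W_{k, a, m} \subset \mcalH^\infty $. I would handle it exactly by the projection argument above, which relies on the multiplicity-one $ \widetilde{K} $-types.
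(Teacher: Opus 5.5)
Your proposal is correct and follows essentially the paper's route: you twist the $\widetilde{\mathit{SO}}(2)$-action by a character so that it descends to a compact quotient, then apply \cref{thm:smooth-and-k-finite-vectors}~(2) together with the agreement of $d\Omega_{k,a,m}$ and $\omega_{k,a,m}$ on $W_{k,a,m}$; the only difference in this step is that you use $e^{-it(\lambda_{k,a,m}+1)}$ and $\setR/\pi\setZ$ where the paper uses $e^{-it\lambda_{k,a,m}}$ and $\mathit{SO}(2)$. Your projection argument for $W_{k,a,m} \subseteq \mcalH^\infty$ is valid and makes explicit a point the paper leaves implicit, although that inclusion also follows at once from \cref{thm:condition-to-be-a-smooth-vector}, since the stated agreement already gives $W_{k,a,m} \subseteq \Dom(d\Omega_{k,a,m}(X))$ and $W_{k,a,m}$ is $\omega_{k,a,m}$-stable.
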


\begin{proof}
  By \cref{thm:unitary-representation-rad}, we have
  \[
    \Omega_{k, a, m}\Paren*{\exp_{\widetilde{\mathit{SO}}(2)}\Paren*{\begin{pmatrix} 0 & t \\ -t & 0 \end{pmatrix}}} f_{k, a, m; l}
    = e^{it (\lambda_{k, a, m} + 2l + 1)} f_{k, a, m; l}.
  \]
  Hence, the representation $ \restr{\Omega_{k, a, m}}{\widetilde{\mathit{SO}}(2)} \otimes \chi $
  descends to $ \mathit{SO}(2) $,
  where $ \chi $ denotes the character
  $ \exp_{\widetilde{\mathit{SO}}(2)}(\begin{psmallmatrix} 0 & t \\ -t & 0 \end{psmallmatrix})
  \mapsto e^{-it\lambda_{k, a, m}} $.
  Therefore, $ W_{k, a, m} $ is a core of $ d\Omega_{k, a, m}(X) $
  by \cref{thm:smooth-and-k-finite-vectors}~(2),
  and hence
  \[
    d\Omega_{k, a, m}(X)
    = \closure{\restr{d\Omega_{k, a, m}(X)}{W_{k, a, m}}}
    = \closure{\restr{\omega_{k, a, m}(X)}{W_{k, a, m}}}.
      \qedhere
  \]
\end{proof}

\begin{proposition}\label{thm:infinitesimal-representation}
  Let $ k $ be a non-negative multiplicity function and $ a > 0 $
  such that $ \lambda_{k, a, 0} = \frac{2\dindex{k} + N - 2}{a} > -1 $.
  Then, for $ X \in \mathfrak{sl}(2, \setR) $, we have
  \[
    d\Omega_{k, a}(X)
    = \closure{\restr{\omega_{k, a}(X)}{W_{k, a}(\setR^N)}}.
  \]
\end{proposition}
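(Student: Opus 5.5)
The argument copies the proof of \cref{thm:infinitesimal-representation-rad}, now applied to $ \Omega_{k, a} $ on $ L^2(\setR^N, w_{k, a}(x) \,dx) $.

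First we check that $ \restr{\Omega_{k, a}}{\widetilde{\mathit{SO}}(2)} $, twisted by a character, descends to $ \mathit{SO}(2) $. By \cref{thm:unitary-representation}, $ \Omega_{k, a} $ is the Hilbert direct sum over $ m $ of $ \mathrm{id} \otimes \Omega_{k, a, m} $ on $ \mcalH_k^m(\Sphere{N - 1}) \otimes L^2(\setRp, r^{2\dindex{k} + a + N - 3} \,dr) $. On each $ p \otimes f_{k, a, m; l} $, the element $ \exp_{\widetilde{\mathit{SO}}(2)}(\begin{psmallmatrix} 0 & t \\ -t & 0 \end{psmallmatrix}) $ acts by $ e^{it(\lambda_{k, a, m} + 2l + 1)} $. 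Since $ \lambda_{k, a, m} = \lambda_{k, a, 0} + 2m/a $, these exponents are not integral shifts of a single number when $ a $ is irrational, so one character cannot handle all $ m $ at once.

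This is the main obstacle. To get around it, we avoid descent for all of $ \Omega_{k, a} $ and argue summand by summand instead:
\begin{itemize}
  \item Let $ P_m $ be the orthogonal projection onto the $ m $-th summand.
  \item $ P_m $ commutes with $ \Omega_{k, a}(g) $ for all $ g $, hence with each skew-adjoint $ d\Omega_{k, a}(X) $.
  \item Therefore $ d\Omega_{k, a}(X) = \sumoplus_m \mathrm{id} \otimes d\Omega_{k, a, m}(X) $, with domain consisting of those $ v $ such that $ P_m v \in \mcalH_k^m \otimes \Dom(d\Omega_{k, a, m}(X)) $ for all $ m $ and $ \sum_m \norm{d\Omega_{k, a, m}(X) P_m v}^2 < \infty $.
\end{itemize}
This direct-sum description holds because $ \Omega_{k, a} $ is literally the direct sum of the representations $ \mathrm{id} \otimes \Omega_{k, a, m} $, so the difference quotients defining $ d\Omega_{k, a}(X) $ decompose componentwise.

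Next, \cref{thm:infinitesimal-representation-rad} says that $ W_{k, a, m} $ is a core for $ d\Omega_{k, a, m}(X) $, and on it $ d\Omega_{k, a, m}(X) $ agrees with $ \omega_{k, a, m}(X) $. Since $ \mcalH_k^m(\Sphere{N - 1}) $ is finite-dimensional, $ \mcalH_k^m \otimes W_{k, a, m} $ is a core for $ \mathrm{id} \otimes d\Omega_{k, a, m}(X) $. By \cref{thm:radial-parts}, on this space the operator equals $ \omega_{k, a}(X) $.

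It remains to use a general fact: if $ D_m $ is a core of a closed operator $ T_m $ for each $ m $, then the algebraic direct sum $ \bigoplus_m D_m $ is a core of $ \sumoplus_m T_m $. To prove it, take $ v $ in the domain of $ \sumoplus_m T_m $. Truncate to finitely many components, using the square-summability of $ \norm{T_m P_m v} $ to control the tail in graph norm. Then approximate each of the finitely many components in graph norm from $ D_m $.

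Applying this with $ D_m = \mcalH_k^m \otimes W_{k, a, m} $ shows that $ W_{k, a}(\setR^N) $ is a core for $ d\Omega_{k, a}(X) $. Since $ d\Omega_{k, a}(X) $ agrees with $ \omega_{k, a}(X) $ there, we obtain $ d\Omega_{k, a}(X) = \closure{\restr{\omega_{k, a}(X)}{W_{k, a}(\setR^N)}} $.

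An alternative route: $ W_{k, a}(\setR^N) $ consists of $ \widetilde{\mathit{SO}}(2) $-finite smooth vectors and is stable under $ \omega_{k, a} $. One could then invoke a Nelson-type analytic-vector argument, but the direct-sum reduction above is more elementary.
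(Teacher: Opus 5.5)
Your proposal is correct and is essentially the paper's proof. The paper likewise identifies $d\Omega_{k,a}(X)$ with the direct sum of the operators $\mathrm{id}\otimes d\Omega_{k,a,m}(X)$, and concludes that $W_{k,a}(\setR^N)=\bigoplus_m \mcalH_k^m(\Sphere{N-1})\otimes W_{k,a,m}$, the algebraic direct sum of the cores from the radial proposition, is a core; you simply spell out the componentwise description of the generator and the direct-sum-of-cores lemma, which the paper leaves implicit, and you correctly explain why the $\widetilde{\mathit{SO}}(2)$-descent argument cannot be applied to $\Omega_{k,a}$ as a whole when $a$ is irrational.
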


\begin{proof}
  Since $ \Omega_{k, a} $ decomposes as
  $ \sum_{m \in \setN}^\oplus 1_{\mcalH_k^m(\Sphere{N - 1})} \otimes \Omega_{k, a, m} $
  (\cref{thm:unitary-representation}),
  $ d\Omega_{k, a}(X) $ coincides with the direct sum of
  $ \id_{\mcalH_k^m(\Sphere{N - 1})} \otimes d\Omega_{k, a, m}(X) $.
  Therefore, $ W_{k, a}(\setR^N) = \bigoplus_{m \in \setN} \mcalH_k^m(\Sphere{N - 1}) \otimes W_{k, a, m} $ is
  a core of $ d\Omega_{k, a}(X) $ by \cref{thm:infinitesimal-representation-rad},
  and hence
  \[
    d\Omega_{k, a}(X)
    = \closure{\restr{d\Omega_{k, a}(X)}{W_{k, a}(\setR^N)}}
    = \closure{\restr{\omega_{k, a}(X)}{W_{k, a}(\setR^N)}}.
      \qedhere
  \]
\end{proof}

The following lemmas are used throughout the rest of this section.

\begin{lemma}\label{thm:criterion-rad}
  Let $ k $ be a non-negative multiplicity function, $ a > 0 $, and $ m \in \setN $
  such that $ \lambda_{k, a, m} = \frac{2m + 2\dindex{k} + N - 2}{a} > -1 $.
  Let $ X \in \mathfrak{sl}(2, \setR) $.
  Assume that $ \mcalD \subseteq L^2(\setRp, r^{2\dindex{k} + a + N - 3} \,dr) $ is a subspace
  such that
  \begin{itemize}
    \item $ W_{k, a, m} \subseteq \mcalD
          \subseteq L^2(\setRp, r^{2\dindex{k} + a + N - 3} \,dr) \cap C^\infty(\setRp) $,
          and
    \item $ \omega_{k, a, m}(X)(\mcalD) \subseteq L^2(\setRp, r^{2\dindex{k} + a + N - 3} \,dr) $ and
          $ \restr{\omega_{k, a, m}(X)}{\mcalD} $ is a skew-symmetric operator
          on $ L^2(\setRp, r^{2\dindex{k} + a + N - 3} \,dr) $.
  \end{itemize}
  Then, we have
  \[
    d\Omega_{k, a, m}(X)
    = \closure{\restr{\omega_{k, a, m}(X)}{\mcalD}}.
  \]
\end{lemma}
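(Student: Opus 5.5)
The plan is the standard sandwich argument, built on \cref{thm:infinitesimal-representation-rad} and the maximality of skew-adjoint operators among skew-symmetric ones. Write $ T = \restr{\omega_{k, a, m}(X)}{\mcalD} $. By hypothesis $ T $ is skew-symmetric, so it is closable and $ \closure{T} $ is again skew-symmetric. The whole proof then reduces to showing $ d\Omega_{k, a, m}(X) \subseteq \closure{T} $.

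First I obtain a lower bound for $ \closure{T} $. Since $ W_{k, a, m} \subseteq \mcalD $, the operator $ \restr{\omega_{k, a, m}(X)}{W_{k, a, m}} $ is a restriction of $ T $. Taking closures gives
\[
  \closure{\restr{\omega_{k, a, m}(X)}{W_{k, a, m}}} \subseteq \closure{T},
\]
because the closure of a restriction of a closable operator is contained in the closure of that operator. By \cref{thm:infinitesimal-representation-rad} the left-hand side equals $ d\Omega_{k, a, m}(X) $, so we get $ d\Omega_{k, a, m}(X) \subseteq \closure{T} $.

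Next I close the sandwich. By \cref{def:infinitesimal-representation}, $ d\Omega_{k, a, m}(X) $ is skew-adjoint, and $ \closure{T} $ is a skew-symmetric extension of it. Taking adjoints of $ d\Omega_{k, a, m}(X) \subseteq \closure{T} \subseteq -\closure{T}^{*} $ reverses inclusions and yields
\[
  \closure{T} \subseteq -\closure{T}^{*} \subseteq -d\Omega_{k, a, m}(X)^{*} = d\Omega_{k, a, m}(X).
\]
This is the same maximality fact already used in the proof of \cref{thm:infinitesimal-lie-algebra-representation}. Combined with the previous step, it gives $ d\Omega_{k, a, m}(X) = \closure{T} $.

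I expect no serious obstacle; the only care needed is bookkeeping. One point is that the hypothesis $ \mcalD \subseteq C^\infty(\setRp) $ is only there so that $ \omega_{k, a, m}(X) $ makes sense pointwise as a differential operator on $ \mcalD $. The other is the convention that "skew-symmetric" means $ T \subseteq -T^{*} $, which is what makes $ \closure{T} $ skew-symmetric and drives the adjoint step. One might worry that the classical differential operator applied to elements of $ \mcalD $ could differ from the abstract operator $ d\Omega_{k, a, m}(X) $. This cannot happen, because both are obtained from $ \omega_{k, a, m}(X) $, and agreement on $ W_{k, a, m} $ is exactly what \cref{thm:infinitesimal-representation-rad} records.
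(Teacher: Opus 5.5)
Your proposal is correct and is the paper's argument. The paper proves \cref{thm:criterion} and says the radial case is identical: since $W_{k,a,m} \subseteq \mcalD$ is a core on which $d\Omega_{k,a,m}(X)$ agrees with $\omega_{k,a,m}(X)$ (\cref{thm:infinitesimal-representation-rad}), the closure of $\omega_{k,a,m}(X)$ restricted to $\mcalD$ is a skew-symmetric extension of the skew-adjoint operator $d\Omega_{k,a,m}(X)$, and maximality forces equality. Your adjoint computation simply spells out that maximality step.
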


\begin{lemma}\label{thm:criterion}
  Let $ k $ be a non-negative multiplicity function and $ a > 0 $
  such that $ \lambda_{k, a, 0} = \frac{2\dindex{k} + N - 2}{a} > -1 $.
  Let $ X \in \mathfrak{sl}(2, \setR) $.
  Assume that $ \mcalD \subseteq L^2(\setR^N, w_{k, a}(x) \,dx) $ is a subspace
  such that
  \begin{itemize}
    \item $ W_{k, a}(\setR^N) \subseteq \mcalD
          \subseteq L^2(\setR^N, w_{k, a}(x) \,dx) \cap C^\infty(\setR^N \setminus \setenum{0}) $,
          and
    \item $ \omega_{k, a}(X)(\mcalD) \subseteq L^2(\setR^N, w_{k, a}(x) \,dx) $ and
          $ \restr{\omega_{k, a}(X)}{\mcalD} $ is a skew-symmetric operator
          on $ L^2(\setR^N, w_{k, a}(x) \,dx) $.
  \end{itemize}
  Then, we have
  \[
    d\Omega_{k, a}(X)
    = \closure{\restr{\omega_{k, a}(X)}{\mcalD}}.
  \]
\end{lemma}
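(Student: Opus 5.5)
The plan is the standard maximality argument: a skew-adjoint operator has no proper skew-symmetric extension. I give it for \cref{thm:criterion}; the argument for \cref{thm:criterion-rad} is the same with $ \omega_{k, a, m} $, $ W_{k, a, m} $ and \cref{thm:infinitesimal-representation-rad} in place of $ \omega_{k, a} $, $ W_{k, a}(\setR^N) $ and \cref{thm:infinitesimal-representation}.

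Set $ T = \restr{\omega_{k, a}(X)}{\mcalD} $. By hypothesis $ T $ is densely defined, since $ \mcalD \supseteq W_{k, a}(\setR^N) $ and the latter is dense. It is also skew-symmetric, so it is closable, and its closure $ \closure{T} $ is again skew-symmetric. Because $ W_{k, a}(\setR^N) \subseteq \mcalD $ and $ T $ restricts to $ \restr{\omega_{k, a}(X)}{W_{k, a}(\setR^N)} $ there, we get
\[
  \closure{\restr{\omega_{k, a}(X)}{W_{k, a}(\setR^N)}} \subseteq \closure{T}.
\]
By \cref{thm:infinitesimal-representation}, the left-hand side is exactly $ d\Omega_{k, a}(X) $. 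This operator is skew-adjoint by \cref{def:infinitesimal-representation}.

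Now $ \closure{T} $ is a skew-symmetric extension of the skew-adjoint operator $ d\Omega_{k, a}(X) $. Taking adjoints of $ d\Omega_{k, a}(X) \subseteq \closure{T} $ reverses the inclusion, giving
\[
  \closure{T} \subseteq (\closure{T})^* \subseteq d\Omega_{k, a}(X)^* = -d\Omega_{k, a}(X).
\]
The first inclusion here uses skew-symmetry, in the sense $ -\closure{T} \subseteq (\closure{T})^* $, with signs handled consistently. Combining this with $ d\Omega_{k, a}(X) \subseteq \closure{T} $ forces equality, which is the same maximality argument used in the proof of \cref{thm:infinitesimal-lie-algebra-representation}.

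I do not see a genuine obstacle; the content lies entirely in verifying the hypotheses in later applications. The one point to handle carefully is closability and the bookkeeping of the skew-symmetry sign convention, which should be stated as $ T \subseteq -T^* $.
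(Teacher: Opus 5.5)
Your proposal is correct and is essentially the paper's proof. The paper also observes that $\closure{\restr{\omega_{k, a}(X)}{\mcalD}}$ is a skew-symmetric extension of $d\Omega_{k, a}(X) = \closure{\restr{\omega_{k, a}(X)}{W_{k, a}(\setR^N)}}$ (\cref{thm:infinitesimal-representation}), and then concludes because a skew-adjoint operator is maximally skew-symmetric; you unpack that last fact via adjoints. As your own sign remark indicates, the displayed chain should read $-\closure{T} \subseteq (\closure{T})^* \subseteq d\Omega_{k, a}(X)^* = -d\Omega_{k, a}(X)$. This gives $\closure{T} \subseteq d\Omega_{k, a}(X)$ and hence equality.
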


Since \cref{thm:criterion-rad} is proved in the same way as \cref{thm:criterion},
we focus on the latter.

\begin{proof}[Proof of \cref{thm:criterion}]
  Since $ W_{k, a}(\setR^N) $ is a core of $ d\Omega_{k, a}(X) $ and
  $ \restr{d\Omega_{k, a}(X)}{W_{k, a}(\setR^N)} = \restr{\omega_{k, a}(X)}{W_{k, a}(\setR^N)} $
  (\cref{thm:infinitesimal-representation}),
  $ \closure{\restr{\omega_{k, a}(X)}{\mcalD}} $ is a skew-symmetric extension of $ d\Omega_{k, a}(X) $.
  Now the assertion follows because a skew-adjoint operator is maximally skew-symmetric.
\end{proof}


\subsection{The self-adjoint operator \texorpdfstring{$ d\Omega_{k, a}(\mbfk) $}{dΩk,a(k)}}

Recall from \cref{ssec:onb} that
$ \mbfk = \begin{psmallmatrix} 0 & -i \\ i & 0 \end{psmallmatrix} \in i\, \mathfrak{so}(2) $.
In this subsection, we find the self-adjoint operators
$ d\Omega_{k, a, m}(\mbfk) $ and $ d\Omega_{k, a}(\mbfk) $ explicitly.
These results will be used to describe the $ (k, a) $-generalized Schwartz space
in terms of infinite sum
(see \cref{thm:schwartz-space-as-infinite-sum-rad,thm:schwartz-space-as-infinite-sum}).

\begin{proposition}\label{thm:operator-k-rad}
  Let $ k $ be a non-negative multiplicity function, $ a > 0 $, and $ m \in \setN $
  such that $ \lambda_{k, a, m} = \frac{2m + 2\dindex{k} + N - 2}{a} > -1 $.
  Then, we have
  \[
    \Dom(d\Omega_{k, a, m}(\mbfk))
    = \set*{\sum_{l \in \setN} c_l f_{k, a, m; l}}{\text{%
      $ c_l \in \setC $,
      $ \sum_{l \in \setN} (\lambda_{k, a, m} + 2l + 1)^2 \abs{c_l}^2 < \infty $%
    }}
  \]
  and
  \[
    d\Omega_{k, a, m}(\mbfk) \Paren*{\sum_{l \in \setN} c_l f_{k, a, m; l}}
    = \sum_{l \in \setN} (\lambda_{k, a, m} + 2l + 1) c_l f_{k, a, m; l},
  \]
  where the infinite sums are understood to converge in $ L^2(\setRp, r^{2\dindex{k} + a + N - 3} \,dr) $.
\end{proposition}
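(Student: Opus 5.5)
The plan is to identify $ d\Omega_{k, a, m}(\mbfk) $ with the diagonal multiplication operator in the orthonormal basis $ \faml{f_{k, a, m; l}}{l \in \setN} $ of \cref{thm:onb}. Let $ T $ denote the operator defined by the right-hand side of the statement. Its domain is the set of $ \sum c_l f_{k, a, m; l} $ with $ \sum (\lambda_{k, a, m} + 2l + 1)^2 \abs{c_l}^2 < \infty $, and it acts by $ c_l \mapsto (\lambda_{k, a, m} + 2l + 1) c_l $. Under the unitary identification $ L^2(\setRp, r^{2\dindex{k} + a + N - 3} \,dr) \cong \ell^2(\setN) $, $ T $ becomes multiplication by the real sequence $ (\lambda_{k, a, m} + 2l + 1)_l $ on its maximal domain. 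It is standard that such an operator is self-adjoint.

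Since $ \mbfk \in i\,\mathfrak{so}(2) $, the operator $ d\Omega_{k, a, m}(\mbfk) $ is self-adjoint in the sense of \cref{def:infinitesimal-representation}. By \cref{thm:infinitesimal-representation-rad} (applied to $ -i\mbfk \in \mathfrak{sl}(2, \setR) $ and multiplied by $ i $), it equals the closure of $ \restr{\omega_{k, a, m}(\mbfk)}{W_{k, a, m}} $. By \cref{thm:action}, this restriction coincides with $ \restr{T}{W_{k, a, m}} $, since both act diagonally on the finite linear combinations of the $ f_{k, a, m; l} $.

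It therefore suffices to show that $ W_{k, a, m} $ is a core for $ T $, i.e.\ $ \closure{\restr{T}{W_{k, a, m}}} = T $. Take $ v = \sum c_l f_{k, a, m; l} \in \Dom(T) $ and its truncations $ v_n = \sum_{l \le n} c_l f_{k, a, m; l} \in W_{k, a, m} $. Then $ v_n \to v $, and
\[
  T v_n = \sum_{l \le n} (\lambda_{k, a, m} + 2l + 1) c_l f_{k, a, m; l} \to T v
\]
in $ L^2 $, because the coefficient sequence is square-summable. Hence $ \closure{\restr{T}{W_{k, a, m}}} \supseteq T $. The reverse inclusion holds because $ T $ is closed, being self-adjoint, and extends $ \restr{T}{W_{k, a, m}} $. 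Combining with the previous paragraph gives $ d\Omega_{k, a, m}(\mbfk) = \closure{\restr{T}{W_{k, a, m}}} = T $.

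There is no real obstacle. The only points needing care are the bookkeeping of the factor $ i $ between $ \mbfk $ and the skew-adjoint generator, and the closedness of the diagonal operator on its maximal domain. The latter is immediate, since its graph is closed under coordinatewise convergence.
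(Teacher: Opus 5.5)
Your proof is correct and follows essentially the paper's argument: both identify $d\Omega_{k, a, m}(\mbfk)$ with the closure of $\omega_{k, a, m}(\mbfk)$ on $W_{k, a, m}$ (\cref{thm:infinitesimal-representation-rad}), and both use \cref{thm:action} to see that it acts diagonally there. The only difference is the last step. The paper uses self-adjointness of the diagonal operator $T$ together with maximality of self-adjoint operators, i.e.\ the argument of \cref{thm:criterion-rad}. You instead show by truncation that $W_{k, a, m}$ is a core for $T$, which needs only closedness of $T$ and never treats $\Dom(T)$ as a space of smooth functions on which the differential operator $\omega_{k, a, m}(\mbfk)$ acts.
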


\begin{proof}
  The diagonal operator with entries $ \lambda_{k, a, m} + 2l + 1 $
  with respect to the orthonormal basis $ \faml{f_{k, a, m; l}}{l \in \setN} $
  (\cref{thm:onb}),
  defined on the domain given in the assertion, is self-adjoint
  by general theory about operators (see \cite[Example~3.8]{MR2953553}).
  Now the assertion follows from \cref{thm:criterion-rad}.
\end{proof}

\begin{proposition}\label{thm:operator-k}
  Let $ k $ be a non-negative multiplicity function and $ a > 0 $
  such that $ \lambda_{k, a, 0} = \frac{2\dindex{k} + N - 2}{a} > -1 $.
  Then, we have
  \begin{align*}
    & \Dom(d\Omega_{k, a}(\mbfk)) \\
    &= \set*{\sum_{m, l \in \setN} p_{m, l} \otimes f_{k, a, m; l}}{
      \begin{aligned}
        & p_{m, l} \in \mcalH_k^m(\Sphere{N - 1}), \\
        & \textstyle\sum_{m, l \in \setN} (\lambda_{k, a, m} + 2l + 1)^2 \norm{p_{m, l}}^2_{L^2(\setRp, r^{2\dindex{k} + a + N - 3} \,dr)} < \infty
      \end{aligned}
    }
  \end{align*}
  and
  \[
    d\Omega_{k, a}(\mbfk) \Paren*{\sum_{m, l \in \setN} p_{m, l} \otimes f_{k, a, m; l}}
    = \sum_{m, l \in \setN} (\lambda_{k, a, m} + 2l + 1) (p_{m, l} \otimes f_{k, a, m; l}),
  \]
  where the infinite sums are understood to converge in $ L^2(\setR^N, w_{k, a}(x) \,dx) $.
\end{proposition}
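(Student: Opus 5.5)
The plan is to mirror the proof of \cref{thm:operator-k-rad}, passing to the orthogonal direct sum over $ m $. First we use \cref{thm:onb} together with the decomposition in \cref{thm:unitary-representation}. Fix an orthonormal basis of each finite-dimensional space $ \mcalH_k^m(\Sphere{N - 1}) \subseteq L^2(\Sphere{N - 1}, w_k(\omega) \,d\omega) $. Tensoring it with $ \faml{f_{k, a, m; l}}{l \in \setN} $ over all $ m $ gives an orthonormal basis of $ L^2(\setR^N, w_{k, a}(x) \,dx) $.

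Grouping the terms with the same $ (m, l) $, every vector in this Hilbert space is uniquely a sum $ \sum_{m, l} p_{m, l} \otimes f_{k, a, m; l} $ with $ p_{m, l} \in \mcalH_k^m(\Sphere{N - 1}) $. The sum converges in $ L^2 $, and its squared norm is $ \sum_{m, l} \norm{p_{m, l}}^2 $, where the norm is taken in $ L^2(\Sphere{N - 1}, w_k(\omega)\,d\omega) $; the subscript in the statement should be read this way.

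Let $ T $ be the operator that multiplies the $ (m, l) $-component by $ \lambda_{k, a, m} + 2l + 1 $, on the domain described in the statement. In the chosen orthonormal basis, $ T $ is a real diagonal operator. By the same general fact used for \cref{thm:operator-k-rad} (\cite[Example~3.8]{MR2953553}), $ T $ is self-adjoint, so $ iT $ is skew-adjoint.

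Next we identify $ T $ with $ d\Omega_{k, a}(\mbfk) $. By \cref{thm:action} and \cref{thm:radial-parts}, $ T $ agrees with $ \omega_{k, a}(\mbfk) $ on $ W_{k, a}(\setR^N) $. By \cref{thm:infinitesimal-representation}, $ d\Omega_{k, a}(\mbfk) $ is the closure of $ \omega_{k, a}(\mbfk) $ restricted to $ W_{k, a}(\setR^N) $. Hence $ d\Omega_{k, a}(\mbfk) \subseteq T $, since $ T $ is closed and extends that restriction.

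Both operators are self-adjoint, and a self-adjoint operator has no proper self-adjoint extension: $ A \subseteq B $ implies $ B = B^* \subseteq A^* = A $. Therefore $ d\Omega_{k, a}(\mbfk) = T $.

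An alternative route is the direct-sum argument from the proof of \cref{thm:infinitesimal-representation}. There $ d\Omega_{k, a}(\mbfk) $ equals $ \bigoplus_m \id \otimes d\Omega_{k, a, m}(\mbfk) $, and one plugs in \cref{thm:operator-k-rad} for each $ m $. The domain of a direct sum of self-adjoint operators consists of vectors whose components lie in the respective domains with summable squared images. This reproduces the stated domain after expanding each component in an orthonormal basis of $ \mcalH_k^m(\Sphere{N - 1}) $.

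The only delicate point is bookkeeping: checking that the grouped expansion with the coefficients $ p_{m, l} $ faithfully represents the Hilbert space and its norm. This reduces to the orthogonality of the decomposition in \cref{thm:unitary-representation} and the finite-dimensionality of each $ \mcalH_k^m(\Sphere{N - 1}) $. No analytic obstacle is expected.
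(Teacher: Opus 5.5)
Your proof is correct. Your main argument applies the ``diagonal operator plus maximality'' mechanism directly on $ L^2(\setR^N, w_{k, a}(x) \,dx) $; this is the mechanism the paper uses for the radial \cref{thm:operator-k-rad} via \cref{thm:criterion-rad}. The diagonal operator $ T $ is closed and extends $ \restr{\omega_{k, a}(\mbfk)}{W_{k, a}(\setR^N)} $, so it extends the closure $ d\Omega_{k, a}(\mbfk) $ of that restriction, and self-adjointness of both operators forces equality. The paper instead takes what you call the alternative route. It identifies $ d\Omega_{k, a}(\mbfk) $ with the direct sum of $ \id_{\mcalH_k^m(\Sphere{N - 1})} \otimes d\Omega_{k, a, m}(\mbfk) $, writes out the domain of that direct sum, and reads off the result from \cref{thm:operator-k-rad}. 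The paper's route is shorter because the radial case is already done. Yours needs only \cref{thm:infinitesimal-representation} and the self-adjointness of real diagonal operators, so it avoids describing the domain of an infinite direct sum of unbounded operators. Your reading of $ \norm{p_{m, l}} $ as the norm in $ L^2(\Sphere{N - 1}, w_k(\omega) \,d\omega) $ is right; the subscript $ L^2(\setRp, r^{2\dindex{k} + a + N - 3} \,dr) $ in the statement is a typo.
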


\begin{proof}
  Since $ \Omega_{k, a} $ decomposes as
  $ \sum_{m \in \setN}^\oplus 1_{\mcalH_k^m(\Sphere{N - 1})} \otimes \Omega_{k, a, m} $
  (\cref{thm:unitary-representation}),
  $ d\Omega_{k, a}(\mbfk) $ coincides with the direct sum of
  $ \id_{\mcalH_k^m(\Sphere{N - 1})} \otimes d\Omega_{k, a, m}(\mbfk) $.
  That is, we have
  \begin{align*}
    & \Dom(d\Omega_{k, a}(\mbfk)) \\
    &= \set*{\sum_{m \in \setN} F_m}{
      \begin{aligned}
        & F_m \in \mcalH_k^m(\Sphere{N - 1}) \otimes L^2(\setRp, r^{2\dindex{k} + a + N - 3} \,dr), \\
        & \text{$ \textstyle\sum_{m \in \setN} (\id_{\mcalH_k^m(\Sphere{N - 1})} \otimes d\Omega_{k, a, m}(\mbfk)) F_m $ exists in $ L^2(\setR^N, w_{k, a}(x) \,dx) $}
      \end{aligned}
    }
  \end{align*}
  and
  \[
    d\Omega_{k, a}(\mbfk) \Paren*{\sum_{m \in \setN} F_m}
    = \sum_{m \in \setN} (\id_{\mcalH_k^m(\Sphere{N - 1})} \otimes d\Omega_{k, a, m}) F_m.
  \]
  Therefore, the assertion follows from \cref{thm:operator-k-rad}.
\end{proof}

\subsection{The skew-adjoint operator \texorpdfstring{$ d\Omega_{k, a}(\mbfe^+) $}{dΩk,a(e+)}}

Recall from \cref{ssec:onb} that $ (\mbfh, \mbfe^+, \mbfe^-) $ denotes
the standard basis of $ \mathfrak{sl}(2, \setR) $.
In this and the next subsection, we investigate the skew-adjoint operators
$ d\Omega_{k, a, m}(\mbfe^+) $, $ d\Omega_{k, a}(\mbfe^+) $,
$ d\Omega_{k, a, m}(\mbfe^-) $ and $ d\Omega_{k, a}(\mbfe^-) $.
These results will be used in the proof of \cref{thm:schwartz-space-rad,thm:schwartz-space}.

\begin{proposition}\label{thm:operator-ep-rad}
  Let $ k $ be a non-negative multiplicity function, $ a > 0 $, and $ m \in \setN $
  such that $ \lambda_{k, a, m} = \frac{2m + 2\dindex{k} + N - 2}{a} > -1 $.
  Then, we have
  \[
    \Dom(d\Omega_{k, a, m}(\mbfe^+))
    = \set{f \in L^2(\setRp, r^{2\dindex{k} + a + N - 3} \,dr)}{r^a f \in L^2(\setRp, r^{2\dindex{k} + a + N - 3} \,dr)}
  \]
  and
  \[
    d\Omega_{k, a, m}(\mbfe^+) f(r)
    = \DiffEp{k, a}[m] f(r)
    = \frac{i}{a} r^a f(r).
  \]
\end{proposition}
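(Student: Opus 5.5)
The plan is to use \cref{thm:criterion-rad} with $ X = \mbfe^+ $, and to check the hypotheses on the natural maximal domain. Write $ \mcalH_m = L^2(\setRp, r^{2\dindex{k} + a + N - 3} \,dr) $. Define $ T $ as multiplication by $ \frac{i}{a} r^a $ on
\[
  \Dom(T) = \set{f \in \mcalH_m}{r^a f \in \mcalH_m}.
\]
This is a multiplication operator by a real-valued measurable function times $ i $. By standard theory of multiplication operators (e.g.\ \cite[Example~3.8]{MR2953553}, or the analogous statement for multiplication operators), $ iT $ is self-adjoint, so $ T $ is skew-adjoint. The goal is to show $ d\Omega_{k, a, m}(\mbfe^+) = T $.

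\cref{thm:criterion-rad} requires the domain to consist of smooth functions, so I would not apply it to $ \Dom(T) $ directly. Instead, take
\[
  \mcalD = \set{f \in \mcalH_m \cap C^\infty(\setRp)}{r^a f \in \mcalH_m}.
\]
The hypotheses are checked as follows.
\begin{enumerate}
  \item $ W_{k, a, m} \subseteq \mcalD $. Each $ f_{k, a, m; l} $ is smooth on $ \setRp $. Moreover, $ r^a f_{k, a, m; l} $ is a polynomial in $ r^a $ times $ r^m e^{-r^a/a} $, which lies in $ \mcalH_m $ because $ \lambda_{k, a, m} > -1 $ gives integrability at $ 0 $, and the exponential decay handles $ \infty $.
  \item $ \omega_{k, a, m}(\mbfe^+) = \DiffEp{k, a}[m] = \frac{i}{a} r^a $ maps $ \mcalD $ into $ \mcalH_m $, by the definition of $ \mcalD $.
  \item The operator is skew-symmetric on $ \mcalD $, since $ \langle i r^a f, g \rangle = -\langle f, i r^a g \rangle $ holds whenever both sides are finite.
\end{enumerate}
Hence \cref{thm:criterion-rad} gives $ d\Omega_{k, a, m}(\mbfe^+) = \closure{\restr{T}{\mcalD}} $.

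It remains to show $ \closure{\restr{T}{\mcalD}} = T $, which is the main (mild) obstacle. Since $ \restr{T}{\mcalD} \subseteq T $ and $ T $ is closed, $ \closure{\restr{T}{\mcalD}} \subseteq T $. But $ \closure{\restr{T}{\mcalD}} = d\Omega_{k, a, m}(\mbfe^+) $ is itself skew-adjoint. A skew-adjoint operator is maximally skew-symmetric and $ T $ is skew-symmetric, so the inclusion forces $ d\Omega_{k, a, m}(\mbfe^+) = T $. This is the same maximality argument used in the proof of \cref{thm:criterion}, so no separate core argument (such as cutting off and mollifying functions in $ \Dom(T) $) should be needed.

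The resulting identification gives both the stated domain and the formula $ d\Omega_{k, a, m}(\mbfe^+) f = \frac{i}{a} r^a f $.
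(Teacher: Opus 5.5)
Your proposal is correct and takes essentially the same approach as the paper: the multiplication operator $\frac{i}{a} r^a$ on its maximal domain is skew-adjoint by general theory, and the maximality of skew-adjoint operators among skew-symmetric ones (the mechanism behind \cref{thm:criterion-rad}) identifies it with $d\Omega_{k, a, m}(\mbfe^+)$. Your detour through the smooth subdomain $\mcalD$ makes explicit the $C^\infty$ hypothesis of \cref{thm:criterion-rad}, which the paper's one-line application passes over, and your final maximality step is valid.
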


\begin{proposition}\label{thm:operator-ep}
  Let $ k $ be a non-negative multiplicity function and $ a > 0 $
  such that $ \lambda_{k, a, 0} = \frac{2\dindex{k} + N - 2}{a} > -1 $.
  Then, we have
  \[
    \Dom(d\Omega_{k, a}(\mbfe^+))
    = \set{F \in L^2(\setR^N, w_{k, a}(x) \,dx)}{\enorm{x}^a F \in L^2(\setR^N, w_{k, a}(x) \,dx)}
  \]
  and
  \[
    d\Omega_{k, a}(\mbfe^+) F(x)
    = \DiffEp{k, a} F(x)
    = \frac{i}{a} \enorm{x}^a F(x).
  \]
\end{proposition}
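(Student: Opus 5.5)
The statement to prove is \cref{thm:operator-ep}. I would deduce it from \cref{thm:criterion}, in the same way that \cref{thm:operator-k} was obtained from the explicit diagonal operator. Write $M$ for the maximal multiplication operator
\[
  MF = \frac{i}{a}\enorm{x}^a F,
\]
defined on the domain $\mcalD_{\max} = \set{F \in L^2(\setR^N, w_{k,a}(x)\,dx)}{\enorm{x}^a F \in L^2(\setR^N, w_{k,a}(x)\,dx)}$. Multiplication by the real measurable function $\frac{1}{a}\enorm{x}^a$ on this maximal domain is self-adjoint; this is standard, e.g.\ the analogue of \cite[Example~3.8]{MR2953553} for multiplication operators. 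Hence $M$ is skew-adjoint, and the task is to show $d\Omega_{k,a}(\mbfe^+) = M$.

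To apply \cref{thm:criterion}, I take
\[
  \mcalD = \set{F \in \mcalD_{\max}}{F \in C^\infty(\setR^N \setminus \setenum{0})},
\]
or, more simply, $\mcalD = W_{k,a}(\setR^N)$ enlarged if needed. The hypotheses are checked as follows.
\begin{itemize}
  \item Each element of $W_{k,a}(\setR^N)$ is a finite sum of functions $p\otimes f_{k,a,m;l}$, namely $r^m$ times a polynomial in $r^a$ times $e^{-r^a/a}$, times a $k$-spherical harmonic. These are smooth on $\setR^N\setminus\setenum{0}$. After multiplication by $\enorm{x}^a$ they stay square integrable for the weight $w_{k,a}$, because of the exponential decay together with $\lambda_{k,a,m}>-1$, which controls integrability at the origin. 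So $W_{k,a}(\setR^N)\subseteq\mcalD\subseteq\mcalD_{\max}$.
  \item On $\mcalD$ we have $\omega_{k,a}(\mbfe^+) = \DiffEp{k,a} = M$, and the restriction of the skew-adjoint operator $M$ is skew-symmetric.
\end{itemize}
\cref{thm:criterion} then gives $d\Omega_{k,a}(\mbfe^+) = \closure{\restr{M}{\mcalD}}$.

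It remains to show $\closure{\restr{M}{\mcalD}} = M$, and this is the step where care is needed. The inclusion $\closure{\restr{M}{\mcalD}}\subseteq M$ holds because $M$ is closed. For the reverse inclusion I would avoid a density argument altogether and use maximality. By \cref{thm:criterion}, $d\Omega_{k,a}(\mbfe^+)$ is skew-adjoint and is contained in the skew-adjoint operator $M$. Taking adjoints of $d\Omega_{k,a}(\mbfe^+)\subseteq M$ gives $M = -M^* \subseteq -d\Omega_{k,a}(\mbfe^+)^* = d\Omega_{k,a}(\mbfe^+)$. Hence the two operators are equal, and this yields both the domain description and the formula in the statement.

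The only genuinely computational point is the membership $\enorm{x}^a W_{k,a}(\setR^N)\subseteq L^2(\setR^N, w_{k,a}(x)\,dx)$. In polar coordinates this reduces to the finiteness of
\[
  \int_0^\infty r^{2m+2a+2\dindex{k}+a+N-3}\,(\text{polynomial in } r^a)\,e^{-2r^a/a}\,dr .
\]
This integral converges because $2m+2\dindex{k}+N-2 > -a$ makes the exponent of $r$ greater than $-1$. The radial version \cref{thm:operator-ep-rad} is proved identically using \cref{thm:criterion-rad}.
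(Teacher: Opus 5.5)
Your proposal is correct and is essentially the paper's argument. The maximal multiplication operator $\frac{i}{a}\enorm{x}^a$ is skew-adjoint and extends $\restr{\omega_{k,a}(\mbfe^+)}{W_{k,a}(\setR^N)}$. Maximality of skew-adjoint operators, which is the mechanism behind \cref{thm:criterion}, then forces it to equal $d\Omega_{k,a}(\mbfe^+)$. Your smooth domain $\mcalD$ and the explicit adjoint-inclusion step just spell out what the paper compresses into ``the assertion follows from \cref{thm:criterion}''.
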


Since \cref{thm:operator-ep-rad} is proved in the same way as \cref{thm:operator-ep},
we focus on the latter.

\begin{proof}[Proof of \cref{thm:operator-ep}]
  The multiplication operator $ (i/a) \enorm{x}^a $,
  defined on the domain given in the assertion, is skew-adjoint
  by general theory about operators (see \cite[Example~3.8]{MR2953553}).
  Now the assertion follows from \cref{thm:criterion}.
\end{proof}

\subsection{The skew-adjoint operator \texorpdfstring{$ d\Omega_{k, a}(\mbfe^-) $}{dΩk,a(e-)}}

\begin{proposition}\label{thm:operator-em-rad}
  Let $ k $ be a non-negative multiplicity function, $ a > 0 $, and $ m \in \setN $
  such that $ \lambda_{k, a, m} = \frac{2m + 2\dindex{k} + N - 2}{a} > -1 $.
  Then, we have
  \[
    d\Omega_{k, a, m}(\mbfe^-)
    = \closure{\restr{\DiffEm{k, a}[m]}{\mcalD_{k, a, m}}}.
  \]
  Here, $ \mcalD_{k, a, m} $ consists of functions $ f $ satisfying the following conditions.
  \begin{enumromanp}
    \item $ f \in L^2(\setRp, r^{2\dindex{k} + a + N - 3} \,dr) \cap C^\infty(\setRp) $.
    \item $ \DiffEm{k, a}[m] f \in L^2(\setRp, r^{2\dindex{k} + a + N - 3} \,dr) $.
    \item \label{item:operator-em-rad-at-infinity}
          $ f $ is rapidly decreasing at infinity.
    \item \label{item:operator-em-rad-at-origin}
          $ f(r) = O(r^m) $ and $ (E_r - m) f(r) = O(r^{m + a}) $ as $ r \to 0 $.
  \end{enumromanp}
\end{proposition}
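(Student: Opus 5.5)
Since $ \mbfe^- \in \mathfrak{sl}(2, \setR) $, the plan is to apply \cref{thm:criterion-rad} with $ X = \mbfe^- $ and $ \mcalD = \mcalD_{k, a, m} $. There are three things to verify: the inclusions $ W_{k, a, m} \subseteq \mcalD_{k, a, m} \subseteq L^2(\setRp, r^{2\dindex{k} + a + N - 3} \,dr) \cap C^\infty(\setRp) $; that $ \DiffEm{k, a}[m] $ maps $ \mcalD_{k, a, m} $ into $ L^2 $; and that $ \restr{\DiffEm{k, a}[m]}{\mcalD_{k, a, m}} $ is skew-symmetric. The second inclusion and the mapping property are built into conditions (i) and (ii), so the work lies in the first inclusion and in skew-symmetry.

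\textbf{The inclusion $ W_{k, a, m} \subseteq \mcalD_{k, a, m} $.} Each $ f_{k, a, m; l} $ has the form $ r^m g(r^a) $ with $ g(t) = L_l^{\lambda_{k, a, m}}(2t/a) e^{-t/a} $ smooth on $ [0, \infty) $, so it is smooth on $ \setRp $ and rapidly decreasing at infinity. Near the origin, $ f(r) = O(r^m) $. Moreover $ (E_r - m)(r^m g(r^a)) = a r^{m + a} g'(r^a) = O(r^{m + a}) $, so condition (iv) holds. Condition (ii) holds because $ \DiffEm{k, a}[m] f_{k, a, m; l} = \omega_{k, a, m}(\mbfe^-) f_{k, a, m; l} $ is a finite combination of basis vectors; this follows from \cref{thm:action}, since $ \mbfe^- $ is a linear combination of $ \mbfk, \mbfn^\pm $.

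\textbf{Skew-symmetry.} Set $ \mu = 2\dindex{k} + a + N - 3 $ and $ \nu = m + 2\dindex{k} + N - 2 $, so that $ \DiffEm{k, a}[m] = \frac{i}{a} r^{-a} (E_r - m)(E_r + \nu) $. Since $ \frac{i}{a} $ is purely imaginary, skew-symmetry amounts to showing that $ A = r^{-a}(E_r - m)(E_r + \nu) $ is symmetric on $ \mcalD_{k, a, m} $ with respect to $ r^\mu \,dr $. Writing $ A = r^{-a}(r^2 \partial_r^2 + (1 + \nu - m) r \partial_r - m\nu) $, the weighted operator is
\[
  r^{\mu - a}(r^2 f'' + (1 + \nu - m) r f') = r^{\mu - a}\bigl(r^2 f'' + (\mu - a + 2) r f'\bigr)
  = \partial_r\bigl(r^{\mu - a + 2} f'\bigr),
\]
which is in divergence form because $ \nu - m + 1 = \mu - a + 2 $. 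For $ f, g \in \mcalD_{k, a, m} $, integrating by parts twice on $ [\varepsilon, R] $ gives
\[
  \int_\varepsilon^R \bigl((Af)\bar g - f \overline{Ag}\bigr) r^\mu \,dr
  = \Bigl[ r^{\mu - a + 1} \bigl( (E_r f) \bar g - f\, \overline{E_r g} \bigr) \Bigr]_\varepsilon^R .
\]
Replacing $ E_r $ by $ E_r - m $ does not change the boundary term, because the extra terms $ -m f \bar g + m f \bar g $ cancel. As $ R \to \infty $, the boundary term vanishes by rapid decrease (iii).

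\textbf{Main obstacle: the boundary term at the origin.} At $ r = \varepsilon $, condition (iv) gives $ (E_r - m) f \cdot \bar g = O(r^{2m + a}) $, so the boundary term is $ O(\varepsilon^{\mu + 2m + 1}) = O(\varepsilon^{a \lambda_{k, a, m} + a}) $. This tends to $ 0 $ because $ \lambda_{k, a, m} > -1 $. The exponent is exactly $ a(\lambda_{k, a, m} + 1) $, which is where the hypothesis enters, and the cancellation above is what makes condition (iv) sufficient.

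It remains to justify passing to the limits in the integrals. The integrands $ (Af)\bar g\, r^\mu $ and $ f\,\overline{Ag}\, r^\mu $ are integrable because $ Af, Ag, f, g \in L^2 $ by (i) and (ii), so the integrals over $ [\varepsilon, R] $ converge as $ \varepsilon \to 0 $ and $ R \to \infty $. With these verifications, \cref{thm:criterion-rad} yields the claimed equality.
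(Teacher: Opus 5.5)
Your proposal is correct and follows essentially the paper's route: apply \cref{thm:criterion-rad}, then prove skew-symmetry by integrating by parts twice, with the boundary term at infinity killed by (iii) and the boundary term at the origin killed by (iv) together with $ \lambda_{k, a, m} > -1 $. The one difference is that the paper first reduces, via the unitary equivalence of \cref{rem:unitary-equivalence}, to the normalized case $ N = 1 $, $ a = 1 $, $ m = 0 $ (where the boundary term is simply $ (f' \bar{g} - f \bar{g}') r^{\lambda + 1} $), whereas you work with general $ (k, a, m) $ directly; that is why you need, and correctly supply, the divergence-form identity and the cancellation of the $ \pm m f \bar{g} $ terms.
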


\begin{proof}
  Since the unitary equivalence in \cref{rem:unitary-equivalence} maps
  $ \mcalD_{k, a, m} $ (which implicitly depends on $ N $) onto
  $ \mcalD_{(\lambda_{k, a, m} + 1)/2, 1, 0} $ (which corresponds to the case $ N = 1 $),
  it suffices to consider the case $ N = 1 $,
  $ k \neq 0 $ is a non-negative multiplicity function, $ a = 1 $ and $ m = 0 $.
  We use the notation in \cref{rem:unitary-equivalence};
  that is, we set $ \lambda = \lambda_{k, 1, 0} = 2\dindex{k} - 1 > -1 $ and
  write $ \DiffEm{\lambda} $ and $ \Omega_\lambda $
  instead of $ \DiffEm{k, 1}[0] $ and $ \Omega_{k, 1, 0} $, respectively.
  In addition, we write $ W_\lambda $ instead of $ W_{k, 1, 0} $ and
  $ \mcalD_\lambda $ instead of $ \mcalD_{k, 1, 0} $.
  We need to prove that
  \[
    d\Omega_\lambda(\mbfe^-)
    = \closure{\restr{\DiffEm{\lambda}}{\mcalD_\lambda}}.
  \]
  Notice that
  \[
    W_\lambda
    \subseteq \mcalD_\lambda
    \subseteq L^2(\setRp, r^\lambda \,dr) \cap C^\infty(\setRp)
  \]
  and $ \mcalD_\lambda $ is stable under $ \DiffEm{\lambda} = ir^{-1} E_r (E_r + \lambda) $.
  Hence, by \cref{thm:criterion-rad}, it suffices to show that
  $ \restr{\DiffEm{\lambda}}{\mcalD_\lambda} $ is
  a skew-symmetric operator on $ L^2(\setRp, r^\lambda \,dr) $.

  Let $ f $, $ g \in \mcalD_\lambda $.
  By integrating by parts twice, we have
  \begin{align*}
    & \int_{\epsilon}^{R} r^{-1} E_r (E_r + \lambda) f(r) \cdot \conj{g(r)} \cdot r^\lambda \,dr \\
    &= \Brack[\bigg]{(f'(r) \conj{g(r)} - f(r) \conj{g'(r)}) r^{\lambda + 1}}_{\epsilon}^{R}
      + \int_{\epsilon}^{R} f(r) \cdot \conj{r^{-1} E_r (E_r + \lambda) g(r)} \cdot r^\lambda \,dr.
  \end{align*}
  The boundary term for $ r = R $ vanishes as $ R \to \infty $
  by the condition \cref{item:operator-em-rad-at-infinity}.
  On the other hand, the boundary term for $ r = \epsilon $ vanishes as $ \epsilon \to 0 $
  by the condition \cref{item:operator-em-rad-at-origin} and $ \lambda > -1 $.
  Therefore, we obtain
  \[
    \innprod{\DiffEm{\lambda} f}{g}_{L^2(\setRp, r^\lambda \,dr)}
    = -\innprod{f}{\DiffEm{\lambda} g}_{L^2(\setRp, r^\lambda \,dr)}.
      \qedhere
  \]
\end{proof}

Next, we investigate the skew-adjoint operator $ d\Omega_{k, a}(\mbfe^-) $.
We restrict our attention to the case $ k = 0 $.

\begin{proposition}\label{thm:operator-em}
  Let $ a > 0 $, and further assume that $ a > 1 $ when $ N = 1 $.
  Then, we have
  \[
    d\Omega_{0, a}(\mbfe^-)
    = \closure{\restr{\DiffEm{0, a}}{\mcalD_a}}.
  \]
  Here, $ \mcalD_a $ consists of functions $ F $ satisfying the following conditions.
  \begin{enumromanp}
    \item $ F \in L^2(\setR^N, \enorm{x}^{a - 2} \,dx) \cap C^\infty(\setR^N \setminus \setenum{0}) $.
    \item $ \DiffEm{0, a} F \in L^2(\setR^N, \enorm{x}^{a - 2} \,dx) $.
    \item \label{item:operator-em-at-infinity}
          $ F $ is rapidly decreasing at infinity.
    \item \label{item:operator-em-at-origin}
          In the case $ N \geq 2 $, $ F(x) = O(1) $ and $ E_x F(x) = o(1) $
          as $ x \to 0 $.
          In the case $ N = 1 $, $ F $ and $ F' $ are continuous at the origin.
  \end{enumromanp}
\end{proposition}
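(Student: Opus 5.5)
The plan is to apply \cref{thm:criterion} with $ X = \mbfe^- $, $ k = 0 $ and $ \mcalD = \mcalD_a $, exactly as \cref{thm:operator-em-rad} applied \cref{thm:criterion-rad}. Two things must be checked: that $ W_{0, a}(\setR^N) \subseteq \mcalD_a $, and that $ \restr{\DiffEm{0, a}}{\mcalD_a} $ is skew-symmetric on $ L^2(\setR^N, \enorm{x}^{a - 2} \,dx) $. Conditions (i) and (ii) of $ \mcalD_a $ are built in, and they put $ \DiffEm{0, a}(\mcalD_a) $ inside $ L^2 $.

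For the inclusion, an element of $ W_{0, a}(\setR^N) $ is a finite sum of functions
\[
  p(\omega) r^m L_l^{\lambda_{0, a, m}}\Paren*{\tfrac{2}{a} r^a} e^{-r^a/a},
\]
with $ p \in \mcalH^m(\Sphere{N - 1}) $. Such a function equals $ h(x)\, \phi(\enorm{x}^a) $, where $ h \in \mcalH^m(\setR^N) $ is a harmonic polynomial and $ \phi $ is smooth on $ [0, \infty) $. It is smooth off the origin and rapidly decreasing. Since $ W_{0, a}(\setR^N) $ is $ \omega_{0, a}(\mbfe^-) $-stable (it is an $ (\mathfrak{sl}(2, \setR), \mathfrak{so}(2)) $-module), it lies in $ L^2 $ together with its image, so (i)--(iii) hold.

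For (iv):
\begin{itemize}
  \item If $ m \geq 1 $, then $ F = O(r^m) = o(1) $ and $ E_x F = O(r^m) $.
  \item If $ m = 0 $, then $ F = \phi(r^a) $ and $ E_x F = a r^a \phi'(r^a) = O(r^a) = o(1) $.
  \item If $ N = 1 $, only $ m \in \{0, 1\} $ occur. The candidates are $ \phi(\abs{x}^a) $ and $ x \phi(\abs{x}^a) $, which are $ C^1 $ at the origin precisely because $ a > 1 $. This is where that hypothesis enters.
\end{itemize}

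For skew-symmetry, take $ F, G \in \mcalD_a $ and integrate over the shell $ \epsilon < \enorm{x} < R $. Since $ \DiffEm{0, a} = \frac{i}{a} \enorm{x}^{2 - a} \Laplacian $, the weight cancels:
\[
  \innprod{\DiffEm{0, a} F}{G} = \frac{i}{a} \int \Laplacian F \cdot \conj{G} \,dx .
\]
Green's second identity turns the skew-symmetry defect into boundary terms
\[
  \int_{\enorm{x} = \rho} \Paren*{\partial_r F \, \conj{G} - F \, \partial_r \conj{G}} \,d\sigma
\]
at $ \rho = \epsilon $ and $ \rho = R $. One point needs justification: the shell integrals converge to the full integrals. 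This follows from $ \Laplacian F \cdot \conj{G} = (\enorm{x}^{2 - a} \Laplacian F)\, \conj{G}\, \enorm{x}^{a - 2} $ being integrable, by (ii) and Cauchy--Schwarz in the weighted space. The $ R $-terms vanish by (iii), which I interpret as including derivatives; if needed I will read rapid decrease as applying to $ \nabla F $ as well.

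The main obstacle is the boundary term at $ \epsilon $. For $ N \geq 2 $ it equals
\[
  \epsilon^{N - 2} \int_{\Sphere{N - 1}} \Paren*{(E_x F)\conj{G} - F\, \conj{E_x G}}(\epsilon\omega) \,d\omega ,
\]
using $ \epsilon\, \partial_r = E_x $. This is $ \epsilon^{N - 2} \cdot o(1) $ by (iv), so it tends to $ 0 $; for $ N = 2 $ the $ o(1) $ in (iv) is exactly what is needed. For $ N = 1 $ the boundary term is
\[
  \Brack*{F' \conj{G} - F \conj{G'}}_{-\epsilon}^{\epsilon},
\]
which tends to $ 0 $ by continuity of $ F, F', G, G' $ at $ 0 $.

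Hence $ \innprod{\DiffEm{0, a} F}{G} = -\innprod{F}{\DiffEm{0, a} G} $, and \cref{thm:criterion} gives the claim.
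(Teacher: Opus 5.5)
Your proposal is correct and follows the paper's proof essentially step for step. You invoke \cref{thm:criterion}, where the paper only notes $W_{0,a}(\setR^N)\subseteq\mcalD_a$ and you actually verify it; you cancel the weight to reduce to symmetry of $\Laplacian$ on $L^2(\setR^N)$; and you kill the Green boundary terms at $R$ via (iii) and at $\epsilon$ via (iv), using the one-sided limits for $N=1$. Your extra remarks (absolute integrability of the shell integrands, and the need for some control of $\nabla F$ at infinity in the $R$-term) are points the paper leaves implicit, since its proof also simply cites (iii) there.
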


\begin{proof}
  Notice that $ \mcalD_a $ contains $ W_{0, a}(\setR^N) $.
  Hence, by \cref{thm:criterion}, it suffices to show that
  $ \restr{\DiffEm{0, a}}{\mcalD_a} $ is
  a skew-symmetric operator on $ L^2(\setR^N, \enorm{x}^{a - 2} \,dx) $.
  Moreover, since the factor $ \enorm{x}^{2 - a} $
  in $ \DiffEm{0, a} = (i/a) \enorm{x}^{2 - a} \Laplacian $
  cancels with the weight function $ \enorm{x}^{a - 2} $,
  what we want to prove is that $ \restr{\Laplacian}{\mcalD_a} $ is
  a symmetric operator on $ L^2(\setR^N) $.

  Let $ F $, $ G \in \mcalD_a $.
  By Green's second identity, we have
  \begin{align*}
    & \innprod{\Laplacian F}{G}_{L^2(\setR^N)} - \innprod{F}{\Laplacian G}_{L^2(\setR^N)} \\
    &= \lim_{\epsilon \to 0,\, R \to \infty}
      \int_{\epsilon \leq \enorm{x} \leq R} (\Laplacian F(x) \conj{G(x)} - F(x) \conj{\Laplacian G(x)}) \,dx \\
    &= \lim_{\epsilon \to 0,\, R \to \infty}
      \Brack*{
        \frac{1}{r}
        \int_{\enorm{x} = r} (E_x F(x) \conj{G(x)} - F(x) \conj{E_x G(x)}) \,dx
      }_{\epsilon}^{R} \\
    &= \lim_{\epsilon \to 0,\, R \to \infty}
      \Brack*{
        r^{N - 2}
        \int_{\Sphere{N - 1}} (E_x F(r\omega) \conj{G(r\omega)} - F(r\omega) \conj{E_x G(r\omega)}) \,d\omega
      }_{\epsilon}^{R}.
  \end{align*}
  The boundary term for $ r = R $ vanishes as $ R \to \infty $
  by the condition \cref{item:operator-em-at-infinity}.
  Next, let us consider the boundary term
  \begin{equation}
    \epsilon^{N - 2}
    \int_{\Sphere{N - 1}} (E_x F(\epsilon\omega) \conj{G(\epsilon\omega)} - F(\epsilon\omega) \conj{E_x G(\epsilon\omega)}) \,d\omega
    \label{eq:operator-em}
  \end{equation}
  for $ r = \epsilon $.
  In the case $ N \geq 2 $, \cref{eq:operator-em} vanishes as $ \epsilon \to 0 $
  by the condition \cref{item:operator-em-at-origin}.
  On the other hand, in the case $ N = 1 $, \cref{eq:operator-em} reduces to
  \[
    (F'(\epsilon) \conj{G(\epsilon)} - F(\epsilon) \conj{G'(\epsilon)})
    - (F'(-\epsilon) \conj{G(-\epsilon)} - F(-\epsilon) \conj{G'(-\epsilon)}),
  \]
  which vanishes as $ \epsilon \to 0 $
  by the condition \cref{item:operator-em-at-origin}.
  This completes the proof.
\end{proof}

\begin{remark}
  As can be seen from the proof,
  the conditions (iii) and (iv) in \cref{thm:operator-em-rad,thm:operator-em} can be relaxed.
  However, since they suffice for use in the proof of
  \cref{thm:schwartz-space-rad,thm:schwartz-space},
  we do not pursue this direction here.
\end{remark}

\section{The \texorpdfstring{$ (k, a) $}{(k, a)}-generalized Schwartz space}
\label{sec:schwartz-space}

\subsection{The \texorpdfstring{$ (k, a) $}{(k, a)}-generalized Schwartz space}

\begin{definition}
  Let $ k $ be a non-negative multiplicity function, $ a > 0 $, and $ m \in \setN $
  such that $ \lambda_{k, a, m} = \frac{2m + 2\dindex{k} + N - 2}{a} > -1 $.
  We define
  \[
    \mscrS_{k, a, m}
    = L^2(\setRp, r^{2\dindex{k} + a + N - 3})_{\Omega_{k, a, m}}^\infty,
  \]
  or the space of smooth vectors for the unitary representation $ \Omega_{k, a, m} $.
\end{definition}

Note that the definition of $ \mscrS_{k, a, m} $ implicitly depends on $ N $.

\begin{definition}\label{def:schwartz-space}
  Let $ k $ be a non-negative multiplicity function and $ a > 0 $
  such that $ \lambda_{k, a, 0} = \frac{2\dindex{k} + N - 2}{a} > -1 $.
  We define the \emph{$ (k, a) $-generalized Schwartz space} as
  \[
    \mscrS_{k, a}(\setR^N)
    = L^2(\setR^N, w_{k, a}(x) \,dx)_{\Omega_{k, a}}^\infty,
  \]
  or the space of smooth vectors for the unitary representation $ \Omega_{k, a} $.
\end{definition}

\begin{proposition}\label{thm:properties-of-schwartz-space}
  Let $ k $ be a non-negative multiplicity function and $ a > 0 $
  such that $ \lambda_{k, a, 0} = \frac{2\dindex{k} + N - 2}{a} > -1 $.
  \begin{enumarabicp}
    \item $ \mscrS_{k, a}(\setR^N) $ is a dense subspace of
          $ L^2(\setR^N, w_{k, a}(x) \,dx) $.
    \item $ \mscrS_{k, a}(\setR^N) $ is
          $ \Omega_{k, a}(\widetilde{\mathit{SL}}(2, \setR)) $-stable.
    \item For any $ X \in \mathfrak{sl}(2, \setR) $,
          $ \mscrS_{k, a}(\setR^N) $ is contained in $ \Dom(d\Omega_{k, a}(X)) $
          and $ d\Omega_{k, a}(X) $-stable.
    \item For any $ X \in \mathfrak{sl}(2, \setR) $,
          $ \mscrS_{k, a}(\setR^N) $ is a core of the skew-adjoint operator $ d\Omega_{k, a}(X) $;
          that is, $ d\Omega_{k, a}(X) = \restr{d\Omega_{k, a}(X)}{\mscrS_{k, a}(\setR^N)} $.
  \end{enumarabicp}
\end{proposition}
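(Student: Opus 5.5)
This statement is the specialization of \cref{thm:properties-of-smooth-vectors} to the unitary representation $(\Omega_{k, a}, L^2(\setR^N, w_{k, a}(x) \,dx))$ of $G = \widetilde{\mathit{SL}}(2, \setR)$, with Lie algebra $\mathfrak{sl}(2, \setR)$. The proof is therefore a direct transfer, and I would proceed as follows.

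First, I would check the hypotheses. By \cref{thm:unitary-representation}, under the assumption $\lambda_{k, a, 0} > -1$, $\Omega_{k, a}$ is a well-defined unitary representation of the Lie group $\widetilde{\mathit{SL}}(2, \setR)$ on the Hilbert space $L^2(\setR^N, w_{k, a}(x) \,dx)$. By \cref{def:schwartz-space}, $\mscrS_{k, a}(\setR^N)$ is exactly its space of smooth vectors $\mcalH^\infty$. So every item of \cref{thm:properties-of-smooth-vectors} applies verbatim.

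Items (1), (2) and (3) are then literally \cref{thm:properties-of-smooth-vectors}~(1), (2) and (3). For (2), note that $\pi(G)$-stability is all that is claimed; the smoothness of the action on $\mcalH^\infty$ is not needed.

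Item (4) is \cref{thm:properties-of-smooth-vectors}~(4), which gives $d\Omega_{k, a}(X) = \closure{\restr{d\Omega_{k, a}(X)}{\mscrS_{k, a}(\setR^N)}}$. The displayed formula in the statement omits the overline, and should be read with the closure, since \enquote{core} means precisely that the closure of the restriction recovers the operator. I would note this in the proof.

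No step is a genuine obstacle. The only point needing care is confirming that the representation in \cref{thm:unitary-representation} is strongly continuous, so that the general facts cited from \cite{MR929683,MR2953553} apply. This holds because $\Omega_{k, a}$ is a unitary representation in the standard (strongly continuous) sense.
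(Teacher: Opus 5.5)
Your proposal is correct and matches the paper, which proves this proposition in one line by specializing \cref{thm:properties-of-smooth-vectors} to $\Omega_{k,a}$ on $L^2(\setR^N, w_{k,a}(x)\,dx)$. You are also right that item (4) should be read with the closure, $d\Omega_{k,a}(X) = \closure{\restr{d\Omega_{k,a}(X)}{\mscrS_{k,a}(\setR^N)}}$; the overline is missing in the statement as written.
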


\begin{proof}
  It follows from \cref{thm:properties-of-smooth-vectors}.
\end{proof}

\begin{corollary}\label{thm:schwartz-space-is-ft-stable}
  Let $ k $ be a non-negative multiplicity function and $ a > 0 $
  such that $ \lambda_{k, a, 0} = \frac{2\dindex{k} + N - 2}{a} > -1 $.
  Then, the $ (k, a) $-generalized Schwartz space $ \mscrS_{k, a}(\setR^N) $ is
  stable under the $ (k, a) $-generalized Fourier transform $ \mscrF_{k, a} $.
\end{corollary}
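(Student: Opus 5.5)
The plan is to apply \cref{thm:properties-of-schwartz-space}~(2) to the specific group element defining $ \mscrF_{k, a} $. By definition,
\[
  \mscrF_{k, a}
  = c_{k, a}\,
    \Omega_{k, a}\Paren*{
      \exp_{\widetilde{\mathit{SL}}(2, \setR)}\Paren*{
        \frac{\pi}{2} \begin{pmatrix} 0 & -1 \\ 1 & 0 \end{pmatrix}
      }
    },
\]
so $ \mscrF_{k, a} $ is a nonzero scalar multiple of $ \Omega_{k, a}(g_0) $ for the single element $ g_0 \in \widetilde{\mathit{SL}}(2, \setR) $ given by the exponential above.

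The steps are as follows. Since $ \mscrS_{k, a}(\setR^N) $ is by \cref{def:schwartz-space} the space of smooth vectors for $ \Omega_{k, a} $, \cref{thm:properties-of-smooth-vectors}~(2) (equivalently \cref{thm:properties-of-schwartz-space}~(2)) gives $ \Omega_{k, a}(g) \mscrS_{k, a}(\setR^N) \subseteq \mscrS_{k, a}(\setR^N) $ for every $ g $. Directly: if $ v $ is smooth, then $ g \mapsto \Omega_{k, a}(g)\Omega_{k, a}(g_0)v = \Omega_{k, a}(g g_0) v $ is the composition of the smooth orbit map of $ v $ with right translation by $ g_0 $, hence smooth. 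Because $ \mscrS_{k, a}(\setR^N) $ is a linear subspace, multiplying by the scalar $ c_{k, a} $ preserves it. Hence $ \mscrF_{k, a}\mscrS_{k, a}(\setR^N) \subseteq \mscrS_{k, a}(\setR^N) $.

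The argument also gives surjectivity. Applying the same reasoning to $ g_0^{-1} $ shows that $ \mscrF_{k, a}^{-1} = c_{k, a}^{-1}\Omega_{k, a}(g_0^{-1}) $ preserves $ \mscrS_{k, a}(\setR^N) $ as well, so $ \mscrF_{k, a} $ maps this space onto itself. Continuity in the Fréchet topology follows from \cref{thm:properties-of-smooth-vectors}~(2), since $ \Omega_{k, a} $ restricts to a smooth representation on the space of smooth vectors.

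I do not expect any real obstacle here: the statement is a formal consequence of defining the Schwartz space intrinsically. The only point needing care is that $ g_0 $ lies in the universal cover rather than in $ \mathit{SL}(2, \setR) $, and this causes no difficulty because the stability holds for all elements of $ \widetilde{\mathit{SL}}(2, \setR) $.
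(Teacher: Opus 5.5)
Your proposal is correct and is exactly the paper's argument: $\mscrF_{k,a}$ is a nonzero scalar multiple of $\Omega_{k,a}(g_0)$ for a single $g_0 \in \widetilde{\mathit{SL}}(2,\setR)$, so the stability is a special case of the $\Omega_{k,a}(\widetilde{\mathit{SL}}(2,\setR))$-stability of the space of smooth vectors. Your additional remarks on surjectivity (applying the same argument to $g_0^{-1}$) and on continuity in the Fréchet topology are valid but go beyond what the statement asks.
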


\begin{proof}
  Since
  \[
    \mscrF_{k, a}
    = e^{\frac{i\pi}{2} (\lambda_{k, a, 0} + 1)}
      \Omega_{k, a}\Paren*{
        \exp_{\widetilde{\mathit{SL}}(2, \setR)}\Paren*{
          \frac{\pi}{2} \begin{pmatrix} 0 & -1 \\ 1 & 0 \end{pmatrix}
        }
      }
  \]
  (see \cite[(5.2)]{MR2956043}),
  the assertion is a special case of \cref{thm:properties-of-schwartz-space}~(2).
\end{proof}


\subsection{Description in terms of infinite sum}

In this subsection, we write $ \widetilde{\mathit{SO}}(2) $
for the Lie subgroup of $ \widetilde{\mathit{SL}}(2, \setR) $
with Lie algebra $ \mathfrak{so}(2) $.

\begin{proposition}\label{thm:schwartz-space-as-infinite-sum-rad}
  Let $ k $ be a non-negative multiplicity function, $ a > 0 $, and $ m \in \setN $
  such that $ \lambda_{k, a, m} = \frac{2m + 2\dindex{k} + N - 2}{a} > -1 $.
  Then, the following spaces coincide.
  \begin{enumalphp}
    \item $ \mscrS_{k, a, m} $, or the space of smooth vectors
          for $ \Omega_{k, a, m} $
    \item the space of smooth vectors
          for $ \restr{\Omega_{k, a, m}}{\widetilde{\mathit{SO}}(2)} $
    \item the space
          \[
            \set*{\sum_{l \in \setN} c_l f_{k, a, m; l}}{\text{%
              $ c_l \in \setC $, $ \faml{c_l}{l \in \setN} $ is rapidly decreasing%
            }},
          \]
          where the infinite sum is understood to converge in $ L^2(\setRp, r^{2\dindex{k} + a + N - 3}) $
  \end{enumalphp}
\end{proposition}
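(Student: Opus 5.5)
We prove the cyclic chain of inclusions (a) $\subseteq$ (b) $\subseteq$ (c) $\subseteq$ (a).

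The inclusion (a) $\subseteq$ (b) is immediate, since smoothness of $g \mapsto \Omega_{k, a, m}(g) f$ on $\widetilde{\mathit{SL}}(2, \setR)$ implies smoothness of its restriction to the Lie subgroup $\widetilde{\mathit{SO}}(2)$.

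For (b) $\subseteq$ (c), apply \cref{thm:condition-to-be-a-smooth-vector} to the one-dimensional group $\widetilde{\mathit{SO}}(2)$. A vector $f$ is smooth for the restriction if and only if $f \in \Dom(d\Omega_{k, a, m}(\mbfk)^n)$ for all $n$; here we use that $\mbfk \in i\,\mathfrak{so}(2)$, so $d\Omega(\mbfk) = i\,d\Omega(-i\mbfk)$ differs from the skew-adjoint generator only by a scalar. Write $f = \sum_l c_l f_{k, a, m; l}$. By \cref{thm:operator-k-rad}, which gives the diagonal form of $d\Omega_{k, a, m}(\mbfk)$, induction on $n$ yields
\[
  f \in \Dom(d\Omega_{k, a, m}(\mbfk)^n)
  \iff \sum_l (\lambda_{k, a, m} + 2l + 1)^{2n} \abs{c_l}^2 < \infty .
\]
Since $\lambda_{k, a, m} + 1 > 0$, the weights $\lambda_{k, a, m} + 2l + 1$ are comparable to $l + 1$. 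Hence the condition for all $n$ is equivalent to $\faml{c_l}{l \in \setN}$ being rapidly decreasing: summability for all $n$ gives boundedness of $(l+1)^n \abs{c_l}$, and conversely boundedness of $(l+1)^{n+1}\abs{c_l}$ gives square-summability of $(l+1)^n c_l$. The same argument shows (c) $\subseteq$ (b).

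The main step is (c) $\subseteq$ (a). By \cref{thm:condition-to-be-a-smooth-vector}~(c), with spanning set $\{\mbfk, \mbfn^+, \mbfn^-\}$ of the complexification (equivalently, real and imaginary combinations of these span $\mathfrak{sl}(2, \setR)$, and domains are unaffected by complex scalars), it suffices to show two things: the space $\mcalS$ in (c) lies in the domains of $d\Omega(\mbfk)$ and $d\Omega(\mbfn^\pm)$, and it is stable under them. For $\mbfk$ this is \cref{thm:operator-k-rad}. For $\mbfn^\pm$, first interpret $d\Omega(\mbfn^\pm)$ as complex-linear combinations of the skew-adjoint operators $d\Omega(X)$, $X \in \mathfrak{sl}(2, \setR)$, defined on the intersection of their domains. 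Each such $d\Omega(X)$ is closed and agrees with $\omega_{k, a, m}(X)$ on $W_{k, a, m}$ by \cref{thm:infinitesimal-representation-rad}. For $f = \sum c_l f_l \in \mcalS$, the partial sums $f^{(L)} \in W_{k, a, m}$ converge to $f$. By \cref{thm:action}, each $d\Omega(X) f^{(L)}$ is a finite combination of basis vectors whose coefficients are linear combinations of $c_{l}$, $\sqrt{l(\lambda + l)}\,c_{l-1}$ and $\sqrt{(l+1)(\lambda+l+1)}\,c_{l+1}$. These coefficient sequences are again rapidly decreasing, because the multipliers grow at most linearly in $l$. Therefore $d\Omega(X) f^{(L)}$ converges in $L^2$, and closedness gives $f \in \Dom(d\Omega(X))$ with $d\Omega(X) f$ equal to the termwise expression, which lies in $\mcalS$. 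Iterating, $f$ lies in the domain of every finite product, so $f$ is smooth.

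The only delicate point is this closedness argument, in particular checking that termwise application is legitimate. That reduces to the observation above that the ladder coefficients from \cref{thm:action} grow only polynomially, which preserves rapid decrease.
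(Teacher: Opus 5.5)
Your proposal is correct and follows essentially the same route as the paper. The paper likewise gets (b) $=$ (c) from the diagonal description of $d\Omega_{k,a,m}(\mbfk)$, and (c) $\subseteq$ (a) from partial sums in $W_{k,a,m}$, the ladder formulas, and closedness of the generators. The only cosmetic difference is that the paper works directly with the real basis $(i\mbfk,\ \mbfn^+ + \mbfn^-,\ i(\mbfn^+ - \mbfn^-))$ of $\mathfrak{sl}(2,\mathbb{R})$. This spares it from having to give meaning to $d\Omega_{k,a,m}(\mbfn^\pm)$ for these elements, which lie neither in $\mathfrak{sl}(2,\mathbb{R})$ nor in $i\,\mathfrak{sl}(2,\mathbb{R})$.
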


\begin{proposition}\label{thm:schwartz-space-as-infinite-sum}
  Let $ k $ be a non-negative multiplicity function and $ a > 0 $
  such that $ \lambda_{k, a, 0} = \frac{2\dindex{k} + N - 2}{a} > -1 $.
  Then, the following spaces coincide.
  \begin{enumalphp}
    \item $ \mscrS_{k, a}(\setR^N) $, or the space of smooth vectors
          for $ \Omega_{k, a} $
    \item the space of smooth vectors
          for $ \restr{\Omega_{k, a}}{\widetilde{\mathit{SO}}(2)} $
    \item the space
          \[
            \set*{\sum_{m, l \in \setN} p_{m, l} \otimes f_{k, a, m; l}}{
              \begin{aligned}
                & p_{m, l} \in \mcalH_k^m(\Sphere{N - 1}), \\
                & \text{$ \faml{\norm{p_{m, l}}^2_{L^2(\Sphere{N - 1}, w_k(\omega) \,d\omega)}}{m, l \in \setN} $ is rapidly decreasing}
              \end{aligned}
            },
          \]
          where the infinite sum is understood to converge in $ L^2(\setR^N, w_{k, a}(x) \,dx) $
  \end{enumalphp}
\end{proposition}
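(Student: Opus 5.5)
We prove the chain (a) $\subseteq$ (b) $=$ (c) $\subseteq$ (a).

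\emph{Step 1: (a) $\subseteq$ (b).} This is immediate: if $g \mapsto \Omega_{k, a}(g) F$ is smooth on $\widetilde{\mathit{SL}}(2, \setR)$, then its restriction to the Lie subgroup $\widetilde{\mathit{SO}}(2)$ is smooth.

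\emph{Step 2: (b) $=$ (c).} The Lie algebra of $\widetilde{\mathit{SO}}(2)$ is spanned by $i\mbfk$. By \cref{thm:condition-to-be-a-smooth-vector} applied to $\widetilde{\mathit{SO}}(2)$, a vector $F$ is smooth for the restriction if and only if $F \in \Dom(d\Omega_{k, a}(\mbfk)^n)$ for all $n$. Write $F = \sum_{m, l} p_{m, l} \otimes f_{k, a, m; l}$ using \cref{thm:unitary-representation} and \cref{thm:onb}. Iterating \cref{thm:operator-k}, this condition is equivalent to
\[
  \sum_{m, l} (\lambda_{k, a, m} + 2l + 1)^{2n} \norm{p_{m, l}}^2 < \infty \quad \text{for every } n.
\]
Since $\lambda_{k, a, m} + 2l + 1$ is comparable to $1 + m + l$ from above and below (using $a > 0$ and $\lambda_{k, a, 0} > -1$), this holds if and only if $\faml{\norm{p_{m, l}}^2}{m, l}$ is rapidly decreasing.

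A small point needs care here: a bound such as $\lambda_{k, a, 0} + 1 > 0$ only controls the terms with $m = l = 0$. The general lower bound follows from
\[
  \lambda_{k, a, m} + 2l + 1 \geq \min\Paren*{\lambda_{k, a, 0} + 1, 1} \cdot \Paren*{1 + \frac{2m}{a} + 2l}/C
\]
for a suitable constant $C$.

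\emph{Step 3: (c) $\subseteq$ (a).} This is the main step. By \cref{thm:condition-to-be-a-smooth-vector}~(c), it suffices to use the spanning set $\setenum{\mbfk, \mbfn^+, \mbfn^-}$ of the complexified algebra; equivalently, one may take real combinations such as $i\mbfk$, $i(\mbfn^+ + \mbfn^-)$ and $\mbfn^+ - \mbfn^-$, which span $\mathfrak{sl}(2, \setR)$. Let $\mcalS$ denote the space in (c).

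We show that $\mcalS \subseteq \Dom(d\Omega_{k, a}(X))$ and that $d\Omega_{k, a}(X)$ maps $\mcalS$ into $\mcalS$ for each such $X$; iterating then gives (c) $\subseteq$ (a). For $\mbfk$ this is Step 2.

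For $\mbfn^\pm$, the operator $d\Omega_{k, a}(\mbfn^\pm)$ is, by \cref{thm:infinitesimal-representation}, the closure of $\omega_{k, a}(\mbfn^\pm)$ restricted to $W_{k, a}(\setR^N)$. Take $F \in \mcalS$ and let $F_n$ be its finite truncations, which lie in $W_{k, a}(\setR^N)$. Then $F_n \to F$ in $L^2$. By \cref{thm:action},
\[
  \omega_{k, a}(\mbfn^+) F_n = \sum_{m, l < n} i\sqrt{(l + 1)(\lambda_{k, a, m} + l + 1)}\, p_{m, l} \otimes f_{k, a, m; l + 1}.
\]
The coefficients $\sqrt{(l + 1)(\lambda_{k, a, m} + l + 1)}$ have polynomial growth in $(m, l)$ and rapid decrease is preserved under index shifts, so this sequence converges in $L^2$ to an element of $\mcalS$. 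By closedness of $d\Omega_{k, a}(\mbfn^+)$, the vector $F$ lies in its domain and $d\Omega_{k, a}(\mbfn^+) F$ equals the limit. The argument for $\mbfn^-$ is the same.

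Since $\mcalS$ is stable under all three operators, $\mcalS \subseteq \Dom(d\Omega_{k, a}(X_1) \dotsm d\Omega_{k, a}(X_n))$ for every finite sequence from the spanning set, and hence $\mcalS \subseteq \mscrS_{k, a}(\setR^N)$.

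The main obstacle is bookkeeping rather than analysis. One must justify that \cref{thm:condition-to-be-a-smooth-vector} may be applied with the complexified basis; this holds because domains are unchanged by taking complex linear combinations, since $d\pi(iX) = i\,d\pi(X)$. One must also obtain the uniform two-sided polynomial comparison of $\lambda_{k, a, m} + 2l + 1$ with $1 + m + l$ described in Step 2.
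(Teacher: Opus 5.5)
Your proposal is correct and follows the paper's proof essentially step for step: (a) $\subseteq$ (b) is trivial, (b) $=$ (c) comes from the explicit description of $\Dom(d\Omega_{k, a}(\mbfk))$ in \cref{thm:operator-k}, and (c) $\subseteq$ (a) comes from truncating into $W_{k, a}(\setR^N)$, applying the ladder formulas of \cref{thm:action}, and using closedness. The one bookkeeping difference is that the paper never defines $d\Omega_{k, a}(\mbfn^\pm)$ for the non-real elements $\mbfn^\pm$; instead it runs the same truncation argument directly on $d\Omega_{k, a}(\mbfn^+ \pm \mbfn^-)$, using that $\mbfn^+ + \mbfn^-$ and $i(\mbfn^+ - \mbfn^-)$ lie in $\mathfrak{sl}(2, \setR)$ (not $i(\mbfn^+ + \mbfn^-)$ and $\mbfn^+ - \mbfn^-$, as you wrote), and your two convergent limits give this at once by adding and subtracting them.
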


Since \cref{thm:schwartz-space-as-infinite-sum-rad} is proved in the same way as
\cref{thm:schwartz-space-as-infinite-sum}, we focus on the latter.

\begin{proof}[Proof of \cref{thm:schwartz-space-as-infinite-sum}]
  It is clear that (a) is contained in (b).
  By \cref{thm:condition-to-be-a-smooth-vector} and
  \cref{thm:operator-k}, we have
  \begin{align*}
    \text{(b)}
    &= \bigcap_{n \in \setN} \Dom(d\Omega_{k, a}(\mbfk)^n) \\
    &= \set*{\sum_{m, l \in \setN} p_{m, l} \otimes f_{k, a, m; l}}{
      \begin{aligned}
        & p_{m, l} \in \mcalH_k^m(\Sphere{N - 1}), \\
        & \textstyle\sum_{m, l \in \setN} (\lambda_{k, a, m} + 2l + 1)^{2n} \norm{p_{m, l}}_{L^2(\Sphere{N - 1}, w_k(\omega) \,d\omega)}^2 < \infty \\
        & \text{for all $ n \in \setN $}
      \end{aligned}
    } \\
    &= \text{(c)}.
  \end{align*}

  We next prove that (c) is contained in (a).
  Recall from \cref{ssec:onb} that $ (\mbfk, \mbfn^+, \mbfn^-) $ denotes
  the Cayley transform of the standard basis $ (\mbfh, \mbfe^+, \mbfe^-) $
  of $ \mathfrak{sl}(2, \setR) $.
  Since $ (i\mbfk, \mbfn^+ + \mbfn^-, i (\mbfn^+ - \mbfn^-)) $ is a basis of
  $ \mathfrak{sl}(2, \setR) $,  by \cref{thm:condition-to-be-a-smooth-vector},
  it suffices to show that (c) is contained in the domains
  of $ d\Omega_{k, a}(\mbfk) $ and $ d\Omega_{k, a}(\mbfn^+ \pm \mbfn^-) $ and
  stable under these operators.
  The assertion concerning $ d\Omega_{k, a}(\mbfk) $ holds
  since (c) coincides with $ \bigcap_{n \in \setN} \Dom(d\Omega_{k, a}(\mbfk)^n) $
  as we see in the previous paragraph.
  In what follows, we prove the assertion concerning $ d\Omega_{k, a}(\mbfn^+ \pm \mbfn^-) $.
  Take any element $ F = \sum_{m, l \in \setN} p_{m, l} \otimes f_{k, a, m; l} $ of (c)
  and set $ F_n = \sum_{m, l = 0}^{n} p_{m, l} \otimes f_{k, a, m; l} $ for $ n \in \setN $.
  Then, for each $ n \in \setN $, we have
  \[
    F_n
    \in W_{k, a}(\setR^N)
    \subseteq \Dom(d\Omega_{k, a}(\mbfn^+ \pm \mbfn^-))
  \]
  and
  \begin{align*}
    & d\Omega_{k, a}(\mbfn^+ \pm \mbfn^-) F_n \\
    &= \sum_{m, l = 0}^{n} p_{m, l} \otimes \Paren*{
      i \sqrt{(l + 1) (\lambda_{k, a, m} + l + 1)} \, f_{k, a, m; l + 1}
      \pm i \sqrt{l (\lambda_{k, a, m} + l)} \, f_{k, a, m; l - 1}
    }
  \end{align*}
  by \cref{thm:action}.
  It is clear that $ \lim_{n \to \infty} F_n = F $ in $ L^2(\setR^N, w_{k, a}(x) \,dx) $.
  Moreover, since $ \faml{\norm{p_{m, l}}_{L^2(\Sphere{N - 1}, w_k(\omega) \,d\omega)}^2}{m, l \in \setN} $
  is rapidly decreasing, we have
  \begin{align*}
    & \lim_{n \to \infty} d\Omega_{k, a}(\mbfn^+ + \mbfn^-) F_n \\
    &= \sum_{m, l \in \setN} p_{m, l} \otimes \Paren*{
      i \sqrt{(l + 1) (\lambda_{k, a, m} + l + 1)} \, f_{k, a, m; l + 1}
      \pm i \sqrt{l (\lambda_{k, a, m} + l)} \, f_{k, a, m; l - 1}
    }
  \end{align*}
  in $ L^2(\setR^N, w_{k, a}(x) \,dx) $, and this limit again belongs to (c).
  Since $ d\Omega_{k, a}(\mbfn^+ \pm \mbfn^-) $ is a (skew-adjoint or self-adjoint,
  and hence) closed operator, we have
  \begin{align*}
    & F \in \Dom(d\Omega_{k, a}(\mbfn^+ \pm \mbfn^-)), \\
    & d\Omega_{k, a}(\mbfn^+ \pm \mbfn^-) F
      = \lim_{n \to \infty} d\Omega_{k, a}(\mbfn^+ \pm \mbfn^-) F_n
      \in \text{(c)}.
  \end{align*}
  Therefore, (c) is contained in the domains of $ d\Omega_{k, a}(\mbfn^+ \pm \mbfn^-) $
  and stable under these operators.
\end{proof}

\subsection{Determination of \texorpdfstring{$ \mscrS_{k, a, m} $}{Sk,a,m}}

In the following theorem, we write
\[
  \mscrS(\setRzp)
  = \set{\restr{u}{\setRzp}}{u \in \mscrS(\setR)},
\]
which we equip with the Fréchet topology defined by the seminorms
\[
  u \mapsto
  \sup_{t \geq 0} \abs*{t^\beta \Paren*{\Odif{t}}^\gamma u(t)}
\]
for $ \beta $, $ \gamma \in \setN $.

\begin{theorem}\label{thm:schwartz-space-rad}
  Let $ k $ be a non-negative multiplicity function, $ a > 0 $, and $ m \in \setN $
  such that $ \lambda_{k, a, m} = \frac{2m + 2\dindex{k} + N - 2}{a} > -1 $.
  Then, we have
  \[
    \mscrS_{k, a, m}
    = \set*{r \mapsto r^m u(r^a)}{u \in \mscrS(\setRzp)}.
  \]
\end{theorem}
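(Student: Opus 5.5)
First reduce to the model case with \cref{rem:unitary-equivalence}. The unitary $ \Phi $ intertwines $ \Omega_{k, a, m} $ with $ \Omega_\lambda $, where $ \lambda = \lambda_{k, a, m} > -1 $, so $ \Phi $ maps $ \mscrS_{k, a, m} $ onto the smooth vectors of $ \Omega_\lambda $ on $ L^2(\setRp, r^\lambda \,dr) $. It also sends $ r \mapsto r^m u(r^a) $ to $ r \mapsto \mathrm{const}\cdot u(ar) $. Since $ u \mapsto u(a\,\cdot) $ preserves $ \mscrS(\setRzp) $, it suffices to prove
\[
  L^2(\setRp, r^\lambda \,dr)^\infty_{\Omega_\lambda} = \mscrS(\setRzp)
\]
(restricted to $ \setRp $), where the relevant operators are $ \DiffH{\lambda} = 2E_r + \lambda + 1 $, $ \DiffEp{\lambda} = ir $, $ \DiffEm{\lambda} = i r^{-1}E_r(E_r + \lambda) = i(r\,d^2/dr^2 + (\lambda + 1)\,d/dr) $.

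For the inclusion $ \supseteq $, take $ u \in \mscrS(\setRzp) $. Because $ \lambda > -1 $, $ u $ lies in $ L^2(\setRp, r^\lambda\,dr) $ and in the domain $ \mcalD_\lambda $ of \cref{thm:operator-em-rad}: it is rapidly decreasing, and $ E_r u = r u' = O(r) $ at the origin. The three operators preserve $ \mscrS(\setRzp) $, since $ \DiffEm{\lambda} $ is $ r u'' + (\lambda + 1) u' $, a differential operator with polynomial coefficients and no singular terms. By \cref{thm:operator-ep-rad}, \cref{thm:operator-em-rad}, and the analogous criterion \cref{thm:criterion-rad} for $ \mbfh $ applied to $ \mcalD = \mscrS(\setRzp) $, the operators $ d\Omega_\lambda(\mbfe^\pm) $ and $ d\Omega_\lambda(\mbfh) $ act on $ \mscrS(\setRzp) $ by these formulas. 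Skew-symmetry on this domain is integration by parts, with the boundary terms controlled as before using $ \lambda > -1 $. Iterating and applying \cref{thm:condition-to-be-a-smooth-vector} gives smoothness.

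For the inclusion $ \subseteq $, I plan to use \cref{thm:schwartz-space-as-infinite-sum-rad}. A smooth vector is $ f = \sum_l c_l f_{\lambda; l} $ with $ (c_l) $ rapidly decreasing, and $ f_{\lambda; l}(r) = \mathrm{const}\cdot L_l^\lambda(2r)e^{-r} $. I want to show that this series converges in the Fréchet topology of $ \mscrS(\setRzp) $. For that I need polynomial-growth bounds in $ l $ for
\[
  \sup_{r \geq 0} \abs{r^\beta (d/dr)^\gamma (L_l^\lambda(2r)e^{-r})}.
\]
I would get these from three ingredients: the derivative identity $ (d/dx) L_l^\lambda = -L_{l-1}^{\lambda + 1} $; the three-term recurrence, which expresses $ x L_l^\lambda $ as a combination of $ L_{l+1}^\lambda, L_l^\lambda, L_{l-1}^\lambda $ with coefficients linear in $ l $; and a uniform bound such as $ \abs{L_l^\lambda(x)} e^{-x/2} \leq C(1 + l)^{\max(\lambda, 0)} $, or a comparable polynomial bound taken from \cref{sec:laguerre-polynomial}. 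The derivatives raise $ \lambda $ by integers, which is harmless since these bounds hold for all $ \lambda > -1 $.

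An alternative for this inclusion avoids sup-norm Laguerre estimates. Smooth vectors lie in all iterated domains, so $ r^\beta f \in L^2 $ and $ (\DiffEm{\lambda})^n f \in L^2 $. One could then use a Sobolev-type embedding, recovering smoothness at $ r = 0 $ from the regular-singular structure of $ r\,d^2/dr^2 + (\lambda+1)\,d/dr $. This is more delicate, so I prefer the Laguerre estimates.

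The main obstacle is the $ \subseteq $ direction: establishing uniform polynomial-in-$ l $ bounds for the Schwartz seminorms of $ L_l^\lambda(2r)e^{-r} $ on all of $ [0, \infty) $, including near $ r = 0 $ when $ -1 < \lambda < 0 $. I would first try the classical estimate $ \abs{L_l^\lambda(x)} \leq \binom{l + \lambda}{l} e^{x/2} $, valid for $ \lambda \geq 0 $. For $ -1 < \lambda < 0 $, I would reduce to the case $ \lambda \geq 0 $ using $ L_l^\lambda = L_l^{\lambda + 1} - L_{l-1}^{\lambda + 1} $.
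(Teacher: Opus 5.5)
Your proposal is correct and follows essentially the paper's route: $\supseteq$ by checking that the space lies in the domains from \cref{thm:operator-ep-rad,thm:operator-em-rad} and is stable under the operators, and $\subseteq$ by \cref{thm:schwartz-space-as-infinite-sum-rad} together with polynomial-in-$l$ bounds on the Schwartz seminorms of $L_l^\lambda(t)e^{-t/2}$. For $\supseteq$, including $\mathbf{h}$ is harmless but unnecessary, since $\mathbf{e}^{\pm}$ generate $\mathfrak{sl}(2,\mathbb{R})$; for $\subseteq$, remember that the normalizing constant in $f_{\lambda;l}$ depends on $l$, though it too has polynomial growth.

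The differences are cosmetic. The paper substitutes $t=\frac{2}{a}r^a$ directly rather than first passing to the model case via \cref{rem:unitary-equivalence}. It also takes the bounds ready-made from Duran's estimate (\cref{thm:estimate-1}), whereas you assemble them, correctly, from the classical bound $\lvert L_l^\lambda(x)\rvert\le\binom{l+\lambda}{l}e^{x/2}$, the identity $L_l^\lambda=L_l^{\lambda+1}-L_{l-1}^{\lambda+1}$, and the derivative and three-term recurrences.
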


\begin{proof}[Proof that $ \set*{r \mapsto r^m u(r^a)}{u \in \mscrS(\setRzp)} \subseteq \mscrS_{k, a, m} $]
  By \cref{thm:infinitesimal-lie-algebra-representation,thm:condition-to-be-a-smooth-vector},
  it suffices to show that $ \set*{r \mapsto r^m u(r^a)}{u \in \mscrS(\setRzp)} $ is
  contained in the domains of $ d\Omega_{k, a, m}(\mbfe^+) $ and $ d\Omega_{k, a, m}(\mbfe^-) $ and
  stable under these operators.

  A simple computation shows that $ \set*{r \mapsto r^m u(r^a)}{u \in \mscrS(\setRzp)} $ is
  contained in\linebreak
  $ L^2(\setRp, r^{2\dindex{k} + a + N - 3} \,dr) \cap C^\infty(\setRp) $ and
  stable under $ \DiffEp{k, a}[m] $ and $ \DiffEm{k, a}[m] $.
  Moreover, for any $ u \in \mscrS(\setRzp) $, the function $ r \mapsto r^m u(r^a) $ satisfies
  the conditions \cref{item:operator-em-rad-at-infinity} and \cref{item:operator-em-rad-at-origin}
  in \cref{thm:operator-em-rad}.
  This, together with \cref{thm:operator-ep-rad,thm:operator-em-rad}, proves the assertion.
\end{proof}

\begin{proof}[Proof that $ \mscrS_{k, a, m} \subseteq \set*{r \mapsto r^m u(r^a)}{u \in \mscrS(\setRzp)} $]
  By \cref{thm:schwartz-space-as-infinite-sum-rad}, we have
  \begin{align*}
    \mscrS_{k, a, m}
    &= \set*{\sum_{l \in \setN} c_l f_{k, a, m; l}}{\text{%
        $ c_l \in \setC $, $ \faml{c_l}{l \in \setN} $ is rapidly decreasing%
      }} \\
    &= \set*{
        \sum_{l \in \setN} c_l
        r^m L_l^{\lambda_{k, a, m}}\Paren*{\frac{2}{a} r^a} \exp\Paren*{-\frac{1}{a} r^a}
      }{\text{%
        $ c_l \in \setC $, $ \faml{c_l}{l \in \setN} $ is rapidly decreasing%
      }},
  \end{align*}
  where the last equality holds because
  \[
    f_{k, a, m; l}(r)
    = \Paren*{\frac{2^{\lambda_{k, a, m} + 1} \Gamma(l + 1)}{a^{\lambda_{k, a, m}} \Gamma(\lambda_{k, a, m} + l + 1)}}^{1/2}
      r^m
      L_l^{\lambda_{k, a, m}} \Paren*{\frac{2}{a} r^a}
      \exp \Paren*{-\frac{1}{a} r^a}
  \]
  and the constant multiple is of polynomial growth with respect to $ l $.
  Hence, it suffices to show that, for any rapidly decreasing family
  $ \faml{c_l}{l \in \setN} $ of complex numbers, the infinite sum
  \begin{equation}
    \sum_{l \in \setN} c_l L_l^{\lambda_{k, a, m}}(t) e^{-t/2}
      \label{eq:schwartz-space-rad}
  \end{equation}
  of functions of $ t \in \setRzp $ converges in $ \mscrS(\setRzp) $.

  Let us take any $ \beta $, $ \gamma \in \setN $.
  By \cref{thm:estimate-1}, for each $ l \in \setN $, we have
  \begin{align*}
    & \sup_{t \geq 0} \abs*{t^\beta \Paren*{\Odif{t}}^\gamma (L_l^{\lambda_{k, a, m}}(t) e^{-t/2})} \\
    &\leq 2^{\beta + \max\setenum{\beta - \lambda_{k, a, m}, 0}} (l + \beta)^{\underline{\beta}}
      \binom{\max\setenum{\lambda_{k, a, m} - \beta, 0} + l + \gamma}{l}.
  \end{align*}
  Since the right-hand side of the above inequality is of polynomial growth
  with respect to $ l $, it follows that
  \[
    \sum_{l \in \setN} \sup_{t \geq 0} \abs*{t^\beta \Paren*{\Odif{t}}^\gamma (L_l^{\lambda_{k, a, m}}(t) e^{-t/2})}
    < \infty.
  \]
  Therefore, the infinite sum \cref{eq:schwartz-space-rad} converges in $ \mscrS(\setRzp) $.
  This completes the proof.
\end{proof}

\subsection{Determination of \texorpdfstring{$ \mscrS_{k, a}(\setR) $}{Sk,a(R)}}

In this subsection,
we determine the one-dimensional $ (k, a) $-generalized Schwartz space.
This is one of the main results of this paper.

\begin{theorem}\label{thm:schwartz-space-of-rank-one}
  Let $ k $ be a non-negative multiplicity function and $ a > 0 $
  such that $ \lambda_{k, a, 0} = \frac{2\dindex{k} - 1}{a} > -1 $.
  Then, we have
  \[
    \mscrS_{k, a}(\setR)
    = \set*{x \mapsto u(\abs{x}^a) + x v(\abs{x}^a)}{\text{%
      $ u $, $ v \in \mscrS(\setRzp) $%
    }}.
  \]
\end{theorem}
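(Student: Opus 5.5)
For $N = 1$ the sphere is $\Sphere{0} = \setenum{\pm 1}$, and the $k$-spherical harmonics exist only in degrees $m = 0$ and $m = 1$. In degree $0$ they are the constants; in degree $1$ they are spanned by the odd function $\omega \mapsto \omega$. The plan is therefore to reduce \cref{thm:schwartz-space-of-rank-one} to \cref{thm:schwartz-space-rad} via the orthogonal decomposition of \cref{thm:unitary-representation}:
\[
  L^2(\setR, w_{k, a}(x) \,dx)
  = \bigl(\mcalH_k^0(\Sphere{0}) \otimes L^2(\setRp, r^{2\dindex{k} + a - 2} \,dr)\bigr)
    \oplus \bigl(\mcalH_k^1(\Sphere{0}) \otimes L^2(\setRp, r^{2\dindex{k} + a - 2} \,dr)\bigr).
\]
In concrete terms, this splits a function into its even and odd parts. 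Under this decomposition we have $\Omega_{k, a} = (1 \otimes \Omega_{k, a, 0}) \oplus (1 \otimes \Omega_{k, a, 1})$. The hypothesis $\lambda_{k, a, 0} > -1$ implies $\lambda_{k, a, 1} = \lambda_{k, a, 0} + 2/a > -1$, so \cref{thm:schwartz-space-rad} applies for both $m = 0$ and $m = 1$.

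The first step is the general fact that smooth vectors of a finite direct sum are the sums of smooth vectors of the summands. One direction holds because the orthogonal projections onto the summands commute with the representation, so they preserve smoothness. The other direction holds because sums of smooth maps are smooth. Alternatively, this follows from \cref{thm:schwartz-space-as-infinite-sum} by comparing with \cref{thm:schwartz-space-as-infinite-sum-rad}: the coefficient family indexed by $(m, l)$ with $m \in \setenum{0, 1}$ is rapidly decreasing if and only if each of the two families indexed by $l$ is. The conclusion is
\[
  \mscrS_{k, a}(\setR) = \bigl(1 \otimes \mscrS_{k, a, 0}\bigr) \oplus \bigl(\omega \otimes \mscrS_{k, a, 1}\bigr).
\]

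The second step is to plug in \cref{thm:schwartz-space-rad}, which gives $\mscrS_{k, a, 0} = \set{r \mapsto u(r^a)}{u \in \mscrS(\setRzp)}$ and $\mscrS_{k, a, 1} = \set{r \mapsto r v(r^a)}{v \in \mscrS(\setRzp)}$. Translating back to functions on $\setR$ via $x = r\omega$, we have $1 \otimes f$ equal to $x \mapsto f(\abs{x})$ and $\omega \otimes g$ equal to $x \mapsto \operatorname{sgn}(x) g(\abs{x})$. For $g(r) = r v(r^a)$ this gives $\operatorname{sgn}(x)\abs{x} v(\abs{x}^a) = x v(\abs{x}^a)$. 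Combining the two pieces yields exactly the claimed set $\set{u(\abs{x}^a) + x v(\abs{x}^a)}{u, v \in \mscrS(\setRzp)}$.

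The main obstacle is the bookkeeping in identifying the $m = 1$ summand. One must check that the normalization of $\mcalH_k^1(\Sphere{0})$ (as a one-dimensional space spanned by $\omega$) and the weight $w_{k, a}$ are consistent with the polar decomposition. For this I would verify that $w_{k, a}(x)\,dx$ on $\setR$ is counting measure on $\setenum{\pm 1}$ times $r^{2\dindex{k} + a - 2}\,dr$, since $w_k$ is constant on $\Sphere{0}$. I would also note that the representation acts trivially on the $\mcalH_k^m$ factor, including for $k \neq 0$, where the Dunkl operator mixes parity only through the reflection $x \mapsto -x$, which preserves each summand. Everything else follows directly from the earlier results.
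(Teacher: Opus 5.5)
Your proposal is correct and matches the paper's proof. The paper likewise uses $\mcalH_k^m(\Sphere{0}) = \setC 1, \setC \sgn, 0$ (for $m = 0$, $m = 1$, $m \geq 2$) to get the unitary equivalence $(f, g) \mapsto f(\abs{x}) + \sgn(x) g(\abs{x})$ from $\Omega_{k, a, 0} \oplus \Omega_{k, a, 1}$ onto $\Omega_{k, a}$, transports smooth vectors across it, and then applies \cref{thm:schwartz-space-rad} with $m = 0$ and $m = 1$.
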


\begin{proof}
  Since $ \Sphere{0} = \setenum{\pm 1} $ and
  \[
    \mcalH_k^m(\Sphere{0})
    = \begin{cases}
        \setC 1    & (m = 0) \\
        \setC \sgn & (m = 1) \\
        0          & (m \geq 2),
      \end{cases}
  \]
  \cref{thm:unitary-representation} gives the unitary equivalence
  \begin{align*}
    & \map{\Phi}{L^2(\setRp, r^{2\dindex{k} + a - 2} \,dr) \oplus L^2(\setRp, r^{2\dindex{k} + a - 2} \,dr)}{L^2(\setR, w_{k, a}(x) \,dx)}, \\
    & \Phi(f, g)(x) = f(\abs{x}) + \sgn(x) g(\abs{x})
  \end{align*}
  from $ \Omega_{k, a, 0} \oplus \Omega_{k, a, 1} $ onto $ \Omega_{k, a} $.
  Therefore, we have
  \begin{align*}
    \mscrS_{k, a}(\setR)
    &= \Phi(\mscrS_{k, a, 0} \oplus \mscrS_{k, a, 1}) \\
    &= \set{x \mapsto f(\abs{x}) + \sgn(x) g(\abs{x})}{\text{%
      $ f \in \mscrS_{k, a, 0} $, $ g \in \mscrS_{k, a, 1} $%
    }} \\
    &= \set*{x \mapsto u(\abs{x}^a) + x v(\abs{x}^a)}{\text{%
      $ u $, $ v \in \mscrS(\setRzp) $%
    }}
  \end{align*}
  by \cref{thm:schwartz-space-rad}.
\end{proof}

\subsection{Determination of \texorpdfstring{$ \mscrS_{0, a}(\setR^N) $}{S0,a(RN)} for rational \texorpdfstring{$ a $}{a}}

In this subsection,
we determine the $ (0, a) $-generalized Schwartz space $ \mscrS_{0, a}(\setR^N) $
for rational $ a $.
This is one of the main results of this paper.
For this purpose, we use propositions in \cref{sec:pg,sec:harmonic-polynomial}.

Recall~\cite[Proof of Theorem~2.1]{MR304972} that
the space $ \mcalP^m(\setR^N) $ of homogeneous polynomials of degree $ m $ decomposes as
\[
  \mcalP^m(\setR^N)
  = \bigoplus_{n = 0}^{\lfloor m/2 \rfloor} \enorm{x}^{2n} \mcalH^{m - 2n}(\setR^N),
\]
and hence the space $ \setC[[x]] = \setC[[x_1, \dots, x_N]] $ of formal power series
decomposes as
\[
  \setC[[x]]
  = \prod_{m \in \setN} \mcalP^m(\setR^N)
  = \prod_{m \in \setN} \bigoplus_{n = 0}^{\lfloor m/2 \rfloor} \enorm{x}^{2n} \mcalH^{m - 2n}(\setR^N)
  = \prod_{m, n \in \setN} \enorm{x}^{2n} \mcalH^{m}(\setR^N).
\]

In what follows, we write
\[
  S_{2p}(\setR^N)
  = \set*{F \in \mscrS(\setR^N)}{
      \tau(F) \in \prod_{m, n = 0}^{\infty} \enorm{x}^{2np} \mcalH^m(\setR^N)
    },
\]
where $ \tau(F) = \sum_{\alpha \in \setN^N} \frac{\partial^\alpha F(0)}{\alpha!} x^\alpha
\in \setC[[x]] $ denotes the formal Taylor series of $ F $ at the origin.
The above decomposition of $ \setC[[x]] $ yields that
$ S_2(\setR^N) = \mscrS(\setR^N) $.

\begin{lemma}\label{thm:stability}
  Let $ a = 2p/q \in \setQp $ with $ p $, $ q \in \setNp $,
  and further assume that $ a > 1 $ when $ N = 1 $.
  Then, the space
  \[
    \sum_{j = 0}^{q - 1} \enorm{x}^{ja} S_{2p}(\setR^N)
  \]
  is stable under $ \omega_{0, a}(\mathfrak{sl}(2, \setR))
  = \lspan_{\setR} \setenum{\DiffH{0, a}, \DiffEp{0, a}, \DiffEm{0, a}} $.
\end{lemma}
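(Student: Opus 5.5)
The plan is to check the three generators separately. I first record how each operator acts on a term $ \enorm{x}^{ja} F $ with $ F \in S_{2p}(\setR^N) $ and $ 0 \leq j \leq q - 1 $. Two facts about $ S_{2p}(\setR^N) $ will be used repeatedly.
\begin{enumromanp}
  \item It is stable under $ E_x $ and under multiplication by $ \enorm{x}^{2} $ and by $ \enorm{x}^{2p} $. Multiplication by polynomials and $ E_x $ preserve $ \mscrS(\setR^N) $. On formal Taylor series, $ E_x $ acts on $ \enorm{x}^{2np}\mcalH^m(\setR^N) $ by the scalar $ 2np + m $, and multiplication by $ \enorm{x}^{2p} $ raises $ n $ by one.
  \item It is stable under $ \enorm{x}^2 \Laplacian $. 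This follows from the identity $ \Laplacian(\enorm{x}^{2s} h) = 2s(2s + 2m + N - 2)\enorm{x}^{2s-2} h $ for $ h \in \mcalH^m(\setR^N) $.
\end{enumromanp}

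For $ \DiffH{0, a} $, the Leibniz rule gives $ E_x(\enorm{x}^{ja}F) = \enorm{x}^{ja}(ja + E_x)F $, so stability follows from (i). For $ \DiffEp{0, a} $, multiplication by $ \enorm{x}^a $ sends $ \enorm{x}^{ja}S_{2p} $ to $ \enorm{x}^{(j+1)a}S_{2p} $ when $ j \leq q-2 $. When $ j = q-1 $ it gives $ \enorm{x}^{qa}S_{2p} = \enorm{x}^{2p}S_{2p} \subseteq S_{2p} $, by (i).

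For $ \DiffEm{0, a} $ I use $ \Laplacian(\enorm{x}^s F) = \enorm{x}^s\Laplacian F + s\enorm{x}^{s-2}(2E_x + s + N - 2)F $ away from the origin. This yields
\[
  \enorm{x}^{2-a}\Laplacian(\enorm{x}^{ja}F)
  = \enorm{x}^{(j-1)a}\Paren*{\enorm{x}^2\Laplacian F + ja(2E_x + ja + N - 2)F}.
\]
For $ 1 \leq j \leq q-1 $, the bracket lies in $ S_{2p} $ by (i) and (ii), so the result lies in $ \enorm{x}^{(j-1)a}S_{2p} $. The remaining case $ j = 0 $ asks for $ \enorm{x}^{2-a}\Laplacian F \in V $, where $ V $ denotes the space in the statement. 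Since $ \Laplacian $ kills the terms with $ n = 0 $, the Taylor series of $ \Laplacian F $ lies in $ \enorm{x}^{2p-2}\prod_{m,n}\enorm{x}^{2np}\mcalH^m(\setR^N) $. Moreover $ 2 - a + (2p-2) = 2p - a = (q-1)a $. Hence it suffices to prove the following division claim: if $ G \in \mscrS(\setR^N) $ has Taylor series in $ \enorm{x}^{2p-2}\prod_{m,n}\enorm{x}^{2np}\mcalH^m(\setR^N) $, then $ G = \enorm{x}^{2p-2}G_1 $ with $ G_1 \in S_{2p}(\setR^N) $. Granting it, $ \enorm{x}^{2-a}\Laplacian F = \enorm{x}^{(q-1)a}G_1 \in V $.

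The division claim is the main obstacle; I would prove it as follows (it is presumably among the results of \cref{sec:pg,sec:harmonic-polynomial}). First, by a Borel-type argument, I would choose $ G_0 \in S_{2p}(\setR^N) $ whose Taylor series equals the formal quotient $ \tau(G)/\enorm{x}^{2p-2} $. Concretely, I would sum the homogeneous pieces cut off by $ \chi(x/\epsilon_d) $ with $ \epsilon_d \to 0 $ fast enough, as in Borel's lemma, and multiply by a compactly supported cutoff. Then $ G - \enorm{x}^{2p-2}G_0 $ is a Schwartz function that is flat at the origin. Dividing a flat smooth function by $ \enorm{x}^{2p-2} $ again gives a flat smooth function: this follows from the Taylor remainder estimates $ \partial^\alpha H(x) = O(\enorm{x}^M) $ for all $ M $, combined with the bound $ \abs{\partial^\beta \enorm{x}^{-(2p-2)}} \lesssim \enorm{x}^{-(2p-2)-\abs{\beta}} $. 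Away from a neighbourhood of the origin the quotient is clearly Schwartz, so $ G_1 = G_0 + (G - \enorm{x}^{2p-2}G_0)/\enorm{x}^{2p-2} \in S_{2p}(\setR^N) $. For $ N = 1 $, $ \mcalH^m $ is spanned by $ 1 $ and $ x $, and the same argument applies verbatim. The case $ p = 1 $ is trivial, since then no division is needed.
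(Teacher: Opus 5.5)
Your proof is correct. The underlying computation is the paper's: the formula for $\Laplacian(\enorm{x}^{s}h)$ with $h$ harmonic, and the wrap-around of $j=0$ to $j'=q-1$ coming from $qa=2p$. The organization, however, differs.

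The paper treats only $\DiffEm{0, a}$, since $\DiffH{0, a}$ follows from the bracket relation. It expands $F$ as a finite sum of pieces $\enorm{x}^{2np}p_{m,n}$ plus a remainder $\enorm{x}^{2n_0p}G_{n_0}$ with $G_{n_0}$ smooth. It then applies $\enorm{x}^{2-a}\Laplacian(\enorm{x}^{ja}\,\cdot\,)$ piece by piece for all $j$ at once, and lets $n_0$ be arbitrary.

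You handle $1\le j\le q-1$ with no expansion at all, via the exact identity and the stability of $S_{2p}(\setR^N)$ under $E_x$ and $\enorm{x}^2\Laplacian$. You isolate the one genuinely analytic point, $j=0$, in a division lemma proved by Borel's lemma plus division of a flat function. That lemma is not among the paper's appendices, contrary to your guess.

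It is, however, exactly what the paper's argument leaves implicit. The paper's expansion with $G_{n_0}$ smooth cannot hold as written. For $N\ge 2$, take $F$ equal near the origin to a nonzero harmonic polynomial of degree $\ge 2n_0p$; such a polynomial is not divisible by $\enorm{x}^2$. So the fact that $\enorm{x}^{2-2p}\Laplacian F$ extends smoothly across the origin, with the right Taylor series, needs an argument like yours; the paper instead reads it off from that expansion.

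The paper's route gives a single uniform formula showing where each component $\enorm{x}^{2np}\mcalH^m(\setR^N)$ is sent. Yours gives a shorter and more complete proof.

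One small correction. In (i) you also assert that $S_{2p}(\setR^N)$ is stable under multiplication by $\enorm{x}^2$. For $p\ge 2$ this is false: it moves $\enorm{x}^{2np}\mcalH^m(\setR^N)$ to the component $\enorm{x}^{2(np+1)}\mcalH^m(\setR^N)$, which is not of the allowed form. You never use it, since only multiplication by $\enorm{x}^{2p}$ and the composite $\enorm{x}^2\Laplacian$ enter, so simply drop it.
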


\begin{proof}
  It suffices to show the stability under $ \DiffEp{0, a} = (i/a) \enorm{x}^a $ and
  $ \DiffEm{0, a} = (i/a) \enorm{x}^{2 - a} \Laplacian $.
  The former is clear, so we focus on the latter.

  Let $ j \in \setenum{0, \dots, q - 1} $ and $ F \in S_{2p}(\setR^N) $.
  By the definition of $ S_{2p}(\setR^N) $,
  the formal Taylor series of $ F $ at the origin can be written as
  \[
    \tau(F)
    = \sum_{m, n = 0}^{\infty} \enorm{x}^{2np} p_{m, n}(x),
  \]
  where $ p_{m, n} \in \mcalH^m(\setR^N) $.
  Hence, for any $ n_0 \in \setN $, there exists $ G_{n_0} \in C^\infty(\setR^N) $
  such that
  \[
    F(x)
    = \sum_{m + 2np < 2n_0 p} \enorm{x}^{2np} p_{m, n}(x) + \enorm{x}^{2n_0 p} G_{n_0}(x).
  \]
  Since $ \Laplacian p_{m, n} = 0 $ and $ E_x p_{m, n} = mp_{m, n} $, we have
  \begin{align*}
    & \enorm{x}^{2 - a} \Laplacian (\enorm{x}^{ja} \enorm{x}^{2np} p_{m, n}(x)) \\
    &= \enorm{x}^{2 - a} \Paren*{
      \Laplacian (\enorm{x}^{2np + ja}) p_{m, n}(x)
      + 2 \sum_{l = 1}^{N} \Pdiff{\enorm{x}^{2np + ja}}{x_l} \Pdiff{p_{m, n}}{x_l}(x)
    } \\
    &= \enorm{x}^{2 - a} (
      (2np + ja)(2np + ja + N - 2) \enorm{x}^{2np + ja - 2} p_{m, n}(x)
      + 2(2np + ja) \enorm{x}^{2np + ja - 2} E_x p_{m, n}(x)
    ) \\
    &= (2np + ja)(2np + ja + 2m + N - 2) \enorm{x}^{2np + (j - 1)a} p_{m, n}(x).
  \end{align*}
  On the other hand, we have
  \begin{align*}
    & \enorm{x}^{2 - a} \Laplacian (\enorm{x}^{ja} \enorm{x}^{2n_0 p} G_{n_0}(x)) \\
    &= \enorm{x}^{2 - a} \Paren*{
      \Laplacian (\enorm{x}^{2n_0 p + ja}) G_{n_0}(x)
      + 2 \sum_{l = 1}^{N} \Pdiff{\enorm{x}^{2n_0 p + ja}}{x_l} \Pdiff{G_{n_0}}{x_l}(x)
      + \enorm{x}^{2n_0 p + ja} \Laplacian G_{n_0}(x)
    } \\
    &= \enorm{x}^{2 - a} (
      (2n_0 p + ja)(2n_0 p + ja + N - 2) \enorm{x}^{2n_0 p + ja - 2} G_{n_0}(x) \\
    &\qquad + 2(2n_0 p + ja) \enorm{x}^{2n_0 p + ja - 2} E_x G_{n_0}(x)
      + \enorm{x}^{2n_0 p + ja} \Laplacian G_{n_0}(x)
    )\\
    &= \enorm{x}^{2n_0 p + (j - 1)a} (
      (2n_0 p + ja)(2n_0 p + ja + N - 2) G_{n_0}(x) \\
    &\qquad + 2(2n_0 p + ja) E_x G_{n_0}(x)
      + \enorm{x}^2 \Laplacian G_{n_0}(x)
    ).
  \end{align*}
  From these equalities, we obtain
  \begin{align*}
    & \enorm{x}^{2 - a} \Laplacian (\enorm{x}^{ja} F(x)) \\
    &\in \sum_{m + 2np < 2n_0 p} (2np + ja)(2np + ja + 2m + N - 2) \enorm{x}^{2np + (j - 1)a} p_{m, n}(x) \\
    &\qquad + \enorm{x}^{2n_0 p + (j - 1)a} C^\infty(\setR^N).
  \end{align*}
  Observe that $ 2np + (j - 1)a = (2nq + j - 1)a $.
  Moreover, $ 2nq + j - 1 < 0 $ only when $ n = j = 0 $, in which case
  the corresponding term in the above sum vanishes.
  Since the above inclusion holds for any $ n_0 $,
  it follows that $ \enorm{x}^{2 - a} \Laplacian (\enorm{x}^{ja} F(x))
  \in \enorm{x}^{j'a} S_{2p}(\setR^N) $,
  where $ j' = j - 1 $ for $ j \in \setenum{1, \dots, q - 1} $ and
  $ j' = q - 1 $ for $ j = 0 $.
  This completes the proof.
\end{proof}

\begin{theorem}\label{thm:schwartz-space}
  Let $ a = 2p/q \in \setQp $ with $ p $, $ q \in \setNp $,
  and further assume that $ a > 1 $ when $ N = 1 $.
  Then, we have
  \[
    \mscrS_{0, a}(\setR^N)
    = \sum_{j = 0}^{q - 1} \enorm{x}^{ja} S_{2p}(\setR^N).
  \]
\end{theorem}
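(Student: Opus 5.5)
We prove the two inclusions separately, writing $ \mscrS' = \sum_{j = 0}^{q - 1} \enorm{x}^{ja} S_{2p}(\setR^N) $.

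\emph{The inclusion $ \mscrS' \subseteq \mscrS_{0, a}(\setR^N) $.}
We mimic the first half of the proof of \cref{thm:schwartz-space-rad}. By \cref{thm:condition-to-be-a-smooth-vector} it suffices to show three things: that $ \mscrS' $ lies in the domains of $ d\Omega_{0, a}(\mbfe^+) $ and $ d\Omega_{0, a}(\mbfe^-) $, that these operators act on it by $ \DiffEp{0, a} $ and $ \DiffEm{0, a} $, and that $ \mscrS' $ is stable under them. Stability under $ \omega_{0, a}(\mathfrak{sl}(2, \setR)) $ is \cref{thm:stability}. For $ \mbfe^+ $ we use \cref{thm:operator-ep}. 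Each element of $ \mscrS' $ is rapidly decreasing and bounded near $ 0 $, so it is in $ L^2(\setR^N, \enorm{x}^{a - 2} \,dx) $; here $ a - 2 > -N $ holds since $ a > 0 $, $ N \geq 2 $, or $ a > 1 $ when $ N = 1 $. For $ \mbfe^- $ we check the conditions of $ \mcalD_a $ in \cref{thm:operator-em}. Condition (ii) follows from stability, (iii) is clear, and (iv) holds since $ F = \enorm{x}^{ja} G $ with $ G $ Schwartz gives $ F = O(1) $ and $ E_x F = ja\enorm{x}^{ja} G + \enorm{x}^{ja} E_x G = o(1) $ when $ N \geq 2 $. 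When $ N = 1 $, the term $ \abs{x}^{ja} G $ with $ 0 < ja $ has continuous first derivative at $ 0 $ once $ ja > 1 $. The cases $ ja \leq 1 $ with $ j \geq 1 $ cannot occur, because $ a > 1 $. Hence $ \mscrS' \subseteq \mcalD_a $.

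\emph{The inclusion $ \mscrS_{0, a}(\setR^N) \subseteq \mscrS' $.}
This is the main obstacle. We would use \cref{thm:schwartz-space-as-infinite-sum}: an element is $ F = \sum_{m, l} p_{m, l} \otimes f_{0, a, m; l} $ with rapidly decreasing norms. Up to polynomially growing constants, $ p_{m, l} \otimes f_{0, a, m; l} $ is the function $ p_{m, l}(x) L_l^{\lambda_{0,a,m}}(\tfrac{2}{a}\enorm{x}^a) e^{-\enorm{x}^a/a} $, where $ p_{m, l} $ is extended as a harmonic polynomial. The key structural step is to write $ u_{m, l}(t) = L_l^{\lambda}(\tfrac{2}{a}t) e^{-t/a} $ and split it by residues of powers: $ u(t) = \sum_{j=0}^{q-1} t^j v_j(t^q) $ with $ v_j \in \mscrS(\setRzp) $. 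We expect such a decomposition for $ \mscrS(\setRzp) $ functions to be a lemma in \cref{sec:pg}, presumably with continuity in the Fréchet topology. Then $ u(\enorm{x}^a) = \sum_j \enorm{x}^{ja} v_j(\enorm{x}^{2p}) $, and $ p_m(x) v_j(\enorm{x}^{2p}) $ is Schwartz with Taylor series in $ \prod_n \enorm{x}^{2np}\mcalH^m $, i.e. it lies in $ S_{2p}(\setR^N) $.

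The difficulty is summing over $ (m, l) $. For each fixed $ j $ we must show that $ \sum_{m, l} p_{m, l}(x)\, v_{j; m, l}(\enorm{x}^{2p}) $ converges in $ \mscrS(\setR^N) $ and that $ S_{2p}(\setR^N) $ is closed there. Closedness is clear, since Taylor coefficients depend continuously on the function. For convergence we would combine two bounds. The first is polynomial-in-$ (m, l) $ estimates of the $ \mscrS(\setRzp) $ seminorms of $ v_{j; m, l} $, from \cref{thm:estimate-1} together with continuity of the splitting map. The second is polynomial-in-$ m $ bounds for $ \sup_{\abs{x} \le R}\abs{\partial^\alpha p(x)} $ in terms of $ \norm{p}_{L^2(\Sphere{N-1})} $ for $ p \in \mcalH^m $, which we expect from \cref{sec:harmonic-polynomial}. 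The second bound is needed because $ \dim \mcalH^m $ and the derivatives of harmonic polynomials grow polynomially in $ m $. Rapid decrease of the coefficients then absorbs all these growths. The subtle point we would check first is that the splitting into $ v_j $ is genuinely continuous with polynomial control; one route is to prove it via Borel-type correction of Taylor coefficients at $ 0 $, combined with Whitney's theorem on even/composite functions.
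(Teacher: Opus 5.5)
Your first inclusion is the paper's argument: stability from \cref{thm:stability}, the domain descriptions in \cref{thm:operator-ep,thm:operator-em}, and the checks of (iii) and (iv), including $ja>1$ when $N=1$. It is fine.

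The gap is in the reverse inclusion, at the step ``rapid decrease of the coefficients absorbs all these growths''. With the bounds you propose, the growths are not polynomial in $(m,l)$.

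First, the constant you discard is $\bigl(2^{\lambda+1}\Gamma(l+1)/(a^{\lambda}\Gamma(\lambda+l+1))\bigr)^{1/2}$ with $\lambda=\lambda_{0,a,m}$. It is bounded, but superexponentially small in $m$: roughly $(e/m)^{m/a}$ for $l=0$, up to polynomial factors. It exactly compensates the size of the unnormalized function. For $l=0$ and $\norm{p}_{L^2(\Sphere{N-1})}=1$ one already has $\sup_x\abs{p(x)}e^{-\enorm{x}^a/a}\geq c_N\,m^{m/a}e^{-m/a}$, with $c_N>0$ depending only on $N$.

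Likewise, $\sup_{s\geq0}\abs{L_l^\lambda(s)e^{-s/2}}=L_l^\lambda(0)=\binom{\lambda+l}{l}$ for $\lambda\geq0$. So the $\mscrS(\setRzp)$-seminorms of your $u_{m,l}$, and the bound from \cref{thm:estimate-1}, are exponential in $\min\{m,l\}$. Since $v_{0;m,l}(0)=u_{m,l}(0)$, the same holds for $v_{0;m,l}$, whatever splitting you choose.

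Restoring the normalization does not rescue fixed-index seminorms either. The plain sup norm becomes $\Gamma(\lambda+1)^{-1}\bigl(\Gamma(\lambda+l+1)/\Gamma(l+1)\bigr)^{1/2}\sim l^{\lambda/2}/\Gamma(\lambda+1)$ as $l\to\infty$. Its degree in $l$ grows with $m$, so it is not of polynomial growth jointly in $(m,l)$.

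Second, decoupling into $\sup_{\enorm{x}\leq R}\abs{\partial^\alpha p}$ and seminorms of $v$ cannot work. Since $\abs{p_{m,l}(x)}$ behaves like $\enorm{x}^m$, at infinity you need $\sup_t t^{(m+b)/2p}\abs{v^{(n)}_{j;m,l}(t)}$, with an exponent growing in $m$. Near the origin, the factor $\enorm{x}^m$ is precisely what tames the large value of $v_{j;m,l}$ at $t=0$.

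The missing idea is the $m$-dependent weighted condition ($2p$-PG) of \cref{def:pg}, applied to the \emph{normalized} pieces. It asks that $\sup_{t\geq0}t^{\max\{(m+b)/2p,0\}}\abs{h_{m,l;j}^{(n)}(t)}$ grow polynomially in $(m,l)$. This is combined with the exact identity $(2/a)^{\lambda_{0,a,m}/2}\enorm{x}^{m+b}=(a/2)^{(2b-N+2)/2a}\,t^{(m+b)/2p}$ for $t=(2/a)^q\enorm{x}^{2p}$. That identity pairs the weight with the global pointwise bound of \cref{thm:pointwise-estimates-for-derivatives-of-harmonic-polynomials}, which carries the factor $\enorm{x}^{m-\abs{\delta}}$.

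Proving ($a$-PG) for the normalized Laguerre functions requires a sharp bound on $t^{\alpha/2}L_l^\lambda(t)e^{-t/2}$ in the regime $\alpha\approx\lambda$. Duran's estimate does not give this; the paper derives it from Koornwinder's identity (\cref{thm:koornwinder,thm:estimate-2,thm:laguerre-satisfies-pg}).

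The residue splitting must then preserve these weighted bounds, which Fréchet continuity on $\mscrS(\setRzp)$ does not provide. The paper obtains this from Borel's theorem for families, with cutoff radii adapted to the size of the Taylor coefficients, together with \cref{thm:pg-power} (see \cref{thm:pg-decomposition}).
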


\begin{example}
  We list some special cases of \cref{thm:schwartz-space}.
  \begin{itemize}
    \item In the case $ p = q = 1 $ so that $ a = 2 $, we have
          \[
            \mscrS_{0, 2}(\setR^N)
            = \mscrS(\setR^N),
          \]
          which recovers the classical result.
    \item In the case $ q = 1 $ so that $ a = 2p $, we have
          \[
            \mscrS_{0, 2p}(\setR^N)
            = S_{2p}(\setR^N)
            = \set*{F \in \mscrS(\setR^N)}{
                \tau(F) \in \prod_{m, n = 0}^{\infty} \enorm{x}^{2np} \mcalH^m(\setR^N)
            }.
          \]
    \item In the case $ p = 1 $ so that $ a = 2/q $, we have
          \[
            \mscrS_{0, 2/q}(\setR^N)
            = S_{2p}(\setR^N)
            = \sum_{j = 0}^{q - 1} \enorm{x}^{2j/q} \mscrS(\setR^N).
          \]
  \end{itemize}
\end{example}

\begin{proof}[Proof that $ \sum_{j = 0}^{q - 1} \enorm{x}^{ja} S_{2p}(\setR^N) \subseteq \mscrS_{0, a}(\setR^N) $]
  By \cref{thm:infinitesimal-lie-algebra-representation,thm:condition-to-be-a-smooth-vector},
  it suffices to show that $ \sum_{j = 0}^{q - 1} \enorm{x}^{ja} S_{2p}(\setR^N) $ is
  contained in the domains of $ d\Omega_{0, a}(\mbfe^+) $ and $ d\Omega_{0, a}(\mbfe^-) $ and
  stable under these operators.

  The space $ \sum_{j = 0}^{q - 1} \enorm{x}^{ja} S_{2p}(\setR^N) $ is clearly
  contained in $ L^2(\setR^N, \enorm{x}^{a - 2}) \cap C^\infty(\setR^N \setminus \setenum{0}) $ and,
  by \cref{thm:stability}, is stable under $ \DiffEp{0, a} $ and $ \DiffEm{0, a} $.
  Moreover, any $ F \in \sum_{j = 0}^{q - 1} \enorm{x}^{ja} S_{2p}(\setR^N) $ satisfies
  the conditions \cref{item:operator-em-at-infinity} and \cref{item:operator-em-at-origin}
  in \cref{thm:operator-em}.
  This, together with \cref{thm:operator-ep,thm:operator-em}, proves the assertion.
\end{proof}

\begin{proof}[Proof that $ \mscrS_{0, a}(\setR^N) \subseteq \sum_{j = 0}^{q - 1} \enorm{x}^{ja} S_{2p}(\setR^N) $]
  Recall from \cref{thm:schwartz-space-as-infinite-sum} that
  \begin{equation}
    \mscrS_{0, a}(\setR^N)
    = \set*{\sum_{m, l \in \setN} p_{m, l} \otimes f_{0, a, m; l}}{
      \begin{aligned}
        & p_{m, l} \in \mcalH^m(\Sphere{N - 1}), \\
        & \text{$ \faml{\norm{p_{m, l}}_{L^2(\Sphere{N - 1})}^2}{m, l \in \setN} $ is rapidly decreasing}
      \end{aligned}
    }.
    \label{eq:schwartz-space-1}
  \end{equation}
  We first fix $ m $, $ l \in \setN $ and $ p_{m, l} \in \mcalH^m(\Sphere{N - 1}) $,
  and consider the function $ p_{m, l} \otimes f_{0, a, m; l} $.
  We extend $ p_{m, l} $ to a harmonic polynomial of degree $ m $ on $ \setR^N $,
  for which we again write $ p_{m, l} $.
  Then, we have
  \begin{align*}
    & (p_{m, l} \otimes f_{0, a, m; l})(x) \\
    &= \Paren*{\frac{2^{\lambda_{0, a, m} + 1} \Gamma(l + 1)}{a^{\lambda_{0, a, m}} \Gamma(\lambda_{0, a, m} + l + 1)}}^{1/2}
      p_{m, l}\Paren*{\frac{x}{\enorm{x}}}
      \enorm{x}^m L_l^{\lambda_{0, a, m}}\Paren*{\frac{2}{a} \enorm{x}^a} \exp\Paren*{-\frac{1}{a} \enorm{x}^a} \\
    &= \Paren*{\frac{2^{\lambda_{0, a, m} + 1} \Gamma(l + 1)}{a^{\lambda_{0, a, m}} \Gamma(\lambda_{0, a, m} + l + 1)}}^{1/2}
      p_{m, l}(x)
      L_l^{\lambda_{0, a, m}}\Paren*{\frac{2}{a} \enorm{x}^a} \exp\Paren*{-\frac{1}{a} \enorm{x}^a}.
  \end{align*}
  If we take smooth functions $ h_{m, l; 0} $, $ \dots $, $ h_{m, l; q - 1} $ on $ \setRzp $
  such that
  \[
    \Paren*{\frac{\Gamma(l + 1)}{\Gamma(\lambda_{0, a, m} + l + 1)}}^{1/2}
      L_l^{\lambda_{0, a, m}}(t) e^{-t/2}
    = \sum_{j = 0}^{q - 1} t^j h_{m, l; j}(t^q),
  \]
  the above identity becomes
  \begin{equation}
    (p_{m, l} \otimes f_{0, a, m; l})(x)
    = \sum_{j = 0}^{q - 1} 
      \sqrt{2}\, \Paren*{\frac{2}{a}}^j \enorm{x}^{ja} \cdot
      \Paren*{\frac{2}{a}}^{\lambda_{0, a, m} /2}
      p_{m, l}(x) h_{m, l; j}\Paren*{\Paren*{\frac{2}{a}}^q \enorm{x}^{2p}}.
    \label{eq:schwartz-space-2}
  \end{equation}
  Here, we note that
  \begin{equation}
    \Paren*{x \mapsto p_{m, l}(x) h_{m, l; j} \Paren*{\Paren*{\frac{2}{a}}^q \enorm{x}^{2p}}}
    \in S_{2p}(\setR^N)
    \label{eq:schwartz-space-3}
  \end{equation}
  for all $ j $.

  We now consider $ \faml{p_{m, l}}{m, l \in \setN} $
  such that $ p_{m, l} \in \mcalH^m(\Sphere{N - 1}) $ and
  $ \faml{\norm{p_{m, l}}_{L^2(\Sphere{N - 1})}^2}{m, l \in \setN} $ is rapidly decreasing.
  By \cref{thm:laguerre-satisfies-pg} and \cref{thm:pg-decomposition},
  we can take $ h_{m, l; 0} $, $ \dots $, $ h_{m, l; q - 1} $
  in the previous paragraph so that $ \faml{h_{m, l; 0}}{m, l \in \setN} $,
  $ \dots $, $ \faml{h_{m, l; q - 1}}{m, l \in \setN} $ satisfy ($ 2p $-PG) (\cref{def:pg}).
  In what follows, under this situation, we prove that the infinite sum
  \begin{equation}
    \sum_{m, l \in \setN}
      \Paren*{\frac{2}{a}}^{\lambda_{0, a, m} /2}
      p_{m, l}(x) h_{m, l; j}\Paren*{\Paren*{\frac{2}{a}}^q \enorm{x}^{2p}}
      \label{eq:schwartz-space-4}
  \end{equation}
  of functions of $ x \in \setR^N $ converges in $ \mscrS(\setR^N) $.
  Once this has been established,
  by \cref{eq:schwartz-space-1}, \cref{eq:schwartz-space-2}, \cref{eq:schwartz-space-3} and
  the fact that $ S_{2p}(\setR^N) $ is closed in $ \mscrS(\setR^N) $,
  the assertion follows.

  Let $ \beta $, $ \gamma \in \setN^N $. Then, we have
  \begin{align*}
    & \partial^\gamma \Paren*{
      \Paren*{\frac{2}{a}}^{\lambda_{0, a, m} /2}
      p_{m, l}(x) h_{m, l; j}\Paren*{\Paren*{\frac{2}{a}}^q \enorm{x}^{2p}}
    } \\
    &= \Paren*{\frac{2}{a}}^{\lambda_{0, a, m} /2}
      \sum_{\delta \leq \gamma}
      \binom{\gamma}{\delta} \cdot
      \partial^\delta p_{m, l}(x) \cdot
      \partial^{\gamma - \delta} \Paren*{h_{m, l; j}\Paren*{\Paren*{\frac{2}{a}}^q \enorm{x}^{2p}}} \\
    &= \Paren*{\frac{2}{a}}^{\lambda_{0, a, m} /2}
      \sum_{\delta \leq \gamma,\; \abs{\delta} \leq m}
      \binom{\gamma}{\delta}
      \partial^\delta p_{m, l}(x)
      \sum_{n = 0}^{\abs{\gamma - \delta}}
      P_{\gamma - \delta, n}(x) h_{m, l; j}^{(n)}\Paren*{\Paren*{\frac{2}{a}}^q \enorm{x}^{2p}},
  \end{align*}
  where $ P_{\gamma - \delta, n} $ is a polynomial depending
  only on $ \gamma - \delta $ and $ n $
  besides the fixed parameters $ N $, $ p $ and $ q $,
  and hence
  \begin{align}
    & \abs*{
        x^\beta \partial^\gamma \Paren*{
          \Paren*{\frac{2}{a}}^{\lambda_{0, a, m} /2}
          p_{m, l}(x) h_{m, l; j}\Paren*{\Paren*{\frac{2}{a}}^q \enorm{x}^{2p}}
        }
      } \notag \\
    &\leq \Paren*{\frac{2}{a}}^{\lambda_{0, a, m} /2}
      \enorm{x}^{\abs{\beta}}
      \sum_{\delta \leq \gamma,\; \abs{\gamma - \delta} \leq m}
      \binom{\gamma}{\delta}
      \abs{\partial^\delta p_{m, l}(x)}
      \sum_{n = 0}^{\abs{\gamma - \delta}}
      \abs{P_{\gamma - \delta, n}(x)} \abs*{h_{m, l; j}^{(n)}\Paren*{\Paren*{\frac{2}{a}}^q \enorm{x}^{2p}}}.
      \label{eq:schwartz-space-5}
  \end{align}
  By \cref{thm:pointwise-estimates-for-derivatives-of-harmonic-polynomials},
  we have
  \begin{equation}
    \abs{\partial^\delta p_{m, l}(x)}
    \leq \Paren*{
        (2m)^{\abs{\delta}}
        \Paren*{m + \frac{N}{2} - 1}^{\underline{\abs{\delta}}} \,
        \frac{\dim \mcalH^{m - \abs{\delta}}(\setR^N)}{\vol(\Sphere{N - 1})}
      }^{1/2}
      \norm{p_{m, l}}_{L^2(\Sphere{N - 1})}
      \enorm{x}^{m - \abs{\delta}},
    \label{eq:schwartz-space-6}
  \end{equation}
  and notice that $ \dim \mcalH^{m - \abs{\delta}}(\setR^N)
  = \binom{m - \abs{\delta} + N - 1}{N - 1} - \binom{m - \abs{\delta} + N - 3}{N - 1}
  = O(m^{N - 2}) $.
  Hence, the right-hand side of \cref{eq:schwartz-space-5} is bounded above
  by a finite sum of terms of the form
  \[
    \norm{p_{m, l}}_{L^2(\Sphere{N - 1})} \times
    (\text{polynomial of $ m $}) \times
    \Paren*{\frac{2}{a}}^{\lambda_{0, a, m} /2}
    \enorm{x}^{m + b}
    \abs*{h_{m, l; j}^{(n)}\Paren*{\Paren*{\frac{2}{a}}^q \enorm{x}^{2p}}}.
  \]
  Here,
  \begin{itemize}
    \item the degree and coefficients of the ``polynomial of $ m $''
          depend only on $ \gamma $ besides the fixed parameter $ N $, and
    \item $ b \in \setZ $ ranges over those values
          satisfying $ m + b \geq 0 $ and $ \abs{b} \leq M_{\beta, \gamma} $,
          where $ M_{\beta, \gamma} $ depends only on $ \beta $ and $ \gamma $
          besides the fixed parameters $ N $, $ p $ and $ q $.
  \end{itemize}
  We estimate the last factor of \cref{eq:schwartz-space-6}.
  Setting $ t = (2/a)^q \enorm{x}^{2p} $, we have
  \begin{align*}
    \Paren*{\frac{2}{a}}^{\lambda_{0, a, m} /2}
      \enorm{x}^{m + b}
      \abs*{h_{m, l; j}^{(n)}\Paren*{\Paren*{\frac{2}{a}}^q \enorm{x}^{2p}}}
    &= \Paren*{\frac{2}{a}}^{\lambda_{0, a, m} /2}
      \Paren*{\Paren*{\frac{a}{2}}^{1/a} t^{1/2p}}^{m + b}
      \abs{h_{m, l; j}^{(n)}(t)} \\
    &= \Paren*{\frac{a}{2}}^{(2b - N + 2)/2a}
      t^{(m + b)/2p} \abs{h_{m, l; j}^{(n)}(t)},
  \end{align*}
  which is bounded by a polynomial of $ m $ and $ l $
  since $ \faml{h_{m, l; j}}{m, l \in \setN} $ satisfies ($ 2p $-PG).
  Therefore, we obtain
  \begin{align*}
    & \sup_{x \in \setR^N} \abs*{
        x^\beta \partial^\gamma \Paren*{
          \Paren*{\frac{2}{a}}^{\lambda_{0, a, m} /2}
          p_{m, l}(x) h_{m, l; j}\Paren*{\Paren*{\frac{2}{a}}^q \enorm{x}^{2p}}
        }
      } \\
    &\leq \norm{p_{m, l}}_{L^2(\Sphere{N - 1})} \times (\text{polynomial of $ m $ and $ l $}).
  \end{align*}
  Since $ \faml{\norm{p_{m, l}}_{L^2(\Sphere{N - 1})}}{m, l \in \setN} $ is rapidly decreasing,
  it follows that
  \[
    \sum_{m, l \in \setN} \sup_{x \in \setR^N} \abs*{
        x^\beta \partial^\gamma \Paren*{
          \Paren*{\frac{2}{a}}^{\lambda_{0, a, m} /2}
          p_{m, l}(x) h_{m, l; j}\Paren*{\Paren*{\frac{2}{a}}^q \enorm{x}^{2p}}
        }
      }
    < \infty.
  \]
  Therefore, the infinite sum \cref{eq:schwartz-space-4} converges in $ \mscrS(\setR^N) $.
  This completes the proof.
\end{proof}

\subsection{Connection with previous work}
\label{ssec:previous-work}

In this subsection, we discuss the connection between our results and previous work.

\begin{remark}\label{rem:smooth-vectors-for-kostants-model}
  In \cite[Section~4]{MR1755901}, Kostant studies the properties of the space
  of the smooth vectors $ L^2(\setRp, dr)_{\pi_\lambda}^\infty $
  for the unitary representation $ \pi_\lambda $ (see \cref{rem:kostants-model}),
  but explicit determination of this space is not addressed there.

  By \cref{thm:schwartz-space-rad} and the unitary equivalence in \cref{rem:kostants-model},
  we now obtain the explicit formula
  \[
    L^2(\setRp, dr)_{\pi_\lambda}^\infty
    = r^{\lambda/2} \mscrS(\setRzp).
  \]
\end{remark}

\begin{remark}
  Gorbachev--Ivanov--Tikhonov~\cite[Propositions~5.2~(ii) and 5.5~(ii)]{MR4629458}
  proved that the condition $ a \in \set{2/q}{q \in \setNp} $ is necessary
  for the image $ \mscrF_{k, a}(\mscrS(\setR^N)) $ to consist of
  rapidly decreasing functions at infinity,
  and that this condition is also sufficient in the case $ N = 1 $.

  Our results show that this condition is sufficient for general $ N $ in the case $ k = 0 $.
  Indeed, since $ \mscrS_{0, 2/q}(\setR^N) = \sum_{j = 0}^{q - 1} \enorm{x}^{2j/q} \mscrS(\setR^N) $
  is stable under $ \mscrF_{0, 2/q} $ (\cref{thm:schwartz-space,thm:schwartz-space-is-ft-stable}),
  we obtain
  \[
    \mscrF_{0, 2/q}(\mscrS(\setR^N))
    \subseteq \mscrF_{0, 2/q}\Paren*{\sum_{j = 0}^{q - 1} \enorm{x}^{2j/q} \mscrS(\setR^N)}
    =         \sum_{j = 0}^{q - 1} \enorm{x}^{2j/q} \mscrS(\setR^N).
  \]
\end{remark}

\begin{remark}
  Ivanov~\cite[(1.3), Theorem~1]{MR4684153} considered the space
  \[
    \mscrS_a(\setR)
    = \set{x \mapsto f(\abs{x}^{a/2}) + x g(\abs{x}^{a/2})}{\text{%
      $ f $, $ g \in \mscrS(\setR) $ are even functions%
    }}
  \]
  in his notation, and proved that this space is stable under
  the one-dimensional $ (k, a) $-generalized Fourier transform $ \mscrF_{k, a} $ and
  the differential operator $ \DiffEm{k, a} = (i/a) \enorm{x}^{2 - a} \Laplacian $.

  By a classical result found in \cite[Theorem~1]{MR7783},
  every even function $ f \in \mscrS(\setR) $ can be expressed as
  $ f(x) = u(x^2) $ ($ u \in \mscrS(\setRzp) $).
  Hence, $ \mscrS_a(\setR) $ coincides with
  the one-dimensional $ (k, a) $-generalized Schwartz space
  \[
    \mscrS_{k, a}(\setR)
    = \set*{x \mapsto u(\abs{x}^a) + x v(\abs{x}^a)}{\text{%
      $ u $, $ v \in \mscrS(\setRzp) $%
    }}
  \]
  in our terminology (\cref{thm:schwartz-space-of-rank-one}).

  In \cite{MR4684153}, which focuses on the case $ N = 1 $,
  the space of smooth vectors is not discussed.
  The fact that $ \mscrS_a(\setR) $ coincides with the space $ \mscrS_{k, a}(\setR) $
  of smooth vectors shows stability not only under individual operators
  such as $ \mscrF_{k, a} $ and $ \DiffEm{k, a} $
  but also under $ \Omega_{k, a}(g) $ for any $ g \in \widetilde{SL}(2, \setR) $ and
  $ d\Omega_{k, a}(X) $ for any $ X \in \mathfrak{sl}(2, \setR) $.
\end{remark}

\begin{remark}
  In a recent preprint by Faustino--Negzaoui~\cite{arXiv2507-04064}, 
  another Schwartz-type space for the $ (k, a) $-generalized Fourier transform
  has been proposed.
  In the case $ N = 1 $, they~\cite[Definition~2.1, Theorem~2.1]{arXiv2507-04064}
  considered the space
  \begin{align*}
    & \widetilde{\mscrS}_{k, n}(\setR) \\
    &= \set*{f \in C^\infty(\setR \setminus \setenum{0})}{\text{%
      $ \sup_{x \in \setR \setminus \setenum{0}}
      \abs{(\abs{x}^{2/n})^\alpha (\abs{x}^{2 - 2/n} \Laplacian_k)^\beta (x^l f^{(l)}(x))} < \infty $
      for all $ \alpha $, $ \beta $, $ l \in \setN $%
    }}
  \end{align*}
  ($ \mscrS_{k, n}(\setR) $ in their notation) and
  seem to claim that this space is stable under
  the one-dimensional $ (k, 2/n) $-generalized Fourier transform $ \mscrF_{k, 2/n} $.

  However, this claim does not hold even in the classical case $ (k, 2/n) = (0, 2) $.
  As a counterexample, while the function $ x \mapsto e^{-\abs{x}} $
  belongs to $ \widetilde{\mscrS}_{0, 1}(\setR) $,
  its Fourier transform $ \xi \mapsto \sqrt{2/\pi}\, (1 + \xi^2)^{-1} $
  does not belong to $ \widetilde{\mscrS}_{0, 1}(\setR) $.
  In fact, they use the intertwining properties
  \begin{align*}
    \mscrF_{k, a} \circ \enorm{x}^a
    = -\enorm{x}^{2 - a} \Laplacian_k \circ \mscrF_{k, a}, \qquad
    \mscrF_{k, a} \circ \enorm{x}^{2 - a} \Laplacian_k
    = -\enorm{x}^a \circ \mscrF_{k, a}
  \end{align*}
  proved in \cite[Theorem~5.6]{MR2956043}, which hold on the dense subspace
  $ W_{k, a}(\setR^N) $ (see \cref{thm:unitary-representation} for the definition)
  in the usual sense and
  on $ L^2(\setR^N, w_{k, a}(x) \,dx) $ in the distribution sense.
  However, the proof there~\cite[Proof of Theorem~2.1]{arXiv2507-04064} use these identities
  beyond $ W_{k, a}(\setR^N) $ in the usual sense,
  rather than in the distribution sense.
\end{remark}

\section{The Schwartz space for the minimal representation of the conformal group}
\label{sec:conformal-group}

\subsection{\texorpdfstring{$ L^2 $}{L2}-model for the Segal--Shale--Weil representation}

In this subsection, we recall basic properties of the $ L^2 $-model for the Segal--Shale--Weil representation.
We adopt the same normalization as in Kobayashi--Mano~\cite{MR2401813}.
For details on the Segal--Shale--Weil representation, we refer the reader to Folland~\cite{MR983366}
(a different normalization is adopted there).

Let $ \mathit{Mp}(N, \setR) $ denote the metaplectic group,
or the double covering group of the symplectic group $ \mathit{Sp}(N, \setR) $.
Its Lie algebra of $ \mathit{Mp}(N, \setR) $ is
\[
  \mathfrak{sp}(N, \setR)
  = \set*{
      \begin{pmatrix} A & B \\ C & -\transpose{A} \end{pmatrix}
    }{\text{%
      $ A \in \mathfrak{gl}(N, \setR) $, $ B $ and $ C $ are real symmetric%
    }}.
\]

The \emph{$ L^2 $-model for the Segal--Shale--Weil representation},
for which we write $ \Pi_2 $,
is a unitary representation of $ \mathit{Mp}(N, \setR) $ on $ L^2(\setR^N) $.
The infinitesimal representation $ d\Pi_2 $ is given by
\begin{align*}
  d\Pi_2\Paren*{\begin{pmatrix} A & 0 \\ 0 & -\transpose{A} \end{pmatrix}}
  &= \sum_{j, k = 1}^{N} A_{jk} x_j \Pdif{x_k} + \frac{1}{2} \tr(A), \\
  d\Pi_2\Paren*{\begin{pmatrix} 0 & B \\ 0 & 0 \end{pmatrix}}
  &= \frac{i}{2} \sum_{j, k = 1}^{N} B_{jk} x_j x_k, \\
  d\Pi_2\Paren*{\begin{pmatrix} 0 & 0 \\ C & 0 \end{pmatrix}}
  &= \frac{i}{2} \sum_{j, k = 1}^{N} C_{jk} \frac{\partial^2}{\partial x_j \partial x_k}.
\end{align*}
Here, each differential operator $ D $ on the right-hand side is
understood in the distribution sense,
and its domain consists of all $ F \in L^2(\setR^N) $ such that $ DF \in L^2(\setR^N) $.

Let us consider the $ \mathfrak{sl}_2 $-triple
$ (\begin{psmallmatrix} I_N & 0 \\ 0 & -I_N \end{psmallmatrix},
\begin{psmallmatrix} 0 & I_N \\ 0 & 0 \end{psmallmatrix},
\begin{psmallmatrix} 0 & 0 \\ I_N & 0 \end{psmallmatrix}) $
in $ \mathfrak{sp}(N, \setR) $.
By the above formulas, we have
\begin{alignat*}{3}
  & d\Pi_2 \Paren*{\begin{pmatrix} I_N & 0 \\ 0 & -I_N \end{pmatrix}}
  &\mbox{}=\mbox{} & E_x + \frac{N}{2}
  &\mbox{}=\mbox{} & d\Omega_{0, 2}(\mbfh), \\
  & d\Pi_2 \Paren*{\begin{pmatrix} 0 & I_N \\ 0 & 0 \end{pmatrix}}
  &\mbox{}=\mbox{} & \frac{i}{2} \enorm{x}^2
  &\mbox{}=\mbox{} & d\Omega_{0, 2}(\mbfe^+), \\
  & d\Pi_2 \Paren*{\begin{pmatrix} 0 & 0 \\ I_N & 0 \end{pmatrix}}
  &\mbox{}=\mbox{} & \frac{i}{2} \Laplacian
  &\mbox{}=\mbox{} & d\Omega_{0, 2}(\mbfe^-).
\end{alignat*}
Hence, up to the choice of the covering,
$ \Omega_{0, 2} $ coincides with the restriction of $ \Pi_2 $
to the subgroup generated by
$ \begin{psmallmatrix} I_N & 0 \\ 0 & -I_N \end{psmallmatrix} $,
$ \begin{psmallmatrix} 0 & I_N \\ 0 & 0 \end{psmallmatrix} $ and
$ \begin{psmallmatrix} 0 & 0 \\ I_N & 0 \end{psmallmatrix} $.

\subsection{The classical Schwartz space}

The following theorem is classical,
but we include this for comparison with \cref{thm:schwartz-space-conformal} stated later.

\begin{theorem}
  The following spaces coincide.
  \begin{enumalphp}
    \item the space of smooth vectors for the Segal--Shale--Weil representation $ \Pi_2 $
    \item $ \mscrS_{0, 2}(\setR^N) $,
          or the space of smooth vectors for $ \Omega_{0, 2} $
    \item the space of smooth vectors for $ \restr{\Omega_{0, 2}}{\widetilde{\mathit{SO}}(2)} $
    \item the classical Schwartz space $ \mscrS(\setR^N) $
  \end{enumalphp}
\end{theorem}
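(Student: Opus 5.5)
The equalities (b) $=$ (c) $=$ (d) will follow from results already proved, so most of the work is (a) $=$ (d). First, (b) $=$ (c) is \cref{thm:schwartz-space-as-infinite-sum}. Next, (b) $=$ (d) is the special case $p = q = 1$ of \cref{thm:schwartz-space}: then $a = 2$ and the right-hand side reduces to $S_2(\setR^N) = \mscrS(\setR^N)$. (For $N = 1$ the hypothesis $a > 1$ holds since $a = 2$.) It therefore remains to show (a) $=$ (d). The inclusion (a) $\subseteq$ (b) is immediate. Indeed, $\Omega_{0,2}$ is the restriction of $\Pi_2$ to the subgroup generated by the $\mathfrak{sl}_2$-triple, up to covering, which does not affect smoothness since the orbit maps factor through the same subgroup. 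So a smooth vector for $\Pi_2$ is smooth for $\Omega_{0,2}$. Hence (a) $\subseteq$ (b) $=$ (d).

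For (d) $\subseteq$ (a), I will use criterion (c) of \cref{thm:condition-to-be-a-smooth-vector} with a spanning set of $\mathfrak{sp}(N, \setR)$. Take as spanning set the elements $\begin{psmallmatrix} A & 0 \\ 0 & -\transpose{A} \end{psmallmatrix}$ with $A = E_{jk}$, together with $\begin{psmallmatrix} 0 & B \\ 0 & 0 \end{psmallmatrix}$ and $\begin{psmallmatrix} 0 & 0 \\ C & 0 \end{psmallmatrix}$ with $B, C$ running over the elementary symmetric matrices. On these elements $d\Pi_2$ acts by $x_j \partial_{x_k} + \frac12 \delta_{jk}$, by $\frac{i}{2} x_j x_k$, and by $\frac{i}{2} \partial_{x_j} \partial_{x_k}$. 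Here the domain of $d\Pi_2(X)$ is all $F \in L^2$ with $DF \in L^2$ in the distribution sense, as recalled above. The space $\mscrS(\setR^N)$ is stable under each of these operators and lies in $L^2(\setR^N)$. So for $F \in \mscrS(\setR^N)$ and any finite word $X_1, \dots, X_n$ in the spanning set, induction shows that $F$ is in $\Dom(d\Pi_2(X_1) \dotsm d\Pi_2(X_n))$. This gives (d) $\subseteq$ (a).

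The only delicate point is the description of $\Dom(d\Pi_2(X))$ as the maximal distributional domain. The paper states this as part of the recalled formulas, so I will simply invoke it. If it were not available, I would instead apply the maximality argument of \cref{thm:criterion}. That argument needs two things. First, each operator restricted to $\mscrS(\setR^N)$ must be skew-symmetric, which follows from integration by parts. Second, $\mscrS(\setR^N)$ must contain a core for $d\Pi_2(X)$. For the second point I would use the $\widetilde{\mathit{SO}}(2)$-finite vectors of $\Pi_2$, namely Hermite functions, which lie in $\mscrS(\setR^N)$; they form a core by \cref{thm:smooth-and-k-finite-vectors}~(2), applied with the compact subgroup $U(N)$ of $\mathit{Mp}(N, \setR)$. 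Securing this core property is the step I expect to be the main obstacle.
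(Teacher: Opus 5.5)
Your proposal is correct and matches the paper's proof step for step. You get (b) $=$ (c) $=$ (d) from \cref{thm:schwartz-space-as-infinite-sum,thm:schwartz-space} with $p = q = 1$, and (a) $\subseteq$ (b) by restricting to the $\mathfrak{sl}_2$-subgroup. For (d) $\subseteq$ (a) you apply \cref{thm:condition-to-be-a-smooth-vector} after checking that $\mscrS(\setR^N)$ lies in, and is stable under, the distributional-domain operators $d\Pi_2(X)$ for the block-type elements spanning $\mathfrak{sp}(N, \setR)$. The paper does exactly this and likewise takes the maximal-domain description of $d\Pi_2(X)$ as given, so your fallback core argument via Hermite functions is not needed.
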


\begin{proof}
  \begin{subproof}{$ \text{(a)} \subseteq \text{(b)} $}
    It follows from the fact that
    $ \Omega_{0, 2} $ coincides with the restriction of $ \Pi_2 $
    to a subgroup up to the choice of the covering.
  \end{subproof}

  \begin{subproof}{$ \text{(b)} = \text{(c)} = \text{(d)} $}
    It follows from \cref{thm:schwartz-space-as-infinite-sum,thm:schwartz-space}.
  \end{subproof}

  \begin{subproof}{$ \text{(d)} \subseteq \text{(a)} $}
    By the formula of $ d\Pi_2 $ given in the previous subsection,
    we see that, for $ X \in \mathfrak{sp}(2, \setR) $ of the form
    $ \begin{psmallmatrix} A & 0 \\ 0 & -\transpose{A} \end{psmallmatrix} $,
    $ \begin{psmallmatrix} 0 & B \\ 0 & 0 \end{psmallmatrix} $ or
    $ \begin{psmallmatrix} 0 & 0 \\ C & 0 \end{psmallmatrix} $,
    $ \mscrS(\setR^N) $ is contained in the domain of $ d\Pi_2(X) $ and
    stable under this operator.
    Therefore, the assertion follows from \cref{thm:condition-to-be-a-smooth-vector}.
  \end{subproof}
\end{proof}

\subsection{\texorpdfstring{$ L^2 $}{L2}-model for the minimal representation of the conformal group}
\label{ssec:minimal-representation-of-conformal-group}

In this subsection,
we recall basic properties of the $ L^2 $-model for the minimal representation
of the conformal group studied by Kobayashi--Mano~\cite{MR2401813}.

Let $ N \geq 2 $ and consider the identity component $ \mathit{SO}_0(N + 1, 2) $
of the conformal group $ O(N + 1, 2) $ and
its maximal compact subgroup $ \mathit{SO}(N + 1) \times \mathit{SO}(2) $.
Let $ \widetilde{\mathit{SO}}_0(N + 1, 2) $ denote the double covering group
of $ \mathit{SO}_0(N + 1, 2) $
corresponding to the double covering group
$ K = \mathit{SO}(N + 1) \times \widetilde{\mathit{SO}}(2)^{(2)} $
of $ \mathit{SO}(N + 1) \times \mathit{SO}(2) $.
Here, $ \widetilde{\mathit{SO}}(2)^{(2)} $ denotes the double covering group
of $ \mathit{SO}(2) $.
The Lie algebra of $ \widetilde{\mathit{SO}}_0(N + 1, 2) $ is
\[
  \mfrako(N + 1, 2)
  = \set{X \in \mathfrak{gl}(N + 3, \setR)}{
      \transpose{X} I_{N + 1, 2} + I_{N + 1, 2} X = 0
    },
\]
where $ I_{N + 1, 2} = \begin{psmallmatrix} I_{N + 1} & 0 \\ 0 & -I_2 \end{psmallmatrix} $.
Following Kobayashi--Mano~\cite{MR2401813},
we use indices $ 0 $, $ 1 $, $ \dots $, $ N + 2 $ for coordinates on $ \setR^{N + 3} $,
and set
\begin{align*}
  & \mfrakm = \set*{
      \begin{pmatrix}
        0      & \cdots & 0      \\
        \vdots & X      & \vdots \\
        0      & \cdots & 0
      \end{pmatrix}
    }{
      X \in \mfrako(N, 1)
    }, \\
  & \mfraka = \setR \mbfE, &
    & \mbfE = \mbfE_{0, N + 2} + \mbfE_{N + 2, 0}, \\
  & \overline{\mfrakn} = \lspan_{\setR} \set{\overline{\mbfN}_j}{j \in \setenum{1, \dots, N + 1}}, &
    & \overline{\mbfN}_j = \mbfE_{j, 0} + \mbfE_{j, N + 2} - \epsilon_j \mbfE_{0, j} + \epsilon_j \mbfE_{N + 2, j}, \\
  & \mfrakn = \lspan_{\setR} \set{\mbfN_j}{j \in \setenum{1, \dots, N + 1}}, &
    & \mbfN_j = \mbfE_{j, 0} - \mbfE_{j, N + 2} - \epsilon_j \mbfE_{0, j} - \epsilon_j \mbfE_{N + 2, j},
\end{align*}
where $ \mbfE_{jk} $ denotes the matrix unit and
\[
  \epsilon_j
  = \begin{cases}
      1  & (j \in \setenum{1, \dots, N}) \\
      -1 & (j = N + 1).
    \end{cases}
\]
Then, $ \mfrako(N + 1, 2) = \overline{\mfrakn} \oplus \mfrakm \oplus \mfraka \oplus \mfrakn $,
and $ \mfrakm \oplus \mfraka \oplus \mfrakn $ is the Langlands decomposition
of a maximal parabolic subalgebra of $ \mfrako(N + 1, 2) $.

The \emph{$ L^2 $-model for the minimal representation of $ \widetilde{\mathit{SO}}_0(N + 1, 2) $},
for which we write $ \Pi_1 $,
is a unitary representation of $ \widetilde{\mathit{SO}}_0(N + 1, 2) $
on $ L^2(\setR^N, \enorm{x}^{-1} \,dx) $.
The formulas in \cite[Section~2.4]{MR2401813} and a simple computation show that
the infinitesimal representation $ d\Pi_1 $ is given by
\begin{align*}
  d\Pi_1(X)
  &= \sum_{j = 1}^{N}
    (X_{1, j} x_1 + \dots + X_{N, j} x_N + X_{N + 1, j} \enorm{x}) \Pdif{x_j}
    \quad (X \in \mfrakm), \\
  d\Pi_1(\mbfE)
  &= -E_x - \frac{N - 1}{2}, \\
  d\Pi_1(\overline{\mbfN}_j)
  &= \begin{cases}
      2i x_j       & (j \in \setenum{1, \dots, N}) \\
      2i \enorm{x} & (j = N + 1),
    \end{cases} \\
  d\Pi_1(\mbfN_j)
  &= \begin{dcases}
      \frac{i}{2} \Paren*{x_j \Laplacian - (2E_x + N - 1) \Pdif{x_j}} & (j \in \setenum{1, \dots, N}) \\
      \frac{i}{2} \enorm{x} \Laplacian                                & (j = N + 1)
    \end{dcases}
\end{align*}
on the space $ L^2(\setR^N, \enorm{x}^{-1} \,dx)_K $ of $ K $-finite vectors.
As stated in \cite[Section~3.2]{MR2401813}, the space of $ K $-finite vectors is
\begin{align*}
  L^2(\setR^N, \enorm{x}^{-1} \,dx)_K
  &= \bigoplus_{m \in \setN}
    \mcalH^m(\Sphere{N - 1}) \otimes
    \lspan_{\setC} \set{r \mapsto r^m L_l^{2m + N - 2}(4r) e^{-2r}}{l \in \setN} \\
  &= \Phi(W_{0, 1}(\setR^N)),
\end{align*}
where $ W_{0, 1}(\setR^N) $ is as defined in \cref{thm:unitary-representation},
and $ \Phi $ is the unitary operator defined by
\begin{align}
  & \map{\Phi}{L^2(\setR^N, \enorm{x}^{-1} \,dx)}{L^2(\setR^N, \enorm{x}^{-1} \,dx)}
    \notag \\
  & \Phi F(x) = 2^{N - 1/2} F(2x).
    \label{eq:scaling}
\end{align}

Let us consider the $ \mathfrak{sl}_2 $-triple
$ (-2\mbfE, \overline{\mbfN}_{N + 1}, \mbfN_{N + 1}) $ in $ \mfrako(N + 1, 2) $.
By the above formulas and the same argument as in \cref{thm:infinitesimal-representation},
we have
\begin{alignat*}{3}
  & d\Pi_1(-2\mbfE)
  &\mbox{}=\mbox{} & \closure{\restr*{\Paren*{2E_x + \frac{N - 1}{2}}}{\Phi(W_{0, 1}(\setR^N))}}
  &\mbox{}=\mbox{} & \Phi \circ \closure{\restr{\omega_{0, 1}(\mbfh)}{W_{0, 1}(\setR^N)}} \circ \Phi^{-1}, \\
  & d\Pi_1(\overline{\mbfN}_{N + 1})
  &\mbox{}=\mbox{} & \closure{\restr{(i \enorm{x})}{\Phi(W_{0, 1}(\setR^N))}}
  &\mbox{}=\mbox{} & \Phi \circ \closure{\restr{\omega_{0, 1}(\mbfe^+)}{W_{0, 1}(\setR^N)}} \circ \Phi^{-1}, \\
  & d\Pi_1(\mbfN_{N + 1})
  &\mbox{}=\mbox{} & \closure{\restr{(i \enorm{x} \Laplacian)}{\Phi(W_{0, 1}(\setR^N))}}
  &\mbox{}=\mbox{} & \Phi \circ \closure{\restr{\omega_{0, 1}(\mbfe^-)}{W_{0, 1}(\setR^N)}} \circ \Phi^{-1}.
\end{alignat*}
Comparing this with \cref{thm:infinitesimal-representation}, we obtain
\begin{align*}
  d\Pi_1(-2\mbfE)
  &= \Phi \circ d\Omega_{0, 1}(\mbfh) \circ \Phi^{-1}, \\
  d\Pi_1(\overline{\mbfN}_{N + 1})
  &= \Phi \circ d\Omega_{0, 1}(\mbfe^+) \circ \Phi^{-1}, \\
  d\Pi_1(\mbfN_{N + 1})
  &= \Phi \circ d\Omega_{0, 1}(\mbfe^-) \circ \Phi^{-1}.
\end{align*}
Hence, up to the choice of the covering,
$ \Phi \circ \Omega_{0, 1}(\blank) \circ \Phi^{-1} $ coincides
with the restriction of $ \Pi_2 $
to the subgroup generated by $ -2\mbfE $, $ \overline{\mbfN}_{N + 1} $ and $ \mbfN_{N + 1} $.

\subsection{The skew-adjoint operators \texorpdfstring{$ d\Pi_1(\overline{\mbfN}_j) $}{dΠ1(Nj)} and \texorpdfstring{$ d\Pi_1(\mbfN_j) $}{dΠ1(Nj)}}

The following propositions are used in the next subsection.

\begin{proposition}\label{thm:operator-nbar}
  Let $ N \geq 2 $.
  For $ j \in \setenum{1, \dots, N} $, we have
  \[
    \Dom(d\Pi_1(\overline{\mbfN}_j))
    = \set{F \in L^2(\setR^N, \enorm{x}^{-1} \,dx)}{x_j F \in L^2(\setR^N, \enorm{x}^{-1} \,dx)}
  \]
  and
  \[
    d\Pi_1(\overline{\mbfN}_j) F(x)
    = 2i x_j F(x).
  \]
\end{proposition}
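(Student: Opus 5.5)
The plan follows the pattern of the proofs of \cref{thm:operator-ep,thm:criterion}. The multiplication operator $T_j = 2i x_j$, defined on
\[
  \Dom(T_j) = \set{F \in L^2(\setR^N, \enorm{x}^{-1} \,dx)}{x_j F \in L^2(\setR^N, \enorm{x}^{-1} \,dx)},
\]
is skew-adjoint by general theory, since multiplication by a real measurable function is self-adjoint on its maximal domain \cite[Example~3.8]{MR2953553}. It therefore suffices to show that $d\Pi_1(\overline{\mbfN}_j)$ and $T_j$ agree on a core of $d\Pi_1(\overline{\mbfN}_j)$. Maximality of skew-adjoint operators then forces equality: the closure of the restriction is a skew-symmetric extension of $d\Pi_1(\overline{\mbfN}_j)$ that is contained in $T_j$, hence both are equal.

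For the core I would take the space $L^2(\setR^N, \enorm{x}^{-1} \,dx)_K$ of $K$-finite vectors. The first step is to check that this space is a core of $d\Pi_1(\overline{\mbfN}_j)$. This should follow from \cref{thm:smooth-and-k-finite-vectors}~(2), applied with $\widetilde{K} = K = \mathit{SO}(N+1)\times\widetilde{\mathit{SO}}(2)^{(2)}$, which is already compact, so $\chi$ can be trivial. That proposition gives that $K$-finite smooth vectors form a core. The $K$-finite vectors in this admissible representation are automatically smooth: each $K$-isotypic component is finite-dimensional and lies in the Harish-Chandra module $\Phi(W_{0,1}(\setR^N))$, which consists of smooth vectors. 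So the $K$-finite smooth vectors are exactly $\Phi(W_{0,1}(\setR^N))$.

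On this space, the formula recalled from \cite[Section~2.4]{MR2401813} in \cref{ssec:minimal-representation-of-conformal-group} gives $d\Pi_1(\overline{\mbfN}_j) = 2i x_j$, so $d\Pi_1(\overline{\mbfN}_j)$ and $T_j$ agree there. Note also that every element $p(x)\,L(\enorm{x})e^{-2\enorm{x}}$ of $\Phi(W_{0,1}(\setR^N))$ satisfies $x_jF \in L^2(\setR^N, \enorm{x}^{-1} \,dx)$, because of the exponential decay. Hence $\Phi(W_{0,1}(\setR^N)) \subseteq \Dom(T_j)$, and the restriction of $T_j$ to it is skew-symmetric. By the maximality argument above, $d\Pi_1(\overline{\mbfN}_j) = T_j$.

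The step I expect to need the most care is the core claim: the identification of $K$-finite vectors with $\Phi(W_{0,1}(\setR^N))$, and the fact that they are smooth vectors. Both are quoted from Kobayashi--Mano \cite[Section~3.2]{MR2401813}, together with the standard fact that $K$-finite vectors of an admissible unitary representation are analytic. Once that is granted, the rest is the routine maximality argument.
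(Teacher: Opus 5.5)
Your proposal is correct and matches the paper's proof. The paper also observes that multiplication by $2ix_j$ on its maximal domain is skew-adjoint \cite[Example~3.8]{MR2953553}, and then applies the argument of \cref{thm:criterion}: the $K$-finite vectors $\Phi(W_{0,1}(\setR^N))$ form a core on which $d\Pi_1(\overline{\mbfN}_j)$ acts by $2ix_j$, so maximality of skew-adjoint operators forces equality. Your justification that these $K$-finite vectors are smooth, which is what lets \cref{thm:smooth-and-k-finite-vectors}~(2) apply, spells out a step the paper leaves implicit.
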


\begin{proof}
  The multiplication operator $ 2i x_j $,
  defined on the domain given in the assertion, is skew-adjoint
  by general theory about operators (see \cite[Example~3.8]{MR2953553}).
  Now the assertion follows from the same argument as in \cref{thm:criterion}.
\end{proof}

\begin{proposition}\label{thm:operator-n}
  Let $ N \geq 2 $.
  For $ j \in \setenum{1, \dots, N} $, we have
  \[
    d\Pi_1(\overline{\mbfN}_j)
    = \closure{\restr*{\frac{i}{2} \Paren*{x_j \Laplacian - (2E_x + N - 1) \Pdif{x_j}}}{\mscrS(\setR^N) + \enorm{x} \mscrS(\setR^N)}}.
  \]
\end{proposition}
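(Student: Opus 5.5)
The plan is to run the same argument as in \cref{thm:criterion} and \cref{thm:operator-em}. Write
\[
  D_j = \frac{i}{2} \Paren*{x_j \Laplacian - (2E_x + N - 1) \Pdif{x_j}}
\]
and set $ \mcalD = \mscrS(\setR^N) + \enorm{x} \mscrS(\setR^N) $. The skew-adjoint operator on the left-hand side is $ \closure{\restr{D_j}{L^2(\setR^N, \enorm{x}^{-1} \,dx)_K}} $. This holds because $ K $-finite vectors form a core for every $ d\Pi_1(X) $ by \cref{thm:smooth-and-k-finite-vectors}~(2), and on $ L^2(\setR^N, \enorm{x}^{-1} \,dx)_K $ the infinitesimal action is given by the differential operators listed in \cref{ssec:minimal-representation-of-conformal-group}. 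Since a skew-adjoint operator is maximally skew-symmetric, it suffices to prove three things. First, $ L^2(\setR^N, \enorm{x}^{-1} \,dx)_K \subseteq \mcalD \subseteq L^2(\setR^N, \enorm{x}^{-1} \,dx) \cap C^\infty(\setR^N \setminus \setenum{0}) $. Second, $ D_j(\mcalD) \subseteq L^2(\setR^N, \enorm{x}^{-1} \,dx) $. Third, $ \restr{D_j}{\mcalD} $ is skew-symmetric. Granting these, $ \closure{\restr{D_j}{\mcalD}} $ is a skew-symmetric extension of the skew-adjoint operator, and the two must coincide.

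For the first inclusion, recall that $ L^2(\setR^N, \enorm{x}^{-1} \,dx)_K = \Phi(W_{0, 1}(\setR^N)) $ is spanned by the functions $ p(x) L_l^{2m + N - 2}(4\enorm{x}) e^{-2\enorm{x}} $ with $ p \in \mcalH^m(\setR^N) $. Splitting $ t \mapsto L_l(4t) e^{-2t} $ into its even and odd parts (the case $ q = 2 $, $ p = 1 $ of \cref{thm:schwartz-space}, namely $ \mscrS_{0, 1}(\setR^N) = \mscrS(\setR^N) + \enorm{x} \mscrS(\setR^N) $) shows that each such function lies in $ \mcalD $. Membership in $ L^2(\setR^N, \enorm{x}^{-1} \,dx) $ is clear because $ N \geq 2 $.

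For the second point, I would compute $ D_j(\enorm{x} G) $ for $ G \in \mscrS(\setR^N) $ directly. Using $ \Laplacian \enorm{x} = (N - 1)/\enorm{x} $, $ E_x \enorm{x} = \enorm{x} $ and $ \partial_j \enorm{x} = x_j/\enorm{x} $, the singular terms combine to
\[
  \frac{x_j (N - 1)}{\enorm{x}} G + \frac{2 x_j}{\enorm{x}} E_x G - (2E_x + N - 1) \Paren*{\frac{x_j}{\enorm{x}} G},
\]
plus smooth rapidly decreasing terms. Since $ E_x(x_j/\enorm{x}) = 0 $, the $ (N - 1) $ and $ E_x G $ pieces cancel. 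What remains is a sum of Schwartz functions and $ \enorm{x} \times $ Schwartz functions, so $ D_j $ preserves $ \mcalD $.

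The main obstacle is skew-symmetry, i.e.\ $ \innprod{D_j F}{G} = -\innprod{F}{D_j G} $ in $ L^2(\setR^N, \enorm{x}^{-1} \,dx) $ for $ F, G \in \mcalD $. I would integrate over $ \epsilon \leq \enorm{x} \leq R $ and integrate by parts. The contributions at $ R $ vanish by rapid decay. At $ \epsilon $, every boundary term has the form $ \epsilon^{N - 2} \int_{\Sphere{N - 1}} (\text{bounded}) \,d\omega $ times at most one factor of $ \epsilon^{-1} $ from the weight. Functions in $ \mcalD $ are $ O(1) $ near $ 0 $ with $ E_x F = O(\enorm{x}) $, and $ D_j $ carries a factor $ x_j $ or $ E_x $ that supplies an extra $ \epsilon $. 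So the boundary terms are $ O(\epsilon^{N - 1}) \to 0 $ for $ N \geq 2 $, the borderline being handled exactly as for the Green identity in \cref{thm:operator-em}. Formal skew-symmetry of $ D_j $ with respect to $ \enorm{x}^{-1} \,dx $ holds on $ C_c^\infty(\setR^N \setminus \setenum{0}) $, because it already holds on the $ K $-finite core. This completes the proof.
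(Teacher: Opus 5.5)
Your overall reduction (the $K$-finite core, the inclusions, stability of $\mscrS(\setR^N)+\enorm{x}\mscrS(\setR^N)$ under $D_j$, then skew-symmetry) matches the paper's. Your cancellation computation for $D_j(\enorm{x}G)$ is also correct. The gap is in the boundary analysis at $\enorm{x}=\epsilon$, which is exactly the delicate point when $N=2$.

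Your claim that $D_j$ ``carries a factor $x_j$ or $E_x$ that supplies an extra $\epsilon$'' fails for the summand $-(N-1)\Pdif{x_j}$ of $x_j\Laplacian-(2E_x+N-1)\Pdif{x_j}$, which has neither factor. Integrating $\Pdif{x_j}$ by parts against the weight $\enorm{x}^{-1}$, with $\widetilde{G}=(2E_x+N-1)G$, produces the boundary term
\[
  \epsilon^{N-2}\int_{\Sphere{N-1}} F(\epsilon\omega)\,\widetilde{G}(\epsilon\omega)\,\omega_j\,d\omega
  = (N-1)\,\epsilon^{N-2}\int_{\Sphere{N-1}} F(\epsilon\omega)\,G(\epsilon\omega)\,\omega_j\,d\omega + O(\epsilon^{N-1}).
\]
Here surface measure and weight together give only $\epsilon^{N-2}$, so for $N=2$ this term is a priori $O(1)$, not $O(\epsilon^{N-1})$.

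The properties you invoke ($F=O(1)$ and $E_xF=O(\enorm{x})$) do not make it vanish. They only give radial limits $F(r\omega)\to F_0(\omega)$. If $F=\phi(\omega)$ and $G=\psi(\omega)$ near $0$, the term equals $(N-1)\int_{\Sphere{N-1}}\phi\,\psi\,\omega_j\,d\omega$, which need not be zero. The mechanism of \cref{thm:operator-em} (smallness from $E_xF=o(1)$) does not reach this term either.

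What is needed is that $F$ and $\widetilde{G}$ are continuous at the origin, which holds on $\mscrS(\setR^N)+\enorm{x}\mscrS(\setR^N)$. Then $F(\epsilon\omega)\widetilde{G}(\epsilon\omega)\to F(0)\widetilde{G}(0)$ uniformly in $\omega$. Combined with $\int_{\Sphere{N-1}}\omega_j\,d\omega=0$, this makes the term tend to $0$. The paper makes this explicit by pairing $\omega$ with $-\omega$ over the half-sphere $\omega_j\geq 0$.

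A smaller point: deriving formal skew-symmetry on $C_c^\infty(\setR^N\setminus\{0\})$ ``because it already holds on the $K$-finite core'' is roundabout. It needs a duality argument through the skew-adjointness of $d\Pi_1(\mbfN_j)$, and you need the explicit boundary form anyway. The paper gets both at once from three identities:
\begin{itemize}
  \item Green's second identity for $\Laplacian F$ against $x_jG/\enorm{x}$;
  \item the divergence theorem for $\widetilde{F}G\,x/\enorm{x}$, with $\widetilde{F}=\Pdif{x_j}F$;
  \item the divergence theorem for $F\widetilde{G}\,e_j/\enorm{x}$.
\end{itemize}
The first two boundary terms are $O(\epsilon^{N-1})$ because $E_xF=O(\enorm{x})$ and $\Pdif{x_j}F=O(1)$. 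The third is the term displayed above, which needs the continuity-plus-oddness argument.
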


\begin{proof}
  Let $ \map{\Phi}{L^2(\setR^N, \enorm{x}^{-1} \,dx)}{L^2(\setR^N, \enorm{x}^{-1} \,dx)} $
  denote the unitary operator defined by \cref{eq:scaling}.

  Let $ D_j $ denote the differential operator
  $ x_j \Laplacian - (2E_x + N - 1) \Pdif{x_j} $.
  Notice that
  \[
    \Phi(W_{0, 1}(\setR^N))
    \subseteq \mscrS(\setR^N) + \enorm{x} \mscrS(\setR^N)
    \subseteq L^2(\setR^N, \enorm{x}^{-1} \,dx) \cap C^\infty(\setR^N \setminus \setenum{0})
  \]
  and $ \mscrS(\setR^N) + \enorm{x} \mscrS(\setR^N) $ is stable under $ D_j $.
  Hence, by the same argument as in \cref{thm:criterion},
  it suffices to show that $ \restr{D_j}{\mscrS(\setR^N) + \enorm{x} \mscrS(\setR^N)} $
  is a symmetric operator on $ L^2(\setR^N, \enorm{x}^{-1} \,dx) $.

  Let $ F $ and $ G $ be elements of $ \mscrS(\setR^N) $ or $ \enorm{x} \mscrS(\setR^N) $.
  By Green's second identity, we have
  \begin{align}
    & \int_{\epsilon \leq \enorm{x} \leq R}
      \Paren*{
        x_j \Laplacian F(x) G(x)
        - F(x) \Paren*{x_j \Laplacian - \frac{x_j}{\enorm{x}^2} (2E_x + N - 1) + 2 \Pdif{x_j}} G(x)
      } \enorm{x}^{-1} \,dx
      \notag \\
    &= \int_{\epsilon \leq \enorm{x} \leq R}
      \Paren*{
        \Laplacian F(x) \frac{x_j G(x)}{\enorm{x}}
        - F(x) \Laplacian \Paren*{\frac{x_j G(x)}{\enorm{x}}}
      } \,dx
      \notag \\
    &= \Brack*{
        \frac{1}{r} \int_{\enorm{x} = r}
        (E_x F(x) G(x) - F(x) E_x G(x)) \frac{x_j}{\enorm{x}} \,dx
      }_{\epsilon}^{R}
      \notag \\
    &= \Brack*{
        r^{N - 2} \int_{\Sphere{N - 1}}
        (E_x F(r\omega) G(r\omega) - F(r\omega) E_x G(r\omega)) \omega_j \,d\omega
      }_{\epsilon}^{R},
      \label{eq:operator-n-1}
  \end{align}
  where $ \omega_j $ denotes the $ j $-th coordinate of $ \omega $.
  Setting $ \widetilde{F} = \Pdiff{F}{x_j} $, by the divergence theorem, we have
  \begin{align}
    & \int_{\epsilon \leq \enorm{x} \leq R}
      \Paren*{
        (2E_x + N - 1) \widetilde{F}(x) G(x)
        + \widetilde{F}(x) (2E_x + N - 1) G(x)
      } \enorm{x}^{-1} \,dx
      \notag \\
    &= \int_{\epsilon \leq \enorm{x} \leq R}
      2 \divg\Paren*{
        \widetilde{F}(x) G(x) \frac{x}{\enorm{x}}
      } \,dx
      \notag \\
    &= \Brack*{
        2 \int_{\enorm{x} = r} \widetilde{F}(x) G(x) \,dx
      }_{\epsilon}^{R}
      \notag \\
    &= \Brack*{
        2r^{N - 1}
        \int_{\Sphere{N - 1}} \widetilde{F}(r\omega) G(r\omega) \,d\omega
      }_{\epsilon}^{R}
      \label{eq:operator-n-2}
  \end{align}
  Setting $ \widetilde{G} = (2E_x + N - 1) G $, by the divergence theorem, we have
  \begin{align}
    & \int_{\epsilon \leq \enorm{x} \leq R}
      \Paren*{
        \Pdiff{F}{x_j}(x) \widetilde{G}(x)
        + F(x) \Paren*{\Pdif{x_j} - \frac{x_j}{\enorm{x}^2}} \widetilde{G}(x)
      } \enorm{x}^{-1} \,dx
      \notag \\
    &= \int_{\epsilon \leq \enorm{x} \leq R}
      \divg\Paren*{F(x) \widetilde{G}(x) \frac{e_j}{\enorm{x}}} \,dx
      \notag \\
    &= \Brack*{
        \int_{\enorm{x} = r} F(x) \widetilde{G}(x) \frac{x_j}{\enorm{x}^2} \,dx
      }_{\epsilon}^{R}
      \notag \\
    &= \Brack*{
        r^{N - 2}
        \int_{\Sphere{N - 1}} F(r\omega) \widetilde{G}(r\omega) \omega_j \,d\omega
      }_{\epsilon}^{R},
      \label{eq:operator-n-3}
  \end{align}
  where $ e_j $ denotes the unit vector in the $ j $-th direction.

  Since $ F $ and $ G $ are rapidly decreasing at infinity,
  the boundary terms \cref{eq:operator-n-1,eq:operator-n-2,eq:operator-n-3}
  for $ r = R $ vanish as $ R \to \infty $.
  Let us consider the boundary terms for $ r = \epsilon $.
  A simple computation shows that $ F(x) = O(1) $, $ \Pdiff{F}{x_j}(x) = O(1) $ and
  $ E_x F(x) = O(\enorm{x}) $ as $ x \to 0 $, and the same estimates hold for $ G $.
  Hence, the boundary terms \cref{eq:operator-n-1,eq:operator-n-2} for $ r = \epsilon $
  vanish as $ \epsilon \to 0 $.
  On the other hand, the boundary term \cref{eq:operator-n-3} for $ r = \epsilon $ is
  \begin{align*}
    & \epsilon^{N - 2} \int_{\Sphere{N - 1}}
      F(\epsilon\omega) \widetilde{G}(\epsilon\omega) \omega_j \,d\omega \\
    &= \epsilon^{N - 2} \int_{\set{\omega \in \Sphere{N - 1}}{\omega_j \geq 0}}
      (F(\epsilon\omega) \widetilde{G}(\epsilon\omega) - F(-\epsilon\omega) \widetilde{G}(-\epsilon\omega))
      \omega_j \,d\omega,
  \end{align*}
  which vanishes as $ \epsilon \to 0 $
  since $ F $ and $ \widetilde{G} = (2E_x + N - 1) G $ are continuous at the origin.
  Therefore, all the boundary terms vanish as $ \epsilon \to 0 $ and $ R \to \infty $.

  Taking the sum of both sides of \cref{eq:operator-n-1,eq:operator-n-2,eq:operator-n-3}
  and passing to the limit $ \epsilon \to 0 $ and $ R \to \infty $,
  we obtain
  \[
    \innprod{D_j F}{G}_{L^2(\setR^N, \enorm{x}^{-1} \,dx)}
      - \innprod{F}{D_j G}_{L^2(\setR^N, \enorm{x}^{-1} \,dx)}
    = 0.
  \]
  This completes the proof.
\end{proof}

\subsection{The Schwartz space for the minimal representation of the conformal group}

In this subsection,
we define and determine the Schwartz space for the minimal representation of the conformal group.
This is one of the main results of this paper.

\begin{definition}\label{def:schwartz-space-conformal}
  Let $ N \geq 2 $.
  We define the \emph{Schwartz space for the minimal representation of the conformal group} as
  \[
    \mscrS_{\mathrm{conf}}(\setR^N)
    = L^2(\setR^N, \enorm{x}^{-1} \,dx)_{\Pi_1}^\infty,
  \]
  or the space of smooth vectors for the minimal representation $ \Pi_1 $.
\end{definition}

\begin{theorem}\label{thm:schwartz-space-conformal}
  Let $ N \geq 2 $.
  The following spaces coincide.
  \begin{enumalphp}
    \item $ \mscrS_{\mathrm{conf}}(\setR^N) $,
          or the space of smooth vectors for the minimal representation $ \Pi_1 $
    \item $ \mscrS_{0, 1}(\setR^N) $,
          or the space of smooth vectors for $ \Omega_{0, 1} $
    \item the space of smooth vectors for $ \restr{\Omega_{0, 1}}{\widetilde{\mathit{SO}}(2)} $
    \item $ \mscrS(\setR^N) + \enorm{x} \mscrS(\setR^N) $
  \end{enumalphp}
\end{theorem}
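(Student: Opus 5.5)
The proof will follow the classical theorem for $ \Pi_2 $: establish (a) $ \subseteq $ (b) $ = $ (c) $ = $ (d) $ \subseteq $ (a). Throughout, $ \Phi $ denotes the unitary scaling operator \cref{eq:scaling}.

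For (a) $ \subseteq $ (b): by the computation at the end of \cref{ssec:minimal-representation-of-conformal-group}, $ \Phi \circ \Omega_{0,1}(\blank) \circ \Phi^{-1} $ is, up to coverings, the restriction of $ \Pi_1 $ to the subgroup generated by the $ \mathfrak{sl}_2 $-triple $ (-2\mbfE, \overline{\mbfN}_{N+1}, \mbfN_{N+1}) $. Hence a smooth vector $ F $ for $ \Pi_1 $ has $ \Phi^{-1}F $ smooth for $ \Omega_{0,1} $. It remains to note that $ \Phi $ preserves $ \mscrS_{0,1}(\setR^N) $, so that $ \mscrS_{\mathrm{conf}}(\setR^N) \subseteq \Phi(\mscrS_{0,1}(\setR^N)) = \mscrS_{0,1}(\setR^N) $. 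Indeed, $ \Phi $ is a dilation, and $ \Phi = \Omega_{0,1}(\exp(t\mbfh)) $ up to a scalar for a suitable $ t $, since $ \DiffH{0,1} = 2E_x + N - 1 $ generates dilations. Alternatively, once (b) $ = $ (d) is known, one can observe directly that $ \mscrS(\setR^N) + \enorm{x}\mscrS(\setR^N) $ is dilation invariant.

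For (b) $ = $ (c) $ = $ (d): \cref{thm:schwartz-space-as-infinite-sum} gives (b) $ = $ (c). \cref{thm:schwartz-space} with $ a = 1 = 2p/q $, $ p = 1 $, $ q = 2 $ (allowed since $ N \ge 2 $) gives $ \mscrS_{0,1}(\setR^N) = S_2(\setR^N) + \enorm{x} S_2(\setR^N) = \mscrS(\setR^N) + \enorm{x}\mscrS(\setR^N) $, using $ S_2(\setR^N) = \mscrS(\setR^N) $.

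For (d) $ \subseteq $ (a), which carries the real content: I will use \cref{thm:condition-to-be-a-smooth-vector} (d) with the generating set of $ \mfrako(N+1,2) $ consisting of $ \overline{\mbfN}_j $ and $ \mbfN_j $ for $ j = 1, \dots, N+1 $. This set generates, because $ [\mfrakn, \overline{\mfrakn}] = \mfrakm \oplus \mfraka $ for this maximal parabolic with abelian nilradical. Set $ V = \mscrS(\setR^N) + \enorm{x}\mscrS(\setR^N) $. It suffices to show that $ V $ is contained in the domains of all these skew-adjoint operators and is stable under them.
\begin{itemize}
  \item For $ \overline{\mbfN}_j $ with $ j \le N $: use \cref{thm:operator-nbar}. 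Multiplication by $ x_j $ clearly preserves $ V $.
  \item For $ \overline{\mbfN}_{N+1} $ and $ \mbfN_{N+1} $: transport by $ \Phi $ the operators $ d\Omega_{0,1}(\mbfe^\pm) $. Since $ V = \mscrS_{0,1}(\setR^N) $ is $ \Phi $-stable and stable under $ d\Omega_{0,1}(X) $ by \cref{thm:properties-of-schwartz-space}, these cases follow.
  \item For $ \mbfN_j $ with $ j \le N $: use \cref{thm:operator-n}, whose statement should read $ d\Pi_1(\mbfN_j) $ and be read that way. It shows $ V \subseteq \Dom(d\Pi_1(\mbfN_j)) $, with $ d\Pi_1(\mbfN_j) $ acting on $ V $ as $ \frac{i}{2} D_j $. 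The only thing left is that $ V $ is stable under $ D_j = x_j\Laplacian - (2E_x + N - 1)\partial_j $.
\end{itemize}

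I expect the main obstacle to be this stability check, which the proof of \cref{thm:operator-n} asserts without detail. Since $ D_j $ preserves $ \mscrS(\setR^N) $, it remains to handle $ D_j(\enorm{x} G) $ for $ G \in \mscrS(\setR^N) $. I will expand by the Leibniz rule, using $ \Laplacian\enorm{x} = (N-1)/\enorm{x} $, $ \nabla\enorm{x} = x/\enorm{x} $ and $ E_x\enorm{x} = \enorm{x} $. The terms carrying the singular factor $ \enorm{x}^{-1} $ are
\[
  x_j\Bigl(\tfrac{N-1}{\enorm{x}} G + \tfrac{2}{\enorm{x}} E_x G\Bigr) - \tfrac{x_j}{\enorm{x}}(2E_x + N - 1)G ,
\]
which cancel exactly. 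What remains is $ \enorm{x} D_j G - 2 x_j G $, and this lies in $ V $. So the cancellation is the key verification, and it should go through with a direct computation.
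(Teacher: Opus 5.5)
Your proposal is correct and follows essentially the same route as the paper: (a) $\subseteq$ (b) by restricting to the $\mathfrak{sl}_2$-triple $(-2\mbfE, \overline{\mbfN}_{N+1}, \mbfN_{N+1})$; (b) $=$ (c) $=$ (d) from \cref{thm:schwartz-space-as-infinite-sum,thm:schwartz-space} with $a = 1$; and (d) $\subseteq$ (a) via the generating set $\{\overline{\mbfN}_j, \mbfN_j\}$ together with \cref{thm:operator-nbar,thm:operator-n}, where you usefully make explicit two points the paper leaves tacit, namely the $\Phi$-invariance of $\mscrS_{0,1}(\setR^N)$ and the typo $\overline{\mbfN}_j \to \mbfN_j$ in \cref{thm:operator-n}. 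One small slip in your stability check: after the singular terms cancel, the remainder is $\enorm{x}\,(D_j G - 2\partial_j G)$ rather than $\enorm{x} D_j G - 2x_j G$, because $E_x$ acting on $\enorm{x}$ in $(2E_x + N - 1)(\enorm{x}\,\partial_j G)$ produces $-2\enorm{x}\,\partial_j G$; this still lies in $\enorm{x}\mscrS(\setR^N)$, so your conclusion stands.
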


\begin{proof}
  Let $ \map{\Phi}{L^2(\setR^N, \enorm{x}^{-1} \,dx)}{L^2(\setR^N, \enorm{x}^{-1} \,dx)} $
  denote the unitary operator defined by \cref{eq:scaling}.

  \begin{subproof}{$ \text{(a)} \subseteq \text{(b)} $}
    It follows from the fact that
    $ \Phi \circ \Omega_{0, 1}(\blank) \circ \Phi^{-1} $ coincides
    with the restriction of $ \Pi_1 $
    to a subgroup up to the choice of the covering.
  \end{subproof}

  \begin{subproof}{$ \text{(b)} = \text{(c)} = \text{(d)} $}
    It follows from \cref{thm:schwartz-space-as-infinite-sum,thm:schwartz-space}.
  \end{subproof}
  
  \begin{subproof}{$ \text{(d)} \subseteq \text{(a)} $}
    Notice that $ \overline{\mfrakn} \oplus \mfrakn
    = \set{\overline{\mbfN}_j, \mbfN_j}{j \in \setenum{1, \dots, N + 1}} $ generates
    the Lie algebra $ \mathfrak{o}(N + 1, 2) $.
    Hence, by \cref{thm:infinitesimal-lie-algebra-representation,thm:condition-to-be-a-smooth-vector},
    it suffices to show that $ \mscrS(\setR^N) + \enorm{x} \mscrS(\setR^N) $ is
    contained in the domains of $ d\Pi_1(\overline{\mbfN}_j) $ and
    $ d\Pi_1(\mbfN_j) $ and
    stable under these operators.

    The stability under
    \begin{align*}
      d\Pi_1(\overline{\mbfN}_{N + 1})
      &= \Phi \circ d\Omega_{0, 1}(\mbfe^+) \circ \Phi^{-1}, \\
      d\Pi_1(\mbfN_{N + 1})
      &= \Phi \circ d\Omega_{0, 1}(\mbfe^-) \circ \Phi^{-1}
    \end{align*}
    is included in \cref{thm:schwartz-space}.
    We next prove the stability under $ d\Pi_1(\overline{\mbfN}_j) $ and $ d\Pi_1(\mbfN_j) $.
    A simple computation shows that $ \mscrS(\setR^N) + \enorm{x} \mscrS(\setR^N) $ is
    stable under the differential operators
    \[
      2i x_j, \qquad
      \frac{i}{2} \Paren*{x_j \Laplacian - (2E_x + N - 1) \Pdif{x_j}}
    \]
    for $ j \in \setenum{1, \dots, N} $.
    Moreover, by \cref{thm:operator-nbar,thm:operator-n},
    $ d\Pi_1(\overline{\mbfN}_j) $ and $ d\Pi_1(\mbfN_j) $ are extensions of
    these operators restricted to $ \mscrS(\setR^N) + \enorm{x} \mscrS(\setR^N) $, respectively.
    This completes the proof.
  \end{subproof}
\end{proof}

\appendix
\section{Laguerre polynomials}
\label{sec:laguerre-polynomial}

\subsection{Definition and basic properties}

For $ \lambda \in \setC $ and $ l \in \setN $, we write the Laguerre polynomial as
\[
  L_l^\lambda(t)
  = \sum_{j = 0}^{l} \frac{(-1)^j}{j!} \binom{\lambda + l}{l - j} t^j
  = \sum_{j = 0}^{l} \frac{(-1)^j}{j! (l - j)!} \frac{\Gamma(\lambda + l + 1)}{\Gamma(\lambda + j + 1)} t^j.
\]
For convenience, we set $ L_l^\lambda = 0 $ for negative integers $ l $.

We note some basic properties about Laguerre polynomials.

\begin{fact}[{\cite[8.971~4, 8.971~2]{MR3307944}}]\label{thm:properties-of-laguerre-polynomials}
  Let $ \lambda \in \setC $ and $ l \in \setN $.
  \begin{enumarabicp}
    \item $ t L_l^\lambda(t) = (\lambda + l) L_l^{\lambda - 1}(t) + (l + 1) L_{l + 1}^{\lambda - 1}(t) $.
    \item $ \Odif{t} L_l^\lambda(t) = -L_{l - 1}^{\lambda + 1}(t) $.
  \end{enumarabicp}
\end{fact}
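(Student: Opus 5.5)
Both identities are finite polynomial identities in $t$, so I will prove them by comparing the coefficients of $t^j$ on the two sides, using the explicit formula
\[
  L_l^\lambda(t) = \sum_{j=0}^{l} \frac{(-1)^j}{j!}\binom{\lambda+l}{l-j} t^j
\]
and the convention $L_l^\lambda = 0$ for negative $l$. Since $\binom{\lambda+l}{l-j}$ is a polynomial in $\lambda$, every coefficient on either side is a polynomial in $\lambda$. It therefore suffices to check each coefficient identity as a polynomial identity in $\lambda$; no Gamma-function manipulations or restrictions on $\lambda$ are needed. The citation \cite[8.971~4, 8.971~2]{MR3307944} could simply be invoked, but I will verify the identities directly.

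For (2), differentiate term by term and shift the index $j \mapsto j+1$. The coefficient of $t^j$ in $\frac{d}{dt}L_l^\lambda(t)$ is
\[
  (j+1)\frac{(-1)^{j+1}}{(j+1)!}\binom{\lambda+l}{l-j-1} = -\frac{(-1)^j}{j!}\binom{(\lambda+1)+(l-1)}{(l-1)-j}.
\]
This is exactly the coefficient of $t^j$ in $-L_{l-1}^{\lambda+1}(t)$. The case $l = 0$ is consistent with the convention $L_{-1}^{\lambda+1}=0$.

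For (1), the left-hand side $tL_l^\lambda(t)$ has coefficient $\frac{(-1)^{j-1}}{(j-1)!}\binom{\lambda+l}{l-j+1}$ at $t^j$ for $1\le j\le l+1$, and zero at $t^0$. On the right-hand side I will write the coefficient of $t^j$ as
\[
  \frac{(-1)^j}{j!}\Bigl[(\lambda+l)\binom{\lambda+l-1}{l-j} \pm (l+1)\binom{\lambda+l}{l+1-j}\Bigr],
\]
where the sign in front of the second term is the one appearing in front of $(l+1)L_{l+1}^{\lambda-1}$. I will then simplify with the absorption identity $(\lambda+l)\binom{\lambda+l-1}{l-j} = (l-j+1)\binom{\lambda+l}{l-j+1}$. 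This collapses the bracket to a single multiple of $\binom{\lambda+l}{l+1-j}$, which can be compared directly with the left-hand coefficient after rewriting $\frac{(-1)^{j-1}}{(j-1)!} = -j\,\frac{(-1)^j}{j!}$.

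The main obstacle is this bookkeeping of signs and of the boundary cases $j=0$ and $j=l+1$ in (1). The coefficient at $t^0$ must vanish, which already forces the relative sign between the two terms on the right. So before the general computation I will first carry out the $l=0$ check: $L_0^{\lambda}=1$ and $L_1^{\lambda-1}(t)=\lambda-t$. This check fixes which sign convention (equivalently, which normalization of $L_{l+1}^{\lambda-1}$) makes (1) an identity. I will then run the general coefficient comparison with that sign and use the resulting form of (1) in the later estimates of this appendix.
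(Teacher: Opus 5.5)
Your coefficient-comparison method is sound, and your computation for (2) is complete and correct. The gap is in (1): as printed, with $+(l+1)L_{l+1}^{\lambda-1}$, it is false, and your proposal never says so. Your own $l=0$ check already shows this: $tL_0^\lambda(t)=t$, whereas $\lambda L_0^{\lambda-1}(t)+L_1^{\lambda-1}(t)=\lambda+(\lambda-t)=2\lambda-t$.

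In general, write $\sigma=\pm1$ for the sign of the second term. Your absorption step turns the bracket into $(l+1-j+\sigma(l+1))\binom{\lambda+l}{l+1-j}$, while the left-hand side requires $-j\binom{\lambda+l}{l+1-j}$. These agree for every $j$ exactly when $\sigma=-1$. For $\sigma=+1$, already the constant term $2(l+1)\binom{\lambda+l}{l+1}$ is a nonzero polynomial in $\lambda$. So what your argument actually proves is
\[
  tL_l^\lambda(t)=(\lambda+l)L_l^{\lambda-1}(t)-(l+1)L_{l+1}^{\lambda-1}(t),
\]
together with (2) as stated.

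Treating the sign as a \emph{convention}, or as a choice of \emph{normalization} of $L_{l+1}^{\lambda-1}$, is not a legitimate way out. The normalization is fixed by the explicit sum you start from. Moreover, no renormalization $L_l^\lambda\mapsto c_l(\lambda)L_l^\lambda$ can make the printed (1) and (2) hold simultaneously: (2) forces $c_{l+1}(\lambda-1)=c_l(\lambda)$, while the $+$ version of (1) forces $c_{l+1}(\lambda-1)=-c_l(\lambda)$. You should state plainly that (1) contains a sign misprint and then prove the corrected identity.

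The paper gives no proof here beyond the citation. Its later use agrees with the minus sign: the expansion in Step~2 of the proof of \cref{thm:estimate-2} carries exactly the factor $(-1)^j$ that iterating the corrected relation produces. Only absolute values are used afterwards, so the misprint is harmless downstream. With the sign corrected, your direct verification is a good self-contained replacement for the reference.
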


\subsection{Estimates for Laguerre polynomials}

The following estimate for Laguerre polynomials is due to Duran.

\begin{fact}[{\cite[Theorem~1]{MR1121714}}]\label{thm:estimate-1}
  For $ l $, $ \beta $, $ \gamma \in \setN $ and $ \lambda \geq -l - 1 $,
  we have
  \[
    \sup_{t \geq 0} \abs*{t^\beta \Paren*{\Odif{t}}^\gamma (L_l^\lambda(t) e^{-t/2})}
    \leq 2^{\beta + \max\setenum{\beta - \lambda, 0}} (l + \beta)^{\underline{\beta}}
      \binom{\max\setenum{\lambda - \beta, 0} + l + \gamma}{l}.
  \]
\end{fact}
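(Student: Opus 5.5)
The plan is to reduce everything to the classical uniform bound
\[
  \abs{L_n^\mu(t) e^{-t/2}} \leq \binom{\mu + n}{n}
  \quad (t \geq 0,\ \mu \geq 0),
\]
which I would prove from the integral representation $ L_n^\mu(t) e^{-t/2} = \frac{t^{-\mu/2} e^{t/2}}{n!} \int_0^\infty e^{-s} s^{n + \mu/2} J_\mu(2\sqrt{st}) \,ds $ together with $ \abs{J_\mu} \leq 1 $ (Szegő's argument). Alternatively, I would use induction on $ \mu $ via $ L_n^\mu = \sum_{i \leq n} L_i^{\mu - 1} $, starting from $ \abs{L_n^0(t) e^{-t/2}} \leq 1 $. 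For the part with $ \mu \in [-1, 0) $ I also need a crude bound. Using $ L_n^{\mu} = L_n^{\mu + 1} - L_{n - 1}^{\mu + 1} $, this bound costs a factor of the form $ 2^{-\mu} $. This is where the factor $ 2^{\max\setenum{\beta - \lambda, 0}} $ should come from.

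Next I handle the derivatives. By \cref{thm:properties-of-laguerre-polynomials}~(2) and the identity $ L_l^\lambda = L_l^{\lambda + 1} - L_{l - 1}^{\lambda + 1} $, one gets
\[
  \Odif{t} \Paren*{L_l^\lambda(t) e^{-t/2}}
  = -\frac{1}{2} \Paren*{L_l^{\lambda + 1}(t) + L_{l - 1}^{\lambda + 1}(t)} e^{-t/2}.
\]
Iterating this yields
\[
  \Paren*{\Odif{t}}^\gamma \Paren*{L_l^\lambda e^{-t/2}}
  = \Paren*{-\frac{1}{2}}^\gamma \sum_{i = 0}^{\gamma} \binom{\gamma}{i} L_{l - i}^{\lambda + \gamma} e^{-t/2}.
\]
So the $ \gamma $-derivative is an average of $ 2^\gamma $ Laguerre functions with parameter raised by $ \gamma $ and degree at most $ l $. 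Bounding each term by the uniform bound, and using the monotonicity $ \binom{\mu + n}{n} \leq \binom{\mu + l}{l} $ for $ n \leq l $, gives the $ \beta = 0 $ case with the binomial $ \binom{\lambda + l + \gamma}{l} $.

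For the weight $ t^\beta $, I would apply \cref{thm:properties-of-laguerre-polynomials}~(1) $ \beta $ times. Each application writes $ t L_n^\mu $ as a combination of $ L_n^{\mu - 1} $ and $ L_{n + 1}^{\mu - 1} $ with nonnegative coefficients summing to $ \mu + 2n + 1 $. Tracking the indices shows that the total coefficient is at most $ 2^\beta (l + \beta)^{\underline{\beta}} $, up to the $ \lambda $-dependent part that is absorbed in the binomial. The resulting functions are $ L_{n}^{\lambda + \gamma - \beta} e^{-t/2} $ with $ l \leq n \leq l + \beta $. Two cases then arise. If $ \lambda \geq \beta $, the uniform bound applies directly and gives $ \binom{\lambda - \beta + l + \gamma}{l} $ after renormalising the degree shift against the falling factorial. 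If $ \lambda < \beta $, the parameter may be negative, and I use the modified bound, which produces the extra $ 2^{\beta - \lambda} $ and the binomial $ \binom{l + \gamma}{l} $.

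The main obstacle is the bookkeeping of this last step. I need the combinatorial coefficients from the $ \beta $-fold recursion, combined with the degree shift $ n \leq l + \beta $, to collapse exactly into $ (l + \beta)^{\underline{\beta}} \binom{\max\setenum{\lambda - \beta, 0} + l + \gamma}{l} $ rather than a cruder product. I would first try to organise it as an induction on $ \beta $ with the full inequality as the hypothesis, checking the step $ \beta \to \beta + 1 $ via (1). Failing that, I would cite Duran's original computation \cite[Theorem~1]{MR1121714} for the precise constant.
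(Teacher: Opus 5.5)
\textbf{Gap: the weight $t^\beta$.} This step fails, and the problem is not bookkeeping. The coefficients are not nonnegative. The correct recurrence is $tL_n^\mu=(\mu+n)L_n^{\mu-1}-(n+1)L_{n+1}^{\mu-1}$; the sign printed in \cref{thm:properties-of-laguerre-polynomials}~(1) is a typo, as the case $n=0$ shows. So $\beta$ applications with $\mu=\lambda+\gamma$ give the alternating expansion $t^\beta L_n^\mu=\sum_{j=0}^{\beta}(-1)^j\binom{\beta}{j}(n+1)^{\overline{j}}(\mu+n)^{\underline{\beta-j}}L_{n+j}^{\mu-\beta}$. Its coefficients grow with $\mu$ and cancel heavily. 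If you bound term by term with $|L_m^\nu(t)e^{-t/2}|\le\binom{\nu+m}{m}$, the $j$-th term contributes exactly $\binom{\beta}{j}\Gamma(\mu+n+1)/(n!\,\Gamma(\mu-\beta+1))$. The best you can reach is therefore $2^\beta(l+\beta)^{\underline{\beta}}\binom{\lambda+\gamma+l}{l+\beta}$. This exceeds the claimed bound by the factor $(\lambda+\gamma+l)^{\underline{\beta}}/(l+\beta)^{\underline{\beta}}$, which is unbounded in $\lambda$ and $\gamma$.

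Already for $l=0$, $\beta=1$, $\gamma=0$, $\lambda\ge1$, the identity $tL_0^\lambda=\lambda L_0^{\lambda-1}-L_1^{\lambda-1}$ yields the bound $2\lambda$, while the statement asserts $2$. An induction on $\beta$ via the recurrence fails for the same reason, since each step multiplies by $\mu+n$. This is exactly the route of Step~2 of \cref{thm:estimate-2}, and the remark after it notes that the result is weaker than Duran's.

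\textbf{Missing idea.} Lower the parameter \emph{to} $\beta$ rather than \emph{by} $\beta$. For $\mu\ge\beta$ the generating function gives $L_n^\mu=\sum_{k=0}^{n}\binom{\mu-\beta-1+k}{k}L_{n-k}^{\beta}$, with nonnegative coefficients. For integer parameter one has the exact identity $t^\beta L_i^\beta=(-1)^\beta(i+\beta)^{\underline{\beta}}L_{i+\beta}^{-\beta}$. Moreover $|L_{m}^{-\beta}(t)e^{-t/2}|\le 2^\beta$, by expanding $L_m^{-\beta}=\sum_{k=0}^{\beta}(-1)^k\binom{\beta}{k}L^0_{m-k}$. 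Together these give $|t^\beta L_n^\mu(t)e^{-t/2}|\le 2^\beta(n+\beta)^{\underline{\beta}}\binom{\mu-\beta+n}{n}$. Combined with your correct formula for the $\gamma$-th derivative, this proves the statement whenever $\lambda+\gamma\ge\beta$.

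\textbf{The regime $\lambda+\gamma<\beta$.} In particular $\lambda\in[-l-1,0)$ is not handled either. For non-integer $\mu\in(-1,0)$, your bound via $L_n^\mu=L_n^{\mu+1}-L_{n-1}^{\mu+1}$ gives $\binom{\mu+1+n}{n}+\binom{\mu+n}{n-1}$, which grows like $n^{\mu+1}$. For $\beta=\gamma=0$, however, the right-hand side is the $l$-independent constant $2^{-\lambda}$. This range matters, because the paper applies the fact with $\lambda_{k,a,m}\in(-1,0)$.

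\textbf{Your fallback citation.} Citing \cite[Theorem~1]{MR1121714} is what the paper does, but the citation alone is insufficient, since Duran assumes $\lambda\ge0$. The paper supplies the missing point: that hypothesis enters Duran's proof only through his inequality~(6), which stays valid for $\lambda\ge-l-1$.

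\textbf{Minor slip.} Inserting $|J_\mu|\le1$ into the Bessel integral only gives $\Gamma(n+\mu/2+1)\,t^{-\mu/2}e^{t/2}/n!$, which is not uniform in $t$. The bound $\binom{\mu+n}{n}$ for $\mu\ge0$ is Szeg\H{o}'s classical inequality; it also follows from the $i=j=0$ term of \cref{thm:koornwinder}.
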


\begin{remark}
  In Duran's paper, it is assumed that $ \lambda \geq 0 $,
  but \cref{thm:estimate-1} holds more generally for $ \lambda \geq -l - 1 $.
  Indeed, the assumption $ \lambda \geq 0 $ is used only in inequality~(6) in the paper,
  which remains valid for $ \lambda \geq -l - 1 $.
\end{remark}

We next show another estimate for Laguerre polynomials,
which we could not find in the literature.
The key to our estimate is the following identity due to Koornwinder.

\begin{fact}[{\cite[Remark~4.1]{MR454106}}]\label{thm:koornwinder}
  For $ \lambda > 0 $, $ l \in \setN $ and $ t \geq 0 $, we have
  \[
    \sum_{i = 0}^{\infty} \sum_{j = 0}^{l}
      \binom{l}{j} \frac{\lambda}{\lambda + i + j}
      \frac{(\lambda + l + 1)^{\overline{i}}}{i! (\lambda + j)^{\overline{i}} (\lambda + i)^{\overline{j}}}
      \Paren*{t^{i + j} \frac{L_{l - j}^{\lambda + i + j}(t^2)}{{L_{l - j}^{\lambda + i + j}(0)}} e^{-t^2 /2}}^2
    = 1.
  \]
\end{fact}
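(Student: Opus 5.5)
The plan is to read the identity as a "Parseval identity" for a unitary matrix coefficient. Concretely, it should be the diagonal specialization of Koornwinder's addition formula for Laguerre polynomials. For $\lambda>0$ that formula expresses
\[
  \frac{L_l^{\lambda}\bigl(x^2+y^2-2xy\rho\cos\psi\bigr)}{L_l^{\lambda}(0)}\,
  e^{-xy\rho\cos\psi}\,(\text{phase in }\rho\sin\psi)
\]
as a double series over $i\in\setN$, $0\le j\le l$. Each term is a product of three factors:
\begin{itemize}
  \item a positive coefficient $c_{i,j}=\binom{l}{j}\frac{\lambda}{\lambda+i+j}\frac{(\lambda+l+1)^{\overline{i}}}{i!\,(\lambda+j)^{\overline{i}}(\lambda+i)^{\overline{j}}}$;
  \item the normalized associated Laguerre functions $x^{i+j}\frac{L_{l-j}^{\lambda+i+j}(x^2)}{L_{l-j}^{\lambda+i+j}(0)}e^{-x^2/2}$, together with the same expression in $y$;
  \item a disk polynomial $R^{(\lambda-1)}_{i,j}(\rho e^{i\psi})$, normalized by $R^{(\lambda-1)}_{i,j}(1)=1$.
\end{itemize}
Specializing to $x=y=t$, $\rho=1$, $\psi=0$ makes the argument of $L_l^\lambda$ vanish, so the left side becomes $1\cdot e^{-t^2}$. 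Each summand on the right becomes $c_{i,j}$ times the square of the stated function times $e^{-t^2}$. Cancelling $e^{-t^2}$ then gives exactly the claimed identity.

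The key steps, in order, are as follows.

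\begin{enumerate}
  \item Prove the addition formula when $\lambda=n-1$ is an integer with $n\ge2$. Here $\Phi(z)=\frac{L_l^{n-1}(\enorm{z}^2)}{L_l^{n-1}(0)}e^{-\enorm{z}^2/2}$ on $\setC^n$ is a bounded $U(n)$-spherical function for the Gelfand pair $(U(n)\ltimes H_n,U(n))$. Write it as $\Phi(g)=\innprod{\pi(g)v}{v}$ with $v$ a unit $U(n)$-fixed vector in a Fock-type representation.
  \item Expand $\Phi(g_1^{-1}g_2)=\innprod{\pi(g_2)v}{\pi(g_1)v}$ in an orthonormal basis adapted to the $U(n-1)$-types. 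This basis is indexed by $(i,j)$ through bigraded harmonics on $\setC^n$, i.e. the spaces $\mcalH^{i,j}$.
  \item Identify the coefficients $\innprod{\pi(g)v}{e_{i,j,\cdot}}$ with the associated Laguerre functions. The ladder relations in \cref{thm:properties-of-laguerre-polynomials} reduce this to one computation.
  \item Summing over the $U(n-1)$-multiplicity space produces the disk polynomials; this is the standard reproducing-kernel identity for $\mcalH^{i,j}$. It also identifies $c_{i,j}$ as the normalized squared norm contributions.
  \item At $g_1=g_2$, Parseval gives $\sum c_{i,j}(\cdots)^2=\norm{\pi(g)v}^2=1$ directly. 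So the identity holds for all integers $\lambda\ge1$, and this route avoids the disk polynomials entirely.
\end{enumerate}

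For general real $\lambda>0$ I would extend by analytic continuation in $\lambda$, with $l$ and $t$ fixed. Each summand is a meromorphic function of $\lambda$, holomorphic and of at most exponential type in $\Re\lambda>0$. To get local uniform convergence of the $i$-series in that half-plane, bound the $i$-th term by $C\,t^{2i}/i!$ times a polynomial in $i$, using $L_{l-j}^{\lambda+i+j}(t^2)/L_{l-j}^{\lambda+i+j}(0)=O(1+t^2)^{l}$ uniformly in $i$. The difference between the two sides is then a holomorphic function of controlled growth vanishing at the positive integers, so Carlson's theorem forces it to vanish identically.

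I expect the main obstacle to be steps 2--4 together: pinning down the exact constants $c_{i,j}$ in the $(i,j)$-decomposition of the Fock-model matrix coefficients. The Carlson growth estimate should be routine. For steps 2--4, the first thing I would try is to verify the constants by evaluating both sides at $t=0$ (only $i=j=0$ survives, giving $1$). I would then compare the coefficient of $t^2$, which fixes the normalization recursively via the ladder relations.
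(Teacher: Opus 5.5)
\textbf{Where your plan stands.} The paper gives no proof of this statement. It imports it from Koornwinder, where it is the diagonal specialization of his Laguerre addition formula, valid for all real $\lambda>0$. You are therefore reproving a black box, and the outer shell of your plan is sound.

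For $\lambda=n-1$ the identity should be $\norm{\pi(te_1)v}^2=1$, split over the $U(n)$-types of $\mathcal{F}\otimes\overline{\mcalP^l}$. Each $\mcalH^{i,j}$ with $j\le l$ occurs exactly once and has a one-dimensional $U(n-1)$-fixed line, so each piece is a scalar function of $t$.

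The continuation in $\lambda$ also works, and is easier than you suggest. Every summand is rational in $\lambda$, and on $\Re\lambda\ge\delta>0$ you have:
$\abs{\lambda/(\lambda+i+j)}\le 1$;
$\abs{(\lambda+l+1)^{\overline{i}}/(\lambda+j)^{\overline{i}}}\le\prod_{s<i}\bigl(1+\frac{l+1}{\delta+s}\bigr)$;
$\abs{(\lambda+i)^{\overline{j}}}\ge\delta^{\overline{j}}$;
and your $(1+t^2)^{l}$ bound.
Hence the difference of the two sides is bounded and holomorphic on a half-plane and vanishes at $1,2,3,\dots$, so it is zero.

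A minor slip: the exponential in your sketched addition formula has the wrong sign. The case $l=0$, where $c_{i,0}=1/i!$, forces $e^{+xy\rho\cos\psi}$. Your ``$1\cdot e^{-t^2}$ versus square times $e^{-t^2}$'' bookkeeping does not balance.

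\textbf{The gap: steps 2--4.} These carry the entire content. Parseval alone gives only $\sum_{i,j}\phi_{i,j}(t)=1$, where $\phi_{i,j}(t)=\norm{P_{i,j}\pi(te_1)v}^2$ is unidentified. The statement \emph{is} the claim
$\phi_{i,j}(t)=c_{i,j}\bigl(t^{i+j}L_{l-j}^{\lambda+i+j}(t^2)e^{-t^2/2}/L_{l-j}^{\lambda+i+j}(0)\bigr)^2$,
and you prove neither the functional form nor the constants.

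Your fallback cannot supply them. The $t=0$ value and the $t^2$-coefficient only test the total sum; both hold for the stated $c_{i,j}$. At order $t^{2k}$ there is one equation for $\min(k,l)+1$ new unknowns, and the check already presupposes the shape of $\phi_{i,j}$.

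\textbf{One way to close the gap.}
(a) Let $w_{i,j}$ be the unit $U(n-1)$-fixed vector of the $(i,j)$-type and $F_{i,j}(z)=\langle\pi(z)v,w_{i,j}\rangle$. Then $F_{i,j}$ is $U(n-1)$-invariant and lies in the $\mcalH^{i,j}$-isotypic part of $L^2(\setC^n)$ (up to the conjugation convention). So $F_{i,j}(z)=Z_{i,j}(z/\enorm{z})\,g(\enorm{z})$ with $Z_{i,j}$ zonal. Since $v$ lies in a single eigenspace of the number operator, $F_{i,j}$ is an eigenfunction of the twisted Laplacian with eigenvalue fixed by $l$. This reduces to a Laguerre equation forcing $g(r)\propto r^{i+j}L^{n-1+i+j}_{l-j}(r^2)e^{-r^2/2}$.

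(b) Moyal's orthogonality relations in $\mathcal{F}\otimes\overline{\mcalP^l}$ apply because $v$ is a multiple of $\sum_\alpha e_\alpha\otimes\overline{e_\alpha}$. They show that $\int_{\setC^n}\abs{F_{i,j}}^2\,dz$ is independent of $(i,j)$. Combine this with $\abs{Z_{i,j}(e_1)}^2=\dim\mcalH^{i,j}/\vol(\Sphere{2n-1})$ and $\int_0^\infty x^\mu L_m^\mu(x)^2e^{-x}\,dx=\Gamma(\mu+m+1)/m!$. This yields $\phi_{i,j}$ with coefficient proportional to $\dim\mcalH^{i,j}\,\Gamma(n+i+l)/\bigl((l-j)!\,\Gamma(n+i+j)^2\bigr)$. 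That quantity is indeed a constant, depending only on $n$ and $l$, times the stated $c_{i,j}$.

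Only at that point does the value at $t=0$ fix the last constant.
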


\begin{proposition}\label{thm:estimate-2}
  For $ l $, $ \alpha $, $ \beta $, $ \gamma \in \setN $ and
  $ \lambda \geq \alpha + \beta $, we have
  \begin{align*}
    & \sup_{t \geq 0} \abs*{t^{\alpha/2 + \beta} \Paren*{\Odif{t}}^\gamma (L_l^\lambda(t) e^{-t/2})} \\
    &\leq 2^\beta (\lambda + l)^{\underline{\beta}}
      \binom{\lambda + l + \gamma}{l + \beta}
      (\alpha!)^{1/2} \binom{\lambda - \beta + \gamma}{\alpha}^{1/2} \binom{\lambda + l}{\alpha}^{-1/2}.
  \end{align*}
\end{proposition}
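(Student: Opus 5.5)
The plan is to reduce the left-hand side to a finite combination of terms $t^{\alpha/2} L_n^\mu(t) e^{-t/2}$ with nonnegative coefficients. Each such term is bounded by a single summand of Koornwinder's identity (\cref{thm:koornwinder}), and the coefficients are then summed by a Vandermonde-type identity. The factors $2^\beta (\lambda + l)^{\underline{\beta}}$, $\binom{\lambda + l + \gamma}{l + \beta}$ and $(\alpha!)^{1/2}\binom{\cdots}{\alpha}^{\pm 1/2}$ should come respectively from the $t^\beta$-absorption, the coefficient sum, and the Koornwinder summand.

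\emph{Step 1 (derivatives).} Combine \cref{thm:properties-of-laguerre-polynomials}~(2) with the standard relation $L_l^\lambda = L_l^{\lambda+1} - L_{l-1}^{\lambda+1}$ (it follows from the explicit sum) to get
\[
  \tfrac{d}{dt}\bigl(L_l^\lambda(t) e^{-t/2}\bigr) = -\tfrac12\bigl(L_l^{\lambda+1}(t) + L_{l-1}^{\lambda+1}(t)\bigr) e^{-t/2}.
\]
Iterating $\gamma$ times gives
\[
  (\tfrac{d}{dt})^\gamma \bigl(L_l^\lambda e^{-t/2}\bigr) = (-\tfrac12)^\gamma \sum_{i=0}^{\gamma} \binom{\gamma}{i} L_{l-i}^{\lambda+\gamma}(t) e^{-t/2}.
\]

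\emph{Step 2 (absorbing $t^\beta$).} Apply \cref{thm:properties-of-laguerre-polynomials}~(1) $\beta$ times. Each application turns $t L_n^\mu$ into $(\mu+n)L_n^{\mu-1} + (n+1)L_{n+1}^{\mu-1}$, and all coefficients stay nonnegative because $\lambda \ge \beta$. The result is that $t^\beta L_n^{\mu}$ becomes a nonnegative combination of $L_{n'}^{\mu-\beta}$ with $n \le n' \le n+\beta$. The coefficients are products of at most $\beta$ factors of size $\le \lambda + l + \gamma$, which I expect to organise into the factor $2^\beta(\lambda+l)^{\underline{\beta}}$ after normalising by the values $L_{n'}^{\mu-\beta}(0) = \binom{\mu-\beta+n'}{n'}$. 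If this crude bookkeeping loses too much, I will instead track the exact coefficients, which are explicit binomial products.

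\emph{Step 3 (Koornwinder).} In \cref{thm:koornwinder}, substitute $t \mapsto \sqrt{t}$, keep only the summand with $i = \alpha$, $j = 0$, and use the parameter $\lambda' = \nu - \alpha$ for a Laguerre polynomial $L_n^{\nu}$ with $\nu = \lambda - \beta + \gamma$. Since all summands are nonnegative, this gives
\[
  \bigl|t^{\alpha/2} L_n^{\nu}(t) e^{-t/2}\bigr|
  \le L_n^{\nu}(0)\Bigl(\frac{\alpha!\,(\nu-\alpha)^{\overline{\alpha}}\,(\nu)}{(\nu-\alpha)\,(\nu-\alpha+n+1)^{\overline{\alpha}}}\Bigr)^{1/2},
\]
which is essentially $\binom{\nu+n}{n}(\alpha!)^{1/2}\binom{\nu}{\alpha}^{1/2}\binom{\nu+n}{\alpha}^{-1/2}$. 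The hypothesis $\lambda \ge \alpha + \beta$ is exactly what makes $\lambda' = \nu - \alpha \ge 0$. The boundary case $\lambda' = 0$ I will handle by letting $\lambda' \downarrow 0$ and using continuity in the parameter. Next I bound $\binom{\nu+n}{\alpha}^{-1/2} \le \binom{\lambda+l}{\alpha}^{-1/2}$ uniformly over the $n$ that occur.

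\emph{Step 4 (summing).} Sum the resulting bounds $\binom{\nu+n}{n}$ against the binomial coefficients from Steps 1 and 2. By Vandermonde / the hockey-stick identity these should collapse to $\binom{\lambda+l+\gamma}{l+\beta}$, with the $2^{-\gamma}\sum_i\binom{\gamma}{i}$ weights contributing at most $1$.

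The main obstacle is the bookkeeping in Steps 2 and 4. The issue is whether the indices $n$ reached, which can range down to $l-\gamma$, still allow the uniform bound by $\binom{\lambda+l}{\alpha}^{-1/2}$. If $n$ drops below $l$, I will absorb the deficit using $\nu + n \ge \lambda + l - \beta$ together with the spare $\gamma$ in $\nu = \lambda - \beta + \gamma$. Failing that, I will reorder the steps, applying the $t^\beta$-absorption before differentiating so that the indices only increase.
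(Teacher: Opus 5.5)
Your skeleton matches the paper's: differentiate, absorb $t^\beta$ with the three-term relation, keep the single Koornwinder summand with $i=\alpha$, $j=0$, then sum. But the step that actually produces the stated constant is missing, and the claim you put in its place is false. In Step~3 you bound $\binom{\nu+n}{\alpha}^{-1/2}\le\binom{\lambda+l}{\alpha}^{-1/2}$ uniformly. However, absorbing $t^\beta$ lowers the Laguerre parameter by $\beta$ while raising the degree only by $j\in\{0,\dots,\beta\}$. So with $\nu=\lambda-\beta+\gamma$ and $n=l-i+j$, the quantity $\nu+n$ drops as low as $\lambda+l-\beta$. This already happens for $\gamma=0$ (take $j=0$ and $\alpha,\beta\ge 1$). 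There nothing is differentiated, $n\ge l$, and there is no spare $\gamma$, so neither of your fallback plans reaches it. The deficit cannot be handled factor by factor; it has to be traded against the companion binomial. The paper does this with the elementary fact that $s\mapsto\binom{c+s}{s}\binom{c+s}{\alpha}^{-1/2}$ is increasing for $c=\lambda-\beta\ge\alpha$. Hence the Koornwinder bound for every $t^{\alpha/2}L^{\lambda-\beta}_{l+j}e^{-t/2}$ is dominated by its value at $s=l+\beta$, namely $\binom{\lambda+l}{l+\beta}\binom{\lambda+l}{\alpha}^{-1/2}$ times the common factors. This is where the $l+\beta$ in the target binomial comes from.

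Your Step~1 also works against the bookkeeping. In $(-\tfrac12)^\gamma\sum_i\binom{\gamma}{i}L^{\lambda+\gamma}_{l-i}$ the total index $\lambda+\gamma+l-i$ varies with $i$. Consequently Step~2 yields $(\lambda+\gamma+l-i)^{\underline{\beta}}$ rather than $(\lambda+l)^{\underline{\beta}}$, and the sum over $i$ is not a Vandermonde convolution, since both indices of $\binom{\nu+n}{n}$ move with $i$. With the estimate $(n+1)^{\overline{j}}(\mu+n)^{\underline{\beta-j}}\le(\mu+n)^{\underline{\beta}}$, which is the paper's Step~2 bookkeeping, your ordering overshoots. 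For $l=0$, $\lambda=\beta=2$, $\gamma=1$, $\alpha=0$ it gives $\tfrac12\cdot 2^2\cdot 3^{\underline{2}}\binom{3}{2}=36$, while the target is $2^2\cdot 2^{\underline{2}}\binom{3}{2}=24$.

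The paper instead applies Leibniz with $\frac{d}{dt}L_l^\lambda=-L_{l-1}^{\lambda+1}$. This gives $(-1)^\gamma\sum_j2^{-(\gamma-j)}\binom{\gamma}{j}L^{\lambda+j}_{l-j}e^{-t/2}$, in which $(\lambda+j)+(l-j)=\lambda+l$ for every term. The $\gamma=0$ bound then applies uniformly, the weights $2^{-(\gamma-j)}\le 1$ are dropped, and $\sum_j\binom{\gamma}{j}\binom{\lambda+l}{l+\beta-j}=\binom{\lambda+l+\gamma}{l+\beta}$ is exact Vandermonde.

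Your ordering can be rescued, but only with exact coefficients:
\begin{itemize}
  \item use $(n+1)^{\overline{j}}(\mu+n)^{\underline{\beta-j}}\binom{\mu-\beta+n+j}{n+j}=\mu^{\underline{\beta}}\binom{\mu+n}{n}$;
  \item then apply monotonicity in the degree to push $i$ to $0$;
  \item finally absorb the leftover deficit $\max\{\beta-\gamma,0\}$ through the factor $(\lambda+l)^{\underline{\beta}}/(l+\beta)^{\underline{\beta}}$.
\end{itemize}
That is the same trade-off idea the proposal lacks.

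Incidentally, the correct three-term relation is $tL_n^\mu=(\mu+n)L_n^{\mu-1}-(n+1)L_{n+1}^{\mu-1}$, so the coefficients alternate in sign. This is harmless because you take absolute values.
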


\begin{proof}
  We divide the proof into three steps.

  \begin{subproof}{\textbf{Step~1: the case $ \beta = \gamma = 0 $.}}
    Since all terms on the left-hand side of the formula in \cref{thm:koornwinder}
    are non-negative, each of them is bounded above by $ 1 $.
    In particular, for $ \lambda > 0 $ and $ l $, $ \alpha \in \setN $
    (we replace $ i $ with $ \alpha $), we have
    \[
      \frac{\lambda}{\lambda + \alpha} \frac{(\lambda + l + 1)^{\overline{\alpha}}}{\alpha! \lambda^{\overline{\alpha}}}
        \Paren*{t^\alpha \frac{L_l^{\lambda + \alpha}(t^2)}{{L_l^{\lambda + \alpha}(0)}} e^{-t^2 /2}}^2
      \leq 1.
    \]
    Since $ L_l^{\lambda + \alpha}(0) = \binom{\lambda + \alpha + l}{l} $, we have
    \begin{align*}
      \abs{t^\alpha L_l^{\lambda + \alpha}(t^2) e^{-t^2 /2}}
      &\leq \binom{\lambda + \alpha + l}{l} \Paren*{
        \frac{\lambda + \alpha}{\lambda}
        \frac{\alpha! \lambda^{\overline{\alpha}}}{(\lambda + l + 1)^{\overline{\alpha}}}
      }^{1/2} \\
      &\leq \binom{\lambda + \alpha + l}{l} \Paren*{
        \frac{\alpha! (\lambda + \alpha)^{\underline{\alpha}}}{(\lambda + \alpha + l)^{\underline{\alpha}}}
      }^{1/2} \\
      &\leq \binom{\lambda + \alpha + l}{l} (\alpha!)^{1/2} \binom{\lambda + \alpha}{\alpha}^{1/2} \binom{\lambda + \alpha + l}{\alpha}^{-1/2}.
    \end{align*}
    By replacing $ t^2 $ with $ t $ and $ \lambda + \alpha $ with $ \lambda $
    (so that the new $ \lambda $ is required to be larger than $ \alpha $),
    we obtain
    \[
      \abs{t^{\alpha/2} L_l^\lambda(t) e^{-t/2}}
      \leq \binom{\lambda + l}{l} (\alpha!)^{1/2} \binom{\lambda}{\alpha}^{1/2} \binom{\lambda + l}{\alpha}^{-1/2}.
    \]
    By taking the limit, we see that this inequality also holds when $ \lambda = \alpha $.
  \end{subproof}

  \begin{subproof}{\textbf{Step~2: the case $ \gamma = 0 $.}}
    Using \cref{thm:properties-of-laguerre-polynomials}~(1) recursively, we have
    \[
      t^\beta L_l^\lambda(t)
      = \sum_{j = 0}^{\beta}
        (-1)^j \binom{\beta}{j} (l + 1)^{\overline{j}} (\lambda + l)^{\underline{\beta - j}}
        L_{l + j}^{\lambda - \beta}(t).
    \]
    Since $ \lambda \geq \alpha + \beta \geq \beta $, we have
    $ (l + 1)^{\overline{j}} (\lambda + l)^{\underline{\beta - j}}
    \leq (\lambda + l)^{\underline{k}} $, and hence
    \begin{align*}
      \abs{t^\beta L_l^\lambda(t)}
      &\leq \sum_{j = 0}^{\beta}
        \binom{\beta}{j} (l + 1)^{\overline{j}} (\lambda + l)^{\underline{\beta - j}}
        \abs{L_{l + j}^{\lambda - \beta}(t)} \\
      &\leq 2^\beta (\lambda + l)^{\underline{\beta}}
        \sup_{0 \leq j \leq \beta} \abs{L_{l + j}^{\lambda - \beta}(t)}.
    \end{align*}
    By the above inequality and Step~1 (here, we use the assumption $ \lambda \geq \alpha + \beta $),
    we obtain
    \begin{align*}
      & \abs{t^{\alpha/2 + \beta} L_l^\lambda(t)} \\
      &\leq 2^\beta (\lambda + l)^{\underline{\beta}}
        \sup_{0 \leq j \leq \beta} \abs{t^{\alpha/2} L_{l + j}^{\lambda - \beta}(t)} \\
      &\leq 2^\beta (\lambda + l)^{\underline{\beta}}
        \sup_{0 \leq j \leq \beta}
        \binom{\lambda - \beta + l + j}{l + j}
        (\alpha!)^{1/2} \binom{\lambda - \beta}{\alpha}^{1/2} \binom{\lambda - \beta + l + j}{\alpha}^{-1/2} \\
      &\leq 2^\beta (\lambda + l)^{\underline{\beta}}
        \binom{\lambda + l}{l + \beta}
        (\alpha!)^{1/2} \binom{\lambda - \beta}{\alpha}^{1/2} \binom{\lambda + l}{\alpha}^{-1/2}.
    \end{align*}
    Here, the last inequality holds because the function
    \[
      s \mapsto
      \binom{\lambda - \beta + s}{s} \binom{\lambda - \beta + s}{\alpha}^{-1/2}
      = \frac{(\alpha!)^{1/2}}{(\lambda - \beta)!}
        \frac{((\lambda - \beta + s)!)^{1/2} ((\lambda - \beta - \alpha + s)!)^{1/2}}{s!}
    \]
    is strictly increasing on $ (-1, \infty) $, as can be verified by elementary calculus.
  \end{subproof}

  \begin{subproof}{\textbf{Step~3: the general case.}}
    Using \cref{thm:properties-of-laguerre-polynomials}~(2) recursively, we have
    \begin{align*}
      \Paren*{\Odif{t}}^\gamma (L_l^\lambda(t) e^{-t/2})
      &= \sum_{j = 0}^{\gamma}
        \binom{\gamma}{j}
        \Paren*{\Odif{t}}^j L_l^\lambda(t) \cdot \Paren*{\Odif{t}}^{\gamma - j} e^{-t/2} \\
      &= (-1)^\gamma \sum_{j = 0}^{\gamma}
        2^{-(\gamma - j)} \binom{\gamma}{j}
        L_{l - j}^{\lambda + j}(t) e^{-t/2},
    \end{align*}
    and hence
    \[
      \abs*{\Paren*{\Odif{t}}^\gamma (L_l^\lambda(t) e^{-t/2})}
      \leq \sum_{j = 0}^{\gamma}
        2^{-(\gamma - j)} \binom{\gamma}{j}
        \abs{L_{l - j}^{\lambda + j}(t) e^{-t/2}}.
    \]
    By the above inequality and Step~2, we obtain
    \begin{align*}
      & \abs*{t^{\alpha/2 + \beta} \Paren*{\Odif{t}}^\gamma (L_l^\lambda(t) e^{-t/2})} \\
      &\leq \sum_{j = 0}^{\gamma}
        2^{-(\gamma - j)} \binom{\gamma}{j}
        \abs{t^{\alpha/2 + \beta} L_{l - j}^{\lambda + j}(t) e^{-t/2}} \\
      &\leq \sum_{j = 0}^{\gamma}
        2^{-(\gamma - j)} \binom{\gamma}{j}
        \cdot 2^\beta (\lambda + l)^{\underline{\beta}}
        \binom{\lambda + l}{l - j + \beta}
        (\alpha!)^{1/2} \binom{\lambda + j - \beta}{\alpha}^{1/2} \binom{\lambda + l}{\alpha}^{-1/2} \\
      &\leq 2^\beta (\lambda + l)^{\underline{\beta}}
        (\alpha!)^{1/2} \binom{\lambda - \beta + \gamma}{\alpha}^{1/2} \binom{\lambda + l}{\alpha}^{-1/2}
        \sum_{j = 0}^{\gamma} \binom{\gamma}{j} \binom{\lambda + l}{l - j + \beta} \\
      &= 2^\beta (\lambda + l)^{\underline{\beta}}
        \binom{\lambda + l + \gamma}{l + \beta}
        (\alpha!)^{1/2} \binom{\lambda - \beta + \gamma}{\alpha}^{1/2} \binom{\lambda + l}{\alpha}^{-1/2}.
    \end{align*}
    Here, we use $ 2^{-(\gamma - j)} \leq 1 $ and
    $ \binom{\lambda + j - \beta}{\alpha} \leq \binom{\lambda - \beta + \gamma}{\alpha} $
    in the last inequality.
  \end{subproof}
\end{proof}

\begin{remark}
  When $ \alpha = 0 $, \cref{thm:estimate-2} becomes
  \[
    \sup_{t \geq 0} \abs*{t^\beta \Paren*{\Odif{t}}^\gamma (L_l^\lambda(t) e^{-t/2})}
    \leq 2^\beta (\lambda + l)^{\underline{\beta}}
      \binom{\lambda + l + \gamma}{l + \beta},
  \]
  while Duran's estimate (\cref{thm:estimate-1}) states that
  \[
    \sup_{t \geq 0} \abs*{t^\beta \Paren*{\Odif{t}}^\gamma (L_l^\lambda(t) e^{-t/2})}
    \leq 2^{\beta} (l + \beta)^{\underline{\beta}}
      \binom{\lambda + l - \beta + \gamma}{l}.
  \]
  In this case, Duran's estimate is better since
  \begin{align*}
    (\lambda + l)^{\underline{\beta}} \binom{\lambda + l + \gamma}{l + \beta}
    &= \frac{(\lambda + l)^{\underline{\beta}} (\lambda + l + \gamma)^{\underline{\beta}}}{((l + \beta)^{\underline{\beta}})^2}
      (l + \beta)^{\underline{\beta}} \binom{\lambda + l - \beta + \gamma}{l} \\
    &\geq (l + \beta)^{\underline{\beta}} \binom{\lambda + l - \beta + \gamma}{l}.
  \end{align*}

  On the other hand, if we fix $ M \geq 0 $ and
  let $ \lambda $, $ l $ and $ \alpha $ vary under the condition
  $ \abs{\alpha - \lambda} \leq M $,
  then \cref{thm:estimate-2} yields that
  \begin{align*}
    & \sup_{t \geq 0} \abs*{t^{\alpha/2 + \beta} \Paren*{\Odif{t}}^\gamma (L_l^\lambda(t) e^{-t/2})} \\
    &\leq 2^\beta (\lambda + l)^{\underline{\beta}}
      \binom{\lambda + l + \gamma}{l + \beta}
      (\alpha!)^{1/2} \binom{\lambda - \beta + \gamma}{\alpha}^{1/2} \binom{\lambda + l}{\alpha}^{-1/2} \\
    &\leq (\text{polynomial of $ \lambda $ and $ l $})
      \times \Paren*{\frac{(\lambda + l)!}{l!}}^{1/2},
  \end{align*}
  which does not follow from Duran's estimate.
  This plays a key role in the proof of \cref{thm:laguerre-satisfies-pg}.
\end{remark}

\section{The condition (\texorpdfstring{$ a $}{a}-PG)}
\label{sec:pg}

In this section, we introduce a condition (\texorpdfstring{$ a $}{a}-PG)
on the growth rate of a family of functions,
and prove propositions related to this condition.
They will be used in the proof of \cref{thm:schwartz-space}
to estimate the radial parts of functions.

\subsection{The condition (\texorpdfstring{$ a $}{a}-PG)}

\begin{definition}\label{def:pg}
  Let $ a > 0 $.
  We say that a family $ \faml{h_{m, l}}{m, l \in \setN} $ of smooth functions
  on $ \setRzp $ satisfies \emph{($ a $-PG)} (``PG'' is derived from ``polynomial growth'')
  if, for any $ b \in \setR $ and $ n \in \setN $,
  \[
    \sup_{t \geq 0} t^{\max\setenum{(m + b)/a, 0}} \abs{h_{m, l}^{(n)}(t)}
  \]
  is of polynomial growth with respect to $ m $, $ l \in \setN $.
\end{definition}

\subsection{Decomposition of a family of functions satisfying (\texorpdfstring{$ a $}{a}-PG)}

In this subsection, we show that a certain type of decomposition of
a family of functions satisfying ($ a $-PG) is possible.

Recall that Borel's theorem states that,
for any sequence $ \faml{c^{(n)}}{n \in \setN} $ of complex numbers,
there exists a smooth function $ h $ on $ \setR $
such that $ h^{(n)}(0) = c^{(n)} $ for all $ n \in \setN $.
The following lemma is a generalization of this,
which we call \emph{Borel's theorem for families}.
The proof below is modeled on that of Borel's theorem,
but some additional arguments are required.

\begin{lemma}[Borel's theorem for families]\label{thm:borels-theorem-for-families}
  Let $ \faml{c_{m, l}^{(n)}}{m, l, n \in \setN} $ be a family of complex numbers
  such that $ \faml{c_{m, l}^{(n)}}{l \in \setN} $ is of polynomial growth
  for any $ m $, $ n \in \setN $.
  Then, there exists a family $ \faml{h_{m, l}}{m, l \in \setN} $
  of smooth functions on $ \setRzp $ such that
  \begin{itemize}
    \item $ \faml{h_{m, l}}{m, l \in \setN} $ satisfies ($ a $-PG) for all $ a > 0 $, and
    \item $ h_{m, l}^{(n)}(0) = c_{m, l}^{(n)} $ for all $ m $, $ l $, $ n \in \setN $.
  \end{itemize}
\end{lemma}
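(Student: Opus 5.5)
We follow the proof of the classical Borel theorem. Fix a cutoff $\chi \in C_c^\infty(\setR)$ with $\chi = 1$ on $[-1/2, 1/2]$, $\operatorname{supp}\chi \subseteq [-1, 1]$ and $0 \leq \chi \leq 1$. We look for $h_{m, l}$ of the form
\[
  h_{m, l}(t) = \sum_{n \in \setN} \frac{c_{m, l}^{(n)}}{n!} t^n \chi(\mu_{m, n} t),
\]
with dilation parameters $\mu_{m, n} \geq 1$ that depend on $m$ and $n$ but \emph{not} on $l$. This independence from $l$ is the key design choice. Every summand is supported in $[0, 1]$, so each $h_{m, l}$ vanishes on $[1, \infty)$. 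Consequently, for any $b$ the weight $t^{\max\{(m + b)/a, 0\}}$ is at most $1$ on the support, and condition ($a$-PG) for every $a > 0$ reduces to the following: for each $n_0$, $\sup_{t \geq 0} |h_{m, l}^{(n_0)}(t)|$ has polynomial growth in $(m, l)$. The identity $h_{m, l}^{(n)}(0) = c_{m, l}^{(n)}$ holds as in Borel's theorem once we show the series converges in $C^\infty$ near $0$, because the $n$-th summand equals $c^{(n)}_{m,l} t^n/n!$ near $0$.

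For the estimates, the standard computation gives, for $j < n$,
\[
  \sup_t \Bigl|\Bigl(\tfrac{d}{dt}\Bigr)^j \bigl(t^n \chi(\mu t)\bigr)\Bigr| \leq C_{n, j} \mu^{j - n},
\]
where $C_{n, j}$ depends only on $n$, $j$ and $\chi$. Choose a polynomial bound $|c_{m, l}^{(n)}| \leq A_{m, n} (1 + l)^{d_{m, n}}$. The difficulty is that $d_{m, n}$ may grow with $n$, whereas the final bound must be polynomial in $l$ uniformly over the sum in $n$. To handle this, for $n > j$ we estimate the $l$-dependence of the $n$-th term crudely via $(1 + l)^{d_{m, n}}$ and use the smallness $\mu^{j - n}$ to absorb $A_{m, n}$. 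This only gives a bound depending on $l$ like $\sum_n (1+l)^{d_{m,n}} 2^{-n}$, which is not polynomial in $l$.

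This is the main obstacle, and we resolve it by making the dilation depend on $l$ after all, but only polynomially. Set $\mu_{m, l, n} = (1 + l)^{e_{m, n}} \nu_{m, n}$, where $e_{m, n} \geq d_{m, n}$ and $\nu_{m, n}$ is chosen large. For $j < n$ the $n$-th term's $j$-th derivative is bounded by
\[
  \frac{A_{m, n} C_{n, j}}{n!} (1 + l)^{d_{m, n} - (n - j)e_{m, n}} \nu_{m, n}^{j - n} \leq 2^{-n}
\]
once $\nu_{m, n}$ is large, uniformly in $l$ and for all $j < n$. Hence for fixed $n_0$, only the finitely many terms with $n \leq n_0$ contribute non-uniformly. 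Each of these is bounded by $C (1 + l)^{d_{m, n}} \mu_{m, l, n}^{n_0 - n}$, which is polynomial in $l$, and the remaining tail is at most $1$.

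The remaining issue is uniformity in $m$. We need polynomial growth jointly in $(m, l)$, but the constants from the terms $n \leq n_0$ involve $A_{m, n}$, $d_{m, n}$ and $\nu_{m, n}$, which can grow arbitrarily in $m$. So we still have to check carefully how ($a$-PG) is to be read when the hypothesis only gives polynomial growth in $l$ for each fixed $(m, n)$. The $m$-dependence is exactly what the weight $t^{(m + b)/a}$ is meant to absorb: it is tiny on the region $t \lesssim 1/\mu$ when $m$ is large. So we replace the support $[0, 1]$ by $[0, \rho_m]$ with $\rho_m < 1$ small; for instance we use $\chi(\mu t/\rho_m)$ and choose $\rho_m$ so that $\rho_m^{m/a}$ kills the constants. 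This requires ($a$-PG) for all $a > 0$ simultaneously, which needs care. I would first verify that this $m$-normalization works, and if it fails I would interpret the lemma's conclusion as holding with constants allowed to depend on $m$ in the form needed later in \cref{thm:pg-decomposition}.
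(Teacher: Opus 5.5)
You handle the $l$-direction, but the uniformity in $m$ is the actual content of the lemma, and you leave it open; the tools you propose for it do not close the argument.

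Your opening reduction asks for something false in general. You use that all supports lie in $[0,1]$ to discard the weight and aim for $\sup_t |h^{(n_0)}_{m,l}(t)|$ of polynomial growth in $(m,l)$. But $h^{(n_0)}_{m,l}(0)=c^{(n_0)}_{m,l}$, and the hypothesis allows e.g.\ $c^{(0)}_{m,l}=e^{e^m}$.

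The $\rho_m$-repair, estimated by $t^{\beta}\leq\rho_m^{\beta}$ on $[0,\rho_m]$, cannot work either. A factor independent of $l$ can shrink constants, but not the degree in $l$ of your head-term bound. That degree is $d_{m,n}+e_{m,n}(n_0-n)$ and may grow with $m$ (take $c^{(0)}_{m,l}=(1+l)^m$), while ($a$-PG) requires a single polynomial in $(m,l)$.

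The fallback of allowing $m$-dependent constants is not available. Joint polynomial growth is exactly what \cref{thm:pg-decomposition} propagates. It is also what the proof of \cref{thm:schwartz-space} sums against the rapidly decreasing coefficients $\|p_{m,l}\|$.

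The missing idea, which is the mechanism of the paper's proof, is to keep the weight and evaluate it on the true support $[0,\mu^{-1}]$ of each summand. For this you need a dilation $\mu=\mu_{m,l,n}\geq\max\{|c^{(n)}_{m,l}|,1\}$, in addition to your tail requirements. The standard computation, now using $t^{\beta}\leq\mu^{-\beta}$ on the support, gives for $\beta\geq 0$
\[
  \sup_{t \geq 0} t^{\beta} \Bigl| \Bigl(\frac{d}{dt}\Bigr)^{\gamma} \Bigl( \frac{c^{(n)}_{m,l}}{n!}\, t^{n} \chi(\mu t) \Bigr) \Bigr|
  \leq C_{n,\gamma}\, |c^{(n)}_{m,l}|\, \mu^{\gamma - n - \beta}.
\]
For $n>\gamma$ this is at most $2^{-n}$ by your tail condition, since $\mu^{-\beta}\leq 1$. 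For $n\leq\gamma$ and $\beta\geq\gamma+1$ the exponent is $\leq -1$, so $|c^{(n)}_{m,l}|\,\mu^{\gamma-n-\beta}\leq |c^{(n)}_{m,l}|\,\mu^{-1}\leq 1$, uniformly in $(m,l)$.

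For fixed $a$, $b$, $\gamma$, the exponent $\beta=\max\{(m+b)/a,0\}$ is $\geq\gamma+1$ for all but finitely many $m$. For those finitely many $m$, your per-$m$ polynomial bound in $l$ suffices. This is also why one family works for all $a>0$ at once.

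In your parametrization this just means additionally requiring $\nu_{m,n}\geq A_{m,n}\geq 1$; no $\rho_m$ is needed. Impose it for $n=0$ too, since your tail condition says nothing there (there is no $j<0$). The paper's $R^{(0)}_{m,l}$ is defined only through the conditions with $\gamma<n$, so as written it equals $1$ and needs the same one-line fix.
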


\begin{proof}
  Take a smooth function $ \rho $ on $ \setRzp $
  such that $ \rho = 1 $ near the origin and its support is contained in $ [0, 1] $.
  We construct $ \faml{h_{m, l}}{m, l \in \setN} $ satisfying the conditions of the form
  \begin{equation}
    h_{m, l}(t)
    = \sum_{n = 0}^{\infty} c_{m, l}^{(n)} \rho(R_{m, l}^{(n)} t) t^n,
      \label{eq:borels-theorem-for-families-1}
  \end{equation}
  where $ R_{m, l}^{(n)} > 0 $ are chosen later
  so that the infinite sum converges with respect to the $ C^\infty $ topology.
  It is clear that $ h_{m, l}^{(n)}(0) = c_{m, l}^{(n)} $
  for all $ m $, $ l $, $ n \in \setN $.

  For $ \beta \geq 0 $ and $ \gamma \in \setN $, we have
  \[
    t^\beta h_{m, l}^{(\gamma)}(t)
    = \sum_{n = 0}^{\infty} c_{m, l}^{(n)}
      \sum_{j = 0}^{\gamma}
      \binom{\gamma}{j} (R_{m, l}^{(n)})^{\gamma - j} n^{\underline{j}}
      \rho^{(\gamma - j)}(R_{m, l}^{(n)} t) t^{n - j + \beta}.
  \]
  Since $ \rho^{(\gamma - j)}(R_{m, l}^{(n)} t) = 0 $ unless $ t \in [0, (R_{m, l}^{(n)})^{-1}] $,
  it follows that
  \begin{align}
    \abs{t^\beta h_{m, l}^{(\gamma)}(t)}
    &\leq \sum_{n = 0}^{\infty} \abs{c_{m, l}^{(n)}}
      \sum_{j = 0}^{\gamma}
      \binom{\gamma}{j} (R_{m, l}^{(n)})^{\gamma - j} n^{\underline{j}}
      \norm{\rho^{(\gamma - j)}}_\infty (R_{m, l}^{(n)})^{-(n - j + \beta)}
      \notag \\
    &= \sum_{n = 0}^{\infty} C_{n, \gamma} \abs{c_{m, l}^{(n)}} (R_{m, l}^{(n)})^{-n - \beta + \gamma},
      \label{eq:borels-theorem-for-families-2}
  \end{align}
  where $ C_{n, \gamma}
  = \sum_{j = 0}^{\gamma} n^{\underline{j}} \norm{\rho^{(\gamma - j)}}_\infty $.
  We now set
  \[
    R_{m, l}^{(n)}
    = \max \setenum*{
        \max_{\gamma \in \setenum{0, \dots, n - 1}} (2^n C_{n, \gamma} \abs{c_{m, l}^{(n)}})^{\frac{1}{n - \gamma}},\;
        1
      }
  \]
  so that $ (R_{m, l}^{(n)})^{-n - \beta + \gamma}
  \leq (R_{m, l}^{(n)})^{-n + \gamma}
  \leq (2^n C_{n, \gamma} \abs{c_{m, l}^{(n)}})^{-1} $
  for $ n \geq \gamma + 1 $.
  Then, we have
  \begin{align}
    & \sum_{n = 0}^{\infty} C_{n, \gamma} \abs{c_{m, l}^{(n)}} (R_{m, l}^{(n)})^{-n - \beta + \gamma}
      \notag \\
    &\leq \sum_{n = 0}^{\gamma} C_{n, \gamma} \abs{c_{m, l}^{(n)}} (R_{m, l}^{(n)})^{-n - \beta + \gamma}
      + \sum_{n = \gamma + 1}^{\infty} C_{n, \gamma} \abs{c_{m, l}^{(n)}} (2^n C_{n, \gamma} \abs{c_{m, l}^{(n)}})^{-1}
      \notag \\
    &\leq \sum_{n = 0}^{\gamma} C_{n, \gamma} \abs{c_{m, l}^{(n)}} (R_{m, l}^{(n)})^{-n - \beta + \gamma}
      + 2^{-\gamma}
      \notag \\
    &\leq \widetilde{C}_\gamma \sum_{n = 0}^{\gamma} \abs{c_{m, l}^{(n)}} (R_{m, l}^{(n)})^{-\beta + \gamma}
      + 2^{-\gamma},
      \label{eq:borels-theorem-for-families-3}
  \end{align}
  where $ \widetilde{C}_\gamma = \max_{n \in \setenum{0, \dots, \gamma}} C_{n, \gamma} $.
  (This shows the convergence of the infinite sum in \cref{eq:borels-theorem-for-families-1}
  with respect to the $ C^\infty $ topology,
  which justifies the definition of $ h_{m, l} $ and termwise differentiation.)
  We estimate $ \sum_{n = 0}^{\gamma} \abs{c_{m, l}^{(n)}} (R_{m, l}^{(n)})^{-\beta + \gamma} $.
  For any $ \gamma \in \setN $, we have
  \begin{align}
    \sum_{n = 0}^{\gamma} \abs{c_{m, l}^{(n)}} (R_{m, l}^{(n)})^{-\beta + \gamma}
    &\leq \sum_{n = 0}^{\gamma}
      \abs{c_{m, l}^{(n)}}
      \max\setenum{2^\gamma \widetilde{C}_\gamma \abs{c_{m, l}^{(n)}}, 1}^\gamma
      \notag \\
    &= \sum_{n = 0}^{\gamma}
      \max\setenum{2^{\gamma^2} \widetilde{C}_\gamma^\gamma \abs{c_{m, l}^{(n)}}^{\gamma + 1}, \abs{c_{m, l}^{(n)}}}.
      \label{eq:borels-theorem-for-families-4}
  \end{align}
  On the other hand, for $ \gamma \in \setN $ such that $ -\beta + \gamma + 1 \leq 0 $,
  we have
  \begin{align}
    \sum_{n = 0}^{\gamma} \abs{c_{m, l}^{(n)}} (R_{m, l}^{(n)})^{-\beta + \gamma}
    &\leq \sum_{n = 0}^{\gamma}
      \abs{c_{m, l}^{(n)}}
      \max\setenum{2^n C_{n, n - 1} \abs{c_{m, l}^{(n)}}, 1}^{-\beta + \gamma}
      \notag \\
    &\leq \sum_{n = 0}^{\gamma}
      \min\setenum{\abs{c_{m, l}^{(n)}}^{-\beta + \gamma + 1}, \abs{c_{m, l}^{(n)}}}
      \notag \\
    &\leq \gamma + 1.
      \label{eq:borels-theorem-for-families-5}
  \end{align}
  Here, we use $ 2^n $, $ C_{n, n - 1} \geq 1 $ in the second inequality,
  and $ \min\setenum{\abs{c_{m, l}^{(n)}}^{-\beta + \gamma + 1}, \abs{c_{m, l}^{(n)}}} \leq 1 $
  in the last inequality.

  Now let $ a > 0 $, $ b \in \setR $ and $ \gamma \in \setN $,
  and set $ \beta = \max\setenum{(m + b)/a, 0} $.
  Then, the last line of \cref{eq:borels-theorem-for-families-4} is
  of polynomial growth with respect to $ l $ for fixed $ m $.
  Moreover, there are only finitely many $ m \in \setN $
  such that $ -\beta + \gamma + 1 = -\max\setenum{(m + b)/a, 0} + \gamma + 1 > 0 $.
  Therefore, it follows from \cref{eq:borels-theorem-for-families-2},
  \cref{eq:borels-theorem-for-families-3},
  \cref{eq:borels-theorem-for-families-4} and \cref{eq:borels-theorem-for-families-5}
  that $ \faml{h_{m, l}}{m, l \in \setN} $ satisfies ($ a $-PG).
\end{proof}

\begin{lemma}\label{thm:pg-power}
  Let $ a > 0 $ and $ \faml{h_{m, l}}{m, l \in \setN} $ be a family of
  smooth functions on $ \setRzp $ satisfying ($ a $-PG).
  \begin{enumarabicp}
    \item \label{item:positive-integers-c-and-d}
          For any $ c $, $ d \in \setNp $,
          the family $ \faml{t \mapsto t^c h_{m, l}(t^d)}{m, l \in \setN} $ satisfies ($ a/d $-PG).
    \item We further assume that
          $ h_{m, l}^{(n)}(0) = 0 $ for all $ m $, $ l $, $ n \in \setN $.
          Then, for any $ c \in \setR $ and $ d > 0 $,
          the family $ \faml{t \mapsto t^c h_{m, l}(t^d)}{m, l \in \setN} $ satisfies ($ a/d $-PG).
  \end{enumarabicp}
\end{lemma}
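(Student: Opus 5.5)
We write $g_{m,l}(t)=t^c h_{m,l}(t^d)$. The task is to bound $\sup_{t\ge0} t^{\max\{(m+b)d/a,0\}}\abs{g_{m,l}^{(n)}(t)}$ by a polynomial in $m$, $l$ for each $b\in\setR$ and $n\in\setN$. In both parts the plan is to expand $g_{m,l}^{(n)}$ by the Leibniz and Faà di Bruno formulas, which gives a finite sum whose length depends only on $n$, $c$, $d$:
\[
  g_{m,l}^{(n)}(t)=\sum_{k=0}^{n} P_{n,k}(t)\, h_{m,l}^{(k)}(t^d).
\]
Each $P_{n,k}$ is a finite linear combination of monomials $t^{c+kd-n}$, with coefficients depending only on $n,k,c,d$. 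Substituting $s=t^d$, a typical term becomes
\[
  t^{\beta}\,t^{c+kd-n}\abs{h^{(k)}_{m,l}(t^d)} = s^{(\beta+c-n)/d+k}\abs{h^{(k)}_{m,l}(s)},
  \qquad \beta=\max\{(m+b)d/a,0\}.
\]
So $\beta/d=\max\{(m+b)/a,0\}$, and the exponent equals $\max\{(m+b)/a,0\}+e$ with $e=k+(c-n)/d$, which is fixed and independent of $m$, $l$.

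For \textbf{(1)}, with $c,d$ positive integers, everything is smooth at $0$; in particular $t^c h_{m,l}(t^d)$ is smooth on $\setRzp$. For $m$ large, $(m+b)/a\ge0$, and we rewrite $(m+b)/a+e=(m+b')/a$ with $b'=b+ae$. Then ($a$-PG) for $h$ applied with parameters $b'$ and $k$ bounds the term polynomially. The subtlety is that the exponent may be negative for finitely many small $m$, or be a non-integer power near $0$. The hard region is near $t=0$. There I would use that $g_{m,l}$ is smooth with all the $P_{n,k}(t)h^{(k)}(t^d)$ terms combining into a smooth expression. More simply, I would split into $t\le1$ and $t\ge1$. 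For $t\ge1$, increasing the exponent only helps, so we may replace $b'$ by a larger value and apply ($a$-PG). For $t\le1$, $t^\beta\le1$, and $g^{(n)}_{m,l}(t)$ is a polynomial combination (with integer nonnegative powers, since $c,d\in\setNp$ make each Faà di Bruno term a genuine polynomial in $t$) of $h^{(k)}_{m,l}(t^d)$. Hence it is bounded by $\sup_{s\le1}\abs{h^{(k)}_{m,l}(s)}$, which is polynomial by ($a$-PG) with $b=-m$... Since $b$ must be independent of $m$, I would instead use $b=0$ and the bound $\sup_{s\ge0}s^{m/a}\abs{h^{(k)}}$ only for $s\ge1$, and $b$ very negative for $s\le1$: ($a$-PG) with $b$ negative gives exponent $\max\{(m+b)/a,0\}$, which is $0$ only for finitely many $m$. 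This is the main obstacle. For $s\le1$ and general $m$ I would instead use Taylor's theorem about $1$: $\abs{h^{(k)}(s)}\le\sum_{j<J}\abs{h^{(k+j)}(1)}+\sup_{[0,1]}\abs{h^{(k+J)}}$, which does not close. So I would instead invoke the definition with $b\le -m$ disallowed and accept handling it as follows: the sup over $[0,1]$ of $\abs{h^{(k)}}$ is controlled by $\abs{h^{(k)}(1)}$ plus integrals of $h^{(k+1)}$ iterated. That leads back to the same problem, so the honest approach is to read ($a$-PG) with $b\to-\infty$ (each fixed $b$) as giving unweighted bounds for all $m\le -b$. For larger $m$ on $[0,1]$, I would integrate from $s$ to $\infty$: $h^{(k)}(s)=-\int_s^\infty h^{(k+1)}$, bounded using ($a$-PG) with $b=a\cdot2-m$... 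I expect to settle this by this integration trick, using decay at infinity from the weighted bounds.

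For \textbf{(2)}, the flatness $h_{m,l}^{(n)}(0)=0$ handles negative or nonintegral exponents near $0$. Taylor's formula gives $\abs{h^{(k)}(s)}\le s^{J}\sup_{[0,s]}\abs{h^{(k+J)}}/J!$. Choosing $J$ large (depending only on $n,c,d$) absorbs any negative fixed exponent $e$ on $[0,1]$, and the relevant sups are polynomial by the previous step. On $[1,\infty)$ the argument is the same as in (1).
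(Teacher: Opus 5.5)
There is a genuine gap, and it comes from the split at $t=1$. Your first step (Fa\`a di Bruno, substitute $s=t^d$, shift $b'=b+ae$) is the same reduction the paper uses. It already bounds the supremum over \emph{all} $s\ge 0$, including near $0$, for every $m$ with $m+b\ge 0$. The proposal goes wrong when you then restrict to $[0,1]$, discard the weight ($t^\beta\le 1$), and try to bound $\sup_{s\le 1}|h^{(k)}_{m,l}(s)|$ polynomially in $(m,l)$ jointly. That quantity is not controlled by ($a$-PG).

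Here is a counterexample. Take a bump $\rho$ supported in $[-1,1]$ with $\rho(0)=1$, and set $h_{m,l}(s)=2^{m/a}\rho(m(s-1/2))$ for $m\ge 4$ (and $0$ otherwise). On the support, $s^{(m+b)/a}2^{m/a}\le (1+2/m)^{m/a}(1/2+1/m)^{b/a}$, which is bounded in $m$. Each derivative costs only a factor $m^n$, so ($a$-PG) holds. Yet $\sup_{[0,1]}|h_{m,l}|=2^{m/a}$. Hence neither the Taylor expansion at $1$ nor the integration from $s$ to $\infty$ can close this step. Moreover, $b=2a-m$ is not an admissible choice for infinitely many $m$. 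Your part (2) relies on the same false bound when you write ``the relevant sups are polynomial by the previous step''.

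The missing observation is that no splitting is needed. In (1), every surviving term has $e=k+(c-n)/d\ge 0$, because $c$ and $d$ are positive integers and the coefficients of negative powers vanish. So the exponent is never negative, contrary to what you say. For $m+b\ge 0$, the weight $s^{(m+b)/a+e}$ equals $s^{\max\{(m+b')/a,0\}}$, and ($a$-PG) finishes. Only the finitely many $m$ with $m+b<0$ remain, and for these you need polynomial growth in $l$ alone. Since $b$ in the definition is arbitrary, for each such fixed $m_0$ apply ($a$-PG) with $b=-m_0+ae$. At $m=m_0$ this gives exactly $\sup_s s^e|h^{(k)}_{m_0,l}(s)|$. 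Your worry that ``$b$ must be independent of $m$'' only matters for infinitely many $m$. This is the paper's argument, which it calls immediate.

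In (2), the Taylor estimate $s^{-J}|h^{(k)}(s)|\le \sup_{[0,s]}|h^{(k+J)}|/J!$, available by flatness, is needed only for the finitely many $m$ whose total exponent $\max\{(m+b)/a,0\}+e$ is negative. For each such $m_0$, the unweighted supremum of $h^{(k+J)}_{m_0,l}$ is polynomial in $l$ by ($a$-PG) with $b=-m_0$. On $s\ge 1$, the negative power is at most $1$. For all other $m$, the shifted-$b$ argument from (1) applies verbatim.
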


\begin{proof}
  \begin{subproof}{(1)}
    Since each derivative of the function $ t \mapsto t^c h_{m, l}(t^d) $ is
    a linear combination of terms of the form $ t^{c'} h_{m, l}^{(n)}(t^d) $
    ($ c' $, $ n \in \setN $),
    it suffices to show that, for any $ b \in \setR $ and $ c' $, $ n \in \setN $,
    \begin{equation}
      \sup_{t \geq 0} t^{\max\setenum{d(m + b)/a, 0} + c'} \abs{h_{m, l}^{(n)}(t^d)}
        \label{eq:pg-power}
    \end{equation}
    is of polynomial growth with respect to $ m $, $ l \in \setN $.
    It follows immediately from the assumption that
    $ \faml{h_{m, l}}{m, l \in \setN} $ satisfies ($ a $-PG).
  \end{subproof}

  \begin{subproof}{(2)}
    The situation is similar to \cref{item:positive-integers-c-and-d}, but here we want to prove that,
    for any $ b $, $ c' \in \setR $ and $ n \in \setN $,
    \cref{eq:pg-power} is of polynomial growth with respect to $ m $, $ l \in \setN $.
    The same argument as in \cref{item:positive-integers-c-and-d} applies
    for $ m \in \setN $ such that $ \max\setenum{d(m + b)/a, 0} + c' \geq 0 $.
    We now consider $ m \in \setN $ such that $ \max\setenum{d(m + b)/a, 0} + c' < 0 $.
    By the assumption and Taylor's theorem, we have
    \[
      t^{-d\alpha} \abs{h_{m, l}^{(n)}(t^d)}
      \leq \frac{1}{\alpha!} \sup_{s \in [0, t^d]} \abs{h_{m, l}^{(n + \alpha)}(s)}
    \]
    for $ \alpha \in \setN $.
    Hence, for fixed $ m \in \setN $ such that $ \max\setenum{d(m + b)/a, 0} + c' < 0 $,
    \cref{eq:pg-power} is of polynomial growth with respect to $ l \in \setN $.
    Since there are only finitely many such $ m $, this completes the proof.
  \end{subproof}
\end{proof}

Using these two lemmas, we prove the following proposition.

\begin{proposition}\label{thm:pg-decomposition}
  Let $ a > 0 $ and $ \faml{h_{m, l}}{m, l \in \setN} $ be a family of
  smooth functions on $ \setRzp $ satisfying ($ a $-PG).
  Then, for any $ q \in \setNp $, there exist
  $ \faml{h_{m, l; 0}}{m, l \in \setN} $, $ \dots $, $ \faml{h_{m, l; q - 1}}{m, l \in \setN} $
  such that
  \begin{itemize}
    \item each $ \faml{h_{m, l; j}}{m, l \in \setN} $ satisfies ($ qa $-PG), and
    \item $ h_{m, l}(t) = \sum_{j = 0}^{q - 1} t^j h_{m, l; j}(t^q) $
          for $ t \geq 0 $.
  \end{itemize}
\end{proposition}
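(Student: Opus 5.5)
The plan is to split each $h_{m,l}$ into a part carrying its Taylor data at the origin, handled by Borel's theorem for families (\cref{thm:borels-theorem-for-families}), and a remainder flat at the origin, handled by \cref{thm:pg-power}~(2).

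First I would record the Taylor coefficients. For each $m,l$ and $j\in\{0,\dots,q-1\}$, set
\[
  c_{m,l;j}^{(n)} = n!\,\frac{h_{m,l}^{(nq+j)}(0)}{(nq+j)!}.
\]
These are exactly the coefficients that $h_{m,l;j}(s)$ must have, with $s = t^q$, so that $\sum_j t^j h_{m,l;j}(t^q)$ has the same formal Taylor series at $0$ as $h_{m,l}$. Applying ($a$-PG) with $t=0$ is not directly available, since the weight $t^{\max\{(m+b)/a,0\}}$ vanishes at $0$. I would therefore choose $b$ very negative, say $b = -m$ is not allowed because $b$ must be independent of $m$. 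Instead, for fixed $m$ take $b=-m$, which is a fixed real number for that $m$. Then $\sup_t |h_{m,l}^{(n)}(t)|$ is of polynomial growth in $l$ for each fixed $m,n$. Hence for fixed $m,n$ the family $(c_{m,l;j}^{(n)})_l$ has polynomial growth, which is exactly the hypothesis of \cref{thm:borels-theorem-for-families}. That lemma then yields families $(g_{m,l;j})$ satisfying ($qa$-PG) (indeed ($a'$-PG) for all $a'$) with $g_{m,l;j}^{(n)}(0)=c_{m,l;j}^{(n)}$.

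Next, set $g_{m,l}(t)=\sum_j t^j g_{m,l;j}(t^q)$. By \cref{thm:pg-power}~(1), applied with $a$ replaced by $qa$, $d=q$ and $c=j$ (the case $j=0$ being trivial), each summand satisfies ($a$-PG). So the family $(g_{m,l})$ satisfies ($a$-PG), and its Taylor series at $0$ agrees with that of $h_{m,l}$. The difference $k_{m,l}=h_{m,l}-g_{m,l}$ therefore satisfies ($a$-PG) and is flat at $0$.

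Now apply \cref{thm:pg-power}~(2) with exponent $d=1/q$ and $c=0$: the family $s\mapsto k_{m,l}(s^{1/q})$ satisfies ($qa$-PG). Put $h_{m,l;0}=g_{m,l;0}+k_{m,l}(s^{1/q})$ and $h_{m,l;j}=g_{m,l;j}$ for $j\ge1$. This gives the required decomposition, since ($qa$-PG) is clearly preserved under finite sums.

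The main obstacle is a point I need to check: whether $k_{m,l}(s^{1/q})$ is smooth at $s=0$. Flatness makes every derivative a combination of terms $s^{\text{negative}}k^{(n)}(s^{1/q})$, and these tend to $0$, so smoothness should follow. The first step, extracting polynomial growth of the Taylor coefficients from ($a$-PG) with a fixed $b$ chosen per $m$, is the other place where care is needed.
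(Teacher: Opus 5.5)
Your proposal is correct and follows the paper's proof essentially step for step. You use Borel's theorem for families to match the Taylor coefficients $h_{m,l}^{(nq+j)}(0)/(nq+j)!$, then \cref{thm:pg-power}~(1) to see that the remainder $k_{m,l}$ satisfies ($a$-PG) and is flat at $0$, and then \cref{thm:pg-power}~(2) with $d=1/q$ to absorb $k_{m,l}(s^{1/q})$ into $h_{m,l;0}$; your two extra checks (taking $b=-m$ for each fixed $m$ to get the polynomial growth in $l$ that the Borel lemma needs, and using flatness to get smoothness of $s\mapsto k_{m,l}(s^{1/q})$ at $0$) are valid and only make explicit what the paper leaves implicit.
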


\begin{proof}
  By Borel's theorem for families (\cref{thm:borels-theorem-for-families}), there exist
  $ \faml{h_{m, l; 0}}{m, l \in \setN} $, $ \dots $, $ \faml{h_{m, l; q - 1}}{m, l \in \setN} $
  satisfying ($ qa $-PG)
  such that $ (h_{m, l; j})^{(n)}(0)/n! = h_{m, l}^{(nq + j)}(0)/(nq + j)! $
  for all $ m $, $ l \in \setN $ and $ j \in \setenum{0, \dots, q - 1} $.
  We set
  \[
    \widetilde{h}_{m, l}(t)
    = h_{m, l}(t) - \sum_{j = 0}^{q - 1} t^j h_{m, l; j}(t^q).
  \]
  Then, $ \faml{\widetilde{h}_{m, l}}{m, l \in \setN} $ satisfies ($ a $-PG)
  by \cref{thm:pg-power}~(1), and
  $ \widetilde{h}_{m, l}^{(n)}(0) = 0 $ for all $ m $, $ l \in \setN $.
  Hence, by \cref{thm:pg-power}~(2),
  $ \faml{t \mapsto \widetilde{h}_{m, l}(t^{1/q})}{m, l \in \setN} $ satisfies ($ qa $-PG).
  By redefining $ h_{m, l; 0}(t) $ as $ h_{m, l; 0}(t) + \widetilde{h}_{m, l}(t^{1/q}) $,
  we obtain the desired decomposition.
\end{proof}

\subsection{Laguerre polynomials and the condition (\texorpdfstring{$ a $}{a}-PG)}

By using the estimates of Laguerre polynomials in \cref{sec:laguerre-polynomial},
we prove the following proposition.

\begin{proposition}\label{thm:laguerre-satisfies-pg}
  Let $ a > 0 $ and $ \faml{\lambda_m}{m \in \setN} $ be a sequence on $ [-1, \infty) $
  such that $ \sup_{m \in \setN} \abs{\lambda_m - 2m/a} < \infty $.
  Then, the family
  \[
    \faml*{
      t \mapsto \Paren*{\frac{\Gamma(l + 1)}{\Gamma(\lambda_m + l + 1)}}^{1/2} L_l^{\lambda_m}(t) e^{-t/2}
    }{m, l \in \setN}
  \]
  satisfies ($ a $-PG).
\end{proposition}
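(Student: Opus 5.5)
We must show that for any $b\in\setR$ and $n\in\setN$,
\[
  \sup_{t\ge 0} t^{\max\setenum{(m+b)/a,0}} \Paren*{\frac{\Gamma(l+1)}{\Gamma(\lambda_m+l+1)}}^{1/2} \abs*{\Paren*{\Odif{t}}^n \Paren*{L_l^{\lambda_m}(t)e^{-t/2}}}
\]
is of polynomial growth in $(m,l)$. Set $M=\sup_m\abs{\lambda_m-2m/a}$. Then $(m+b)/a = \lambda_m/2 + O(1)$ uniformly in $m$. So it suffices to bound $t^{\lambda_m/2+c}$ for a fixed real $c$ (depending on $b$, $a$, $M$). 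Since $t^{s}\le 1+t^{s'}$ for $0\le s\le s'$, we may enlarge the exponent to the form $\alpha/2+\beta$ with integers $\alpha,\beta\ge0$, where $\beta$ is fixed and $\alpha\in\setN$ satisfies $\abs{\alpha-\lambda_m}\le M'$. The lower-order part $t^0$ is handled by the same estimate with $\alpha=\beta=0$, or by Duran's estimate.

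\textbf{Main case: $m$ large.} We apply \cref{thm:estimate-2} with $\gamma=n$ and with $\alpha$ chosen as an integer satisfying $\alpha+\beta\le\lambda_m$ and $\lambda_m-\alpha\le M''$. For instance, take $\alpha=\lfloor\lambda_m\rfloor-\beta$. This gives the exponent $\alpha/2+\beta = \lambda_m/2+\beta/2+O(1)$, and choosing $\beta$ large enough in terms of $c$, we dominate $t^{\lambda_m/2+c}$ for $t\ge1$. For $t\le1$, the factor $t^{\text{exponent}}\le1$, and the bound with $\alpha=\beta=0$ suffices.

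\textbf{Key computation.} By the remark following \cref{thm:estimate-2}, under $\abs{\alpha-\lambda}\le M''$ the bound is
\[
  (\text{polynomial of }\lambda_m, l)\times \Paren*{\frac{(\lambda_m+l)!}{l!}}^{1/2}.
\]
Multiplying by $\Paren*{\Gamma(l+1)/\Gamma(\lambda_m+l+1)}^{1/2}$ cancels the factorial ratio exactly. The result is polynomial in $\lambda_m$ and $l$, hence polynomial in $m$ and $l$, since $\lambda_m=O(m)$.

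I expect the main obstacle to be verifying this cancellation carefully. Concretely, one must check that
\[
  \binom{\lambda+l+\gamma}{l+\beta}\,(\alpha!)^{1/2}\binom{\lambda-\beta+\gamma}{\alpha}^{1/2}\binom{\lambda+l}{\alpha}^{-1/2}
\]
equals $\Paren*{\Gamma(\lambda+l+1)/\Gamma(l+1)}^{1/2}$ times a factor polynomial in $\lambda$ and $l$, uniformly when $\lambda-\alpha$ is bounded. The Gamma-ratios $\Gamma(x+s)/\Gamma(x)$ with bounded shift $s$ are polynomially bounded in $x$, and this is used repeatedly. The term $\binom{\lambda+l}{\alpha}^{-1/2}$ contributes $\Paren*{\alpha!\,\Gamma(\lambda+l-\alpha+1)/\Gamma(\lambda+l+1)}^{1/2}$, and $\Gamma(\lambda+l-\alpha+1)\approx l!$ up to polynomial factors. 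One checks these combine to $\Paren*{\Gamma(\lambda+l+1)/l!}^{1/2}$ up to polynomial factors.

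\textbf{Small $m$.} For the finitely many $m$ with $\lambda_m<\beta$ (or $\lambda_m$ near $-1$), we use Duran's \cref{thm:estimate-1} with integer exponent $\lceil\max\setenum{(m+b)/a,0}\rceil$. It gives polynomial growth in $l$ directly, since $\lambda_m\ge-1\ge-l-1$. The normalizing factor $\Paren*{\Gamma(l+1)/\Gamma(\lambda_m+l+1)}^{1/2}$ is polynomially bounded in $l$ for fixed $m$. Finitely many $m$ therefore pose no problem.
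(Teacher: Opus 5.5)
Your argument breaks down near $t=0$ when $m$ is large. You discard the weight there twice. The first time is via $t^{s}\le 1+t^{s'}$ (``the lower-order part $t^0$ is handled by the same estimate with $\alpha=\beta=0$, or by Duran's estimate''). The second is via ``for $t\le 1$ the factor $t^{\text{exponent}}\le 1$, and the bound with $\alpha=\beta=0$ suffices''. Write $h_{m,l}$ for the normalized function in the statement. Both steps then require $\sup_{0\le t\le 1}\lvert h_{m,l}^{(n)}(t)\rvert$ to be of polynomial growth jointly in $(m,l)$, and this is false. Since $L_l^{\lambda}(0)=\Gamma(\lambda+l+1)/(\Gamma(l+1)\Gamma(\lambda+1))$, we have
\[
  h_{m,l}(0)=\frac{1}{\Gamma(\lambda_m+1)}\left(\frac{\Gamma(\lambda_m+l+1)}{\Gamma(l+1)}\right)^{1/2}.
\]
For $l\approx\lambda_m^3$ this is roughly at least $\lambda_m^{\lambda_m/2}$, which no polynomial in $(m,l)$ dominates. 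Correspondingly, the $\alpha=\beta=0$ case of \cref{thm:estimate-2} (or Duran's bound) multiplied by the normalizing factor is $\binom{\lambda_m+l+n}{l}\bigl(\Gamma(l+1)/\Gamma(\lambda_m+l+1)\bigr)^{1/2}\ge h_{m,l}(0)$, so it is super-polynomial too. The cancellation in your key computation only happens when $\alpha$ stays within bounded distance of $\lambda_m$, and $\alpha=0$ does not. For each fixed $m$ this quantity is polynomial in $l$, which is why your small-$m$ paragraph is fine.

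The smallness of $t^{(m+b)/a}$ near the origin is essential and must be kept. The repair is to bound the exponent from both sides by admissible exponents, using $t^{s}\le t^{s_1}+t^{s_2}$ for $s_1\le s\le s_2$. Keep your upper choice $s_2=\alpha_2/2+\beta$. For the lower one take $s_1=\alpha_1/2$, where $\alpha_1$ is a non-negative integer with $\alpha_1\le\min\{2(m+b)/a,\lambda_m\}$ and $\lambda_m-\alpha_1$ bounded. This is possible for all but finitely many $m$, since $\lambda_m-2m/a$ is bounded. Then \cref{thm:estimate-2} applies with $(\alpha,\beta,\gamma)=(\alpha_1,0,n)$. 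Because $\lvert\alpha_1-\lambda_m\rvert$ is bounded, your cancellation gives a polynomial bound for this piece as well. This is what the paper's proof does implicitly. It bounds $\sup_t t^{\alpha/2+\beta}\lvert h_{m,l}^{(\gamma)}(t)\rvert$ polynomially, uniformly over all $\alpha$ with $\lvert\lambda_m-\alpha\rvert\le M$ and $\beta\le M$, for every $M$, and this covers exponents on both sides of $(m+b)/a$. Apart from this point, your outline matches the paper's: Duran's estimate for finitely many $m$, and \cref{thm:estimate-2} plus the factorial cancellation for the rest.
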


\begin{proof}
  By \cref{thm:estimate-1}, for $ m $, $ l $, $ \beta $, $ \gamma \in \setN $,
  we have
  \begin{align*}
    & \sup_{t \geq 0} \abs*{
        t^\beta \Paren*{\Odif{t}}^\gamma \Paren*{
          \Paren*{\frac{\Gamma(l + 1)}{\Gamma(\lambda_m + l + 1)}}^{1/2} L_l^{\lambda_m}(t) e^{-t/2}
        }
      } \\
    &\leq  \Paren*{\frac{\Gamma(l + 1)}{\Gamma(\lambda_m + l + 1)}}^{1/2} \cdot
      2^{\beta + \max\setenum{\beta - \lambda_m, 0}} (l + \beta)^{\underline{\beta}}
      \binom{\max\setenum{\lambda_m - \beta, 0} + l + \gamma}{l}
  \end{align*}
  Hence, for any $ \beta $, $ \gamma \in \setN $ and fixed $ m \in \setN $,
  the left-hand side of the above inequality is of polynomial growth
  with respect to $ l $.
  Since there are only finitely many $ m \in \setN $ such that $ \lambda_m \leq 0 $
  by the assumption $ \sup_{m \in \setN} \abs{\lambda_m - 2m/a} < \infty $,
  now it suffices to consider the case $ \lambda_m > 0 $.

  By \cref{thm:estimate-2}, for $ m $, $ l $, $ \alpha $, $ \beta $, $ \gamma \in \setN $
  such that $ \lambda_m > 0 $ and $ \lambda_m \geq \alpha + \beta $, we have
  \begin{align*}
    & \sup_{t \geq 0} \abs*{
        t^{\alpha/2 + \beta} \Paren*{\Odif{t}}^\gamma \Paren*{
          \Paren*{\frac{\Gamma(l + 1)}{\Gamma(\lambda_m + l + 1)}}^{1/2} L_l^{\lambda_m}(t) e^{-t/2}
        }
      } \\
    &\leq \Paren*{\frac{\Gamma(l + 1)}{\Gamma(\lambda_m + l + 1)}}^{1/2} \cdot
      2^\beta (\lambda_m + l)^{\underline{\beta}}
      \binom{\lambda_m + l + \gamma}{l + \beta}
      (\alpha!)^{1/2} \binom{\lambda_m - \beta + \gamma}{\alpha}^{1/2} \binom{\lambda_m + l}{\alpha}^{-1/2} \\
    &= \frac{2^\beta (\lambda_m + l)^{\underline{\beta}}}{((\lambda_m - \alpha - \beta + \gamma)!)^{1/2}}
      \frac{(\alpha!)^{1/2}}{((\lambda_m - \beta + \gamma)!)^{1/2}}
      \frac{(l!)^{1/2} ((\lambda_m + l - \alpha)!)^{1/2}}{(l + \beta)!}
      \frac{(\lambda_m + l + \gamma)!}{(\lambda_m + l)!}.
  \end{align*}
  If we fix $ M \geq 0 $ and let $ m $, $ l $, $ \alpha $ and $ \beta $ vary
  under the conditions $ \abs{\lambda_m - \alpha} \leq M $ and $ \beta \leq M $,
  then each fraction in the last line of the above inequality is bounded above
  by a polynomial of $ m $ and $ l $
  (whose degree and coefficients depend on $ M $ and $ \gamma $).
  Note that the condition $ \abs{\lambda_m - \alpha} \leq M $ is satisfied
  as long as $ \abs{\alpha/2 - m/a} \leq (M - C)/2 $,
  where we set $ C = \sup_{m' \in \setN} \abs{\lambda_{m'} - 2m'/a} < \infty $.
  Because it holds for any $ M \geq 0 $, we conclude that
  $ \faml{t \mapsto (\frac{\Gamma(l + 1)}{\Gamma(\lambda_m + l + 1)})^{1/2} L_l^{\lambda_m}(t) e^{-t/2}}{m, l \in \setN} $
  satisfies ($ a $-PG).
\end{proof}

\section{Inequalities for harmonic polynomials}
\label{sec:harmonic-polynomial}

In this section, we prove inequalities for harmonic polynomials,
which will be used in the proof of \cref{thm:schwartz-space}
to estimate the spherical parts of functions.

For $ p $, $ q \in \mcalP(\setR^N) $, we define the \emph{Fischer inner product} by
\[
  \innprod{p}{q}_{\mathrm{F}}
  = (p(\partial) \conj{q})(0),
\]
where $ p(\partial) $ denotes the differential operator obtained
by replacing the variables in $ p $ with $ \Pdif{x_1} $, $ \dots $, $ \Pdif{x_N} $.
We write $ \norm{\blank}_{\mathrm{F}} $ for the \emph{Fischer norm},
or the norm defined by the Fischer inner product.
The Fischer inner product is related to the $ L^2 $ inner product
(with respect to the standard measure on $ \Sphere{N - 1} $) as follows.

\begin{fact}[{\cite[Lemma~3.12]{MR1130821}}]\label{thm:fischer-and-l2}
  For $ p \in \mcalH^m(\setR^N) $ and $ q \in \mcalH^n(\setR^N) $
  ($ m $, $ n \in \setN $),
  we have
  \[
    \innprod{p}{q}_{\mathrm{F}}
    = \begin{dcases}
        \frac{2^{m - 1} \Gamma(m + N/2)}{\pi^{N/2}} \innprod{p}{q}_{L^2(\Sphere{N - 1})}
        & (m = n) \\
        0
        & (m \neq n).
      \end{dcases}
  \]
\end{fact}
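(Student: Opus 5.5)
The plan is to evaluate the Gaussian-weighted integral
\[
  I = \int_{\setR^N} p(x) \conj{q(x)} e^{-\enorm{x}^2/2} \,dx
\]
in two ways: once in polar coordinates, which produces the $ L^2(\Sphere{N - 1}) $ inner product, and once by integration by parts, which produces the Fischer inner product.

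First, the case $ m \neq n $ is immediate. The polynomial $ p(\partial) \conj{q} $ is homogeneous of degree $ n - m $: it vanishes if $ m > n $, and has no constant term if $ m < n $. In either case its value at $ 0 $ is zero. (On the sphere side, harmonic polynomials of different degrees are also $ L^2(\Sphere{N - 1}) $-orthogonal, so both sides vanish.) From now on let $ m = n $. In polar coordinates, homogeneity gives
\[
  I = \innprod{p}{q}_{L^2(\Sphere{N - 1})} \int_0^\infty r^{2m + N - 1} e^{-r^2/2} \,dr
    = 2^{m + N/2 - 1} \Gamma(m + N/2) \innprod{p}{q}_{L^2(\Sphere{N - 1})}.
\]

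Second, the key lemma is that for harmonic homogeneous $ p $ of degree $ m $,
\[
  p(\partial) e^{-\enorm{x}^2/2} = p(-x) e^{-\enorm{x}^2/2} = (-1)^m p(x) e^{-\enorm{x}^2/2}.
\]
I would prove this with a generating function. For $ s \in \setR^N $,
\[
  e^{s \cdot \partial} e^{-\enorm{x}^2/2} = e^{-\enorm{x + s}^2/2} = e^{-\enorm{x}^2/2} e^{-s \cdot x} e^{-\enorm{s}^2/2}.
\]
Applying $ p(\partial_s) $ at $ s = 0 $ gives
\[
  p(\partial) e^{-\enorm{x}^2/2} = e^{-\enorm{x}^2/2} \Paren*{p(\partial_s) \Brack*{e^{-s \cdot x} e^{-\enorm{s}^2/2}}}(0).
\]
Now expand $ e^{-\enorm{s}^2/2} = \sum_k (-\enorm{s}^2/2)^k / k! $. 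Only the degree-$ m $ part in $ s $ survives at $ s = 0 $. The terms carrying a factor $ \enorm{s}^{2k} $ with $ k \geq 1 $ are killed. Indeed, $ p(\partial)(\enorm{s}^{2k} g)(0) $ is a Fischer pairing against $ \enorm{s}^{2k} g $. In this pairing, multiplication by $ \enorm{s}^2 $ is adjoint to $ \Laplacian $, up to conjugation of coefficients, which is harmless because $ \conj{p} $ is also harmonic. Since $ \Laplacian p = 0 $, these terms vanish. What remains is $ p(\partial_s) e^{-s \cdot x} |_{s = 0} = p(-x) $. This lemma is the main (though mild) obstacle. Alternatively one could cite the Hecke--Bochner identity for the Fourier transform of $ p(x) e^{-\enorm{x}^2/2} $.

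Third, by the lemma,
\[
  I = (-1)^m \int \conj{q(x)}\, p(\partial) e^{-\enorm{x}^2/2} \,dx.
\]
Integrating by parts $ m $ times contributes another factor $ (-1)^m $; the boundary terms vanish by Gaussian decay. This gives
\[
  I = \int (p(\partial) \conj{q})(x)\, e^{-\enorm{x}^2/2} \,dx.
\]
Since $ p(\partial) \conj{q} $ is a constant equal to $ \innprod{p}{q}_{\mathrm{F}} $, we get $ I = (2\pi)^{N/2} \innprod{p}{q}_{\mathrm{F}} $. Comparing with the polar computation yields
\[
  \innprod{p}{q}_{\mathrm{F}} = \frac{2^{m + N/2 - 1} \Gamma(m + N/2)}{(2\pi)^{N/2}} \innprod{p}{q}_{L^2(\Sphere{N - 1})} = \frac{2^{m - 1} \Gamma(m + N/2)}{\pi^{N/2}} \innprod{p}{q}_{L^2(\Sphere{N - 1})},
\]
which is the claimed constant.
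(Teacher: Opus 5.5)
The paper gives no argument for this statement: it is recorded as a Fact and imported from Gilbert--Murray (Lemma~3.12). Your proof is therefore an independent, self-contained derivation, and it is correct.

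The case $m \neq n$ follows from homogeneity alone. For $m = n$, both of your evaluations of $\int_{\setR^N} p(x)\overline{q(x)}\,e^{-\enorm{x}^2/2}\,dx$ are right:
\begin{itemize}
  \item the radial factor is $2^{m + N/2 - 1}\Gamma(m + N/2)$;
  \item the identity $p(\partial)e^{-\enorm{x}^2/2} = (-1)^m p(x) e^{-\enorm{x}^2/2}$ holds, because under the bilinear pairing $(p, f) \mapsto (p(\partial) f)(0)$ multiplication by $\enorm{s}^2$ is transposed to $\Delta$, which kills the $\enorm{s}^{2k}$ terms for $k \geq 1$;
  \item the $m$ integrations by parts leave $(2\pi)^{N/2}\innprod{p}{q}_{\mathrm{F}}$.
\end{itemize}

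The citation keeps the paper short. Your route buys two things in exchange.

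First, it pins down the normalization constant explicitly. That constant is exactly what the paper needs in the next lemma, where the ratio of the constants in degrees $m$ and $m - \abs{\gamma}$ produces the factor $2^{\abs{\gamma}}(m + N/2 - 1)^{\underline{\abs{\gamma}}}$.

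Second, it shows the identity needs less than stated. For $m = n$, harmonicity of just one of $p$, $q$ suffices, since your argument only uses $\Delta p = 0$. For $m \neq n$, the vanishing of the Fischer product needs no harmonicity at all. Harmonicity of at least one factor cannot be dropped: for $p = q = \enorm{x}^2$ the Fischer side is $2N$, while the right-hand side of the formula would give $N(N + 2)$.
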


\begin{lemma}\label{thm:derivatives-of-harmonic-polynomial}
  For $ p \in \mcalH^m(\setR^N) $ ($ m \in \setN $) and $ \gamma \in \setN^N $,
  we have
  \[
    \norm{\partial^\gamma p}_{L^2(\Sphere{N - 1})}
    \leq
    \begin{dcases}
      \Paren*{(2m)^{\abs{\gamma}} \Paren*{m + \frac{N}{2} - 1}^{\underline{\abs{\gamma}}}}^{1/2}
        \norm{p}_{L^2(\Sphere{N - 1})}
      & (\abs{\gamma} \leq m) \\
      0
      & (\abs{\gamma} > m)
    \end{dcases}
  \]
\end{lemma}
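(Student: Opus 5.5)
We argue by induction on $ \abs{\gamma} $, using the Fischer inner product and \cref{thm:fischer-and-l2}; the whole estimate reduces to a one-derivative bound. If $ \abs{\gamma} > m $, then $ \partial^\gamma p = 0 $ because $ p $ is homogeneous of degree $ m $, so only the case $ \abs{\gamma} \leq m $ needs proof. Since $ \Laplacian $ commutes with $ \Pdif{x_j} $, each $ \Pdif{x_j} p $ again belongs to $ \mcalH^{m - 1}(\setR^N) $, so the induction stays within harmonic polynomials.

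\textbf{Key step: one derivative.} Let $ p \in \mcalH^m(\setR^N) $ with $ m \geq 1 $. We claim that
\[
  \norm*{\Pdif{x_j} p}_{L^2(\Sphere{N - 1})}^2 \leq 2m \Paren*{m + \frac{N}{2} - 1} \norm{p}_{L^2(\Sphere{N - 1})}^2 .
\]
From the definition $ \innprod{p}{q}_{\mathrm{F}} = (p(\partial)\conj{q})(0) $, multiplication by $ x_j $ is adjoint to $ \Pdif{x_j} $, that is, $ \innprod{\partial_j p}{q}_{\mathrm{F}} = \innprod{p}{x_j q}_{\mathrm{F}} $. Hence
\[
  \sum_{j = 1}^{N} \norm{\partial_j p}_{\mathrm{F}}^2
  = \innprod*{p}{\textstyle\sum_j x_j \partial_j p}_{\mathrm{F}}
  = \innprod{p}{E_x p}_{\mathrm{F}}
  = m \norm{p}_{\mathrm{F}}^2 .
\]
In particular, $ \norm{\partial_j p}_{\mathrm{F}}^2 \leq m \norm{p}_{\mathrm{F}}^2 $ for each $ j $.

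Now write $ c_m = 2^{m - 1} \Gamma(m + N/2) / \pi^{N/2} $. By \cref{thm:fischer-and-l2}, $ \norm{q}_{\mathrm{F}}^2 = c_m \norm{q}_{L^2(\Sphere{N - 1})}^2 $ for $ q \in \mcalH^m(\setR^N) $. Applying this in degrees $ m $ and $ m - 1 $ gives
\[
  \norm{\partial_j p}_{L^2}^2 \leq m \frac{c_m}{c_{m - 1}} \norm{p}_{L^2}^2 .
\]
The ratio is $ c_m / c_{m - 1} = 2(m - 1 + N/2) $, which is well defined for $ m \geq 1 $ and $ N \geq 1 $. This proves the claim.

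\textbf{Iteration.} Write $ \partial^\gamma $ as a product of $ \abs{\gamma} $ first-order derivatives. Apply the one-derivative bound successively to harmonic polynomials of degrees $ m, m - 1, \dots, m - \abs{\gamma} + 1 $. This gives
\[
  \norm{\partial^\gamma p}_{L^2}^2 \leq \prod_{i = 0}^{\abs{\gamma} - 1} 2(m - i)\Paren*{m - i + \frac{N}{2} - 1} \, \norm{p}_{L^2}^2 .
\]
Finally, bound $ 2(m - i) \leq 2m $ and identify $ \prod_{i=0}^{\abs{\gamma}-1} (m - i + N/2 - 1) $ with the falling factorial $ (m + N/2 - 1)^{\underline{\abs{\gamma}}} $. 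This yields the stated inequality.

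The only step needing care is the adjointness computation together with the exact normalization constants $ c_m $ in the Fischer-to-$ L^2 $ conversion. The rest is bookkeeping.
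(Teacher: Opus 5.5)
Your argument is correct, but it obtains the Fischer-norm estimate by a different mechanism than the paper.

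\textbf{The paper's route.} The paper treats $\partial^\gamma$ in one stroke. Monomials form a Fischer-orthogonal basis of $\mcalP^m(\setR^N)$ with $\norm{x^\alpha}_{\mathrm{F}}^2 = \alpha!$, and $\partial^\gamma$ sends each monomial to a multiple of a single monomial. Hence, for every homogeneous $p$ of degree $m$,
\[
  \norm{\partial^\gamma p}_{\mathrm{F}}^2 \leq \max_{\abs{\alpha} = m} \alpha_1^{\underline{\gamma_1}} \dotsm \alpha_N^{\underline{\gamma_N}} \, \norm{p}_{\mathrm{F}}^2 \leq m^{\abs{\gamma}} \norm{p}_{\mathrm{F}}^2 .
\]
Then \cref{thm:fischer-and-l2} is applied once, in degrees $m$ and $m - \abs{\gamma}$, with ratio $c_m / c_{m - \abs{\gamma}} = 2^{\abs{\gamma}} (m + \frac{N}{2} - 1)^{\underline{\abs{\gamma}}}$.

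\textbf{Your route.} You prove the basis-free identity $\sum_j \norm{\partial_j p}_{\mathrm{F}}^2 = m \norm{p}_{\mathrm{F}}^2$ from the adjointness of $\partial_j$ and $x_j$ together with Euler's relation. You then convert to $L^2$ at each of the $\abs{\gamma}$ steps. The conversion ratios telescope to the same $c_m / c_{m - \abs{\gamma}}$. Before relaxing $2(m - i) \leq 2m$, you actually get the sharper intermediate constant $2^{\abs{\gamma}} m^{\underline{\abs{\gamma}}} (m + \frac{N}{2} - 1)^{\underline{\abs{\gamma}}}$.

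\textbf{What each buys.} The paper's route is shorter: it needs no induction, and harmonicity enters only at the final conversion. Yours is coordinate-free and keeps the squared norms explicit. That matters here, because the paper's displayed inequality $\norm{\partial^\gamma p}_{\mathrm{F}} \leq m^{\abs{\gamma}} \norm{p}_{\mathrm{F}}$ must be read with squared norms on both sides for the stated constant to come out. Taken literally, it would only give $m^{2\abs{\gamma}}$ in place of $m^{\abs{\gamma}}$.
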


\begin{proof}
  Since it is clear that $ \partial^\gamma p = 0 $ when $ \abs{\gamma} > m $,
  we consider the case $ \abs{\gamma} \leq m $ in what follows.

  The monomials of degree $ m $ form an orthogonal basis of $ \mcalP^m(\setR^N) $
  with respect to the Fischer inner product.
  Since $ \partial^\gamma x^\alpha
  = \alpha_1^{\underline{\gamma_1}} \dotsm \alpha_N^{\underline{\gamma_N}} x^{\alpha - \gamma} $,
  we have
  \[
    \norm{\partial^\gamma p}_{\mathrm{F}}
    \leq \Paren*{
        \max_{\abs{\alpha} = m}
        \alpha_1^{\underline{\gamma_1}} \dotsm \alpha_N^{\underline{\gamma_N}}
      }
      \norm{p}_{\mathrm{F}}
    \leq m^{\abs{\gamma}} \norm{p}_{\mathrm{F}}
  \]
  for $ p \in \mcalP^m(\setR^N) $.
  Now assume that $ p \in \mcalH^m(\setR^N) $,
  so that $ \partial^\gamma p \in \mcalH^{m - \abs{\gamma}}(\setR^N) $.
  The above inequality, together with \cref{thm:fischer-and-l2},
  yields the desired inequality.
\end{proof}

We then prove pointwise estimates for the derivatives of harmonic polynomials.
The following fact is needed for this purpose.

\begin{fact}[{\cite[Corollary~IV.2.9~(b)]{MR304972}}]\label{thm:uniform-norm-of-spherical-harmonics}
  For $ p \in \mcalH^m(\Sphere{N - 1}) $ ($ m \in \setN $),
  we have
  \[
    \sup_{\omega \in \Sphere{N - 1}} \abs{p(\omega)}
    \leq \Paren*{\frac{\dim \mcalH^m(\setR^N)}{\vol(\Sphere{N - 1})}}^{1/2}
      \norm{p}_{L^2(\Sphere{N - 1})}.
  \]
\end{fact}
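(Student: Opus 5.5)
This is a standard reproducing-kernel argument, and I would prove it directly rather than appeal to the citation. Set $ d = \dim \mcalH^m(\setR^N) = \dim \mcalH^m(\Sphere{N - 1}) $ and fix an orthonormal basis $ Y_1 $, $ \dots $, $ Y_d $ of $ \mcalH^m(\Sphere{N - 1}) $ with respect to $ L^2(\Sphere{N - 1}) $. Define the kernel
\[
  Z(\omega, \eta) = \sum_{j = 1}^{d} Y_j(\omega) \conj{Y_j(\eta)}
  \qquad (\omega, \eta \in \Sphere{N - 1}).
\]
For $ p = \sum_j c_j Y_j \in \mcalH^m(\Sphere{N - 1}) $, expanding in the basis gives the reproducing identity
\[
  p(\omega) = \int_{\Sphere{N - 1}} p(\eta) Z(\omega, \eta) \,d\eta = \innprod{p}{\conj{Z(\omega, \blank)}}_{L^2(\Sphere{N - 1})}.
\]
We also have $ \norm{Z(\omega, \blank)}_{L^2(\Sphere{N - 1})}^2 = \sum_j \abs{Y_j(\omega)}^2 = Z(\omega, \omega) $.

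The key step is to show that $ \omega \mapsto Z(\omega, \omega) $ is constant on $ \Sphere{N - 1} $. The space $ \mcalH^m(\Sphere{N - 1}) $ is stable under rotations $ g \in O(N) $, since the Laplacian commutes with $ O(N) $ and homogeneity is preserved. The surface measure is $ O(N) $-invariant, so $ \faml{Y_j \circ g^{-1}}{j} $ is again an orthonormal basis. The kernel $ Z $ is independent of the choice of orthonormal basis, because a change of basis is a unitary matrix $ U $ and $ \sum_j Y'_j(\omega)\conj{Y'_j(\eta)} $ picks up $ U^* U = I $. Hence $ Z(g\omega, g\eta) = Z(\omega, \eta) $. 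Since $ O(N) $ acts transitively on $ \Sphere{N - 1} $, $ Z(\omega, \omega) $ equals a constant $ c $. Integrating over the sphere gives
\[
  c \vol(\Sphere{N - 1}) = \sum_{j = 1}^{d} \norm{Y_j}^2 = d,
\]
so $ c = d / \vol(\Sphere{N - 1}) $.

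Finally, the Cauchy--Schwarz inequality applied to the reproducing identity gives
\[
  \abs{p(\omega)} \leq \norm{p}_{L^2(\Sphere{N - 1})} Z(\omega, \omega)^{1/2} = \Paren*{\frac{\dim \mcalH^m(\setR^N)}{\vol(\Sphere{N - 1})}}^{1/2} \norm{p}_{L^2(\Sphere{N - 1})},
\]
which is the claim. The only point needing care is the basis-independence and invariance of $ Z $. This is a short linear-algebra check, so I do not expect a genuine obstacle.
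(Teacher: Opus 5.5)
Your proof is correct and is essentially the standard zonal-harmonic (reproducing-kernel) argument behind Stein--Weiss, Corollary~IV.2.9, which the paper cites rather than proves: rotation invariance of $Z$ forces $Z(\omega, \omega) = \dim \mcalH^m(\setR^N)/\vol(\Sphere{N - 1})$, and Cauchy--Schwarz then gives the bound. The basis-independence check you flag is indeed routine, since a unitary change of basis contributes a factor equal to the identity, so there is no gap.
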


\begin{proposition}\label{thm:pointwise-estimates-for-derivatives-of-harmonic-polynomials}
  For $ p \in \mcalH^m(\setR^N) $ ($ m \in \setN $) and $ \gamma \in \setN^N $,
  we have
  \begin{align*}
    & \abs{\partial^\gamma p(x)} \\
    &\leq
    \begin{dcases}
      \Paren*{
        (2m)^{\abs{\gamma}}
        \Paren*{m + \frac{N}{2} - 1}^{\underline{\abs{\gamma}}} \,
        \frac{\dim \mcalH^{m - \abs{\gamma}}(\setR^N)}{\vol(\Sphere{N - 1})}
      }^{1/2}
      \norm{p}_{L^2(\Sphere{N - 1})}
      \enorm{x}^{m - \abs{\gamma}}
      & (\abs{\gamma} \leq m) \\
      0
      & (\abs{\gamma} > m)
    \end{dcases}
  \end{align*}
  for $ x \in \setR^N $.
\end{proposition}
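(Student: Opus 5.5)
The plan is to combine \cref{thm:derivatives-of-harmonic-polynomials} with \cref{thm:uniform-norm-of-spherical-harmonics}, using homogeneity to pass from the unit sphere to all of $ \setR^N $. The case $ \abs{\gamma} > m $ is trivial, since then $ \partial^\gamma p = 0 $ identically. So assume $ \abs{\gamma} \leq m $.

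The key observation is that $ \partial^\gamma p $ is again a harmonic polynomial, homogeneous of degree $ m - \abs{\gamma} $. Indeed, $ \partial^\gamma $ commutes with $ \Laplacian $, so harmonicity is preserved, and differentiation lowers the degree of homogeneity by $ \abs{\gamma} $. Hence $ \partial^\gamma p \in \mcalH^{m - \abs{\gamma}}(\setR^N) $, and its restriction to $ \Sphere{N - 1} $ lies in $ \mcalH^{m - \abs{\gamma}}(\Sphere{N - 1}) $.

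For $ x = 0 $ with $ \abs{\gamma} < m $, both sides vanish. For $ \abs{\gamma} = m $, $ \partial^\gamma p $ is a constant, and the estimate at $ x = 0 $ follows from the estimate on the sphere, with $ \enorm{x}^0 = 1 $. For $ x \neq 0 $, I write $ x = r\omega $ and use homogeneity:
\[
  \partial^\gamma p(x) = r^{m - \abs{\gamma}}\, \partial^\gamma p(\omega).
\]
This reduces the claim to bounding $ \sup_{\omega \in \Sphere{N - 1}} \abs{\partial^\gamma p(\omega)} $.

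Apply \cref{thm:uniform-norm-of-spherical-harmonics} with degree $ m - \abs{\gamma} $. This gives the sup-norm bound
\[
  \Paren*{\frac{\dim \mcalH^{m - \abs{\gamma}}(\setR^N)}{\vol(\Sphere{N - 1})}}^{1/2} \norm{\partial^\gamma p}_{L^2(\Sphere{N - 1})}.
\]
Then bound $ \norm{\partial^\gamma p}_{L^2(\Sphere{N - 1})} $ by \cref{thm:derivatives-of-harmonic-polynomials}. Multiplying the two square-root factors yields exactly the stated constant.

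There is no real obstacle here. The only point needing care is to record explicitly that $ \partial^\gamma p $ is harmonic and homogeneous of the right degree, because both cited results require that hypothesis.
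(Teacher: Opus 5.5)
Your proposal is correct and is the paper's proof: the paper also applies the sup-norm bound for spherical harmonics of degree $m - |\gamma|$ to $\partial^\gamma p$, then the $L^2$ bound on $\partial^\gamma p$ from the preceding lemma, using that $\partial^\gamma p$ is harmonic and homogeneous of degree $m - |\gamma|$. Your explicit use of homogeneity to pass from the sphere to all $x$, and your treatment of $x = 0$, only spell out details the paper leaves implicit.
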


\begin{proof}
  It follows from \cref{thm:uniform-norm-of-spherical-harmonics} and
  \cref{thm:derivatives-of-harmonic-polynomial},
  together with the fact that $ \partial^\gamma p \in \mcalH^{m - \abs{\gamma}}(\setR^N) $.
\end{proof}

\section*{Acknowledgements}
\addcontentsline{toc}{section}{Acknowledgements}

The author is deeply grateful to Professor Toshiyuki Kobayashi for helpful
comments and warm encouragement.
The author also thanks Dr.\ Yugo Takanashi for valuable conversations.

This work was supported by JSPS KAKENHI Grant Number JP25KJ0914.

\bibliographystyle{amsalpha}
\addcontentsline{toc}{section}{References}
\bibliography{back_references}

@article {MR2566988,
    AUTHOR = {Ben Sa{\"i}d, Salem and Kobayashi, Toshiyuki and {\O}rsted, Bent},
     TITLE = {Generalized {F}ourier transforms {$\mathscr{F}_{k,a}$}},
   JOURNAL = {C. R. Math. Acad. Sci. Paris},
  FJOURNAL = {Comptes Rendus Math{\'e}matique. Acad{\'e}mie des Sciences. Paris},
    VOLUME = {347},
      YEAR = {2009},
    NUMBER = {19-20},
     PAGES = {1119--1124},
      ISSN = {1631-073X,1778-3569},
   MRCLASS = {43A32 (42A16)},
  MRNUMBER = {2566988},
MRREVIEWER = {Karl-Hermann\ Neeb},
       DOI = {10.1016/j.crma.2009.07.015},
       URL = {https://doi.org/10.1016/j.crma.2009.07.015},
}

@article {MR2956043,
    AUTHOR = {Ben Sa{\"i}d, Salem and Kobayashi, Toshiyuki and {\O}rsted, Bent},
     TITLE = {Laguerre semigroup and {D}unkl operators},
   JOURNAL = {Compos. Math.},
  FJOURNAL = {Compositio Mathematica},
    VOLUME = {148},
      YEAR = {2012},
    NUMBER = {4},
     PAGES = {1265--1336},
      ISSN = {0010-437X,1570-5846},
   MRCLASS = {33C52 (22E46 43A32)},
  MRNUMBER = {2956043},
MRREVIEWER = {Hendrik\ De Bie},
       DOI = {10.1112/S0010437X11007445},
       URL = {https://doi.org/10.1112/S0010437X11007445},
}

@article {MR917849,
    AUTHOR = {Dunkl, Charles F.},
     TITLE = {Reflection groups and orthogonal polynomials on the sphere},
   JOURNAL = {Math. Z.},
  FJOURNAL = {Mathematische Zeitschrift},
    VOLUME = {197},
      YEAR = {1988},
    NUMBER = {1},
     PAGES = {33--60},
      ISSN = {0025-5874,1432-1823},
   MRCLASS = {42C05 (22E45 33A65 33A75)},
  MRNUMBER = {917849},
MRREVIEWER = {Walter\ Schempp},
       DOI = {10.1007/BF01161629},
       URL = {https://doi.org/10.1007/BF01161629},
}

@article {MR951883,
    AUTHOR = {Dunkl, Charles F.},
     TITLE = {Differential-difference operators associated to reflection
              groups},
   JOURNAL = {Trans. Amer. Math. Soc.},
  FJOURNAL = {Transactions of the American Mathematical Society},
    VOLUME = {311},
      YEAR = {1989},
    NUMBER = {1},
     PAGES = {167--183},
      ISSN = {0002-9947,1088-6850},
   MRCLASS = {33A45 (20H15 33A65 42C10 51F15)},
  MRNUMBER = {951883},
MRREVIEWER = {W.\ A.\ Al-Salam},
       DOI = {10.2307/2001022},
       URL = {https://doi.org/10.2307/2001022},
}

@article {MR1145585,
    AUTHOR = {Dunkl, Charles F.},
     TITLE = {Integral kernels with reflection group invariance},
   JOURNAL = {Canad. J. Math.},
  FJOURNAL = {Canadian Journal of Mathematics. Journal Canadien de
              Math{\'e}matiques},
    VOLUME = {43},
      YEAR = {1991},
    NUMBER = {6},
     PAGES = {1213--1227},
      ISSN = {0008-414X,1496-4279},
   MRCLASS = {33C80 (20F55)},
  MRNUMBER = {1145585},
MRREVIEWER = {Eric\ M.\ Opdam},
       DOI = {10.4153/CJM-1991-069-8},
       URL = {https://doi.org/10.4153/CJM-1991-069-8},
}

@incollection {MR1199124,
    AUTHOR = {Dunkl, Charles F.},
     TITLE = {Hankel transforms associated to finite reflection groups},
 BOOKTITLE = {Hypergeometric functions on domains of positivity, {J}ack
              polynomials, and applications ({T}ampa, {FL}, 1991)},
    SERIES = {Contemp. Math.},
    VOLUME = {138},
     PAGES = {123--138},
 PUBLISHER = {Amer. Math. Soc., Providence, RI},
      YEAR = {1992},
      ISBN = {0-8218-5159-4},
   MRCLASS = {33C80 (42B10 43A32 44A15)},
  MRNUMBER = {1199124},
MRREVIEWER = {Luis\ Verde-Star},
       DOI = {10.1090/conm/138/1199124},
       URL = {https://doi.org/10.1090/conm/138/1199124},
}

@article {MR1121714,
    AUTHOR = {Duran, Antonio J.},
     TITLE = {A bound on the {L}aguerre polynomials},
   JOURNAL = {Studia Math.},
  FJOURNAL = {Polska Akademia Nauk. Instytut Matematyczny. Studia
              Mathematica},
    VOLUME = {100},
      YEAR = {1991},
    NUMBER = {2},
     PAGES = {169--181},
      ISSN = {0039-3223,1730-6337},
   MRCLASS = {33C45},
  MRNUMBER = {1121714},
MRREVIEWER = {J.\ W.\ Brown},
       DOI = {10.4064/sm-100-2-169-181},
       URL = {https://doi.org/10.4064/sm-100-2-169-181},
}

@book {MR1446489,
    AUTHOR = {Faraut, Jacques and Kor{\'a}nyi, Adam},
     TITLE = {Analysis on symmetric cones},
    SERIES = {Oxford Mathematical Monographs},
      NOTE = {Oxford Science Publications},
 PUBLISHER = {The Clarendon Press, Oxford University Press, New York},
      YEAR = {1994},
     PAGES = {xii+382},
      ISBN = {0-19-853477-9},
   MRCLASS = {17C37 (22E30 33C80 35A30 43A85 46-02)},
  MRNUMBER = {1446489},
MRREVIEWER = {Hong\ Ming\ Ding},
}

@misc {arXiv2507-04064,
    AUTHOR = {Faustino, Nelson and Negzaoui, Selma},
     TITLE = {A novel {S}chwartz space for the {$ \left(k, \frac{2}{n}\right) $}-generalized {F}ourier transform},
      YEAR = {2025},
       DOI = {10.48550/arXiv.2507.04064},
       URL = {https://arxiv.org/abs/2507.04064v2},
      NOTE = {preprint},
}

@book {MR983366,
    AUTHOR = {Folland, Gerald B.},
     TITLE = {Harmonic analysis in phase space},
    SERIES = {Annals of Mathematics Studies},
    VOLUME = {122},
 PUBLISHER = {Princeton University Press, Princeton, NJ},
      YEAR = {1989},
     PAGES = {x+277},
      ISBN = {0-691-08527-7; 0-691-08528-5},
   MRCLASS = {22E30 (43A80 58G15 81S30)},
  MRNUMBER = {983366},
       DOI = {10.1515/9781400882427},
       URL = {https://doi.org/10.1515/9781400882427},
}

@article {MR4629458,
    AUTHOR = {Gorbachev, Dmitry and Ivanov, Valerii and Tikhonov, Sergey},
     TITLE = {On the kernel of the {$(\kappa, a)$}-generalized {F}ourier
              transform},
   JOURNAL = {Forum Math. Sigma},
  FJOURNAL = {Forum of Mathematics. Sigma},
    VOLUME = {11},
      YEAR = {2023},
     PAGES = {Paper No. e72, 25},
      ISSN = {2050-5094},
   MRCLASS = {42B10 (33C45)},
  MRNUMBER = {4629458},
MRREVIEWER = {Vitaly\ V.\ Volchkov},
       DOI = {10.1017/fms.2023.69},
       URL = {https://doi.org/10.1017/fms.2023.69},
}

@book {MR1130821,
    AUTHOR = {Gilbert, John E. and Murray, Margaret A. M.},
     TITLE = {Clifford algebras and {D}irac operators in harmonic analysis},
    SERIES = {Cambridge Studies in Advanced Mathematics},
    VOLUME = {26},
 PUBLISHER = {Cambridge University Press, Cambridge},
      YEAR = {1991},
     PAGES = {viii+334},
      ISBN = {0-521-34654-1},
   MRCLASS = {42B20 (15A66 30E10 30G35 31B15 35C15 58G03)},
  MRNUMBER = {1130821},
MRREVIEWER = {John\ Ryan},
       DOI = {10.1017/CBO9780511611582},
       URL = {https://doi.org/10.1017/CBO9780511611582},
}

@book {MR3307944,
    AUTHOR = {Gradshteyn, I. S. and Ryzhik, I. M.},
     TITLE = {Table of integrals, series, and products},
   EDITION = {Eighth},
      NOTE = {Translated from the Russian,
              Translation edited and with a preface by Daniel Zwillinger and
              Victor Moll},
 PUBLISHER = {Elsevier/Academic Press, Amsterdam},
      YEAR = {2015},
     PAGES = {xlvi+1133},
      ISBN = {978-0-12-384933-5},
   MRCLASS = {00A22 (33-00)},
  MRNUMBER = {3307944},
}

@article {MR3201818,
    AUTHOR = {Hilgert, Joachim and Kobayashi, Toshiyuki and M{\"o}llers, Jan},
     TITLE = {Minimal representations via {B}essel operators},
   JOURNAL = {J. Math. Soc. Japan},
  FJOURNAL = {Journal of the Mathematical Society of Japan},
    VOLUME = {66},
      YEAR = {2014},
    NUMBER = {2},
     PAGES = {349--414},
      ISSN = {0025-5645,1881-1167},
   MRCLASS = {22E45 (17C30 33C10)},
  MRNUMBER = {3201818},
MRREVIEWER = {Zhanqiang\ Bai},
       DOI = {10.2969/jmsj/06620349},
       URL = {https://doi.org/10.2969/jmsj/06620349},
}

@article {MR4684153,
    AUTHOR = {Ivanov, V. I.},
     TITLE = {One-dimensional {$(k,a)$}-generalized {F}ourier transform},
   JOURNAL = {Tr. Inst. Mat. Mekh.},
  FJOURNAL = {Trudy Instituta Matematiki i Mekhaniki},
    VOLUME = {29},
      YEAR = {2023},
    NUMBER = {4},
     PAGES = {92--108},
      ISSN = {0134-4889,2658-4786},
   MRCLASS = {42A38 (33C45)},
  MRNUMBER = {4684153},
MRREVIEWER = {Branko\ Sari{\'c}},
}

@article {MR2134314,
    AUTHOR = {Kobayashi, Toshiyuki and Mano, Gen},
     TITLE = {Integral formulas for the minimal representation of {$\mathrm{O}(p,2)$}},
   JOURNAL = {Acta Appl. Math.},
  FJOURNAL = {Acta Applicandae Mathematicae},
    VOLUME = {86},
      YEAR = {2005},
    NUMBER = {1-2},
     PAGES = {103--113},
      ISSN = {0167-8019,1572-9036},
   MRCLASS = {22E30 (22E46 43A80)},
  MRNUMBER = {2134314},
MRREVIEWER = {Daniel\ Belti{\c t}{\u a}},
       DOI = {10.1007/s10440-005-0464-2},
       URL = {https://doi.org/10.1007/s10440-005-0464-2},
}

@article {MR2317306,
    AUTHOR = {Kobayashi, Toshiyuki and Mano, Gen},
     TITLE = {Integral formula of the unitary inversion operator for the
              minimal representation of {${\rm O}(p,q)$}},
   JOURNAL = {Proc. Japan Acad. Ser. A Math. Sci.},
  FJOURNAL = {Japan Academy. Proceedings. Series A. Mathematical Sciences},
    VOLUME = {83},
      YEAR = {2007},
    NUMBER = {3},
     PAGES = {27--31},
      ISSN = {0386-2194},
   MRCLASS = {22E30 (22E46 43A80)},
  MRNUMBER = {2317306},
MRREVIEWER = {Daniel\ Belti{\c t}{\u a}},
       URL = {http://projecteuclid.org/euclid.pja/1176126886},
}

@incollection {MR2401813,
    AUTHOR = {Kobayashi, Toshiyuki and Mano, Gen},
     TITLE = {The inversion formula and holomorphic extension of the minimal
              representation of the conformal group},
 BOOKTITLE = {Harmonic analysis, group representations, automorphic forms
              and invariant theory},
    SERIES = {Lect. Notes Ser. Inst. Math. Sci. Natl. Univ. Singap.},
    VOLUME = {12},
     PAGES = {151--208},
 PUBLISHER = {World Sci. Publ., Hackensack, NJ},
      YEAR = {2007},
      ISBN = {978-981-277-078-3; 981-277-078-X},
   MRCLASS = {22E45 (33C80 43A80 43A85)},
  MRNUMBER = {2401813},
MRREVIEWER = {Tomasz\ Przebinda},
       DOI = {10.1142/9789812770790\_0006},
       URL = {https://doi.org/10.1142/9789812770790_0006},
}

@article {MR2858535,
    AUTHOR = {Kobayashi, Toshiyuki and Mano, Gen},
     TITLE = {The {S}chr{\"o}dinger model for the minimal representation of
              the indefinite orthogonal group {${\rm O}(p,q)$}},
   JOURNAL = {Mem. Amer. Math. Soc.},
  FJOURNAL = {Memoirs of the American Mathematical Society},
    VOLUME = {213},
      YEAR = {2011},
    NUMBER = {1000},
     PAGES = {vi+132},
      ISSN = {0065-9266,1947-6221},
      ISBN = {978-0-8218-4757-2},
   MRCLASS = {22E30 (22E46 43A80)},
  MRNUMBER = {2858535},
MRREVIEWER = {Takeshi\ Kawazoe},
       DOI = {10.1090/S0065-9266-2011-00592-7},
       URL = {https://doi.org/10.1090/S0065-9266-2011-00592-7},
}

@article {MR454106,
    AUTHOR = {Koornwinder, Tom},
     TITLE = {The addition formula for {L}aguerre polynomials},
   JOURNAL = {SIAM J. Math. Anal.},
  FJOURNAL = {SIAM Journal on Mathematical Analysis},
    VOLUME = {8},
      YEAR = {1977},
    NUMBER = {3},
     PAGES = {535--540},
      ISSN = {0036-1410},
   MRCLASS = {33A65},
  MRNUMBER = {454106},
MRREVIEWER = {M.\ Mikol{\'a}s},
       DOI = {10.1137/0508041},
       URL = {https://doi.org/10.1137/0508041},
}

@article {MR1755901,
    AUTHOR = {Kostant, Bertram},
     TITLE = {On {L}aguerre polynomials, {B}essel functions, {H}ankel
              transform and a series in the unitary dual of the
              simply-connected covering group of {${\rm Sl}(2,{\bf R})$}},
   JOURNAL = {Represent. Theory},
  FJOURNAL = {Representation Theory. An Electronic Journal of the American
              Mathematical Society},
    VOLUME = {4},
      YEAR = {2000},
     PAGES = {181--224},
      ISSN = {1088-4165},
   MRCLASS = {22E46 (22D10 33C10 33C45 33C80 42C05 43A65)},
  MRNUMBER = {1755901},
MRREVIEWER = {Sergei\ S.\ Platonov},
       DOI = {10.1090/S1088-4165-00-00096-0},
       URL = {https://doi.org/10.1090/S1088-4165-00-00096-0},
}

@incollection {MR2022853,
    AUTHOR = {R{\"o}sler, Margit},
     TITLE = {Dunkl operators: theory and applications},
 BOOKTITLE = {Orthogonal polynomials and special functions ({L}euven, 2002)},
    SERIES = {Lecture Notes in Math.},
    VOLUME = {1817},
     PAGES = {93--135},
 PUBLISHER = {Springer, Berlin},
      YEAR = {2003},
      ISBN = {3-540-40375-2},
   MRCLASS = {33C47 (20F55 37J35 39A70)},
  MRNUMBER = {2022853},
MRREVIEWER = {Walter\ Schempp},
       DOI = {10.1007/3-540-44945-0\_3},
       URL = {https://doi.org/10.1007/3-540-44945-0_3},
}

@book {MR2953553,
    AUTHOR = {Schm{\"u}dgen, Konrad},
     TITLE = {Unbounded self-adjoint operators on {H}ilbert space},
    SERIES = {Graduate Texts in Mathematics},
    VOLUME = {265},
 PUBLISHER = {Springer, Dordrecht},
      YEAR = {2012},
     PAGES = {xx+432},
      ISBN = {978-94-007-4752-4},
   MRCLASS = {47-01 (47B25 47E05)},
  MRNUMBER = {2953553},
MRREVIEWER = {G.\ V.\ Rozenblum},
       DOI = {10.1007/978-94-007-4753-1},
       URL = {https://doi.org/10.1007/978-94-007-4753-1},
}

@book {MR2279709,
    AUTHOR = {Sepanski, Mark R.},
     TITLE = {Compact {L}ie groups},
    SERIES = {Graduate Texts in Mathematics},
    VOLUME = {235},
 PUBLISHER = {Springer, New York},
      YEAR = {2007},
     PAGES = {xiv+198},
      ISBN = {978-0-387-30263-8; 0-387-30263-8},
   MRCLASS = {22Exx},
  MRNUMBER = {2279709},
MRREVIEWER = {David\ A.\ Renard},
       DOI = {10.1007/978-0-387-49158-5},
       URL = {https://doi.org/10.1007/978-0-387-49158-5},
}

@book {MR304972,
    AUTHOR = {Stein, Elias M. and Weiss, Guido},
     TITLE = {Introduction to {F}ourier analysis on {E}uclidean spaces},
    SERIES = {Princeton Mathematical Series},
    VOLUME = {No. 32},
 PUBLISHER = {Princeton University Press, Princeton, NJ},
      YEAR = {1971},
     PAGES = {x+297},
   MRCLASS = {42A92 (31B99 32A99 46F99 47G05)},
  MRNUMBER = {304972},
MRREVIEWER = {Edwin\ Hewitt},
}

@article {MR2328257,
    AUTHOR = {Vershik, A. M. and Graev, M. I.},
     TITLE = {The structure of complementary series and special
              representations of the groups {${\rm O}(n,1)$} and {${\rm
              U}(n,1)$}},
   JOURNAL = {Uspekhi Mat. Nauk},
  FJOURNAL = {Uspekhi Matematicheskikh Nauk},
    VOLUME = {61},
      YEAR = {2006},
    NUMBER = {5(371)},
     PAGES = {3--88},
      ISSN = {0042-1316,2305-2872},
   MRCLASS = {22E46},
  MRNUMBER = {2328257},
       DOI = {10.1070/RM2006v061n05ABEH004356},
       URL = {https://doi.org/10.1070/RM2006v061n05ABEH004356},
}

@book {MR929683,
    AUTHOR = {Wallach, Nolan R.},
     TITLE = {Real reductive groups. {I}},
    SERIES = {Pure and Applied Mathematics},
    VOLUME = {132},
 PUBLISHER = {Academic Press, Inc., Boston, MA},
      YEAR = {1988},
     PAGES = {xx+412},
      ISBN = {0-12-732960-9},
   MRCLASS = {22E46 (17B10 22-02 22E30)},
  MRNUMBER = {929683},
MRREVIEWER = {Roberto\ J.\ Miatello},
}

@article {MR7783,
    AUTHOR = {Whitney, Hassler},
     TITLE = {Differentiable even functions},
   JOURNAL = {Duke Math. J.},
  FJOURNAL = {Duke Mathematical Journal},
    VOLUME = {10},
      YEAR = {1943},
     PAGES = {159--160},
      ISSN = {0012-7094,1547-7398},
   MRCLASS = {27.0X},
  MRNUMBER = {7783},
MRREVIEWER = {A.\ E.\ Taylor},
       URL = {http://projecteuclid.org/euclid.dmj/1077471799},
}

\end{document}